\documentclass[a4paper,11pt]{amsart}

\title[Frayed Demazure weaves]{Frayed Demazure weaves for Poisson-compatible cluster structures on Bott--Samelson charts}
\author{Jon Cheah}
\address{
	Department of Mathematics   \\
	The University of Hong Kong \\
	Pokfulam Road               \\
	Hong Kong}
\email{joncheah@connect.hku.hk}

\date{\today}  

\calclayout

\usepackage{amsmath}
\usepackage{amsfonts}
\usepackage{mathtools}
\usepackage{bbm}
\usepackage{amssymb}
\usepackage[dvipsnames]{xcolor}
\usepackage{graphicx}
\usepackage{array} 
\usepackage{multirow} 
\usepackage{bm}
\usepackage[shortlabels]{enumitem}
\usepackage{mathrsfs}
\usepackage{etoolbox}
\usepackage{pdfpages}
\usepackage[numbers]{natbib}
\usepackage{cite}
\usepackage{quiver}
\usepackage{tikz}
\usetikzlibrary{calc}
\usepackage{tikz}
\usetikzlibrary{decorations.pathreplacing,angles,quotes,patterns,patterns.meta}
\usepackage{tikz-cd}
\usepackage{subcaption}
\usepackage[numbers]{natbib}
\usepackage{cite}
\usepackage{hyperref}
\usepackage{extarrows}
\usepackage{mathdots}

\usepackage{amsthm} 
\theoremstyle{plain}
\newtheorem{thm}{Theorem}[section] 
\newtheorem{lem}[thm]{Lemma} 
\newtheorem{prop}[thm]{Proposition} 
\newtheorem{cor}[thm]{Corollary} 
 
\newtheorem{conj}[thm]{Conjecture}

\theoremstyle{definition}
\newtheorem{defn}[thm]{Definition}
\newtheorem{eg}[thm]{Example}

\theoremstyle{remark}
\newtheorem{rem}[thm]{Remark}

\DeclareMathOperator{\SL}{SL}

\newcommand{\Z}{\mathbb{Z}}
\newcommand{\Q}{\mathbb{Q}}
\newcommand{\C}{\mathbb{C}}

\renewcommand{\P}{\mathbb{P}}
\newcommand{\proj}{\mathbb{P}}

\newcommand{\blank}{{-}}
\newcommand{\largewedge}{\mbox{\Large $\wedge$}}
\newcommand{\ep}{\varepsilon}

\newcommand{\cA}{\mathcal{A}}
\newcommand{\cB}{\mathcal{B}}

\newcommand{\cO}{\mathcal{O}}

\newcommand{\seed}{\mathbf{s}}

\definecolor{S1}{RGB}{10,70,150}
\definecolor{S2}{RGB}{255,102,102}
\definecolor{S3}{RGB}{78,174,30}

\usepackage{url}

\begin{document}
	\begin{abstract}
		Demazure weaves are combinatorial representations of maps between Bott--Samelson cells and have been used to construct cluster structures on braid varieties. We show the compatibility of these maps and the resulting cluster structures with the standard Poisson structure on the Bott--Samelson variety. Adding frayed strands to Demazure weaves, we further construct Poisson compatible cluster structures on other affine charts of the Bott--Samelson variety in a manner that transition functions across charts become rational quasi-cluster. The mutation sequences we construct for these quasi-cluster morphisms are closely related to those of M\'enard for open Richardson seeds. 
	\end{abstract}
	\maketitle
	\setcounter{tocdepth}{1}
	\tableofcontents
	
	\section{Introduction}
	
	\subsection{Scientific context}
	
	Cluster algebras were introduced by Fomin and Zelevinsky \citep{FZ02} as a means to study canonical bases in quantum groups and total positivity in algebraic varieties. It is a natural question whether varieties arising from Lie theory possess cluster structures compatible with total positivity and Poisson geometry. Open Richardson varieties were introduced by Kazhdan--Lusztig, and are useful in studying both the total positivity \citep{MR04} and Poisson geometry \citep{GY09} of the flag variety.
	
	For a simply-laced algebraic group $G$, Leclerc \citep{Lec16} used modules of the preprojective algebra to construct a cluster subalgebra in the coordinate ring of open Richardson varieties in the flag variety. It was conjectured that this subalgebra is equal to the whole coordinate ring and proved in some special cases. The conjecture was refined by M\'enard \citep{M22} who provided a candidate initial seed, and it was shown in \citep{CK22} that this seed defines an upper cluster algebra structure the coordinate ring.
	
	In the finite-type case, open Richardson varieties are special cases of braid varieties. Two separate groups independently resolved Leclerc's conjecture by constructing cluster algebra structures on braid varieties. Casals--Gorsky--Gorsky--Le--Shen--Simental \citep{CGGLSS25} used the combinatorics of \emph{Demazure weaves} and Galashin--Lam--Sherman-Bennett--Speyer \citep{GLSBS24,GLSB25} used 3D plabic graphs and Deodhar geometry.	
	These two cluster structures coincide. A comparison of their tori, cluster variables, and Weil--Petersson forms is presented in \citep{CGGSSBS25}.
	
	Demazure weave combinatorics were a main inspiration for this paper. Weaves were introduced in \citep{CZ22} to study Lagrangian filings of Legendrian links. They returned in \citep{CGGS24} and were used to construct many decompositions of braid varieties. A subclass of weaves, called \emph{Demazure weaves}, correspond to those whose unique strata of maximal dimension are isomorphic to an algebraic torus, and these form the initial cluster tori in \citep{CGGLSS25}.
	
	Recently, Bao and Ye \citep{BY25} demonstrated that Kac--Moody open Richardson varieties admit an upper cluster algebra structure with an initial seed obtained by a generalised version of M\'enard's algorithm. Whether this coincides with the cluster algebra in this generalised setting is still unknown.
	
	\subsection{Poisson compatibility}
	A cluster structure and a Poisson structure on an algebra are \emph{compatible} \citep{GSV03,GSV10} if each every cluster is comprised of mutually of log-canonical elements.  In \citep{GSV10}, it was shown that the cluster algebra structure in \citep{BFZ05} constructed on double Bruhat cells in semisimple algebraic groups is compatible with (the restriction of) the \emph{standard Poisson structure}.
	
	The previous works \citep{SW21} and \citep{CGGLSS25} also construct cluster Poisson structures on Bott--Samelson cells and braid varieties respectively. However, the Poisson structure arising from this need not be the standard Poisson structure from the algebraic group $G$, see \citep[Remark 1.11]{SW21}.

	\subsection{Bott--Samelson charts}
	Braid varieties and open Richardson varieties are locally-closed subvarieties of an iterated $\P^1$-fibration called a Bott--Samelson variety.
	An $\ell$-dimensional Bott--Samelson variety $\mathring{Z}_{\underline{w}}$ has $2^\ell$ natural affine coordinate charts which are indexed by subexpressions of the word $\underline{w}$.
	In \citep{EL21}, Elek and Lu explicitly computed the standard Poisson structure on $Z_{\underline{w}}$ in the natural \emph{Bott--Samelson coordinates} of these charts. It was shown that these coordinates form an iterated $T$-Poisson Ore extension, and that for some particular subexpressions, they are in fact symmetric Poisson CGL extensions. A result of Goodearl and Yakimov \citep{GY18} says that when this occurs, there exist cluster structures compatible with the Poisson structure.

	\subsection{Structure of the paper}
	Sections \ref{sec:clusterbackground}, and \ref{sec:BSvarieties} set up the background on cluster algebra theory and the algebriac varieties that we study.
	In Section \ref{sec:classical weaves}, we introduce Demazure weaves from \citep{CGGLSS25} and their associated toric charts. We show that the component maps associated to weave vertices are Poisson maps, and prove in Theorem \ref{thm:weavechartsarelogcan} that the corresponding weave tori on braid varieties (and Bott--Samelson cells) are log-canonical in the standard Poisson structure. By considering the initial torus inside the opposite Bott--Samelson chart,
	we show the compatibility of the Poisson coefficient matrix and the exchange matrix in Corollary \ref{cor:ClusterPoissonCompatible}.
	In Section \ref{sec:weavesfornonfullcharts}, we modify weave combinatorics for the other Bott--Samelson charts. New weave vertices correspond to natural Poisson maps between Bott--Samelson charts. These Poisson maps allow us to pull back a seed from the supported subword and give a Poisson compatible cluster structure on $\cO^\gamma$ in Theorem \ref{thm:BSchartshaveclusterstructure}.
	In Section \ref{sec:clustertransitions}, we show that the constructed cluster algebra structure on the chart $\cO^\gamma$ is compatible with the existing one on $\cO^{\underline{w}}$ in the sense that the transition functions become rational quasi-cluster. We show that the initial seed $\seed(\gamma)$ is quasi-equivalent to a freezing of a seed mutation equivalent to the initial seed $\seed(\underline{w})$ of $\cO^{\underline{w}}$.
	We define a natural mutation sequence $\overrightarrow{\mu_{\gamma}}$ and describe $\overrightarrow{\mu_{\gamma}}(\seed(\underline{w}))$ as an iterated pullback of seeds. We show that this sequence is closely related to M\'enard's mutation sequence \citep{M22,BY25}, and that the seeds for the chart $\cO^\gamma$ and the open Richardson $\mathring{Z}_{\underline{w}}^v$ complementary in $\overrightarrow{\mu_{\gamma}}(\seed(\underline{w}))$.
	Running examples are provided throughout the work.
	
	In the Appendices \ref{sec:FrayedWeaves} and \ref{sec:frayedweaveequiv}, we broaden the definition of a frayed weave including all possible vertices on a rank $2$ simply-laced root subsystem. We list the corresponding braid matrix identities and check the Poisson compatibility of the maps. Some frayed weave equivalences are given based on weaves giving the same moduli of flags, and we explore other embeddings of flag moduli arising from isotopies of fraying vertices.

	\subsection{Acknowledgements}
	We thank Jiang-Hua Lu for helpful discussions on the standard Poisson structure and York Ye for explanations of M\'enard's mutation sequence.
	The author is supported by the HKU Presidential PhD Scholar Programme (HKU-PS).

	\section{Background on cluster algebras}\label{sec:clusterbackground}
	
	Throughout, let $J$ be a finite index set.
	\begin{defn}
		A \emph{seed} is a quadruple $\mathbf{s} = \bigl((A_j)_{j\in J}, J_{\mathrm{fr}},E,D\bigr)$ consisting of:
		\begin{itemize}
			\item A transcendental basis $(A_j)_{j\in J}$ of a field $K=\C(x_j)_{j\in J}$, called an \emph{extended cluster}.
			\item A subset $J_{\mathrm{fr}} \subset J$. The indices $j\in J_{\mathrm{fr}}$ and variables $(A_j)_{j\in J_{\mathrm{fr}}}$ are called \emph{frozen}. Indices and variables in the complement $J_{\mathrm{uf}} = J \setminus J_{\mathrm{fr}}$ are called \emph{mutable} or \emph{unfrozen}.
			\item An \emph{exchange matrix} $E = (\ep_{ij})_{i,j\in J}$, which is a skew-symmetrisable matrix with $\ep_{ij} \in \Q$ if both $i,j$ are frozen, and $\ep_{ij}\in \Z$ otherwise.
			\item A symmetriser for $D=\mathrm{diag}(d_1,\dots,d_\ell)$ such that $DE$ is skew-symmetric. We may assume that $D$ has coprime integer entries.
		\end{itemize}
	\end{defn}
	Given a seed $\mathbf{s}_0$ and mutable index $k$, its \emph{mutation} in direction $k$ gives a seed $\mathbf{s}'$ with the same symmetriser and frozen subset, but with exchange matrix $E' = (\ep'_{ij})_{i,j\in J}$ where
	$$\ep'_{ij} = \begin{cases}
		-\ep_{ij} \quad &\text{ if } k \in \{i,j\},\\
		\ep_{ij} + \ep_{ik}\ep_{kj}  \quad &\text{ if } k \notin \{i,j\}, \text{ and } \ep_{ik},\ep_{kj}>0,\\
		\ep_{ij} - \ep_{ik}\ep_{kj}  \quad &\text{ if } k \notin \{i,j\}, \text{ and }\ep_{ik},\ep_{kj}<0,\\
		\ep_{ij}, & \text{ else}.
	\end{cases}$$
	and whose variables are determined by $A_{i}' = A_i$ for each $i\neq k$, and  
	\begin{equation}\label{defn:mutation}
		A_k A_k' = {\prod_{\ep_{ik}<0}A_i^{-\ep_{ik}} + \prod_{\ep_{jk}>0} A_j^{\ep_{jk}}}.
	\end{equation}
	\begin{rem}\label{rem:rowvscolumnmutation}
		Some works including \citep{SW21,CGGLSS25,BY25} take a different convention for the mutation formula (\ref{defn:mutation}) where the exponents instead read from the rows of the exchange matrix. The column convention used here aligns with \citep{FZ02,BZ05,FG09,GLS11}.
	\end{rem}
	
	We write $\mathbf{s} \sim\mathbf{s}_0$ when $\mathbf{s}$ can be obtained from $\mathbf{s}_0$ by a finite sequence of mutations. Given a seed $\mathbf{s}_0$, and a subset $\mathrm{inv}\subset J_{\mathrm{fr}}$ of frozen indices, we define the \emph{cluster algebra} $\mathcal{A}(\mathbf{s}_0,\mathrm{inv}) = \C[A_j]_{j\in J, \mathbf{s}\sim\mathbf{s}_0} [A_j^{-1}]_{j\in \mathrm{inv}}$ to be the the $\C$-subalgebra of $K$ generated by the (cluster and frozen) variables over all mutation-reachable seeds $\mathbf{s} \sim \mathbf{s}_0$, as well as the inverses of the variables indexed by $j\in \mathrm{inv}$.
	When $\mathrm{inv}=J_{\mathrm{fr}}$, we write $$\mathcal{A}(\mathbf{s}_0) := \mathcal{A}(\mathbf{s}_0,J_{\mathrm{fr}}) =  \bigl(\C[A_j^{\pm1}]_{j\in J_{\mathrm{fr}}}\bigr)[A_{j,\mathbf{s}}]_{i\in J_{\mathrm{uf}},\mathbf{s}\sim\mathbf{s}_0},$$
	and we have $\mathcal{A}(\mathbf{s}_0,\emptyset)\subseteq \mathcal{A}(\mathbf{s}_0,\mathrm{inv})\subseteq \mathcal{A}(\mathbf{s}_0)$.
	By construction, mutation equivalent seeds yield equal cluster algebras. The \emph{upper cluster algebra} $$\mathcal{U}(\mathbf{s}_0) = \bigcap_{\mathbf{s}\sim\mathbf{s}_0} \C[A_{j,\mathbf{s}}^{\pm1}]_{j\in J}$$
	is the intersection of the Laurent polynomial rings over each cluster. Mutation equivalent seeds yield the same upper cluster algebra. If we further define 
	$$\mathcal{U}(\mathbf{s}_0,\mathrm{inv}) = \bigcap_{\mathbf{s}\sim\mathbf{s}_0} \C[A_{j,\mathbf{s}}^{\pm1}]_{j\in J_{\mathrm{uf}}}[A_j]_{j\in J_{\mathrm{fr}}}[A_j^{-1}]_{j\in \mathrm{inv}},$$
	then for any $\mathrm{inv}\subset J_{\mathrm{fr}}$, we have
	$\mathcal{U}(\mathbf{s}_0,\emptyset)\subseteq\mathcal{U}(\mathbf{s}_0,\mathrm{inv})\subseteq\mathcal{U}(\mathbf{s}_0)$.
	The celebrated \emph{Laurent phenomenon} from \citep{FZ02} says that $\mathcal{A}(\mathbf{s}_0,\emptyset)  \subseteq \mathcal{U}(\mathbf{s}_0,\emptyset)$, and therefore $\mathcal{A}(\mathbf{s}_0,\mathrm{inv})  \subseteq \mathcal{U}(\mathbf{s}_0,\mathrm{inv})$ for any $\mathrm{inv}\subseteq J_{\mathrm{fr}}$. It is of general interest in which situations equality is achieved.
	
	We will also make use of the notion of \emph{quasi-equivalence} of seeds, first introduced in \citep{Fra16}, which give a more flexible criterion for when two seeds give the same cluster algebra.
	
	\begin{defn}\label{defn:quasiequivseeds}
		Two seeds $\mathbf{s} = \bigl((A_j)_{j\in J}, J_{\mathrm{fr}},B,D\bigr)$ and $\mathbf{s}' = \bigl((A'_j)_{j\in J'}, J'_{\mathrm{fr}},B',D'\bigr)$ with cluster variables in the same field $K$ are \emph{quasi-equivalent} if we have:
		\begin{enumerate}[(1)]
			\item A bijection $\phi:J\to J'$ such that $\phi(J_{\mathrm{fr}})=J'_{\mathrm{fr}}$, and $d_{\phi(j)}=d'_{j}$.
			\item The frozen variables of $\mathbf{s}$ and $\mathbf{s}'$ generate the same multiplicative subgroup of $K$.
			\item For each $j\in J_{\mathrm{uf}}$, the cluster variables $A_{}$ and $A_{\phi(j)}'$ differ by a Laurent monomial of frozen variables.
			\item For each $j\in J_{\mathrm{uf}}$, the \emph{exchange ratios}
			$$y_j = \frac{\prod_{\ep_{ij}>0}A_i^{\ep_{ij}}}{\prod_{\ep_{ij}<0}A_i^{-\ep_{ij}}} = \prod_{i}A_i^{\ep_{ij}},$$\
			are preserved. That is, $y_{i} = y_{\phi(i)}'$ for each $i$.
		\end{enumerate}
	\end{defn}
	\begin{rem}
		Preservation of the exchange ratios is the necessary condition for quasi-equivalences to commute with mutation. This can be equivalently thought of as the $2$-forms agreeing on the respective tori, see \citep[Definition 3.8]{GLSB25}.
	\end{rem}
	
	\begin{lem}
		If seeds $\mathbf{s}$ and $\mathbf{s}'$ are quasi-equivalent, then for any mutable index $j$, $\mu_{j}(s)$ and $\mu_{\phi(j)}(\mathbf{s}')$ are quasi equivalent.
	\end{lem}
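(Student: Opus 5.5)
We check the four conditions of Definition~\ref{defn:quasiequivseeds} for $\mu_j(\mathbf{s})$ and $\mu_{\phi(j)}(\mathbf{s}')$, keeping the bijection $\phi$. Condition (1) is immediate, since mutation changes neither the frozen set nor the symmetriser. Condition (2) is also immediate, because frozen variables are unchanged by mutation. For conditions (3) and (4), the only new cluster variable is $A_j'$ (respectively $A'^{\,\prime}_{\phi(j)}$), and the new exchange ratios are determined by the old ones. So the plan is to express everything through the $y$-variables, on which the two seeds already agree.

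\textbf{Step 1: the relation on exchange matrices.} Write $\mathbf{s}=(A,E)$ and $\mathbf{s}'=(A',E')$, and identify $J'$ with $J$ via $\phi$. Let $M\subseteq K^\times$ be the multiplicative group generated by the frozen variables, which is common to both seeds by (2). Condition (3) says $A'_i=A_i m_i$ with $m_i\in M$ for mutable $i$. For frozen $i$, both $A_i$ and $A'_i$ lie in $M$. Condition (4) gives
$$\prod_i A_i^{\ep_{ij}}=\prod_i A_i'^{\ep'_{ij}} \qquad\text{for all mutable } j.$$
Substitute $A'_i=A_im_i$ and use that the mutable cluster variables are algebraically independent modulo $M$. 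This step relies on the extended cluster being a transcendence basis: $A_{\mathrm{uf}}\cup A_{\mathrm{fr}}$ is a transcendence basis, so no nontrivial monomial in the mutable $A_i$ can equal an element of $M$. Comparing exponents then gives $\ep_{ij}=\ep'_{ij}$ for $i,j$ mutable. The two matrices therefore share their mutable block and differ only in the frozen rows, and the frozen-row contributions satisfy $\prod_{i\in J_{\mathrm{fr}}}A_i^{\ep_{ij}}=\prod_i m_i^{\ep'_{ij}}\prod_{i\in J_{\mathrm{fr}}}A_i'^{\ep'_{ij}}$.

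\textbf{Step 2: condition (3) after mutation.} The exchange relation can be written as
$$A_jA_j^{\mathrm{new}}=\prod_{\ep_{ij}<0}A_i^{-\ep_{ij}}\,(1+y_j),$$
and similarly for the primed seed. We have $y_j=y'_j$, and the monomials $\prod_{\ep_{ij}<0}A_i^{-\ep_{ij}}$ and $\prod_{\ep'_{ij}<0}A_i'^{-\ep'_{ij}}$ differ by an element of $M$. The latter holds because their mutable parts coincide by Step 1, up to the factors $m_i$, and their frozen parts lie in $M$. Dividing the two exchange relations, and using $A'_j/A_j\in M$, shows that $A'^{\,\mathrm{new}}_j/A^{\mathrm{new}}_j\in M$.

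\textbf{Step 3: condition (4) after mutation.} This is the main obstacle. We must show the new exchange ratios agree, i.e.\ that $y$-variables transform the same way in both seeds. I would first prove the standard tropical-type formula: for $k\neq j$ mutable,
$$y_k^{\mathrm{new}}=y_k\,y_j^{[\ep_{jk}]_+}(1+y_j)^{-\ep_{jk}}, \qquad y_j^{\mathrm{new}}=y_j^{-1}.$$
I would verify it directly from the matrix mutation rule and the exchange relation, splitting into cases by the signs of $\ep_{jk}$ and $\ep_{ij}$ as in the definition of $E'$. The bookkeeping is routine but sign-sensitive, since the paper uses the column convention of Remark~\ref{rem:rowvscolumnmutation}. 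The formula depends only on the $y$'s and on the mutable entries $\ep_{jk}$, and both agree for the two seeds by (4) and Step 1. Hence $y_k^{\mathrm{new}}=y'^{\,\mathrm{new}}_{\phi(k)}$ for all mutable $k$, which completes the proof.
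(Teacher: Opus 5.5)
Your proposal is correct. Step 1 shows the mutable blocks of the exchange matrices agree, using algebraic independence of the extended cluster. Step 2 shows the exchange binomials $P_j(1+y_j)$ agree modulo the frozen group. Step 3 uses the rule $y_k\mapsto y_k\,y_j^{[\ep_{jk}]_+}(1+y_j)^{-\ep_{jk}}$, $y_j\mapsto y_j^{-1}$, which does hold in the paper's column convention; together these verify all four conditions. The paper states this lemma without proof, attributing it to \citep{Fra16}, and your argument follows the standard approach used there.
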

	\begin{cor}
		Quasi-equivalent seeds yield the same cluster algebra.
	\end{cor}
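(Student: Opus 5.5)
The plan is to deduce the corollary from the preceding lemma by induction on the length of mutation sequences, and then to compare the generating sets of the two algebras directly.

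\textbf{Step 1: mutation sequences correspond.} Let $\mathbf{s}$ and $\mathbf{s}'$ be quasi-equivalent via $\phi$. Since mutation preserves the frozen subset, the bijection $\phi$ from one quasi-equivalence still sends mutable indices to mutable indices after mutating. Applying the lemma repeatedly, any sequence $\mu_{j_r}\cdots\mu_{j_1}(\mathbf{s})$ is quasi-equivalent to $\mu_{\phi(j_r)}\cdots\mu_{\phi(j_1)}(\mathbf{s}')$. The reverse direction uses $\phi^{-1}$. Quasi-equivalence is symmetric: conditions (2) and (3) are symmetric, and (4) is an equality. So the seeds mutation-equivalent to $\mathbf{s}$ and to $\mathbf{s}'$ are paired up, with each pair quasi-equivalent.

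\textbf{Step 2: frozen parts agree.} By condition (2), the frozen variables of $\mathbf{s}$ and $\mathbf{s}'$ generate the same multiplicative group $M\subset K^\times$. The subalgebra $\C[A_j^{\pm1}]_{j\in J_{\mathrm{fr}}}$ is the group algebra of $M$, viewed as a subalgebra of $K$. It is therefore equal to $\C[A'^{\pm1}_{j}]_{j\in J'_{\mathrm{fr}}}$. Mutation does not change frozen variables, so this common ring $R$ is the frozen Laurent ring for every seed in both mutation classes.

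\textbf{Step 3: generators agree.} Take a cluster variable $A_{j,\mathbf{t}}$ with $j$ mutable and $\mathbf{t}\sim\mathbf{s}$. By Step 1 it corresponds to $A'_{\phi(j),\mathbf{t}'}$ in the paired seed. By condition (3), $A_{j,\mathbf{t}}=m\,A'_{\phi(j),\mathbf{t}'}$ for some Laurent monomial $m$ in the frozen variables, so $m\in R$. Hence $A_{j,\mathbf{t}}\in\mathcal{A}(\mathbf{s}')$. Since $\mathcal{A}(\mathbf{s})$ is generated over $R$ by such variables, this gives $\mathcal{A}(\mathbf{s})\subseteq\mathcal{A}(\mathbf{s}')$. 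The symmetric argument gives the reverse inclusion, so the two cluster algebras are equal.

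\textbf{Main obstacle.} The only delicate point is that the statement really concerns $\mathcal{A}(\mathbf{s}_0)=\mathcal{A}(\mathbf{s}_0,J_{\mathrm{fr}})$, where all frozen variables are inverted. For $\mathcal{A}(\mathbf{s}_0,\mathrm{inv})$ with $\mathrm{inv}\neq J_{\mathrm{fr}}$, the monomials in condition (3) may have negative exponents that are not available, and the argument in Step 3 fails. I would note this restriction explicitly.

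The same argument, applied to Laurent rings over corresponding clusters, also shows that the upper cluster algebras $\mathcal{U}(\mathbf{s})$ and $\mathcal{U}(\mathbf{s}')$ coincide.
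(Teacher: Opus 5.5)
Your proof is correct and is the argument the paper leaves implicit: the corollary is stated without proof as a direct consequence of the preceding lemma, by iterating that lemma along mutation sequences and then using conditions (2) and (3), together with the fact that all frozen variables are inverted in $\mathcal{A}(\mathbf{s})=\mathcal{A}(\mathbf{s},J_{\mathrm{fr}})$. Your caveat is accurate and worth keeping: the argument, and in general the statement, fails for $\mathcal{A}(\mathbf{s},\mathrm{inv})$ with $\mathrm{inv}\subsetneq J_{\mathrm{fr}}$, since a frozen variable can be replaced by its inverse without breaking quasi-equivalence.
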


	\subsection{Operations on seeds}
	Given a seed $\mathbf{s}$, its \emph{freezing} at $i\in J_{\mathrm{uf}}$, is the seed $\mathbf{s}' = \bigl((A_j)_{j\in J}, J_{\mathrm{fr}} \cup \{i\},B,D\bigr)$.
	\begin{lem}[\protect{\citep[Proposition 3.1]{Mul13}}]
		If $\mathbf{s}'$ is obtained from $\mathbf{s}$ by freezing at $i\in J$,  we have
		$$\cA(\mathbf{s}') \subseteq \cA(\mathbf{s}) [A_{i,\mathbf{s}}^{-1}]\subseteq \mathcal{U}(\mathbf{s}) [A_{i,\mathbf{s}}^{-1}]\subseteq \mathcal{U}(\mathbf{s}'),$$
		as subalgebras of the ambient field $K$.
	\end{lem}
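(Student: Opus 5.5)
We prove the three inclusions in turn, working inside the common ambient field $K$. Throughout, write $\mathbf{s}'$ for the freezing of $\mathbf{s}$ at $i$, and note that $\mathbf{s}$ and $\mathbf{s}'$ have the same extended cluster and the same exchange matrix. Mutations of $\mathbf{s}'$ are allowed only at indices in $J_{\mathrm{uf}}\setminus\{i\}$. The key observation is then: if a sequence $\underline{k}$ avoids $i$, the matrix mutation formula depends only on the entries of $E$, not on which indices are declared frozen. Hence $\mu_{\underline{k}}(\mathbf{s}')$ is exactly the freezing of $\mu_{\underline{k}}(\mathbf{s})$ at $i$, with the same variables, and $A_{i}$ is never mutated.

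\textbf{First inclusion.} The generators of $\cA(\mathbf{s}')$ are of three kinds. The variables appearing in seeds $\mu_{\underline{k}}(\mathbf{s}')$ are variables of seeds $\mu_{\underline{k}}(\mathbf{s})\sim\mathbf{s}$, so they lie in $\cA(\mathbf{s})$. The inverses of the old frozen variables lie in $\cA(\mathbf{s})$ by definition. Finally, $A_i^{-1}$ is adjoined on the right-hand side, where $A_i = A_{i,\mathbf{s}}$ is the initial variable. This proves $\cA(\mathbf{s}')\subseteq \cA(\mathbf{s})[A_{i,\mathbf{s}}^{-1}]$.

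\textbf{Second inclusion.} This follows by inverting $A_{i,\mathbf{s}}$ in the Laurent phenomenon $\cA(\mathbf{s})\subseteq\mathcal{U}(\mathbf{s})$. The statement $\mathcal{A}(\mathbf{s}_0,\mathrm{inv})\subseteq\mathcal{U}(\mathbf{s}_0,\mathrm{inv})$ with $\mathrm{inv}=J_{\mathrm{fr}}$ is recorded above.

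\textbf{Third inclusion.} Each Laurent ring over a cluster of $\mathbf{s}$ embeds in a Laurent ring over a cluster of $\mathbf{s}'$, after the localisation.
\begin{itemize}
\item For every $\mathbf{t}\sim\mathbf{s}'$, write $\mathbf{t}=\mu_{\underline{k}}(\mathbf{s}')$ with $\underline{k}$ avoiding $i$.
\item The corresponding seed $\tilde{\mathbf{t}}=\mu_{\underline{k}}(\mathbf{s})$ is mutation equivalent to $\mathbf{s}$ and has the same extended cluster, which contains $A_{i,\mathbf{s}}$.
\item Therefore
$$\mathcal{U}(\mathbf{s})[A_{i,\mathbf{s}}^{-1}]\subseteq \C[A_{j,\tilde{\mathbf{t}}}^{\pm1}]_{j\in J}[A_{i,\mathbf{s}}^{-1}] = \C[A_{j,\mathbf{t}}^{\pm1}]_{j\in J}.$$
\end{itemize}
The subtle point is that localisation does not commute with infinite intersections in general. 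This is harmless here: $\mathcal{U}(\mathbf{s})$ sits in each Laurent ring $\C[A_{j,\tilde{\mathbf{t}}}^{\pm1}]$, and since $A_{i,\mathbf{s}}$ is already invertible in each of them, any element $x\,A_{i,\mathbf{s}}^{-m}$ with $x\in\mathcal{U}(\mathbf{s})$ lies in each $\C[A_{j,\mathbf{t}}^{\pm1}]$. Intersecting over all $\mathbf{t}\sim\mathbf{s}'$ gives $\mathcal{U}(\mathbf{s})[A_{i,\mathbf{s}}^{-1}]\subseteq\mathcal{U}(\mathbf{s}')$.

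The main obstacle is the bookkeeping in the third step: matching the intersection of $\mathcal{U}(\mathbf{s}')$ over seeds $\mathbf{t}\sim\mathbf{s}'$ with a sub-collection of the seeds defining $\mathcal{U}(\mathbf{s})$. This matching is exactly what the observation about sequences avoiding $i$ provides. One convention point also needs checking: the upper cluster algebra here is the fully Laurent version in all variables, frozen ones included, which is what makes the containment in the Laurent rings of $\mathbf{s}'$ immediate.
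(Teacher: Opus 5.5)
Your proof is correct and is essentially the standard argument of \citep[Proposition 3.1]{Mul13}, which the paper quotes without reproducing. Mutation sequences avoiding $i$ commute with freezing, so every seed $\mathbf{t}\sim\mathbf{s}'$ is the freezing of a seed $\tilde{\mathbf{t}}\sim\mathbf{s}$ with the same extended cluster containing $A_{i,\mathbf{s}}$; this gives the first and third inclusions, the middle one is the Laurent phenomenon $\mathcal{A}(\mathbf{s})\subseteq\mathcal{U}(\mathbf{s})$ localised at $A_{i,\mathbf{s}}$, and you rightly note that only the always-valid direction of "localisation versus intersection" is needed, which works because the paper's $\mathcal{U}$ inverts frozen variables.
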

	
	Given a seed $\mathbf{s}$, we can \emph{delete} an index $i\in J$ by considering the same seed supported on the index set $J\setminus \{i\}$.
	That is, we consider the seed $$\mathbf{s}^\dagger = \bigl((A_j)_{j\in J\setminus\{i\}}, J_{\mathrm{fr}} \setminus \{i\},(\ep_{jk})_{j,k\neq i},(d_j)_{j\neq i}\bigr).$$
	
	\begin{lem}[\protect{\citep[Proposition 3.7]{Mul13}}]
		If $\mathbf{s}^\dagger$ is obtained from $\mathbf{s}$ by deleting a frozen vertex $i\in J_{\mathrm{fr}}$, then we have natural isomorphisms
		$$\cA(\mathbf{s})/(A_{i} -1) \cong \cA(\mathbf{s}^\dagger) \qquad \text{and} \qquad \mathcal{U}(\mathbf{s})/(A_{i} -1) \cong \mathcal{U}(\mathbf{s}^\dagger).$$
	\end{lem}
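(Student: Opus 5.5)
The map $\pi\colon \cA(\mathbf{s})\to\cA(\mathbf{s}^\dagger)$ is obtained by specialising $A_i\mapsto 1$, and it is best to work at the level of Laurent polynomial rings before restricting. For any seed $\mathbf{t}\sim\mathbf{s}$, consider the specialisation
$$\pi_{\mathbf{t}}\colon \C[A_{j,\mathbf{t}}^{\pm1}]_{j\in J_{\mathrm{uf}}}[A_j]_{j\in J_{\mathrm{fr}}}[A_j^{-1}]_{j\in \mathrm{inv}}\to \C[A_{j,\mathbf{t}}^{\pm1}]_{j\in J_{\mathrm{uf}}}[A_j]_{j\in J_{\mathrm{fr}}\setminus\{i\}}[A_j^{-1}]_{j\in\mathrm{inv}\setminus\{i\}},$$
where $i\in\mathrm{inv}$ since the lemma is stated for $\mathrm{inv}=J_{\mathrm{fr}}$. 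Its kernel is exactly the ideal generated by $A_i-1$.

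The first step is to check that mutation commutes with deletion of a frozen index. Because $i$ is frozen, it is never a mutation direction. Hence any mutation sequence $\mu_{k_1}\cdots\mu_{k_r}$ from $\mathbf{s}$ has a counterpart from $\mathbf{s}^\dagger$. The exchange matrix of $\mu_k(\mathbf{s})$ restricted to $J\setminus\{i\}$ equals $\mu_k$ of the restricted matrix, since the mutation rule for $\ep'_{jl}$ only involves the entries $\ep_{jl},\ep_{jk},\ep_{kl}$ with $j,l\neq i$.

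Applying $A_i\mapsto 1$ to the exchange relation \eqref{defn:mutation} turns each monomial into the corresponding monomial for $\mathbf{s}^\dagger$. By induction on the length of the mutation sequence, $\pi(A_{j,\mathbf{t}})=A_{j,\mathbf{t}^\dagger}$ for every $j\neq i$. The induction needs the Laurent phenomenon, so that $A_{j,\mathbf{t}}$ lies in $\C[A^{\pm1}_{\mathbf{s}}]$ and the specialisation of rational functions is well defined. It follows that $\pi$ maps $\cA(\mathbf{s})$ onto $\cA(\mathbf{s}^\dagger)$, since generators go to generators. The same argument shows that $\pi$ maps $\mathcal{U}(\mathbf{s})$ into each Laurent ring of $\mathbf{s}^\dagger$, hence into $\mathcal{U}(\mathbf{s}^\dagger)$.

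For the kernel on the cluster algebra side, I would use a grading trick: $A_i$ is an invertible generator, and the specialisation factors through the quotient by $A_i-1$. Concretely, take $f\in\cA(\mathbf{s})$ with $\pi(f)=0$. In the Laurent ring we have $f=(A_i-1)g$. We must show that $g\in\cA(\mathbf{s})$, which I do not expect to hold in general. So instead of computing the kernel directly, I would construct an inverse map $\cA(\mathbf{s}^\dagger)\to\cA(\mathbf{s})/(A_i-1)$ by lifting each cluster variable of $\mathbf{s}^\dagger$ to the corresponding one of $\mathbf{s}$.

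Well-definedness of this lift is the main obstacle. One has to show that relations among the variables of $\mathbf{s}^\dagger$ lift to relations modulo $A_i-1$. I would try to handle this with a grading by the frozen index: assign degrees so that $A_i$ is homogeneous and the exchange binomials become homogeneous after rescaling by powers of $A_i$. Then every element of $\cA(\mathbf{s})$ decomposes as $\sum_n A_i^n f_n$ with the $f_n$ generated without $A_i$, and the ideal $(A_i-1)$ identifies these pieces.

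For $\mathcal{U}$, surjectivity is the delicate direction. I would lift $u\in\mathcal{U}(\mathbf{s}^\dagger)$ by multiplying each monomial by a suitable power of $A_i$ so that the exchange relations are homogeneous. This gives a well-defined element of each Laurent ring of $\mathbf{s}$, and hence of $\mathcal{U}(\mathbf{s})$. Injectivity then follows from the computation of the kernel in each Laurent ring.
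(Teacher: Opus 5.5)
The paper gives no argument of its own here; it simply cites Muller. So your proposal has to stand on its own, and as written the two hard directions are not proved.

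\textbf{What is correct.} Specialisation commutes with mutation. Hence $\pi$ maps $\cA(\mathbf{s})$ onto $\cA(\mathbf{s}^\dagger)$ and $\mathcal{U}(\mathbf{s})$ into $\mathcal{U}(\mathbf{s}^\dagger)$; the latter uses that $\pi_{\mathbf{s}}$ and $\pi_{\mathbf{t}}$ agree on $\mathcal{L}_{\mathbf{s}}\cap\mathcal{L}_{\mathbf{t}}$. Injectivity on $\mathcal{U}$ is also right: if $\pi(f)=0$, then $g=f/(A_i-1)$ lies in every Laurent ring, so $g\in\mathcal{U}(\mathbf{s})$.

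\textbf{The $\cA$ side.} Your plan is self-defeating. The natural map $\cA(\mathbf{s})/(A_i-1)\to\cA(\mathbf{s}^\dagger)$ is an isomorphism \emph{exactly} when every $f=(A_i-1)g\in\cA(\mathbf{s})$ has $g\in\cA(\mathbf{s})$; by the previous argument, $g\in\mathcal{U}(\mathbf{s})$ holds automatically. So if you expect this to fail, the statement fails. An inverse built by lifting cluster variables is well defined only if it holds, so it does not sidestep the issue. When $\cA(\mathbf{s})=\mathcal{U}(\mathbf{s})$ (the situation in which the paper applies the lemma), your Laurent-ring computation already gives injectivity, but you have no argument in general.

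\textbf{The grading trick is where the argument breaks.} A grading with $\deg A_i\neq 0$ making all exchange binomials homogeneous is a vector $d\in\mathbb{Q}^J$ with $d_i\neq 0$ and $\sum_j\ep_{jk}d_j=0$ for every mutable $k$. Equivalently, the $i$-th row of $\widetilde{E}$ must lie in the rational span of the other rows.

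When such a grading exists, your sketch can be completed:
\begin{itemize}
\item $\cA(\mathbf{s})=\cA(\mathbf{s})_0[A_i^{\pm1}]$;
\item $\pi$ is injective on each graded piece of $\mathcal{L}_{\mathbf{s}}$, since a homogeneous element of $(A_i-1)\mathcal{L}_{\mathbf{s}}$ is zero;
\item degree-zero homogenisation gives seed-independent lifts for $\mathcal{U}$.
\end{itemize}
But the grading need not exist. ``Rescaling the binomials by powers of $A_i$'' changes the exchange relations, and hence the algebra.

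\textbf{A minimal counterexample to your method.} Take $J=\{k,i\}$ with $\ep_{ik}=1$. Then $A_kA_k'=1+A_i$, which forces $d_i=0$. Here $\mathcal{U}(\mathbf{s}^\dagger)=\C[A_k^{\pm1}]$, with $A_kA_k'=2$. The element $u=A_k^{-1}$ has no lift of the kind you propose: $A_i^nA_k^{-1}=A_i^nA_k'/(1+A_i)\notin\C[(A_k')^{\pm1},A_i^{\pm1}]$. The actual lift is $A_k'/2=(1+A_i)/(2A_k)$, which is not a monomial rescaling.

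So you are missing the key ingredient: a way to control the kernel on $\cA$ and to produce coherent lifts on $\mathcal{U}$ when row $i$ is not a rational combination of the other rows. As it stands, your proof covers only that special case.
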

	

	A variety $X$ over $\C$ has \emph{a cluster structure} if its coordinate ring $\C[X]$ is the cluster algebra $\cA(\seed,\mathrm{inv})$ for some initial seed $\seed$ and inversion set $\mathrm{inv}\subseteq J_{\mathrm{fr}}$. It has an \emph{upper cluster structure} if $\C[X] \cong \mathcal{U}(\seed, \mathrm{inv})$.

	\subsection{Compatible Poisson brackets}
	
	\begin{defn}
		A \emph{Poisson bracket} $\{\cdot,\cdot\}:\mathcal{A} \times \mathcal{A} \to \mathcal{A}$ on a $\C$-algebra $\mathcal{A}$ is a Lie bracket which is a $\C$-derivation in both entries.
	\end{defn}
	Poisson brackets readily extend to the field of fractions $\mathrm{Frac}(\mathcal{A})$.
	A \emph{Poisson structure} on an algebraic variety $X$ will be a Poisson bracket on its ring of regular functions or equivalently its field of rational functions. This is equivalent to the data of a \emph{Poisson bivector field} $\pi\in \Gamma(X,\largewedge^2TX)$ satisfying the Schouten bracket condition $[\pi,\pi]=0$.
	Elements $f_1,\dots,f_n\in \mathcal{A}$ are \emph{log-canonical} with respect to the bracket $\{\cdot,\cdot\}$ if $\{f_i,f_j\} = \omega_{ij}f_if_j$, for some structure constants $\omega_{ij}\in \Z$.
	
	\begin{defn}[\citep{GSV03}]
		Given a cluster algebra $\mathcal{A} = \mathcal{A}(\seed_0)$, a Poisson structure on the ambient field $\mathcal{F}$ is \emph{compatible} if and only if all extended clusters are log-canonical.
	\end{defn}
	
	\begin{lem}[\protect{\citep[Theorem 1.4, Equation (1.5)]{GSV03}, see also \citep[Definition 3.1]{BZ05}}]\label{lem:poissoncompatibilitysufficent}\leavevmode \\
		Let $\Omega = (\omega_{ij})$ be the Poisson coefficient matrix of a log-canonical initial cluster with a (tall) extended exchange matrix $\widetilde{E} = E [I_{{\mathrm{uf}}} \; 0]^T$ given by the mutable columns of $E$. Then the Poisson structure is compatible if and only if $\widetilde{E}^T \Omega = [\Delta \;\; 0]$, for $\Delta$ a diagonal matrix.
	\end{lem}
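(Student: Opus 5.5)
We must prove the compatibility criterion of Lemma \ref{lem:poissoncompatibilitysufficent}: with $\Omega$ the Poisson coefficient matrix of the log-canonical initial extended cluster, the bracket is compatible (every extended cluster log-canonical) if and only if $\widetilde{E}^T\Omega=[\Delta\;\;0]$ with $\Delta$ diagonal. The plan is to work in logarithmic coordinates on the initial torus. Set $y_k=\prod_i A_i^{\ep_{ik}}$ for the exchange ratio at a mutable index $k$. The key identity, from the derivation property, is
$$\{\log y_k,\log A_j\}=\sum_i \ep_{ik}\,\omega_{ij}=(\widetilde{E}^T\Omega)_{kj}.$$
So the matrix condition says that $\{y_k,A_j\}=0$ for $j\neq k$, and $\{y_k,A_k\}=\delta_k y_kA_k$.

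\emph{Sufficiency.} First show that one mutation $\mu_k$ preserves log-canonicity. Write $A_k'=A_k^{-1}P^-(1+y_k)$, where $P^-=\prod_{\ep_{ik}<0}A_i^{-\ep_{ik}}$. For $j\neq k$, the factors $A_k^{-1}P^-$ and $A_j$ are log-canonical, and $1+y_k$ Poisson-commutes with $A_j$. Hence $\{A_k',A_j\}$ is a constant multiple of $A_k'A_j$. The remaining brackets among the $A_i$, $i\neq k$, do not change. Next, record the new coefficient matrix: $\Omega'=M^T\Omega M$, where $M$ is the usual linear change of logarithmic coordinates for the monomial part (identity except in column $k$). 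Then verify that $\mu_k(\widetilde{E})^T\Omega'=[\Delta\;\;0]$ with the \emph{same} $\Delta$. This uses the standard formula $\mu_k(\widetilde E)=J_kEJ'_k$, a piecewise-linear expression built from the sign choice, so the new matrix relation follows by conjugation. Induction along mutation sequences then gives compatibility for all seeds.

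\emph{Necessity.} Assume every cluster is log-canonical. Mutate at a mutable $k$ and require that $A_k'$ be log-canonical with each $A_j$, $j\neq k$. Expanding $\{A_k',A_j\}$ produces a term proportional to $\frac{y_k}{1+y_k}\{\log y_k,\log A_j\}$, plus a constant. Since $y_k/(1+y_k)$ is not constant (the seed is a transcendence basis and column $k$ is nonzero, or else the claim is trivial), log-canonicity forces $\{\log y_k,\log A_j\}=0$. This gives the off-diagonal vanishing, including on the frozen block.

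The main obstacle is the matrix bookkeeping in the sufficiency step, namely showing that $\Delta$ is unchanged under mutation. I would handle it by choosing the sign convention so that $\mu_k(\widetilde E)=J\widetilde E J''$ and $\Omega'=J^T\Omega J$ use compatible involutions, and checking the identity entrywise only in the row and column indexed by $k$. The degenerate case of a zero column also needs a separate remark. Otherwise one may simply cite \citep[Theorem 1.4]{GSV03}.
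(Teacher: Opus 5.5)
Your necessity argument is correct and complete. The one-step identity $\{\log A_k',\log A_j\}=c+\frac{y_k}{1+y_k}(\widetilde E^T\Omega)_{kj}$ forces every off-diagonal entry to vanish, frozen columns included. In the sufficiency direction, two of your steps are also right: the log-canonicity of the mutated cluster, and the formula $\Omega'=J^T\Omega J$. The paper itself gives no proof and only cites \citep{GSV03,BZ05}; your route is the standard one, namely mutation-invariance of compatible pairs as in \citep{BZ05}.

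The gap is exactly the step you flag, and the justification you give for it does not work. Neither ``follows by conjugation'' nor a choice of compatible sign conventions is enough. Fix $\eta\in\{\pm1\}$ and define two matrices:
\begin{itemize}
\item $J_\eta$ (size $|J|$) is the identity with column $k$ replaced by $-1$ at $(k,k)$ and $[-\eta\ep_{ik}]_+$ at $(i,k)$;
\item $F_\eta$ (size $|J_{\mathrm{uf}}|$) is the identity with row $k$ replaced by $-1$ at $(k,k)$ and $[\eta\ep_{kj}]_+$ at $(k,j)$.
\end{itemize}
Then $\mu_k(\widetilde E)=J_\eta\widetilde E F_\eta$ and $\Omega'=J_\eta^T\Omega J_\eta$. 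Using $J_\eta^2=I$ you get $\mu_k(\widetilde E)^T\Omega'=F_\eta^T[\Delta\;\;0]J_\eta$. All entries of this matrix are as desired except, for mutable $i\neq k$, the $(i,k)$ entry
\[
\delta_i[-\eta\ep_{ik}]_+-\delta_k[\eta\ep_{ki}]_+ ,
\]
which is nonzero for a general diagonal $\Delta$. So the invariance is not a formal identity; it must use the hypothesis a second time.

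The missing idea is that the hypothesis forces $\Delta$ to skew-symmetrise the mutable block. The matrix $\widetilde E^T\Omega\widetilde E$ is skew-symmetric because $\Omega$ is. It also equals $[\Delta\;\;0]\widetilde E$, whose $(i,k)$ entry is $\delta_i\ep_{ik}$. Hence $\delta_i\ep_{ik}=-\delta_k\ep_{ki}$ for all $i,k\in J_{\mathrm{uf}}$; in your language, $\{\log y_i,\log y_k\}=\delta_i\ep_{ik}$ must be antisymmetric.

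Since $\ep_{ik}$ and $\ep_{ki}$ have opposite signs, $\delta_i$ and $\delta_k$ then have the same sign (or both vanish) whenever $\ep_{ik}\neq0$. This gives $\delta_i[-\eta\ep_{ik}]_+=\delta_k[\eta\ep_{ki}]_+$ with no positivity assumption on $\Delta$. That point matters here, because the paper's own computation gives $\widetilde E^T\Omega=-2[I_{\mathrm{uf}}\;\;0]\varDelta$, which is negative. With this identity the mutated seed satisfies the relation with the same $\Delta$, and your induction closes.
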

	While a cluster $\mathcal{X}$-variety possesses a natural cluster--Poisson structure, this might in principle be a different Poisson structure to other known Poisson structures. See Remark 1.11 in \citep{SW21}. The vector space of Poisson structures compatible with a given cluster structure is described in \citep[Theorem 1.4]{GSV03}.
	
	\subsection{Morphisms between cluster algebras}
	
	\begin{defn}[\citep{Fra16}]
		A \emph{quasi-cluster homomorphism} is an algebra homomorphism $ \psi: \cA(\seed) \to \cA(\seed')$ between cluster algebras of the same rank (i.e. $\seed$ and $\seed'$ have same number of mutable variables) such that for every $j\in J_{\mathrm{uf}}$, $\psi(A_j)/A'_j$ is a monomial in frozens variables, the image of a frozen variable is a monomial of frozen variables and exchange ratios are preserved.
	\end{defn}
	In other words, quasi-equivalent seeds arise from quasi-cluster homomorphisms between cluster algebras with a quasi-cluster inverse.

	\begin{defn}[\protect{\citep[Definition 4.1]{BY25}}]
		Suppose we have an embedding $\sigma:K\to K'$, cluster algebras $\cA\subset K$, $\cA'\subset K'$ with initial seeds $\mathbf{s}$, $\mathbf{s}'$ respectively.     
		If we have a subset $J_{\mathrm{sup}}\subseteq J$ containing the mutable indices, $J_{\mathrm{uf}} \subseteq J_{\mathrm{sup}}$, an injection $\iota: J_{\mathrm{sup}}\hookrightarrow J'$ such that $\iota(J_{\mathrm{uf}}) \subseteq J_{\mathrm{uf}}'$, and a subset $S'\subseteq J'\setminus \iota(J_{\mathrm{sup}})$, which satisfy the following three properties:
		\begin{enumerate}[(1)]
			\item for any $i\in J_{\mathrm{sup}}$, we have $\sigma(A_i)=A'_{\iota(i)}M_i$, where $M_i$ is a Laurent monomial in the $A'_j$, for $j\in S'$;
			\item for any $i\in J_{\mathrm{uf}}$, the exchange ratios satisfy $\sigma(X_i) = X'_{\iota(i)}$, where
			$$X_i = \prod_{j\in J}A_j^{\ep_{ij}} \quad \text{and} \quad X'_{\iota(i)} = \prod_{j\in J'}(A'_j)^{\ep'_{\iota(i)j}};$$
			\item for any $i\in J\setminus J_{\mathrm{sup}}$, $\sigma(A_i)$ is a Laurent monomial in the $A'_j$, for $j\in S'$;
		\end{enumerate}

		Then $\sigma: K\hookrightarrow K'$ is called \emph{rational quasi-cluster}, and is denoted by
		\begin{equation*}
			\begin{tikzcd}
				\sigma: (\cA,\mathbf{s}) \ar[r,dashed,"(J_{\mathrm{sup}} { , }S')"] & (\cA',\mathbf{s}').
			\end{tikzcd}
		\end{equation*}
	\end{defn}
	
	In cases when $J_{\mathrm{sup}} = J_{\mathrm{uf}}$ and $S'=J'_{\mathrm{fr}}$, and $|J_{\mathrm{uf}}|=|J_{\mathrm{uf}}'|$, this notion coincides with quasi-cluster morphisms from \citep{Fra16}.
	The advantage of rational quasi-cluster maps is that neither ranks (number of mutable variables), nor dimensions are necessarily preserved.

	\section{Bott--Samelson (sub-)varieties and charts}\label{sec:BSvarieties}

	\subsection{Notation and conventions}
	
	Let $G$ be a simple algebraic group. We fix a \emph{pinning}. Fix a pair of opposite Borel subgroups $B$ and $B^-$, their unipotent radicals $U$ and $U^-$ radicals, and the maximal torus $T=B\cap B^-$. The simple roots $\alpha_i$ are indexed by a set $I$, and the Weyl group $W=N_G(T)/T$ is generated by simple reflections $s_i$ for $i\in I$. For each $i\in I$, we fix isomorphisms $u_i:\mathbb{A}^1\to U_{\alpha_i}$ and $u_{-i}:\mathbb{A}^1\to U_{-\alpha_i}$ so that
	\begin{equation*}
		\begin{pmatrix}
			1 & z \\0 & 1
		\end{pmatrix} \mapsto u_i(z), \qquad \begin{pmatrix}
			x & 0 \\0 & x^{-1}
		\end{pmatrix} \mapsto \alpha_i^\vee(x), \qquad \begin{pmatrix}
			1 & 0 \\z & 1
		\end{pmatrix} \mapsto u_{-i}(z),
	\end{equation*}
	defines a homomorphism $\varphi_i:SL_2\to G$. Our Cartan matrix will be denoted $A=(a_{ij})_{i,j\in I}$ such that $\alpha_i(\alpha_j^\vee(x)) = x^{a_{ij}}$ for any $x\in \mathbb{G}_m$.
	
	We will frequently deal with \emph{subexpressions} of a word, which amount to choosing some positions of the word whose letters we omit.
	In \citep{EL21}, omitted letters in a word of simple reflections were replaced with instances of the identity, so $(s_1,e,s_1)$ was a subexpression of $(s_1,s_2,s_1)$. In \citep{E16}, these positions were marked with blanks, and this example would be denoted $(s_1,\blank,s_1)$.
	
	For what follows, it will be necessary to keep track of not just the letters included in the subexpression, but the letters which were removed. We will denote a subexpression $\gamma$ of $\underline{w} = (i_1,i_2,\dots,i_\ell) \in I^\ell$ by an element of the set $$\Upsilon_{\underline{w}} = \{i_1, -i_1\} \times \{i_2, -i_2\} \times \cdots \times \{i_\ell, -i_\ell\}.$$
	Positive letters in a subexpression will be called \emph{supported}, and negative letters will be \emph{unsupported}.
	For $i\in \pm I$, we define 
	$$s_i^+ = \begin{cases*}
		s_i, \text{ if } i>0,\\
		e, \text{ if } i<0.
	\end{cases*}$$
	Now, for a subexpression $\gamma$ as a sequence of signed letters, we can obtain the sequence $s_\gamma = (s_{i_1}^+,s_{i_2}^+, \dots, s_{i_\ell}^+)$ which corresponds to the notation used in \citep{EL21}.
	For $1\leq k\leq \ell$, we use $\gamma^k$  to denote the product $s^+_{\gamma_1}s^+_{\gamma_2}\cdots s^+_{\gamma_k}$, i.e. the element in $W$ given by the $k$-th partial product of the subexpression.
	
	These signed expressions should not be confused with the various other instances of words in two copies of the same alphabet. In \citep{SW21}, negative letters are used to denote opposite flags in an opposite flag variety and their relative positions. In \citep{GLSB25}, negative letters are used to denote relative positions in the same flag variety, but with a  reversed direction. Our use of negative letters will be to denote different charts in the same variety, as well as one-parameter subgroups given by the negative roots.
	
	We have the standard lift of the simple reflection
	$$\dot{s}_i = u_{i}(-1)u_{-i}(1)u_i(-1) = \varphi_i\begin{pmatrix}
		0 & -1\\1 &0
	\end{pmatrix} \in N_G(T).$$
	For an element $w\in W$ of the Weyl group with reduced expression $w=s_{i_1}s_{i_2}\cdots s_{i_\ell}$, its standard lift $\dot{w} = \dot{s}_{i_1}\dot{s}_{i_2}\cdots \dot{s}_{i_\ell}$ is known to be independent of choice of reduced expression.
	
	If $\underline{x} = (x_1,\dots,x_\ell)$ is an $\ell$-tuple of invertible elements in a ring and $c=(c_1,\dots,c_\ell)^T$ is a column vector of integers, we write $\underline{x}^c = x_1^{c_1}\cdots x_\ell^{c_\ell}$. If $C=(c_{ij})$ is an $\ell\times m$ integer matrix with columns $C_1,\dots,C_m$, we write $\underline{x}^C = (\underline{x}^{C_1},\dots \underline{x}^{C_m})$.

	\subsection{Bott--Samelsons, bricks, cells, and braids}
	We now introduce various subvarieties of the twisted product of flags
	$$G\times^B G \times^B  \cdots \times^B G/B,$$ including Bott--Samelson varieties, brick manifolds, braid varieties, and Bott--Samelson charts.
	Flags in $G/B$ will be denoted by calligraphic letters $\mathcal{B} \in G/B$, or by coset representatives $gB\in G/B$. Two flags $xB, yB$ are in relative position $w\in W$ if and only if $x^{-1}yB \in BwB$, and this will be denoted by $xB \xrightarrow{w}yB$.	
	
	For a $i\in I$, let $P_i$ denote the parabolic subgroup $P_i = B \sqcup Bs_iB \subset G.$ For a word $\underline{w}=({i_1},{i_2},\dots, {i_\ell})\in I^\ell$, consider the product variety $P_{\underline{w}} := P_{i_1}\times P_{i_2} \times \cdots \times P_{i_\ell} \subset G^\ell$ and endow it with a (right) $B^\ell$-action given by 
	\begin{equation}\label{eqn:BSaction}
		(g_1,g_2,\dots,g_\ell) * (b_1,\dots b_\ell) = (g_1b_1,b_1^{-1}g_2b_2,\dots,b_{l-1}^{-1}g_\ell b_\ell).
	\end{equation}
	
	The \emph{Bott--Samelson variety} associated to $\underline{w}$ is the quotient space
	$Z_{\underline{w}} := P_{\underline{w}}/B^\ell$.
	It is endowed with the structure of a projective variety via the $B^\ell$-equivariant embedding
	$$Z_{\underline{w}} \hookrightarrow (G/B)^\ell,\quad [g_1,g_2,\dots,g_\ell] \mapsto \bigl(g_1B, g_1g_2 B, \dots, g_1{\cdots}\,g_\ell B\bigr),$$
	so we can equivalently view the Bott--Samelson variety as comprised of sequences of flags,
	$$Z_{\underline{w}} = \left\{(\mathcal{B}_0,\mathcal{B}_1,\dots,\mathcal{B}_\ell) \in (G/B)^{\ell+1} \;\middle|\; \mathcal{B}_0 = B \text{, and }   \mathcal{B}_{j-1} = \mathcal{B}_{j} \text{ or } \mathcal{B}_{j-1} \xrightarrow{s_{i_j}} \mathcal{B}_{j} \right\}.$$
	Note that each flag $\mathcal{B}_j$ is chosen out of a $\P^1$ of possible flags which differ from $\mathcal{B}_j$ in the $i_j$ component. This gives the Bott--Samelson variety the structure of an iterated $\P^1$-fibration.
	The multiplication map $m: Z_{\underline{w}} \to G/B$ is given by $[g_1,g_2,\dots,g_\ell] \mapsto g_1g_2\cdots g_\ell B$. For sequences of flags this is projection onto the last entry. T
	
	The \emph{brick manifold} \citep{E16} is the closed subvariety $m^{-1}(\delta(\underline{w}) B)$, consisting of those sequences whose final flag differs from the initial flag by the Demazure product $\delta(\underline{w})$. When $\delta(\underline{w})=w_0$ is the longest element, the brick manifold is equal to $m^{-1}(B^-w_0B/B) \subset Z_{\underline{w}}$.
	
	The \emph{Bott--Samelson cell} is the open subvariety of ${Z_{\underline{w}}}$ of distinct consecutive flags:
	\begin{align*}
		\mathring{Z}_{\underline{w}} &= (Bs_{i_1}B) \times^B \cdots \times^B (Bs_{i_\ell}B)/B,\\ &= \left\{(\mathcal{B}_0,\mathcal{B}_1,\dots,\mathcal{B}_\ell) \in (G/B)^{\ell+1} \;\middle|\; \mathcal{B}_0 = B, \mathcal{B}_{j-1} \xrightarrow{s_{i_j}} \mathcal{B}_{j} \right\}.
	\end{align*}
	The unique factorisation $Bs_iB = U_{\alpha_i} \dot s_i B$, where $U_{\alpha_i}$ is the $1$-parameter subgroup associated to the simple root $\alpha$, can be used to iteratively trivialise the $\proj^1$-fibration from the left to right.
	This gives an isomorphism to an affine space of dimension $\ell$. When $\underline{w}$ is a reduced word, the $\mathring{Z}_{\underline{w}}$ is isomorphic to the Schubert cell $\mathring{X_w}$ where $w= \delta(\underline{w})$.
	
	The intersection of the brick manifold with the Bott--Samelson cell $\mathring{Z}_{\underline{w}} \cap m^{-1}(\delta(\underline{w})B)$ is called the \emph{braid variety}. In \citep{CGGS24,CGGLSS25}, these are denoted by $X(\beta)$ where $\beta$ is the class positive braid words for $\underline{w}$. This is because braid moves on words give natural isomorphisms between the Bott--Samelson cells which respect the multiplication map, and so words related by braid moves give isomorphic varieties.
	It is shown in \citep{CGGLSS25} that letters which strictly increase the Demazure product can be added to the word without changing the isomorphism class of the variety. When working in finite type, there are (usually) no issues assuming that $\delta(\underline{w}) = w_0$.
	
	For $v\in W$ such that $v\leq \delta(\underline{w})$, we define the \emph{open Richardson variety} $\mathring{Z}_{\underline{w}}^{v} = \mathring{Z}_{\underline{w}}\cap m^{-1}(B^-vB/B)$ inside the twisted product of flags. When $\underline{w}$ is a reduced word, the isomorphism of the Bott--Samelson cell to the Schubert cell $B_wB/B$ gives an isomorphism of the $\mathring{Z}_{\underline{w}}^{v}$ with the usual open Richardson varieties $\mathring{R}_w^v\subseteq G/B$ in the flag variety.
	
	When $\delta(\underline{w})= w_0$, we have braid variety as an open Richardson variety
	$$X(\underline{w}) = \mathring{Z}_{\underline{w}}^{w_0} =  \left\{(\mathcal{B}_0,\mathcal{B}_1,\dots,\mathcal{B}_\ell) \in (G/B)^{\ell+1} \;\middle|\; \mathcal{B}_0 = B, \mathcal{B}_{j-1} \xrightarrow{s_{i_j}} \mathcal{B}_{j},  \mathcal{B}_\ell = w_0B  \right\}.$$

	\begin{rem}\label{rem:braidvarisopenpartofBScell}
		The above describes the braid variety as closed variety inside the Bott--Samelson cell corresponding to the same word. There is also an isomorphism (see \citep[Lemma 3.16]{CGGLSS25}) between an open subset of a Bott--Samelson cell and a braid variety of a longer word. Specifically, $X((\underline{w_0},\underline{w}))$ is isomorphic to $\mathring{Z}_{\underline{w}}\cap m^{-1}({B^-B/B}) = \mathring{Z}_{\underline{w}}^e.$
		The complement of this open set is the union of $\mathring{Z}_{\underline{w}}^{s_i} = \mathring{Z}_{\underline{w}}\cap m^{-1}({B^-s_iB/B})$ where the $s_i$ range over all simple reflections. Equivalently, the open set is the non-vanishing of the generalised minors \citep[\S1.4]{FZ99} $\Delta_{\omega_i,\omega_i}$ evaluated on the the final flag $\cB_\ell$.
		
		This isomorphism is used in \citep{CGGLSS25} to show that their cluster structure on braid varieties agrees with that on the Bott--Samelson cell constructed in \citep{SW21}.
	\end{rem}

	\subsection{The standard seed for the Bott--Samelson cell}\label{sec:almagmatedseeds}
	A particular initial seed for the Bott--Samelson cell $\mathring{Z}_{\underline{w}} = \cO^{\underline{w}}$ is ubiquitous in the literature. The quivers appear in \citep{GLS11} as endomorphism algebras of preprojective algebra modules in their additive categorification of Schubert cells. It comes from the \emph{right inductive weave} \citep[Definition 4.5]{CGGLSS25} from for the braid variety $X(\underline{w_0},\underline{w})$, which agrees by \citep[\S4.8]{CGGLSS25} with the initial seed from \citep{SW21} constructed by amalgamation. The cluster torus is the classical Deodhar torus from the Deodhar decomposition of the flag variety, which gives the first seed in \citep{GLSB25}.
	In \citep{LM26}, it is shown that this is also the standard initial seed arising from the procedure in \citep{GY18} for the symmetric Poisson CGL presentation on $\cO^{\underline{w}}$.
	In the case that $\underline{w}$ is a reduced word of the longest Grassmannian element in type $A_n$, this is the \emph{rectangles seed} considered in \citep{RW19,RW25,FWZ25} comprised of the Pl\"ucker coordinates whose young tableaux presentations are given by rectangles. This last example is particularly interesting since Grassmannian elements are \emph{fully commutative} \citep{Stembridge96}, so all choices of reduced words yield this particular cluster.
	
	Here, we describe the construction of the quiver for the seed $\seed(\underline{w})$ on $\mathring{Z}_{\underline{w}}$ by the amalgamation procedure used in \citep{SW21} and \citep{BY25}.
	Given two exchange matrices and a bijection between subsets of their respective frozen vertices, their \emph{amalgamation} is obtained by identifying this subset of frozens.
	
	More precisely, suppose we have exchange matrices $E$ and $E'$ whose vertex sets are $J$ and $J'$ respectively, subsets of frozen vertices $J_1\subseteq J\setminus J_{uf}$ and $J_1'\subseteq J'\setminus J_{uf}'$, and a bijection $\phi:J_1\to J_1'$ such that $d_i=d_{\phi(i)}$ for every $i\in J_1$. Then the amalgamation has vertex set $J'' := J \sqcup J'/\sim_\phi$, unfrozen vertices $J_{uf}'' = J_{uf} \sqcup J_{uf}'$, and is such that $E''= E+E'$, where the matrices have been first extended by zeroes. The condition that the symmetrisers agree at the vertices identified by $\phi$ ensures the new matrix is skew--symmetrisable with the combined symmetriser.
	
	We now explicitly construct the seed $\seed(\underline{w})$ for a word $\underline{w}$ in the roots of a root system with Cartan matrix $A=(a_{ij})$ and symmetriser $D=\mathrm{diag}(d_1,\dots,d_n)$ such that $AD$ is symmetric.
	
	For a simple reflection $i\in I$, define the exchange matrix $E(i)$ as follows:
	Its vertex set is $J(i) =\{j_0 \mid j\in I \setminus \{i\}\} \sqcup\{i_0, i_1\}$ and all vertices are frozen. The exchange matrix is given by $\ep_{i_0, i_1} = -1$ and $\ep_{i_0,j_0}=-\frac{a_{ji}}{2}$ and $\ep_{j_0,i_1}=\frac{a_{ji}}{2}$ for $j\neq i$, and $\ep_{j_0k_0} = 0$ for $j,k \neq i$.
	\begin{rem}
		Our convention for the seed here differs from \citep{SW21,CGGLSS25,BY25} by a transpose of the Cartan matrix. This is to align with our mutation formula in (\ref{defn:mutation}), see also Remark \ref{rem:rowvscolumnmutation}.
	\end{rem}
	
	The symmetrisers are the $d_j$'s coming from the symmetriser of the Cartan matrix where both $i_l,i_r \in J(i)$ are treated as $i$. 
	The exchange matrix $E(\underline{w})$ is then constructed inductively amalgamating $E(i)$ for the letters making up $\underline{w}$, and defrosting one vertex each time. To be precise, let $n_i$ be the number of times $i$ appears in $\underline{w}$, and say $E(\underline{w})$ has vertex set $J(\underline{w}) = \{(i,l)\mid i\in I, 0\leq l\leq n_i\}$ and unfrozen vertices $J(\underline{w})_{uf} = \{(i,l)\mid i\in I, 0<l<n_i\}$. Note $E(i)$ is of this form. Then to form $E(\underline{w},i)$ we identify the frozen vertices $(j,n_j)\in J(\underline{w})$ with $j_0\in E(i)$, rename the vertex $i_1$ as $(i,n_i+1)$, and defrost the vertex $(i,n_i)$.
	For seeds on the Bott--Samelson cell, we will delete the frozen vertices of indexed by $(i,0)$.
	
	The effect of this is that we have $n_i$ vertices of the form $(i,l)$ which corresponds the $l$-th instance of $i$ in $\underline{w}$. We therefore have two ways of indexing the vertices of $\seed(\underline{w})$, and will alternate between them depending on context. Explicitly, if $i=i_j$ is the $j$-th position of $\underline{w}$ and is the $m$-th appearance of $i$, then we refer to that vertex both as $j$ and as $(i,m)$. This amalgamation procedure results in a constructive/destructive interference phenomenon for the half-weighted indices between two frozen vertices, which results in integer entries after each defrosted stage.
	One useful property of these exchange matrices is that each frozen vertex is adjacent to exactly one mutable vertex.

	The amalgamated exchange matrix can be described by an interlacing property (see also \citep{BFZ05, GLS11}). For a word $\underline{w} =(i_1,\dots,i_\ell)$, we define the \emph{predecessor} and \emph{successor} functions
	\begin{equation}\label{eqn:pred}
		p(j) = \begin{cases}
			\max\{k<j \mid i_k=i_j\}, &\text{if } \exists k<j \text{ with } i_k=i_j,\\
			-\infty, &\text{else},
		\end{cases}
	\end{equation}
	and
	\begin{equation}\label{eqn:succ}
		s(j) = \begin{cases}
			\min\{k>j \mid i_k=i_j\}, &\text{if } \exists k>j \text{ with } i_k=i_j,\\
			\infty, &\text{else}.
		\end{cases}
	\end{equation}
	Then the entries of the exchange matrix $E(\underline{w})$ are given by 
	\begin{equation}\label{eqn:exchangematrixofW}
		\ep_{jk}=\begin{cases}
			a_{i_k,i_j} & \text{if } k<j<s(k)< s(j),\\
			-a_{i_k,i_j} & \text{if } j<k<s(j)< s(k),\\
			\frac{a_{i_k,i_j}}{2} & \text{if } k<j<s(k)= s(j)=\infty,\\
			-\frac{a_{i_k,i_j}}{2} & \text{if } j<k<s(j)= s(k)=\infty,\\
			1 & \text{if } j=s(k),\\				
			-1 & \text{if } j=p(k),\\				
			0 & \text{else.}
		\end{cases}
	\end{equation}
	
	The cluster variables of this seed $A_j$ are given by generalised minors of the partial products $\varphi_j = \Delta_{\omega_{i_j},\omega_{i_j}}\circ m\circ \psi_{\underline{w},j}$.
	Specifically, for the $j$-th letter of $\underline{w}$ and its vertex $v_j=(i_j,m)$, we have the cluster variable $A_j = A_{(i_j,m)} = \varphi_j =  \Delta_{\omega_{i_j},\omega_{i_j}}(B_{\underline{w}[j]}(z_1,\dots,z_j))$ for the Bott--Samelson coordinates $z_i$ on $\cO^{\underline{w}}$. The frozen variables for the seed are those given by the last instance of each letter in $\underline{w}$ and are  $\Delta_{\omega_{i},\omega_{i}}(B_{\underline{w}}(z_1,\dots,z_\ell))$ for $i\in \mathrm{supp}(\underline{w})$.
	Every polynomial generator $z_j$ is reachable from this seed by a sequence of mutations \citep{GLS11} (see also \citep{SW21,CGGLSS25,GLSB25}) and therefore $$\cA(\seed(\underline{w})) = \C[\mathring{Z}_{\underline{w}} \cap m^{-1}(B^-B/B)]\quad \text{and} \quad \cA(\seed(\underline{w}),\emptyset) = \C[\mathring{Z}_{\underline{w}}].$$
	
	\begin{rem}
		Similar exchange matrices are constructed for expressions in two alphabets of simple roots \citep{SW21}. Letters $\overline{i}$ for $i\in I$ are associated to a component matrix $E(\overline{i})$ which is the negation of $E(i)$, and these are amalgamated similarly. We reiterate that signed letters in subexpressions $\gamma$ have a distinct meaning which we introduce in the next subsection.
	\end{rem}

	\subsection{Bott--Samelson charts}
	The Bott--Samelson variety is an iterated $\proj^1$-fibration: at the $j$-th stage we have a new entry in $P_{i_j}/B \cong \proj^1$. Equivalently, under the embedding $Z_{\underline{w}} \hookrightarrow (G/B)^\ell$, at each stage, we modify the $i_j$-th component of the flag out of a $\proj^1$ of possible subspaces to get the next flag in the sequence. With this fibration structure, the variety has a number of natural affine charts, of which the Bott--Samelson cell $\mathring{Z}_{\underline{w}}$ is one.
	
	For the Bott--Samelson cell, we imposed conditions $g_jB \neq B$, that consecutive flags are not equal. To obtain other charts, we instead use the opposite affine chart in $\proj^1$ in certain positions, and ask that $g_jB \neq s_{i_j}B$. We index these by the subexpressions $\gamma \in \Upsilon_{\underline{w}}$ of $\underline{w}$. In terms of relative positions, when $\gamma_j = -s_{i_j}$, we have that $\mathcal{B}_{j-1} = \mathcal{B}_j$ or $\mathcal{B}_{j-1} \xrightarrow{s_{i_j}} \mathcal{B}_j$, but $\mathcal{B}_{j-1} \neq s_{i_j}\cdot \mathcal{B}_j$. We can uniformly describe these charts as the images of affine spaces under particular embeddings dependent on the subexpression $\gamma= (\gamma_1,\gamma_2,\dots,\gamma_\ell)\in \Upsilon_{\underline{w}}$. The \emph{Bott--Samelson chart} $\cO^{\gamma}$ is the image of the embedding
	$$\Phi_{\gamma}: \mathbb{A}^\ell \to Z_{\underline{w}}, \quad (z_1,\dots,z_l) \mapsto \left[u_{\gamma_1}(z_1) \dot{s}^+_{\gamma_1},u_{\gamma_2}(z_2) \dot{s}^+_{\gamma_2},\dots,u_{\gamma_\ell}(z_\ell) \dot{s}^+_{\gamma_\ell}\right].$$ Note that the chart for the ``full'' subexpression $\gamma=\underline{w}$ is equal to the Bott--Samelson cell.

	We call two charts $\cO^\gamma$ and $\cO^{\gamma'}$ \emph{adjacent}, if there is an index $k$ such that $\gamma_j = \gamma_j'$ for all $j\neq k$ and $\gamma_k=-\gamma_k'$. One should picture the $1$-skeleton of an $\ell$-dimensional hypercube with vertices labelled by subexpressions and edges connecting adjacent charts.
	\begin{rem}
		The change of coordinates between any two Bott--Samelson charts can be computed as the composition of a sequence of coordinate changes between adjacent charts.
	\end{rem}

	\begin{defn}\label{defn:braidmatrix}
		For a subexpression $\gamma \in \Upsilon_{\underline{w}}$ and $\underline{z}=(z_1,\dots,z_\ell)\in \mathbb{A}^\ell$, we define the \emph{braid matrix} to be the element of $G$ given by the product $$B_\gamma(\underline{z})=B_\gamma(z_1,\dots,z_\ell) := u_{\gamma_1}(z_1) \dot{s}^+_{\gamma_1}u_{\gamma_2}(z_2) \dot{s}^+_{\gamma_2}\cdots u_{\gamma_\ell}(z_\ell) \dot{s}^+_{\gamma_\ell}.$$
		This construction is such that $m\circ\Phi_{\gamma}(\underline{z})  = B_\gamma(\underline{z}) \cdot B\in G/B$.
	\end{defn}
	
	Similar to Remark \ref{rem:braidvarisopenpartofBScell}, the fibre of the multiplication map over the longest element can be used to study an open subset of $\cO^\gamma$. The following lemma is essentially \citep[Lemma 3.16]{CGGLSS25} adapted for an arbitrary Bott--Samelson.
	
	\begin{lem}\label{lem:braidopeninchart}
		Let $\gamma$ be a signed expression, and $\underline{w_0}=(i_1,\dots,i_r)\in I^r$ a reduced expression of the longest element with positive letters. There is a natural isomorphism
		$$\cO^{(\underline{w_0},\gamma)} \cap m^{-1}(w_0B/B) \cong \cO^\gamma \cap m^{-1}(B^-B/B).$$
	\end{lem}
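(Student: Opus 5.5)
The isomorphism will be built from the Bruhat factorisation of the big cell, following \citep[Lemma 3.16]{CGGLSS25}. A point of $\cO^{(\underline{w_0},\gamma)}$ is $[u_{i_1}(y_1)\dot s_{i_1},\dots,u_{i_r}(y_r)\dot s_{i_r},u_{\gamma_1}(z_1)\dot s^+_{\gamma_1},\dots]$. Because $\underline{w_0}$ is reduced, the map
$$(y_1,\dots,y_r)\mapsto u_{i_1}(y_1)\dot s_{i_1}\cdots u_{i_r}(y_r)\dot s_{i_r}=B_{\underline{w_0}}(\underline{y})$$
identifies $\mathbb{A}^r$ with $U\dot w_0$. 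Equivalently, the prefix of flags $(\mathcal{B}_0,\dots,\mathcal{B}_r)$ is determined by its endpoint $\mathcal{B}_r=B_{\underline{w_0}}(\underline{y})B$, which ranges over the Schubert cell $Uw_0B/B$. The condition $m=w_0B/B$ reads
$$B_{\underline{w_0}}(\underline{y})\,B_\gamma(\underline{z})\in \dot w_0 B .$$

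The forward map sends $(\underline{y},\underline{z})$ to $\underline{z}$. The key step is to show that $B_\gamma(\underline{z})B\in B^-B/B$. Write $B_{\underline{w_0}}(\underline{y})=u\dot w_0$ with $u\in U$. Then the condition gives $\dot w_0 B_\gamma(\underline{z})\in u^{-1}\dot w_0 B$, so $B_\gamma(\underline{z})\in \dot w_0^{-1}U\dot w_0B=U^-B$. Hence $\underline{z}\in\cO^\gamma\cap m^{-1}(B^-B/B)$.

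For the inverse, take $\underline{z}$ with $B_\gamma(\underline{z})\in U^-B$. The factorisation $U^-\times B\to U^-B$ is an isomorphism onto the open cell, so there is a unique $n(\underline{z})\in U^-$, regular in $\underline{z}$, with $B_\gamma(\underline{z})\in n(\underline{z})B$. Set $u=\dot w_0 n(\underline{z})^{-1}\dot w_0^{-1}\in U$, and let $\underline{y}$ be the unique coordinates with $B_{\underline{w_0}}(\underline{y})=u\dot w_0$, using the isomorphism $\mathbb{A}^r\cong U\dot w_0$. Then
$$B_{\underline{w_0}}(\underline{y})B_\gamma(\underline{z})\in u\dot w_0 n(\underline{z})B=\dot w_0B,$$
so $(\underline{y},\underline{z})$ lies in the left-hand side. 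Uniqueness of $\underline{y}$ shows the two maps are mutually inverse, and both are regular. The chart coordinates $\underline{z}$ in the last $\ell$ positions are untouched, so the maps are compatible with the embedding into $Z_{(\underline{w_0},\underline{w})}$ by forgetting the first $r$ flags. This justifies calling the isomorphism natural.

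The main obstacle is the isomorphism $\mathbb{A}^r\cong U\dot w_0$ via $\underline{y}\mapsto B_{\underline{w_0}}(\underline{y})$, which needs a reduced word with positive letters. I would prove it by induction on length using $u_i(y)\dot s_i U_{w'}\dot w'=U_{s_iw'}\dot s_i\dot w'$, where $U_{w}=U\cap \dot wU^-\dot w^{-1}$ when $\ell(s_iw')>\ell(w')$. Together with $U_{w_0}=U$, this gives the claim; it is standard and is where reducedness of $\underline{w_0}$ enters. Everything else is independent of the signs in $\gamma$, since those only enter through the fixed product $B_\gamma(\underline{z})$.
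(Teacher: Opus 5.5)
Your proposal is correct and follows essentially the same argument as the paper. Both proofs use that $\underline{y}\mapsto B_{\underline{w_0}}(\underline{y})$ parametrises $U\dot w_0$, which the paper phrases as $\dot w_0 B_{\underline{w_0}}(\underline{\zeta})$ ranging over $U^-$, together with the unique factorisation $U^-\times B\cong U^-B$ to recover the $\underline{w_0}$-coordinates from $\underline{z}$. Your conjugation $\dot w_0^{-1}U\dot w_0=U^-$ is in fact slightly cleaner than the paper's, which writes $\dot w_0$ where $\dot w_0^{-1}$ is meant; this is harmless only because $\dot w_0^{2}$ is central and lies in $T\subset B$.
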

	\begin{proof}
		Let $\zeta_1,\dots\zeta_N,z_1,\dots z_\ell$ denote the coordinates on the Bott--Samelson chart $\cO^{(\underline{w_0},\gamma)}$, where the $\zeta_i$'s correspond to the letters in $\underline{w_0}$, and $z_i$'s those in $\gamma$. By construction,
		$$m\circ\Phi_{(\underline{w_0},\gamma)}(\underline{\zeta},\underline{z}) = \dot{w_0}B \in G/B \;\text{ if and only if }\;
		\dot{w_0}\cdot B_{\underline{w_0}}(\underline{\zeta}) B_\gamma(\underline{z}) \in B.$$
		Note that the map $\underline{\zeta} \mapsto w_0\cdot B_{\underline{w_0}}(\underline{\zeta}) $ gives an isomorphism $\mathbb{A}^N\to w_0Uw_0 =U^-$. Hence, $B_\gamma(\underline{z}) \in B^-B$ and $m\circ\Phi_\gamma(\underline{z})\in B^-B/B$. Conversely, for $\Phi_\gamma(\underline{z}) \in\cO^\gamma \cap m^{-1}(B^-B/B)$, the product $B_\gamma(\underline{z})\in U^-B$ factors uniquely as $B_\gamma(\underline{z})=x_-x_+$ with $x_-\in U^-,x_+\in B$. For the unique $\underline{\zeta}$ such that
		$\dot{w_0}\cdot B_{\underline{w_0}}(\underline{\zeta})= (x_-)^{-1}$, we obtain $\dot{w_0}\cdot B_{\underline{w_0}}(\underline{\zeta}) B_\gamma(\underline{z}) \in B$ as required.
	\end{proof}

	\subsection{Morphisms between Bott--Samelson varieties and their charts}
	For any subset $J = \{j_1<\dots<j_m\}\subset [\ell]=\left\{1,\dots,\ell\right\}$, consider $\underline{w}[J] = \left(i_{j_1},\dots,i_{j_m}\right)$.
	There is closed embedding $\iota_{w[J]}: Z_{\underline{w}[J]} \to Z_{\underline{w}}$ obtained by ``inserting identities in the blank spaces'',
	$$\iota: [g_{1},\dots, g_{\ell}] \mapsto [e,\dots, e, g_{1}, e,\dots,e, g_{\ell},e,\dots,e],$$
	where for each $1\leq k \leq m$, $g_i$ is in position $j_k$ in the output, and we have the identity element $e$ is in all positions in $[\ell]\setminus J$.
	
	For any $1\leq m \leq \ell$, we have the truncated word of length $m$ given by $\underline{w}[m] = (i_1,\dots,i_m)$. There is a projection morphism $\psi_{\underline{w},m}: Z_{\underline{w}} \to Z_{\underline{w}[m]}$, $[g_1,\dots, g_\ell]\to [g_1,\dots, g_m]$ given by removing the last $\ell-m$ entries or equivalently by forgetting the last $\ell-m$ flags. For $m=\ell-1$, we abbreviate to $\psi_{\underline{w},\ell-1}$ to $\psi_{\underline{w}}$ so that
	$$\psi_{\underline{w},m} = \psi_{\underline{w}[m+1]} \circ \cdots \circ \psi_{\underline{w}[\ell-1]} \circ \psi_{\underline{w}}.$$
	Each truncation $\psi_{\underline{w},m}$ is a locally trivial $Z_{[\ell-m+1,\ell]}$-fibration, with a section $\sigma_{\underline{w},m} = \iota_{\underline{w}[m]}$.
	
	These truncation morphisms have natural restrictions to Bott--Samelson charts, but the embeddings $\iota_{\underline{w}[J]}$ only restrict to charts if the target chart is unsupported at all positions in $[\ell]\setminus J$.

	\begin{lem}\label{lem:truncationmapsarequasicluster}
		The truncation map $\cO^{\underline{w}} \to \cO^{\underline{w}[m]}$ is a rational quasi-cluster homomorphism.
	\end{lem}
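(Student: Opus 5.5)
We check the three conditions of the definition of a rational quasi-cluster map directly. Take $\seed = \seed(\underline{w}[m])$ on $\cO^{\underline{w}[m]}$, $\seed' = \seed(\underline{w})$ on $\cO^{\underline{w}}$, and $\sigma$ the pullback along the truncation $\psi_{\underline{w},m}$; it is injective on function fields because the truncation is dominant. By the description in \S\ref{sec:almagmatedseeds}, the vertices of $\seed(\underline{w}[m])$ are the positions $1,\dots,m$, and those of $\seed(\underline{w})$ are $1,\dots,\ell$. Let $\iota$ be the inclusion $[m]\hookrightarrow[\ell]$, set $J_{\mathrm{sup}} = J$ (all vertices of $\seed(\underline{w}[m])$), and set $S'=\emptyset$.

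First we check condition (1). The cluster variable at position $j\le m$ is $A_j = \Delta_{\omega_{i_j},\omega_{i_j}}(B_{\underline{w}[j]}(z_1,\dots,z_j))$. This depends only on $z_1,\dots,z_j$, and the truncation maps the Bott--Samelson coordinates $z_1,\dots,z_m$ on $\cO^{\underline{w}}$ identically to those on $\cO^{\underline{w}[m]}$, since $\psi_{\underline{w},m}\circ\Phi_{\underline{w}} = \Phi_{\underline{w}[m]}\circ \mathrm{pr}_{[m]}$. Hence $\sigma(A_j) = A'_{j}$ with $M_j=1$. Mutable vertices of $\seed(\underline{w}[m])$ are the positions $j\le m$ with $s(j)\le m$ in the truncated word. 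Such $j$ still have a successor in $\underline{w}$, so they are mutable in $\seed(\underline{w})$, and $\iota(J_{\mathrm{uf}})\subseteq J'_{\mathrm{uf}}$. Condition (3) is vacuous.

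The main step is condition (2), preservation of exchange ratios at mutable $j$. We will compare columns $j$ of the two matrices using the interlacing formula (\ref{eqn:exchangematrixofW}). Fix a mutable $j$ of $\underline{w}[m]$, so $s(j)\le m$ in both words. For $k\le m$, each case of (\ref{eqn:exchangematrixofW}) is determined by the relative order of $k,j,s(k),s(j)$, with $s$ computed in the respective word. The two successor functions agree except when $s_{\underline{w}[m]}(k)=\infty$ while $s_{\underline{w}}(k)>m$. In the entry $\ep_{kj}$ only the comparisons with $s(j)\le m$ matter. For example, $k<j<s(k)<s(j)$ versus $j<k<s(j)<s(k)$ differ only in whether $s(k)$ lies before or after $s(j)\le m$, and any $s(k)>m$ lands on the same side whether it is finite or $\infty$. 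The half-integer cases require both $s(k)$ and $s(j)$ to be $\infty$, which cannot happen since $s(j)<\infty$. So $\ep_{kj}=\ep'_{kj}$ for all $k\le m$.

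It remains to show $\ep'_{kj}=0$ for $k>m$. This is where care is needed, because of the sign and row/column conventions. In that case $j<k$ and $s(j)\le m<k$, so none of the nonzero patterns apply: those patterns need $k<s(j)$, or $k=s(j)$, or $k=p(j)$, and all of these are impossible, with $k=p(j)$ ruled out because $p(j)<j$. Therefore
$\sigma(y_j)=\prod_{k\le m}(A'_k)^{\ep_{kj}} = y'_j$,
using the column convention of Definition \ref{defn:quasiequivseeds}. The same comparison shows the row version agrees, since the matrices are skew-symmetrisable with the same symmetriser. This completes the verification.
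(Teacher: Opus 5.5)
Your proposal is correct and is the same argument as the paper's. Both use the seed embedding $\seed(\underline{w}[m])\hookrightarrow\seed(\underline{w})$ given by the identity on positions, under which the cluster variables are the same generalised minors of the same partial products and the mutable columns agree. The paper asserts the column agreement in one line; you verify it from the interlacing formula (\ref{eqn:exchangematrixofW}), including the vanishing of the entries at positions $k>m$, and you pass to the row convention of the rational quasi-cluster definition via the common symmetriser.
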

	\begin{proof}
		This arises from the embedding of seeds $\seed(\underline{w}[m]) \hookrightarrow \seed(\underline{w})$. The cluster variables are the same generalised minors of the same partial products, and the mutable parts of the quiver for $\seed(\underline{w}[m])$ agrees with its image in $\seed(\underline{w})$.
	\end{proof}

	\subsection{The standard Poisson structure}
	A \emph{Poisson algebraic group} is a an algebraic group with a Poisson structure $\pi_G$ such that the group multiplication  $$m: (G\times G, \pi_{G} \times \pi_{G}) \to (G,\pi_{G})$$ is a Poisson map.
	Every connected complex semi-simple algebraic group can be endowed with its \emph{standard Poisson structure} $\pi_{\mathrm{st}}$, which is defined using a choice of opposite Borel subgroups. The standard Poisson structure is multiplicative, and is the semi-classical limit of the associated quantum group.
	
	For a fixed symmetric non-degenerate invariant bilinear form $\langle\cdot,\cdot\rangle$ on $\mathfrak{g}$ (and its induced form on $\mathfrak{h}^*$), the Poisson bivector for the standard Poisson structure is given by
	\begin{equation}
		\pi_{\mathrm{st}} = \sum_{\alpha\in \Phi_+} \frac{\langle\alpha,\alpha\rangle}{2} e_{-\alpha}\wedge e_{\alpha} \in \largewedge^2\mathfrak{g}.
	\end{equation}
	Note that this bivector depends on the choice of non-degenerate bilinear form $\langle\cdot,\cdot\rangle$, but not on the choice of pinning. A canonical choice of this form is such that $\langle\alpha,\alpha\rangle =2$ for the short roots so that $\langle\alpha,\alpha\rangle \in \{2,4,6\}$ for all simple roots, so $\langle\alpha_i, \alpha_j\rangle = a_{ij}d_j$

	The standard Poisson structure on the Bott--Samelson variety  \citep{EL21} is defined by by taking the product Poisson structure $(\pi_{\mathrm{st}})^\ell$ on product of Poisson algebraic subgroups $P_{\underline{w}}$ and pushing it forward onto the quotient space $Z_{\underline{w}}$. Elek and Lu then calculated the Poisson bracket in the coordinates of each Bott--Samelson chart.
	
	\begin{lem}[\protect{\citep[Lemma 3.1, Theorem 4.14]{EL21}}]\label{lem:StdPoissonStr}
		In the Bott--Samelson coordinates, the Poisson structure on $\cO^{\gamma} \subset Z_{\underline{w}}$ is given by		
		\begin{equation}\label{eqn:StdPoissonStr}
			\{z_j,z_k\} = \begin{cases}
				\langle \gamma^j(\alpha_j), \gamma^k (\alpha_k) \rangle z_j z_k & \text{ if } \gamma_j < 0; \\
				-\langle \gamma^j(\alpha_j), \gamma^k (\alpha_k) \rangle z_j z_k  - \langle\alpha,\alpha\rangle \sigma_j(z_k) & \text{ if } \gamma_j >0,
			\end{cases}			
		\end{equation}
		for $j<k$. Here, $\sigma_j(z_k)$ denotes the action of the vector field $\sigma_j(p) = \frac{d}{dt}|_{t=0}\bigl(\exp(te_{\alpha_{i_j}})\cdot p\bigr)$ on $z_k$ as a local function on $ \cO^{(\gamma_{j+1},\dots,\gamma_\ell)} \subset Z_{(s_{i_{j+1}},\dots, s_{i_\ell})}$ via the parametrisation
		$$\mathbb{A}^{\ell-j}\ni (z_{j+1},\dots,z_\ell) \mapsto \Phi_{(\gamma_{j+1},\dots,\gamma_\ell)}(z_{j+1},\dots,z_\ell).
		$$
	\end{lem}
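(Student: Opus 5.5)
We prove Lemma \ref{lem:StdPoissonStr} by induction on the length $\ell$ of the word, peeling off the first factor. Write $\gamma = (\gamma_1,\gamma')$ with $\gamma'=(\gamma_2,\dots,\gamma_\ell)$. By definition, the Poisson structure on $Z_{\underline{w}}$ is the pushforward of $(\pi_{\mathrm{st}})^\ell$ on $P_{\underline{w}}$. We factor the quotient as
$$P_{i_1}\times^{B} Z_{\underline{w}'},$$
where the twisted product is the quotient of $P_{i_1}\times Z_{\underline{w}'}$ by $B$ acting via $(g,p)\cdot b=(gb,b^{-1}p)$. The structure on $Z_{\underline{w}}$ is then the pushforward of $\pi_{\mathrm{st}}\times\pi_{Z_{\underline{w}'}}$. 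This is well defined because $B$ is a Poisson subgroup and the left $B$-action on $Z_{\underline{w}'}$ is a Poisson action.

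On the chart, the point is represented by $(u_{\gamma_1}(z_1)\dot s^+_{\gamma_1},\,\Phi_{\gamma'}(z_2,\dots,z_\ell))$, and this section is transverse to the $B$-orbits. The first step is to compute the brackets of $z_1$ with the remaining coordinates.

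\textbf{Case $\gamma_1<0$.} The map $z\mapsto u_{-i}(z)$ parametrises the big cell of $P_i/B$. The group element $u_{-i}(z_1)$ lies in $U^-\subset B^-$. We use the factorisation $\pi_{\mathrm{st}}(gh)=\ell_g\pi(h)+r_h\pi(g)$ and the standard formula for the action of the torus part on $Z_{\underline{w}'}$. The only contribution to $\{z_1,z_k\}$ comes from the $\mathfrak{h}$-component of the $r$-matrix, $\tfrac12\sum h\otimes h$. This component pairs the weight $\alpha_{i_1}$ by which $T$ scales $z_1$ with the $T$-weight of $z_k$.

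The $T$-weight of $z_k$ under the left action is $\gamma^k(\alpha_{i_k})$ when transported through $\gamma'$. This yields the log-canonical term $\langle\gamma^1(\alpha_1),\gamma^k(\alpha_k)\rangle z_1z_k$, where $\gamma^1=e$.

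\textbf{Case $\gamma_1>0$.} The element $u_{i}(z_1)\dot s_i$ involves $U^+$. Here $\pi_{\mathrm{st}}$ has a nonzero $e_{-\alpha}\wedge e_\alpha$ component. Computing $\pi_{\mathrm{st}}$ at $u_i(z)\dot s_i$ and pushing forward produces two terms. The first is the toric term, with a sign flip caused by conjugation by $\dot s_i$: it sends $\alpha_i\mapsto-\alpha_i$, giving $-\langle\alpha_1,\gamma^k\alpha_k\rangle$. The second is the term $-\langle\alpha,\alpha\rangle\, \partial_{z_1}\wedge\sigma_1$. Here $\sigma_1$ is the infinitesimal left action of $e_{\alpha_{i_1}}$ on the chart $\cO^{\gamma'}$, and it arises because right multiplication by $U_{\alpha_i}$ gets absorbed into $Z_{\underline{w}'}$ through the twisted product.

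The brackets among $z_2,\dots,z_\ell$ agree with those on $\cO^{\gamma'}$, since the projection to the second factor is Poisson-compatible on functions pulled back from $Z_{\underline{w}'}$. The Weyl-group twist by $\gamma^1=s^+_{\gamma_1}$ appears when we reindex the partial products. This closes the induction.

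The main obstacle is the $\gamma_1>0$ case. There one must track precisely how the $B$-quotient converts the $U^+$-part of $\pi_{\mathrm{st}}$ at $u_i(z)\dot s_i$ into the vector field $\sigma_j$, and check the sign and normalisation $\langle\alpha,\alpha\rangle$. To handle it, I would first reduce to $\mathrm{SL}_2$ via $\varphi_i$, where $\pi_{\mathrm{st}}$ is explicit. I would then verify the formula for $\ell=1$ and $\ell=2$ by direct computation before running the general induction.
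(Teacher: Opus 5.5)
The paper does not prove this lemma; it quotes it from \citep{EL21}. Your inductive route through $Z_{\underline{w}}=P_{i_1}\times^B Z_{\underline{w}'}$ is the natural one, and it is what produces the Ore-extension shape of the formula. That route evaluates the pushed-forward bivector at the section points $g=u_{\gamma_1}(z_1)\dot{s}^+_{\gamma_1}$ after reducing to $\mathrm{SL}_2$ via $\varphi_{i_1}$. The two expressions you assert for $\{z_1,z_k\}$ are what that computation yields.

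The gap is in a step you treat as formal: that $\{z_j,z_k\}$ for $j,k\geq 2$ agrees with the bracket on $\cO^{\gamma'}$ ``since the projection to the second factor is Poisson-compatible on functions pulled back from $Z_{\underline{w}'}$''. There is no natural map $Z_{\underline{w}}\to Z_{\underline{w}'}$ forgetting the first factor. Nor are $z_2,\dots,z_\ell$ pulled back along any section-independent map: on $\cO^\gamma$ they are $[g_1,p]\mapsto z_k(b\cdot p)$, where $g_1=u_{\gamma_1}(z_1)\dot{s}^+_{\gamma_1}\,b$. Hence the first factor contributes $\pi_{\mathrm{st}}(g_1)(d_{g_1}z_j,d_{g_1}z_k)$ to $\{z_j,z_k\}$. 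That this contribution vanishes is a property of these particular sections, and it must be checked.

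The fix is the $\mathfrak{sl}_2$ computation you already plan, done once for both purposes. Put $i=i_1$, $c=\langle\alpha_i,\alpha_i\rangle/2$ and $w_k=s^+_{\gamma_2}\cdots s^+_{\gamma_k}$. Let $\lambda(X)$ be the infinitesimal left action of $X\in\mathfrak{b}$ on functions on $\cO^{\gamma'}$, so that $\lambda(e_i)=\sigma_1$ and $\lambda(h_i)z_k=-\langle w_k(\alpha_{i_k}),\alpha_i^\vee\rangle z_k$. Work in left trivialisation at a section point:
\begin{itemize}
\item the differential of $g_1\mapsto b$ is the projection $\mathfrak{p}_i=\C f_i\oplus\mathfrak{b}\to\mathfrak{b}$, since the section is tangent to $\pm f_i$;
\item $dz_1$ picks out $\pm$ the $f_i$-coefficient;
\item with $\pi_{\mathrm{st}}=\Lambda^L-\Lambda^R$ (the convention matching (\ref{eqn:StdPoissonStr})), $l_g^{-1}\pi_{\mathrm{st}}(g)=\Lambda-\mathrm{Ad}_{g^{-1}}\Lambda$ lies in $\wedge^2\varphi_i(\mathfrak{sl}_2)$ and equals
\[
c\,z\,h_i\wedge f_i\quad\text{for } g=u_{-i}(z),\qquad c\,(2f_i\wedge e_i+z\,h_i\wedge f_i)\quad\text{for } g=u_i(z)\dot{s}_i .
\]
\end{itemize}
Neither expression has a $\wedge^2\mathfrak{b}$-component (compare $c\,x\,h_i\wedge e_i$ at $g=u_i(x)$), and this is exactly the tail claim. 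Pairing with $dz_1$ gives $\{z_1,z_k\}=-c\,z_1\lambda(h_i)z_k$, resp.\ $c\,z_1\lambda(h_i)z_k-\langle\alpha_i,\alpha_i\rangle\sigma_1(z_k)$. This is (\ref{eqn:StdPoissonStr}) once you use $\gamma^k=s^+_{\gamma_1}w_k$ and $W$-invariance of $\langle\cdot,\cdot\rangle$.

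Relatedly, your account of the log-canonical term is convention-dependent: it attributes the term to ``only the $\mathfrak{h}$-component of the $r$-matrix'' and to a sign flip from conjugating by $\dot{s}_i$. With $\Lambda^L-\Lambda^R$ there is no $\mathfrak{h}\otimes\mathfrak{h}$ part, and the $h_i\wedge f_i$ term comes from $\mathrm{Ad}_{g^{-1}}\Lambda$. So do not use that heuristic to fix signs.
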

	\begin{rem}
		The formula (\ref{eqn:StdPoissonStr}) is calculated by considering the action of the bivector of the $j$-th factor on the rest of the variety. The first term in the formula arises from a torus component, and the second arises from a unipotent component. A recursive formula for $\sigma_j(z_k)$ is given in \citep[Theorem 4.10]{EL21}. In particular, $\sigma_j(z_k)$ is polynomial in the coordinates strictly between positions $j$ and $k$, and at most quadratic in $z_k$.
	\end{rem}
	\begin{rem}
		When $\gamma_j<0$, its Bott--Samelson coordinate $z_j$ is log-canonical with every Bott--Samelson coordinate to its right, but need not be so with the coordinates to its left. If we could somehow ``move'' this negative letter and its coordinate to a position to the left of all the others, it could then be log-canonical with every other coordinate.
	\end{rem}
	\begin{cor}
		The isomorphism in Lemma \ref{lem:braidopeninchart} is a Poisson isomorphism.
	\end{cor}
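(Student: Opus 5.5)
The plan is to realise the isomorphism of Lemma \ref{lem:braidopeninchart} as a composition of restrictions of Poisson maps. The key observation is that the relevant open subvariety of $\cO^{(\underline{w_0},\gamma)}$ splits off its first $N$ coordinates. Concretely, I will show that the projection
\[
\cO^{(\underline{w_0},\gamma)} \cap m^{-1}(w_0B/B) \to \cO^\gamma, \qquad (\underline{\zeta},\underline{z})\mapsto \underline{z},
\]
is Poisson onto its image $\cO^\gamma\cap m^{-1}(B^-B/B)$. Since this projection is the isomorphism of the lemma, with inverse $\underline{z}\mapsto(\underline{\zeta}(\underline{z}),\underline{z})$, this suffices.

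The first step is to check that the $\underline{z}$-brackets are computed identically in both charts. Lemma \ref{lem:StdPoissonStr} gives $\{z_j,z_k\}$ for $j<k$ in terms of two pieces of data:
\begin{itemize}
\item the pairings $\langle \gamma^j(\alpha_j),\gamma^k(\alpha_k)\rangle$;
\item the vector fields $\sigma_j$ acting on the coordinates to the right of position $j$.
\end{itemize}
The vector fields $\sigma_j$ only involve positions $j+1,\dots,\ell$, so they are unchanged when we prepend $\underline{w_0}$. Prepending multiplies every partial product by $w_0$, turning $\gamma^j$ into $w_0\gamma^j$. Because $W$ preserves $\langle\cdot,\cdot\rangle$, the pairings are unchanged as well. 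Hence the brackets $\{z_j,z_k\}$ on $\cO^{(\underline{w_0},\gamma)}$ agree with those on $\cO^\gamma$. Equivalently, and more conceptually, the projection $\cO^{(\underline{w_0},\gamma)}\to\cO^\gamma$ forgetting the first $N$ factors is Poisson, since $\pi_{\mathrm{st}}^{N+\ell}$ is a product structure and $Z_{(\underline{w_0},\gamma)}\to Z_\gamma$ is induced by the projection of $P$'s, which is Poisson and equivariant.

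The second step is the main obstacle: passing from the ambient projection to the closed subvariety $m^{-1}(w_0B/B)$. For the restricted projection to be Poisson, I need $m^{-1}(w_0B/B)$ to be a Poisson subvariety of the chart, so that brackets of the $z$'s computed in the ambient chart restrict correctly. I would argue this as follows:
\begin{itemize}
\item The multiplication map $m:Z_{\underline{w}}\to G/B$ is Poisson for the standard structure on $G/B$, since it is induced by group multiplication, which is Poisson.
\item The $T$-fixed point $w_0B$ is a zero-dimensional symplectic leaf of $\pi_{\mathrm{st}}$ on $G/B$.
\item The preimage of a Poisson subvariety under a Poisson map that is a submersion onto the relevant Schubert cell is a Poisson subvariety. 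The multiplication restricted to $\cO^{(\underline{w_0},\gamma)}$ should be flat onto an open set containing $w_0B$, via the free $U^-$-action on the $\zeta$'s shown in the proof of Lemma \ref{lem:braidopeninchart}.
\end{itemize}
Once $m^{-1}(w_0B/B)$ is known to be a Poisson subvariety, the restricted projection is Poisson. Being a bijective isomorphism of varieties, it is then a Poisson isomorphism.
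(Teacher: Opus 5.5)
Your Step 1 is the paper's entire proof. On the fibre the $\zeta$'s are determined by the $z$'s, and by Lemma~\ref{lem:StdPoissonStr} the bracket $\{z_j,z_k\}$ depends only on the signed word between positions $j$ and $k$: the pairing is unchanged under $\gamma^j\mapsto w_0\gamma^j$, and $\sigma_j$ only sees positions after $j$. Hence the $z$-brackets agree. The paper stops there, tacitly treating $\cO^{(\underline{w_0},\gamma)}\cap m^{-1}(w_0B/B)$ as a Poisson subvariety.

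You rightly single this out as the remaining issue, but the argument you give for it fails. The preimage of a zero-dimensional leaf under a Poisson submersion need not be a Poisson subvariety. For example, $(x,y)\mapsto x$ from $(\C^2,\{x,y\}=1)$ to $\C$ with the zero bracket is a Poisson submersion, and $0$ is a leaf, yet $\{x,y\}=1\notin(x)$. In general, $m$ being Poisson only gives $dm_p\circ\pi_p^\sharp\circ dm_p^*=(\pi_{G/B})^\sharp_{w_0B}=0$ at points $p$ of the fibre. Tangency of all Hamiltonian vector fields to the fibre needs the stronger condition $\pi_p^\sharp\circ dm_p^*=0$. Neither flatness nor the free $U^-$-action on the $\zeta$'s supplies this.

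Step 2 therefore needs a genuinely different input. Options include:
\begin{enumerate}
\item the known description of the $T$-leaves of $\pi_{\mathrm{st}}$ on Bott--Samelson varieties (Lu--Mouquin), under which $m^{-1}(B^-vB/B)$ meets each stratum $Bu_1B\times_B\cdots\times_B Bu_nB/B$ in a union of $T$-leaves, so the closed set $m^{-1}(w_0B/B)=m^{-1}(B^-w_0B/B)$ is a union of leaves and hence Poisson;
\item a direct check that the ideal generated by the $\zeta_a-\zeta_a(\underline z)$ is closed under the Elek--Lu bracket;
\item taking the restricted Poisson structure on the braid-variety side as known, as the paper implicitly does.
\end{enumerate}

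Separately, the ``more conceptual'' justification in your Step 1 is incorrect. Forgetting the first $N$ factors does not descend to a map $Z_{(\underline{w_0},\gamma)}\to Z_\gamma$. The $B^{N+\ell}$-action sends $g_{N+1}\mapsto b_N^{-1}g_{N+1}b_{N+1}$, which couples the two blocks, so the projection of the $P$'s is not equivariant. The coordinate projection $(\underline\zeta,\underline z)\mapsto\underline z$ on charts is Poisson, but only by the Elek--Lu computation itself.
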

	\begin{proof}
		For $\cO^{(\underline{w_0},\gamma)} \cap m^{-1}(w_0B)$, we observed that the coordinates $\zeta_i$ for positions in $\underline{w}$ are determined by the coordinates $z_i$ for positions in $\gamma$. To verify the isomorphism is Poisson, it suffices to check that the bracket on the $z_i$'s agrees on both sides. This is immediate from Lemma \ref{lem:StdPoissonStr} since the bracket $\{z_j,z_k\}$  depends only the signed expression in between positions $j$ and $k$.
	\end{proof}
	
	\begin{eg}\label{eg:(1,2,1,-2)}
		In type $A$, let $\underline{w}=(1,2,1,2)$ and $\gamma = (1,2,1,-2)$. Denote coordinates for the Bott--Samelson charts 
		$\cO^{\underline{w}}$ and $\cO^{\gamma}$ by $(z_1,\dots,z_4)$ and $(z'_1,\dots,z'_4)$ respectively.
		Calculating explicitly using Lemma \ref{lem:StdPoissonStr}, we obtain
		\begin{equation*}
			\begin{array}{l l l}
				\{z_1,z_2\}_{\underline{w}} = -z_1z_2, & \{z_1,z_3\}_{\underline{w}} = z_1z_3-2z_2, & \{z_1,z_4\}_{\underline{w}} = 2z_1z_4-2,\\
				& \{z_2,z_3\}_{\underline{w}}=-z_2z_3,& \{z_2,z_4\}_{\underline{w}} = z_2z_4-2z_3,\\
				&& \{z_3,z_4\}_{\underline{w}}= -z_3z_4.
			\end{array}
		\end{equation*}
		and
		\begin{equation*}
			\begin{array}{l l l}
				\{z'_1,z'_2\}_{\gamma} = -z'_1z'_2, & \{z'_1,z'_3\}_{\gamma} = z'_1z'_3-2z'_2, & \{z'_1,z'_4\}_{\gamma} = -2z'_1z'_4+(z'_4)^2,\\
				& \{z'_2,z'_3\}_{\gamma}=-z'_2z'_3,& \{z'_2,z'_4\}_{\gamma} = -z'_2z'_4 + 2z'_3(z'_4)^2,\\
				&& \{z'_3,z'_4\}_{\gamma}= z'_3z'_4.
			\end{array}
		\end{equation*}
	\end{eg}

	\begin{lem}
		For any subexpression $\gamma$, the intersection $\cO^{-\gamma}\cap \cO^{\gamma}$ of two \emph{opposite} Bott--Samelson charts in $Z_{\underline{w}}$ is isomorphic to a torus $(\mathbb{G}_m)^\ell$. is a torus.
	\end{lem}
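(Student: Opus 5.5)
We argue by induction on $\ell$, using the iterated $\proj^1$-fibration structure of $Z_{\underline{w}}$ together with the truncation maps $\psi_{\underline{w}}$. For $\ell=1$, $Z_{(i)} = P_i/B\cong \proj^1$. The chart $\cO^{(i)}$ is the complement of the point $B$ (it is the image of $z\mapsto u_i(z)\dot s_iB$), and $\cO^{(-i)}$ is the complement of the point $\dot s_iB$ (it is the image of $z\mapsto u_{-i}(z)B$). These are the two standard affine charts of $\proj^1$, so their intersection is $\proj^1\setminus\{0,\infty\}\cong\mathbb{G}_m$. In coordinates, on the overlap the $\SL_2$ identity
$$u_{-i}(z')=u_i(1/z')\,\dot s_i\,\alpha_i^\vee(\cdot)\,u_i(\cdot)$$
gives the transition $z=1/z'$ up to sign, together with an explicit Borel element $b(z')\in B$ that is carried over to the next factor.

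For the inductive step we describe the intersection in coordinates of $\cO^{\gamma}$. Write $\gamma=(\gamma',\gamma_\ell)$. The truncation $\psi_{\underline{w}}$ maps $\cO^{\gamma}\to\cO^{\gamma'}$ and $\cO^{-\gamma}\to\cO^{-\gamma'}$. Its fibre over a point $[g_1,\dots,g_{\ell-1}]$ is the $\proj^1$ of flags $\cB_\ell$ with $\cB_{\ell-1}\xrightarrow{}\cB_\ell$ in $P_{i_\ell}$-position, i.e. the projective line $g_1\cdots g_{\ell-1}P_{i_\ell}/B$. Membership of the last coordinate in $\cO^{\gamma}$ means $\cB_\ell$ avoids one point of this line, namely $g_1\cdots g_{\ell-1}\dot s_{i_\ell}B$ or $g_1\cdots g_{\ell-1}B$ according to the sign of $\gamma_\ell$. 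Membership in $\cO^{-\gamma}$ means $\cB_\ell$ avoids the point determined by the \emph{opposite} chart's representative $g'_1\cdots g'_{\ell-1}$.

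The key observation is that the representatives differ by an element of $B$, so these two points lie in the same $\proj^1$. Moreover, the lifts $g_1\cdots g_{\ell-1}$ and $g_1'\cdots g_{\ell-1}'$ of $\cB_{\ell-1}$ differ by right multiplication by an element $b\in B$. Since $B$ stabilises the point $eB$ and $P_{i_\ell}/B$ is fixed under $b\mapsto$ the Borel action, the two excluded points are exactly the two $T$-fixed points $eB$ and $\dot s_{i_\ell}B$ of $P_{i_\ell}/B$, translated by the same element. Hence the two excluded points are distinct, and the fibre of $\cO^\gamma\cap\cO^{-\gamma}$ over a point of $\cO^{\gamma'}\cap\cO^{-\gamma'}$ is $\proj^1$ minus two points, i.e. $\mathbb{G}_m$.

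To conclude that we obtain a product torus rather than just a $\mathbb{G}_m$-bundle, we trivialise explicitly in the $\cO^\gamma$ coordinates. Over $\cO^{\gamma'}\cap\cO^{-\gamma'}$, the last coordinate $z_\ell$ of $\cO^\gamma$ parametrises $\proj^1$ minus the point excluded by $\gamma_\ell$, and the point excluded by $-\gamma_\ell$ corresponds to $z_\ell=0$. This is because $u_{\pm i}(0)\dot s^+=\dot s^+$, which is the $T$-fixed point opposite to the excluded one, after transporting by the Borel element $b$. Transporting by $b$ rescales $u_{\pm i}(z_\ell)$ by a torus character and conjugates by unipotents that fix the relevant point. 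The main obstacle is checking exactly this: that the unipotent part of $b$ does not move the point $z_\ell=0$. This should follow because $U$-elements from the carried-over $b$ are absorbed into the $B$ on the right of $\dot s^+$ in the appropriate factorisation; otherwise a non-monomial shift would appear.

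Granting this, the intersection is $\{z_\ell\neq 0\}$ over $\cO^{\gamma'}\cap\cO^{-\gamma'}$. By induction the whole intersection is $\{z_1\cdots z_\ell\neq0\}\subset\mathbb{A}^\ell$, which is $(\mathbb{G}_m)^\ell$.
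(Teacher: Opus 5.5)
Your fibrewise argument is correct and matches the paper's: the two lifts of $\cB_{\ell-1}$ satisfy $g'=gb$ with $b\in B$, so the two excluded points of the fibre are $g'B=gB$ and $g'\dot s_{i_\ell}B$. These are translates of $eB$ and $\dot s_{i_\ell}B$ by the same element, hence distinct.

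The gap is your final step. There you assert that in the coordinates of $\cO^\gamma$ the point excluded by $\cO^{-\gamma}$ is $z_\ell=0$, and conclude that the intersection is $\{z_1\cdots z_\ell\neq0\}$. This holds when $\gamma_\ell<0$, since the excluded point is then $g'B=gB=g\,u_{-i_\ell}(0)B$. It fails when $\gamma_\ell>0$. Write $b=t\,u_{i_\ell}(c)\,u'$ with $t\in T$ and $u'$ in the product of the positive root subgroups other than $U_{\alpha_{i_\ell}}$. Then $g'\dot s_{i_\ell}B=g\,u_{i_\ell}(\alpha_{i_\ell}(t)c)\,\dot s_{i_\ell}B$, so the excluded point is $z_\ell=\alpha_{i_\ell}(t)c$, which is generally nonzero. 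The $U_{\alpha_{i_\ell}}$-part of $b$ is exactly what is \emph{not} absorbed, because $\dot s_{i_\ell}^{-1}U_{\alpha_{i_\ell}}\dot s_{i_\ell}=U_{-\alpha_{i_\ell}}\not\subset B$.

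Concretely, take $\underline{w}=\gamma=(i,i)$. Then $b=\alpha_i^\vee(z_1^{-1})u_i(z_1)$ and the excluded point is $z_2=z_1^{-1}$. The intersection is $\{z_1\neq0,\ z_1z_2\neq1\}$, the non-vanishing locus of the cluster variables $z_1$ and $z_1z_2-1$ of $\seed(\underline{w})$, not $\{z_1z_2\neq0\}$. So the ``main obstacle'' you grant is false, and the explicit description you end with is wrong.

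The lemma survives because you never needed the excluded point to be at $0$. It differs from the point at infinity of the $\mathbb{A}^1$-fibre of $\cO^\gamma\to\cO^{\gamma'}$, so it is a section $z_\ell=f(z_1,\dots,z_{\ell-1})$ with $f=\alpha_{i_\ell}(t)c$. This $f$ is regular on $\cO^{\gamma'}\cap\cO^{-\gamma'}$, since there $b=g^{-1}g'$ depends regularly on the point (both charts' coordinates are regular on the overlap). The shear $(z',z_\ell)\mapsto(z',z_\ell-f(z'))$ identifies $\cO^\gamma\cap\cO^{-\gamma}$ with $(\cO^{\gamma'}\cap\cO^{-\gamma'})\times\mathbb{G}_m$, and induction finishes. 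This is what the paper's closing step, about two non-intersecting sections of a trivial $\P^1$-fibration, amounts to. The coordinate-hyperplane description of the torus is valid only in a chart all of whose letters are negative, as in the paper's corollary for $\cO^{-\underline{w}}$.
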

	\begin{proof}
		We make use of the iterated $\P^1$-fibration structure of Bott-Samelson varieties and proceed by induction on the length $\ell$. For $\ell=1$, $Z_{\underline{w}}$ is isomorphic to $\P^1$, and the intersection $\cO^{(-i_i)}\cap \cO^{({i_1})} \cong \P^1 \setminus\{0,\infty\}=\mathbb{G}_m$.
		
		Let $\gamma'=\gamma[\ell-1] = (\gamma_1,\dots,\gamma_{\ell-1})$ denote the truncated subexpression of $\gamma$, and suppose that $\cO^{\gamma'} \cap \cO^{-{{\gamma'}}} \cong \mathbb{G}_m^{\ell-1}$. The truncation map $\psi_{\underline{w}}:Z_{\underline{w}} \to Z_{\underline{w}[\ell-1]}$ is a $\P^1$-fibration which can be trivialised over any of the Bott--Samelson charts. Restricting the domain to the chart $\cO^{\gamma}$, we have a trivial $\mathbb{A}^1$-fibration $\psi_{\underline{w}}: \cO^\gamma \to \cO^{\gamma'}$ for the two cases $\gamma = (\gamma',-i_\ell)$ and $\gamma = (\gamma', i_\ell)$. Suppose we have $p\in Z_{\underline{w}} \setminus \cO^\gamma$ such that $p'=\psi_{\underline{w}}(p) \in \cO^{\gamma'} \cap \cO^{{-\gamma'}}$.
		
		In the first case, when $\gamma_\ell={i_\ell}$, trivialising in the coordinates for $\cO^{\gamma'}$ gives $$p=\Phi_{(\gamma',-i_\ell)}(z_1,\dots,z_{\ell-1},0) = \bigl[\Phi_{\gamma'}(z_1,\dots,z_{\ell-1}), e\bigr].$$ Note that this is gives section of the $\P^1$-fibration $$\cO^{\gamma'} \to \cO^{(\gamma',-i_\ell)}\subset \cO^{\gamma'} \times \P^1,\quad p'=\Phi_{\gamma'}(z_1,\dots,z_{\ell-1}) \mapsto \bigr[\Phi_{\gamma'}(z_1,\dots,z_{\ell-1}),e \bigl].$$ Since $p'\in \cO^{\gamma'} \cap \cO^{-{\gamma'}}$, we can re-trivialise in of coordinates for $\cO^{-\gamma'}$,
		$$p = \Phi_{(-\gamma',-i_\ell)}\bigl(\zeta_1(z_1),\zeta_2(z_1,z_2),\dots,\zeta_{\ell-1}(z_1,\dots,z_{\ell-1}),0\bigr) \in \cO^{-{\gamma}}$$ to show that $p\in \cO^{-\gamma}$.
		Here, the $\zeta_i$ denote the change of coordinate functions between $\cO^{\gamma'}$ and $\cO^{-{\gamma'}}$. The last coordinate can be seen to remain as zero since the $B$-action arising from the $\zeta$ changes will commute past the identity element.
		
		In the other case, we have $\gamma_\ell=-i_\ell$. Again trivialising in the coordinates for $\cO^{\gamma'}$, our condition $p\notin \cO^\gamma$ implies we must have $$p=\Phi_{(\gamma',i_\ell)}(z_1,\dots,z_{\ell-1},0) = \bigl[\Phi_{\gamma'}(z_1,\dots,z_{\ell-1}), \dot{s}_{i_\ell}\bigr].$$ Again, we note that this gives a section of the $\P^1$-fibration $$ \cO^{\gamma'} \to \cO^{(\gamma',i_\ell)}\subset \cO^{\gamma'} \times \P^1,\quad  p' = \Phi_{\gamma'}(z_1,\dots,z_{\ell-1})\mapsto \bigl[\Phi_{\gamma'}(z_1,\dots,z_{\ell-1}), \dot{s_{i_\ell}}\bigr].$$ Since, $p'\in \cO^{\gamma'} \cap \cO^{-{\gamma'}}$, we can re-trivialise $p$ in terms of coordinates for $\cO^{-{\gamma'}}$, to get
		$$p = \Phi_{-{\gamma}}\left(\zeta_1(z_1),\zeta_2(z_1,z_2),\dots,\zeta_{\ell-1}(z_1,\dots,z_{\ell-1}), \zeta_\ell(z_1,\dots,z_{\ell-1})\right) \in \cO^{-{\gamma}}$$
		where the $\zeta_i$ for $1\leq i\leq\ell-1$ denotes the change of coordinate functions between $\cO^{\gamma'}$ and $\cO^{-\gamma'}$, and $\zeta_\ell$ is the appropriate coordinate change arising from moving the Borel action rightwards.
		In general it will be the parameter arising from the one parameter subgroup associated to the root $i_\ell$ up to a Laurent monomial in the nonzero functions defining the $\cO^{\gamma'} \cap \cO^{-\gamma'}$.
		
		Thus, the charts $\cO^{\gamma}$ and $\cO^{-\gamma}$ are fibrewise distinct $\mathbb{A}^1$-fibrations which cover the $\P^1$-fibration, and whose complements are non-intersecting sections in a trivial $\P^1$-fibration over the torus $\cO^{\gamma'} \cap \cO^{-\gamma'} \cong \mathbb{G}_m^{\ell-1}$. They must intersect to a trivial $\mathbb{G}_m$-fibration over $\mathbb{G}_m^{\ell-1}$. Thus, $\cO^{\gamma}\cap \cO^{-{\gamma}} \cong \mathbb{G}_m^{\ell}$.
	\end{proof}
	
	Most notably, the formula (\ref{eqn:StdPoissonStr}) says the Bott--Samelson coordinates for the ``unsupported chart'' $\cO^{-\underline{w}} = \cO^{(-i_1,,-i_2,\dots,-i_\ell)}$ are automatically log-canonical, and therefore the torus $\{z_j\neq0\}$ is a log-canonical torus.
	
	\begin{cor}\label{cor:emptytorusislogcanonical}
		The torus $\cO^{-\underline{w}}\cap \cO^{\underline{w}}$ is given by the complement of the coordinate hyperplanes in $\cO^{-\underline{w}}$, and is hence a log-canonical torus in $\cO^{\underline{w}}$.
	\end{cor}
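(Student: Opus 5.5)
Two things have to be shown: that $\cO^{-\underline{w}}\cap\cO^{\underline{w}}$ is exactly the locus $\{z_1\cdots z_\ell\neq 0\}$ inside $\cO^{-\underline{w}}$ (with $z_j$ the Bott--Samelson coordinates of that chart), and that this torus is log-canonical, hence a log-canonical torus of $\cO^{\underline{w}}$.

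\textbf{Log-canonicity.} This part is immediate from Lemma \ref{lem:StdPoissonStr}. For $\gamma=-\underline{w}$ every letter is unsupported, so for $j<k$ the first case of (\ref{eqn:StdPoissonStr}) always applies. Moreover every partial product $\gamma^j$ is $e$, so
$$\{z_j,z_k\}=\langle\alpha_{i_j},\alpha_{i_k}\rangle z_jz_k .$$
These structure constants are integers, since $\langle\alpha_i,\alpha_j\rangle=a_{ij}d_j$. Hence the coordinates $z_1,\dots,z_\ell$ of $\cO^{-\underline{w}}$ are log-canonical, and so is any torus of the form $\{z_j\neq0\}$ in them.

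\textbf{Identifying the intersection.} I will not use a separate induction for this; I will make the computation in the previous lemma's proof explicit, working one factor at a time. A point of $\cO^{-\underline{w}}$ is $[u_{-i_1}(z_1),\dots,u_{-i_\ell}(z_\ell)]$. The basic $\SL_2$ identity is that, for $z\neq0$,
$$u_{-i}(z)=u_i(z^{-1})\,\dot s_i\,\alpha_i^\vee(z)\,u_i(z^{-1}),$$
up to the sign conventions of the pinning. When $z=0$, $u_{-i}(0)=e\in B$, and the corresponding flag equals the previous one, so the point is not in $\cO^{\underline{w}}$. I then push the Borel element $b_1=\alpha_{i_1}^\vee(z_1)u_{i_1}(z_1^{-1})$ rightwards into the second factor. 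Conjugating $u_{-i_2}(z_2)$ by the torus part rescales $z_2$ by the monomial $z_1^{\pm a}$. The unipotent part $u_{i_1}(\cdot)$ needs more care, and this is the main obstacle. If $i_1\neq i_2$, then $u_{i_1}(c)$ commutes with $u_{-i_2}$, so it passes through. If $i_1=i_2$, the product $u_{i_1}(c)\,u_{-i_1}(z_2)$ is no longer of the form $u_{-i_1}(\cdot)\cdot b$ with $b\in B$ without dividing by $1+cz_2$. So the clean monomial bookkeeping only works literally when the unipotent parts can be absorbed.

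\textbf{Handling the obstacle.} I will use the iterated $\P^1$-fibration instead. Over any base point, $\cO^{\underline{w}}$ and $\cO^{-\underline{w}}$ each remove one point of the fibre $\P^1$. The last paragraph of the previous lemma's proof shows these two removed points are disjoint sections over $\cO^{\gamma'}\cap\cO^{-\gamma'}$. In the $\cO^{-\underline{w}}$-trivialisation, the removed point of $\cO^{\underline{w}}$ is the section $z_\ell=0$: the position $z_\ell=0$ gives $\mathcal B_{\ell-1}=\mathcal B_\ell$, which is forbidden in $\cO^{\underline{w}}$. By induction, the intersection is therefore cut out in $\cO^{-\underline{w}}$ by $z_1\cdots z_{\ell-1}\neq0$ together with $z_\ell\neq0$. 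This is exactly the complement of the coordinate hyperplanes, and combined with the log-canonicity above it proves the corollary.
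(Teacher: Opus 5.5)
Your proposal is correct and follows essentially the same route as the paper. Log-canonicity comes from the first case of the Elek--Lu formula (\ref{eqn:StdPoissonStr}) on the all-unsupported chart $\cO^{-\underline{w}}$, and the identification of the intersection is the iterated $\P^1$-fibration induction of the preceding lemma, specialised to $\gamma=-\underline{w}$. The Borel-pushing computation you abandoned was never needed: membership of $[u_{-i_1}(z_1),\dots,u_{-i_\ell}(z_\ell)]$ in $\cO^{\underline{w}}=\mathring{Z}_{\underline{w}}$ is a $B^\ell$-invariant condition checked factor by factor, namely $u_{-i_j}(z_j)\notin B$, i.e.\ $z_j\neq 0$ for all $j$.
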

	
	\begin{rem}
		This is the torus corresponding to the seed $\seed(\underline{w})$, see Lemma \ref{lem:monomialchangematrix}.
		This gives a quick verification that the initial clusters of \citep{SW21,CGGLSS25,GLSB25} in $\cO^{\underline{w}}$ are log-canonical with the standard Poisson structure.
		Furthermore, in $\cO^{-\underline{w}}$, since this torus is the complement of the coordinate hyperplanes, if we take each $\zeta_i$ to be frozen, then we trivially have $\cA = \mathcal{U} = \C[\cO^{-\underline{w}}]$, for an initial seed comprised solely of noninvertible frozens.
	\end{rem}

	We can determine the complementary hypersurfaces for this torus in $\cO^{\underline{w}}$ by finding the change of coordinates from $\cO^{\underline{w}}$ to $\cO^{-\underline{w}}$. To do this, we iteratively invert the leftmost coordinate and modify the other functions accordingly. This rational map factors as a composition of $\ell$ changes of coordinates through adjacent Bott--Samelson charts
	$$\cO^{\underline{w}} = \cO^{(i_1,i_2,\dots,i_\ell)} \dashrightarrow \cO^{(-i_1,i_2,\dots,i_\ell)} \dashrightarrow \cO^{(-i_1,-i_2,\dots,i_\ell)}\dashrightarrow \cdots \dashrightarrow \cO^{-\underline{w}}.$$
	This idea of finding rational maps which are defined on a log-canonical torus is the basis of our interpretation of Demazure weaves, and we expanded upon in later sections.

	\section{Demazure weaves, framed weaves, and opening charts}\label{sec:classical weaves}
	
	In this section we recall the key constructions of Demazure weaves from \citep{CGGLSS25}, and verify their compatibility with the standard Poisson structure.

	\subsection{Demazure weaves}\label{subsec:DemazureWeaves}	
	A \emph{Demazure weave} \citep[\S4.1]{CGGLSS25} is a planar graph consisting of $I$-coloured edges and vertices of specific valences arising from the Dynkin diagram and computations with braid matrices. Figure \ref{fig:classicalweavevertices} below is recreated from \citep{CGGLSS25} for the purposes of comparison with the next section.

	\begin{figure}[htbp]
		\centering
		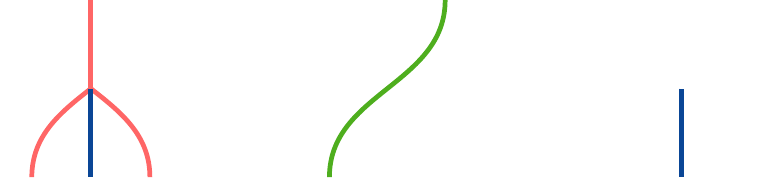
		\caption{The three types of vertices in classical Demazure weaves of simply-laced type. We keep to the convention in \citep{CGGLSS25} of using {\textbf{\textcolor{S1}{blue}}}, {\textbf{\textcolor{S2}{red}}}, and {\textbf{\textcolor{S3}{green}}} to denote edges labelled $i,j,k$ where $i$ and $j$ are adjacent roots, and $i$ and $k$ are not adjacent.}
		\label{fig:classicalweavevertices}
	\end{figure}

	Each (generic) horizontal slice of a Demazure weave is a word with letters in $I$. Demazure weaves diagrammatically capture the data of a sequence of rational maps between Bott--Samelson cells. In \citep{CGGLSS25}, the hexavalent and tetravalent vertices correspond to biregular isomorphisms between cells arising from applying braid moves to the words. The trivalent vertices or ``{pinches}'' correspond to rational maps defined on the non-vanishing of a particular rational function.
	In this way, each strand carries a rational function in the initial Bott--Samelson coordinates $z_1,\dots,z_\ell$. The dotted ray propagating rightward from the trivalent vertex is called a \emph{raking ray} (see \citep[Definition 5.1]{CGGLSS25}), and is used to keep track of how to modify each variable to the right\footnote{The conventions for Demazure weaves in \citep{CGGS24} and \citep{CGGLSS25} differ by a horizontal flip and transposing. We follow the latter, as it matches the construction of Bott--Samelson charts in \citep{EL21}.} of a trivalent vertex.
	
	The respective matrix identities for the three vertices in Figure \ref{fig:classicalweavevertices} are
	\begin{align}
		B_i(z_1)B_j(z_2)B_i(z_3) &= B_j(z_3)B_i(z_1z_3-z_2)B_j(z_1),\\  B_i(z_1)B_k(z_2) &= B_k(z_2)B_i(z_1), \text{ and}\\ B_i(z_1)B_i(z_2) &= B_i(z_1-z_2^{-1})\alpha^\vee_i(z_2)u_{i}(-z_2^{-1}).
	\end{align}
	Here the element $\alpha^\vee_i(z_2)u_{i}(-z_2^{-1})$ of the Borel subgroup can propagate rightwards and modify the other coordinates appropriately.
	
	\begin{rem}
		In non-simply-laced type, higher valency vertices are needed. These correspond to relations between roots of a $B_2$ or $G_2$ sub-root system. See \citep[\S6.1]{CGGLSS25}.
	\end{rem}
	
	It is shown that over a whole weave, each pinch only adds one new irreducible component to the indeterminacy locus of the overall composed map. The other zeroes or poles were previously acquired on previous pinches, and so these can be collected to obtain a toric chart on the braid variety (or on Bott--Samelson cells).
	
	Weaves can be interpreted as giving conditions for a moduli of flags. Given a weave $\mathfrak{w}$ we can label each region in the plane with a flag $\cB$ such that whenever two flags are separated by an $i$-coloured strand, they are in relative position $s_i$. If $g\cdot B$ is the flag to the left of an $i$-coloured strand with variable $z$, then we want the flag on the right of the strand to be equal to $gB_i(z)\cdot B$. In this sense, the flags along top-most slice of a Demazure weave correspond to the open condition of the Bott--Samelson cell. After the variables at the bottom-most strands are declared to be zero, the flags along the bottom-most slice of a weave contribute the closed condition for the braid variety: that the left-most and right-most flags differ by exactly $\delta(\beta)$. The subset $M(\mathfrak{w})$ of the braid variety where the interior regions can be filled by flags will be a cluster torus.
	
	\begin{rem}
		For open Richardson varieties $\mathring{Z}^v_{\underline{w}} = \mathring{Z}_{\underline{w}}\cap m^{-1}(B^{-}vB)$, we would need to impose the condition that the last flag is in the $B^-$ orbit of $vB$, but this construction of weaves only allows us to impose a condition of being in the fibre $m^{-1}(vB/B)$ of torus fixed points. These coincide for the longest element $w_0$, but not for other elements $v$. This was one inspiration for the ``negative strands'' of the later sections.
	\end{rem}

	\subsection{Toric charts and mutations of weaves}
	There is a collection of equivalence relations between weaves, which can be found in \citep[\S4.2]{CGGLSS25}. The key property of weave equivalences $\mathfrak{w}_1 \sim \mathfrak{w}_2$ are that the two composed maps between Bott--Samelson cells map are equal and are in particular defined on the same open torus. It is checked in \citep[\S5]{CGGS24} that two equivalent weaves give the same tori.
	
	\begin{lem}[\protect{\citep[Lemma 4.4]{CGGLSS25}}]\label{lem:allweavecharts}
		Fix a word for $\delta(\underline{w})$. Any two weaves $\mathfrak{w}_1, \mathfrak{w}_2: \underline{w} \to \delta(\underline{w})$ are related by a sequence of weave equivalences and mutations.
	\end{lem}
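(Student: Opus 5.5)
The statement is Lemma~\ref{lem:allweavecharts}, cited from \citep[Lemma 4.4]{CGGLSS25}. I would prove it by reducing an arbitrary Demazure weave to a normal form: the right inductive weave (or, dually, the left inductive one) for $\underline{w}$ ending at the fixed reduced word for $\delta(\underline{w})$. It then suffices to show that every weave $\mathfrak{w}:\underline{w}\to\delta(\underline{w})$ is related to this normal form by weave equivalences and mutations. Two arbitrary weaves are then related through the normal form.

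The argument proceeds by induction on the length $\ell$ of $\underline{w}$.
\begin{enumerate}[(1)]
\item Read a given weave $\mathfrak{w}$ from top to bottom. Isolate the strand starting at the last letter $i_\ell$ of $\underline{w}$.
\item Use the equivalences of \citep[\S4.2]{CGGLSS25}: planar isotopy, the Zamolodchikov-type relations among hexavalent and tetravalent vertices, and the commutation of trivalent vertices past braid-move vertices. With these, push all vertices not involving this last strand upward, so that $\mathfrak{w}$ becomes a weave $\mathfrak{w}':\underline{w}[\ell-1]\to \delta(\underline{w}[\ell-1])$, followed by a weave from $(\delta(\underline{w}[\ell-1]), i_\ell)$ to $\delta(\underline{w})$.
\item Apply induction to $\mathfrak{w}'$, replacing it with the inductive normal form.
\item Normalise the remaining bottom part. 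This part is a weave from a reduced word followed by a single letter to $\delta(\underline{w})$. It splits into two cases:
\begin{itemize}
\item if $\ell(\delta(\underline{w}[\ell-1])s_{i_\ell})$ increases, it consists of braid moves only;
\item otherwise it consists of braid moves to bring $i_\ell$ adjacent to a matching letter, then a single pinch.
\end{itemize}
Braid-move-only weaves between reduced words are equivalent whenever their endpoints agree, by Tits' theorem and the Zamolodchikov relations (Manin--Schechtman). In the pinch case, the choice of braid path is handled the same way up to equivalence.
\end{enumerate}

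The main obstacle is step (2). A trivalent vertex cannot always be slid past another trivalent vertex by equivalences alone. When two pinches are stacked so that they form the configuration \emph{trivalent over trivalent in the same colour} (or its hexavalent-conjugated forms), exchanging their order is exactly a weave mutation, not an equivalence. That is where mutations enter. So I would first classify the local obstructions to moving a pinch upward past another vertex. I would show that the only non-equivalence case is this pair-of-pinches configuration, and that it is resolved by the mutation move of \citep[\S4.2]{CGGLSS25}. Every other local configuration is an equivalence.

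To guarantee termination, I would induct on a complexity measure: the number of vertices lying below the last strand's first interaction. Each local move strictly decreases this measure.
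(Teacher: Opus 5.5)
The paper gives no proof of this statement; it quotes \citep[Lemma 4.4]{CGGLSS25}, so your argument has to stand on its own. Your outline is sensible: the inductive weave as normal form, Tits plus Manin--Schechtman for braid-only weaves, and mutation entering only through the two trees $(i,i,i)\to(i)$. But step (2), which you flag as the main obstacle, fails as stated.

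A strand has no colour-preserving continuation through a hexavalent vertex, and it merges at a trivalent vertex. So ``vertices not involving the last strand'' can only mean vertices not downstream of it. A vertex can, however, take inputs both downstream of $i_\ell$ and from the still-unreduced part of $\underline{w}[\ell-1]$, and such a vertex cannot be moved up by isotopy or commutation. Concretely, take $\underline{w}=(i,i,j,i)$ with $i,j$ adjacent, and the weave that applies a hexavalent vertex on positions $2$--$4$, then a hexavalent vertex on positions $1$--$3$, then pinches the two $j$'s, then applies a final hexavalent vertex. Every vertex is downstream of the last strand, so your procedure does nothing. Yet the weave never pinches the two $i$'s of $\underline{w}[3]=(i,i,j)$. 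So it is not of the form $\underline{w}[\ell-1]\to\delta(\underline{w}[\ell-1])$ followed by a weave out of $(\delta(\underline{w}[\ell-1]),i_\ell)$. Relating it to the factored weave (a single pinch of the first two letters) needs a genuine relation that slides a trivalent vertex through hexavalent vertices, and that relation changes the number of vertices.

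Given the inductive hypothesis, the factorisation in (2) is therefore equivalent to the lemma itself. Two further claims are asserted rather than proved: that the only non-equivalence is the pair of same-colour pinches, and that your complexity measure decreases. The rewrites you actually need create new vertices, so the measure need not drop.

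The same problem recurs in the second bullet of step (4). A weave from $(\underline{v},i)$ with $vs_i<v$ to the fixed word for $v$ can perform braid moves involving the final $i$ on the \emph{non-reduced} word, and then pinch a different colour. On $(i,j,i,i)$, compare the single pinch of the last two $i$'s with the sequence: hexavalent, hexavalent, pinch of the two $j$'s, hexavalent. Tits' theorem and Manin--Schechtman only govern braid paths between reduced expressions, so they do not show these two weaves agree.

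What is missing in both places is an explicit set of local relations between trivalent vertices and braid-move vertices (trivalent through hexavalent, and through tetravalent), together with an actual coherence argument. For example, induct on the number $\ell(\underline{w})-\ell(\delta(\underline{w}))$ of trivalent vertices. Show that any two choices of ``braid path followed by a first pinch'' can be connected by isotopy (disjoint pinches), mutation (overlapping same-colour pinches) and these trivalent--braid relations, and then apply induction to what remains.
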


	\subsection{Opening sequences of crossings}
	Some of the ideas in this work appeared previously in \citep{CGGS24}, but our interpretations using Bott--Samelson charts are new. In \citep[\S2.3]{CGGS24}, a number of toric charts are constructed on braid varieties based on ``opening the crossings'' in the braid word. This can be thought of as a sequence of maps between Bott--Samelson cells where we remove one letter at a time (in some order), and the complement of the indeterminacy locus gives the torus. Different orderings might yield the same or different toric charts. In \citep[\S5.3]{CGGS24}, it is shown that these toric charts obtained by can be realised as the complements of the indeterminacy loci of Demazure weaves.
	
	One way to think of a sequence of opening crossings (or its associated Demazure weaves) is to consider it an oriented $\ell$-dim hypercube \citep[Figure 9]{CGGS24}, with vertices labelled by the subexpressions $\gamma \in \Upsilon_{\underline{w}}$ and edges between adjacent subexpressions oriented $(\gamma',i_j,\gamma'')\to (\gamma',-i_j,\gamma'')$. Travelling along an edge corresponds to opening the crossing at that position, and there are $\ell!$ of these paths from the vertex $\underline{w}$ to $-\underline{w}$.
	This hypercube is closely related to the graph of adjacent Bott--Samelson charts.
	
	\subsection{Quivers from weaves}
	A procedure is given in \citep{CGGLSS25} to construct a quiver from a particular class of cycles on weaves called \emph{Lusztig cycles}. Constructing these cycles requires a more detailed labelling on the weave and instead corresponds to a moduli of framed flags.
	
	Elements of the basic affine space $G/U$ are called \emph{framed flags} in \citep{CGGLSS25}, \emph{decorated flags} in \citep{SW21},  and \emph{weighted flags} in \citep{GLSB25}. There is a natural projection $G/U \to G/B$ whose fibres are isomorphic to the torus $T=B/U$. If a flag is an equivalence class of an ordered sequence of linearly independent vectors, lifting this flag to $G/U$ is a choice of the lengths of these vectors (i.e. basis of these one dim subspaces).
	\begin{lem}[\protect{\citep[Proposition 3.11]{CGGLSS25}}]\label{lem:framedsolidparameters}
		For $g\cdot U\in G/U$ a framed flag and parameters $z,z'\in \C$, and $x,x'\in \C^*$, if $gB_i(z)\alpha_i^\vee(x) \cdot U = gB_i(z')\alpha_i^\vee(x') \cdot U$, then $z=z'$ and $x=x'$.
	\end{lem}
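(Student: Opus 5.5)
The plan is to reduce the statement to a computation inside the image of $\varphi_i:\SL_2\to G$, and then to finish with the uniqueness of the Gauss decomposition $U^-\times T\times U\to G$.

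First, cancel the common left factor. The equality of cosets $gB_i(z)\alpha_i^\vee(x)\cdot U = gB_i(z')\alpha_i^\vee(x')\cdot U$ is equivalent to
$$h:=\alpha_i^\vee(x)^{-1}B_i(z)^{-1}B_i(z')\,\alpha_i^\vee(x')\in U.$$
Here $B_i(z)=u_i(z)\dot{s}_i$ is the one-letter braid matrix of Definition \ref{defn:braidmatrix}. Every factor of $h$ lies in $\varphi_i(\SL_2)$, so we can compute with $2\times 2$ matrices. We have $B_i(z)^{-1}B_i(z')=\dot{s}_i^{-1}u_i(z'-z)\dot{s}_i$. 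Conjugating an upper unipotent matrix by $\begin{pmatrix}0&-1\\1&0\end{pmatrix}$ gives a lower unipotent one, so this equals $u_{-i}(-(z'-z))$. Pushing the torus factor through then gives
$$h=u_{-i}\bigl(c\bigr)\,\alpha_i^\vee(x^{-1}x')\qquad\text{for some } c\in\C \text{ with } c=0 \iff z=z'.$$
The exact value of $c$ is an explicit nonzero multiple of $(z-z')$, involving a power of $x$; this is routine bookkeeping and is not needed.

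Next, note that $h$ now lies in $U^-T$ and, by assumption, also in $U$. Multiplication $U^-\times T\times U\to G$ is injective onto the open cell $U^-TU=U^-B$. So an element of $U^-T\cap U$ has trivial $U^-$ and $T$ components. This forces $u_{-i}(c)=e$ and $\alpha_i^\vee(x^{-1}x')=e$.

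Finally, $u_{-i}$ is an isomorphism onto $U_{-\alpha_i}$, so $c=0$ and hence $z=z'$. For the torus part, $\alpha_i^\vee(y)$ is the image of $\mathrm{diag}(y,y^{-1})$ under $\varphi_i$, so we need $\varphi_i$ to be injective on the diagonal torus. I expect this to be the only mildly delicate step, since $\alpha_i^\vee$ might fail to be injective, for instance in adjoint groups. If injectivity fails, I would argue directly: compose with a representation in which $\alpha_i^\vee$ acts faithfully, for example one with $\langle\omega_i,\alpha_i^\vee\rangle=1$, which exists in the simply connected setting implicit in the pinning. This gives $x=x'$.
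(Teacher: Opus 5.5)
Your proof is correct and is essentially the argument the paper gives for the frayed analogue of this lemma; for this statement itself the paper only cites \citep[Proposition 3.11]{CGGLSS25}. Conjugating by $\dot{s}_i$ turns $B_i(z)^{-1}B_i(z')$ into $u_{-i}(z-z')$, so $h=u_{-i}\bigl(x^2(z-z')\bigr)\alpha_i^\vee(x^{-1}x')\in U^-T\cap U=\{e\}$.

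One correction to your final step: there is no fallback when $\alpha_i^\vee$ is not injective, because then the lemma is simply false (in $\mathrm{PGL}_2$ take $x'=-x$). So simple connectivity, implicit in the paper's use of the minors $\Delta_{\omega_i,\omega_i}$, is a genuine hypothesis, and under it $\omega_i\circ\alpha_i^\vee=\mathrm{id}$ gives $x=x'$ at once.
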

	In \citep{SW21}, flags of of the above form where $x=1$ are called \emph{compatible}.
	Following \citep[Definitons 5.3, 5.8]{CGGLSS25}, a \emph{framed labelling} $(\mathfrak{w},\zeta)$ is an assignment of two rational functions $(z,x)$ are assigned to each edge on the weave $\mathfrak{w}$, so that an $i$-coloured strand separates two framed flags $g\cdot U$ and $gB_i(z)\alpha_i^\vee(x) \cdot U$ as described by the lemma above. Labels propagate down the weave in a manner consistent with the relations (or their framed counterparts) represented by the vertices of the weave. At hexavalent vertices, there is a $\C^*$ choice of how to propagate the torus components, and a tropicalisation of braid identities in Lusztig coordinates is used to determine the propagation.
	
	For any trivalent vertex $v\in \mathfrak{w}$, there is a unique Lusztig cycle $\gamma_v:E(\mathfrak{w})\to \Z_{\geq0}$ which originates from $v$. This is defined by propagation rules of \emph{Lusztig cycles} defined in \citep[Definition 4.9]{CGGLSS25} (see also \citep[Definition 4.17]{CGGSSBS25}). 
	
	\begin{thm}[\protect{\citep[Theorem 5.12]{CGGLSS25}}]
		For a weave $\mathfrak{w}$ and $\mathfrak{w}_3$ its set of trivalent vertices, there is a unique collection of rational functions $\{A_v\}_{v\in \mathfrak{w}_3}$ and a unique framed labelling $(\mathfrak{w},\zeta)$ of the weave such that the torus label $x_e$ of every edge $e$ is given by $$x_e = \prod_{v\in\mathfrak{w}_3} A_v^{\gamma_v({e})},$$
		where $\gamma_v$ is the Lusztig cycle originating from $v$.
	\end{thm}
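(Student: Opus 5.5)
The plan is to construct the framed labelling and the functions $A_v$ together, by induction down the weave, processing vertices in order of decreasing height. At the top slice the flags are fixed: $\cB_0=B$ is framed as $U$, and each top strand carries the Bott--Samelson coordinate $z_j$ with torus label $x=1$, i.e.\ the compatible framing of \citep{SW21}. No Lusztig cycle reaches the top edges, because Lusztig cycles originate at trivalent vertices and propagate downward. So the required formula reads $x_e=1$ on every top edge, and it holds.

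Inductively, assume every edge above some horizontal slice carries a label $(z_e,x_e)$ with $x_e=\prod_{v} A_v^{\gamma_v(e)}$, using only the $A_v$ for trivalent vertices already crossed. The next vertex is handled according to its type.

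\textbf{Tetravalent vertices.} The commutation identity transports both labels unchanged. Lusztig cycles also pass straight through, so the formula is preserved.

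\textbf{Trivalent vertices.} Apply the identity $B_i(z_1)B_i(z_2)=B_i(z_1-z_2^{-1})\alpha_i^\vee(z_2)u_i(-z_2^{-1})$ in its framed form. The torus element $\alpha_i^\vee$ that emerges forces the outgoing torus label to be the product of the incoming labels times one new factor. We \emph{define} $A_v$ to be that factor, namely $z_2$ corrected by the incoming torus labels. The cycle $\gamma_v$ starts at $v$ with value $1$ on the outgoing edge, so the formula holds there. The remaining Borel part propagates to the right along the raking ray. It changes only the $z$-labels of strands further right, together with torus labels in a way already accounted for by the cycles passing through.

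\textbf{Hexavalent vertices.} Here Lemma \ref{lem:framedsolidparameters} is essential. The framed flags in the surrounding regions are already determined, so the labels on the three outgoing edges are forced. The solid-parameter uniqueness shows that both $z$ and $x$ are determined once the flags are. The main obstacle is to check that these forced torus labels agree with the Lusztig-cycle propagation rule, i.e.\ with the tropicalised braid move in Lusztig coordinates, $(a,b,c)\mapsto(b+c-\min(a,c),\min(a,c),a+b-\min(a,c))$. I would first reduce to $\mathrm{SL}_3$ and compute $B_i(z_1)\alpha_i^\vee(x_1)B_j(z_2)\alpha_j^\vee(x_2)B_i(z_3)\alpha_i^\vee(x_3)$ explicitly, rewriting it in the order $jij$. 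I would then show that the resulting torus labels are monomials in $x_1,x_2,x_3$ whose exponents match the cycle rule on the generic stratum where the weave is defined. If they fail to match for a general labelling, the fallback is to use the $\C^*$ freedom noted above: since the $A_v$ are free generators coming from previous pinches, choose the propagation that realises the min-plus rule, and then verify consistency with the downstream flags.

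\textbf{Uniqueness.} This is also by induction. At every step the labels are determined by the framed flags, again by Lemma \ref{lem:framedsolidparameters}. Each $A_v$ is determined as the unique new factor on the outgoing edge of $v$, because $\gamma_v$ takes value $1$ there and every other cycle through that edge originates at an earlier vertex. The exponent matrix is therefore unitriangular with respect to the height order, which pins down each $A_v$ uniquely.
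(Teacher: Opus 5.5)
Your top-down induction, your definition of $A_v$ as the new factor on the outgoing edge of a pinch, and your unitriangularity argument are the right skeleton. The paper itself gives no proof; it only quotes \citep[Theorem 5.12]{CGGLSS25}. The gap is the hexavalent step, where the outgoing labels are \emph{not} forced. The two outer regions keep their framed flags. The two new regions below the vertex carry flags whose cosets in $G/B$ are determined, but whose framings are not. Lemma \ref{lem:framedsolidparameters} only recovers an edge label from the framed flags on its two sides; it cannot frame a new region. Concretely, compare torus parts: the two sides give $\alpha_i^\vee(x_2x_3)\alpha_j^\vee(x_1x_2)$ and $\alpha_i^\vee(x_1'x_2')\alpha_j^\vee(x_2'x_3')$ respectively. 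This yields only the two constraints $x_1x_2=x_2'x_3'$ and $x_2x_3=x_1'x_2'$ of Lemma \ref{lem:framedfrayedbraididentitiesrow1}(a)(i), so $x_2'\in\C^*$ is free. This is exactly the $\C^*$ ambiguity the paper points out just before the statement. Your ``main obstacle'' is therefore ill-posed, and the outcome you hope for cannot occur. The min rule is applied cycle by cycle, so the correct $x_2'$ depends on how $x_1$ and $x_3$ factor into the $A_v$; compare $x_1=x_3=A_u$ with $x_1=A_u$, $x_3=A_w$. It is not a fixed monomial in $x_1,x_2,x_3$. Your uniqueness argument inherits the same error, since it again assumes the framed flags are determined.

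Your fallback is the actual proof, but it needs the one check you omit. For each cycle separately, the tropical rule $(a,b,c)\mapsto(b+c-m,\,m,\,a+b-m)$ with $m=\min(a,c)$ satisfies $a+b=m+(a+b-m)$ and $b+c=(b+c-m)+m$. So set $x_2'=\prod_v A_v^{\min(\gamma_v(e_1),\gamma_v(e_3))}$, $x_1'=x_2x_3/x_2'$ and $x_3'=x_1x_2/x_2'$; the identity then determines $z_1',z_2',z_3'$. This gives a valid framed labelling whose torus labels are the Lusztig-cycle monomials. Nothing needs checking downstream, because this choice only changes framings of the new regions, never the underlying flags or the outer regions. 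Uniqueness must be argued the other way round. At a hexavalent vertex the required formula forces $x_2'$, since only $A_u$ with $u$ above the vertex occur, and the identity then forces the remaining labels. At trivalent and tetravalent vertices the outgoing labels genuinely are determined by the incoming ones. Finally, in the framed setting the element left over at a pinch lies in $U_{\alpha_i}$, because the torus factor is absorbed into the outgoing label. So the raking ray changes only $z$-labels and leaves every torus label to its right unchanged. Your remark that it also changes torus labels ``in a way already accounted for by the cycles'' should be replaced by this.
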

	
	The quiver from a weave is defined by adding up local contributions arising from each vertex, together with a contribution from the bottom boundary of the weave. These have a topological interpretation where these are intersection numbers of particular cycles on surfaces, see \citep[\S~7.4]{CGGLSS25}.
	From \citep[\S6.1]{CGGLSS25}, the intersections between Lusztig cycles (and dual Lusztig cycles) at a slice $\gamma$ of length $r$ in a weave is given by
	\begin{equation*}
		\sharp_\gamma(C^\vee\cdot C') = \frac{1}{2}\sum_{j,k=1}^r\mathrm{sign}(j-k)c_j^\vee c_k\cdot \langle\gamma^j(\alpha_{i_j}),\gamma^k(\alpha_{i_k})\rangle.
	\end{equation*}
	It's shown in \citep[Corollary 4.45]{CGGLSS25} that the quiver arising from this intersection form coincides with the quiver in \citep{SW21} for the open part of the Bott--Samelson cell $\mathring{Z}_{\underline{w}} \cap m^{-1}(B^-B/B)$. The coincidence of the cluster variables is shown in \citep[Proposition 5.20]{CGGLSS25}.
	
	\subsection{Weave tori are log-canonical in the standard Poisson structure}\label{subsec:Poisson weaves}
	Equipping the Bott--Samelson cells with the standard Poisson structure, we have the following results.
	\begin{prop}\label{prop:Poissonmapsbetweenshortcells}
		The maps between $2$ and $3$ dimensional Bott--Samelson cells  corresponding to tetravalent and hexavalent vertices are Poisson isomorphisms.
	\end{prop}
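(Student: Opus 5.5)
We will prove this structurally rather than by brute coordinate computation, since the standard Poisson structure on a Bott--Samelson variety is defined intrinsically. For a word $\underline{w}$, the structure on $Z_{\underline{w}}$ is the pushforward of $(\pi_{\mathrm{st}})^\ell$ along the quotient $P_{\underline{w}}\to Z_{\underline{w}}$. For two words related by a braid move, say $(i,j,i)$ and $(j,i,j)$ with $i,j$ adjacent, or $(i,k)$ and $(k,i)$ with $i,k$ non-adjacent, both Bott--Samelson cells map isomorphically onto the same Schubert cell $B\dot{w}B/B \subset G/B$ via the multiplication map $m$. Here $w=s_is_js_i$ or $s_is_k$, and the words are reduced. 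The isomorphism attached to a hexavalent or tetravalent vertex is exactly the composite of one such identification with the inverse of the other. This is the content of the identities $B_i(z_1)B_j(z_2)B_i(z_3)=B_j(z_3)B_i(z_1z_3-z_2)B_j(z_1)$ and $B_i(z_1)B_k(z_2)=B_k(z_2)B_i(z_1)$, which say the two maps commute with $m$ on the nose, with no Borel correction.

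It therefore suffices to show that, for a reduced word $\underline{w}$, the map $m:\mathring{Z}_{\underline{w}}\to B\dot{w}B/B$ is Poisson. The target carries the standard Poisson structure on $G/B$, the quotient of $\pi_{\mathrm{st}}$, which is well defined because $B$ is a Poisson subgroup. Poissonness of $m$ follows from multiplicativity. The product map $P_{i_1}\times\cdots\times P_{i_\ell}\to G$ is Poisson for $(\pi_{\mathrm{st}})^\ell$, since each $P_i$ is a Poisson subgroup and multiplication $G\times G\to G$ is Poisson, iterated. Composing with $G\to G/B$ gives a Poisson map that is $B^\ell$-invariant. It therefore descends through the Poisson submersion $P_{\underline{w}}\to Z_{\underline{w}}$ to a Poisson map $m:Z_{\underline{w}}\to G/B$. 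Restricted to the open cell, $m$ is an isomorphism onto the Schubert cell, which is a Poisson submanifold of $G/B$ because it is a $T$-orbit-union of symplectic leaves, or directly because it is $B$-stable. Hence $m$ is a Poisson isomorphism onto its image, and so is its inverse. The vertex maps, being composites $m_{\underline{w}'}^{-1}\circ m_{\underline{w}}$, are Poisson isomorphisms.

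The main obstacle is the descent step. We must check carefully that the quotient Poisson structure on $P_{\underline{w}}/B^\ell$, taken with respect to the twisted action (\ref{eqn:BSaction}), is compatible with the map to $G/B$. This amounts to verifying that $m$ intertwines the reduction procedures, namely that the functions pulled back from $G/B$ are $B^\ell$-invariant and that their brackets in $P_{\underline{w}}$ are computed by $(\pi_{\mathrm{st}})^\ell$. This is the Poisson-reduction statement underlying \citep{EL21}, and we would cite it there.

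As a safety net, the claim can also be confirmed directly from Lemma \ref{lem:StdPoissonStr}. For $\gamma=(i,j,i)$ and $(j,i,j)$ in type $A_2$, the brackets match those in Example \ref{eg:(1,2,1,-2)}, for instance $\{z_1,z_3\}=z_1z_3-2z_2$. A direct computation then shows that substituting $(z_3,z_1z_3-z_2,z_1)$ preserves all three brackets. The tetravalent case is the trivial check that $\{z_1,z_2\}=0$ on both sides, since $\langle\alpha_i,\alpha_k\rangle=0$ and the $\sigma$-term vanishes.
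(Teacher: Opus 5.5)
Your proposal is correct, but your main argument takes a different route from the paper's proof. The paper argues by direct computation, which is exactly your ``safety net''. It reads off the brackets on $\cO^{(i,j,i)}$ and $\cO^{(j,i,j)}$ from Lemma \ref{lem:StdPoissonStr}, namely $\{z_1,z_2\}=-z_1z_2$, $\{z_1,z_3\}=z_1z_3-2z_2$ and $\{z_2,z_3\}=-z_2z_3$ on both sides. It then checks the three pullbacks under $(z_1,z_2,z_3)\mapsto(z_3,z_1z_3-z_2,z_1)$, and the tetravalent case is trivial because all brackets vanish. Your primary argument makes rigorous the conceptual explanation that the paper only records in the remark after the proof: both cells are identified with the Schubert cell by $m$, and the vertex map is $\widetilde{m}^{-1}\circ m$.

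Two comments on your argument. First, the descent step you flag as the main obstacle is immediate. The structure on $Z_{\underline{w}}$ is by definition the pushforward of $(\pi_{\mathrm{st}})^\ell$, so the quotient $P_{\underline{w}}\to Z_{\underline{w}}$ is a surjective Poisson map. Lemma \ref{lem:Poissonfactoring}, applied to the $B^\ell$-invariant Poisson map $P_{\underline{w}}\to G\to G/B$, then shows $m$ is Poisson; this is the argument of Proposition \ref{prop:Poissonmapsbetweencells}. Second, $B$-stability alone does not make the Schubert cell a Poisson submanifold, since orbits of a Poisson action need not be Poisson submanifolds. You do not need that step, however. From $\widetilde{m}\circ f=m$, with $m$ and $\widetilde{m}$ Poisson and $\widetilde{m}$ an isomorphism onto its locally closed image, $f$ is Poisson directly. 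In fact the image of an injective Poisson immersion is automatically a Poisson submanifold.

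As for what each route buys: the paper's computation is self-contained given the Elek--Lu formula and produces explicit brackets that are reused later. It is, however, tied to the specific rank-$2$ simply-laced relations. Your argument needs no computation and applies verbatim to any braid relation between reduced words in any type, including the higher-valency $B_2$ and $G_2$ vertices. The cost is importing several facts: that $B$ and the $P_i$ are Poisson subgroups, that the quotient structure on $G/B$ exists, and that $\mathring{Z}_{\underline{w}}\to BwB/B$ is an isomorphism for reduced $\underline{w}$.
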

	\begin{proof}		
		For non-adjacent roots $i$ and $k$, the map $\cO^{\gamma}=\cO^{(i,k)}\to \cO^{(k,i)}= \cO^{\widetilde{\gamma}}$ is given by $ \Phi_{\gamma}(z_1,z_2)\mapsto \Phi_{\widetilde{\gamma}}(z_2,z_1)$. By the Elek--Lu formula in Lemma \ref{lem:StdPoissonStr}, the bracket between the coordinate functions is trivial on both the domain and codomain of this isomorphism, and so the map $\cO^{\gamma}\to \cO^{\widetilde{\gamma}}$ is a Poisson isomorphism.
		
		Now let $i$ and $j$ be adjacent roots and consider the map $\cO^{\gamma} = \cO^{(i,j,i)} \to \cO^{(j,i,j)} = \cO^{\widetilde{\gamma}}$ defined by $\Phi_{\gamma}(z_1,z_2,z_3) \mapsto \Phi_{\widetilde{\gamma}}(z_3,z_1z_3-z_2,z_1)$.
		Using Lemma \ref{lem:StdPoissonStr}, the Poisson bracket on $\cO^{\gamma}$ in the Bott--Samelson coordinates is
		$$\{z_1,z_2\}_{\gamma} = -z_1z_2,\quad \{z_1,z_3\}_{\gamma} =z_1z_3-2z_2,\quad \{z_2,z_3\}_{\gamma}=-z_2z_3,$$
		and the exact same for the coordinates $\widetilde{z}_i$ on $\cO^{(j,i,j)}$.
		Under the change of coordinates
		$$f^*(\widetilde{z}_1) = z_3, \quad f^*(\widetilde{z}_2) = z_1z_3-z_2, \quad f^*(\widetilde{z}_3) = z_1,$$
		we have
		\begin{align*}
			f^*\left(\{\widetilde{z}_1,\widetilde{z}_2\}_{\widetilde{\gamma}}\right)  &= 
			f^*\left(-\widetilde{z}_1\widetilde{z}_2\right)=
			-z_3(z_1z_3-z_2),\\&= \{z_3,z_1z_3-z_2\}_{\gamma} = \{f^*(\widetilde{z}_1),f^*(\widetilde{z}_2)\}_{\gamma},\\
			f^*\left(\{\widetilde{z}_1,\widetilde{z}_3\}_{\widetilde{\gamma}}\right)  &=
			f^*\left(\widetilde{z}_1\widetilde{z}_3-2\widetilde{z}_2\right) = 
			-(z_1z_3-2z_2),\\& = \{z_3,z_1\}_{\gamma} = \{f^*(\widetilde{z}_1),f^*(\widetilde{z}_3)\}_{\gamma}, \qquad \text{ and}\\
			f^*\left(\{\widetilde{z}_2,\widetilde{z}_3\}_{\widetilde{\gamma}}\right) & =
			f^*\left(-\widetilde{z}_2\widetilde{z}_3\right)=
			-z_1(z_1z_3-z_2) ,\\&= \{z_1z_3-z_2,z_1\}_{\gamma} = \{f^*(\widetilde{z}_2),f^*(\widetilde{z}_3)\}_{\gamma}.
		\end{align*}
		Hence the map $\cO^{\gamma}\to \cO^{\widetilde{\gamma}}$ given by $(z_1,z_2,z_3)\mapsto(z_3,z_1z_3-z_2,z_1)$ is a Poisson map.
	\end{proof}
	
	\begin{rem}
		The above isomorphisms are a result of being these Bott--Samelson cells being Poisson isomorphic to the Schubert cell. When $i$ and $j$ are adjacent roots, both $\cO^{(i,j,i)}$ and $\cO^{(j,i,j)}$ are Poisson isomorphic to the Schubert cell $Bs_is_js_iB/B$, and the coordinate change is simply from comparing the parametrisations. Similarly, when $i,k$ are not adjacent, both $\cO^{(i,k)}$ and $\cO^{(k,i)}$ are Poisson isomorphic to $Bs_is_kB/B$.
	\end{rem}
	We've checked that these give Poisson isomorphisms between $2$ and $3$ dimensional Bott--Samelson cells respectively. Now we show that these isomorphisms can be applied to a consecutive string within a word. 

	\begin{lem}\label{lem:Poissonfactoring}
		Let $X,Y,Z$ be Poisson varieties, $\varphi:X\to Y$ a surjective Poisson regular map, and $\psi:Y\to Z$ a regular map such that the composition $\psi\circ\varphi:X\to Y $ is Poisson. Then $\psi$ is Poisson.
	\end{lem}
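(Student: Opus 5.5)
The plan is to verify the Poisson condition on pullbacks of functions, using surjectivity of $\varphi$ to make pullback injective. I read the statement with the evident typo corrected, so that the composition is $\psi\circ\varphi : X\to Z$. Being Poisson for $\psi$ means that for all regular (or rational) functions $f,g$ on $Z$ we have $\psi^*\{f,g\}_Z = \{\psi^*f,\psi^*g\}_Y$. So I fix $f,g$ and compare the two sides after applying $\varphi^*$.

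First, I would use that $\psi\circ\varphi$ is Poisson:
$$\varphi^*\psi^*\{f,g\}_Z = (\psi\circ\varphi)^*\{f,g\}_Z = \{(\psi\circ\varphi)^*f,(\psi\circ\varphi)^*g\}_X.$$
Second, I would use that $\varphi$ is Poisson, applied to the functions $\psi^*f,\psi^*g$ on $Y$:
$$\{\varphi^*\psi^*f,\varphi^*\psi^*g\}_X = \varphi^*\{\psi^*f,\psi^*g\}_Y.$$
Combining these gives
$$\varphi^*\bigl(\psi^*\{f,g\}_Z\bigr) = \varphi^*\bigl(\{\psi^*f,\psi^*g\}_Y\bigr).$$

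The remaining step is to cancel $\varphi^*$, and this is where surjectivity enters. Let $h = \psi^*\{f,g\}_Z - \{\psi^*f,\psi^*g\}_Y$, a regular function on $Y$. Then $h\circ\varphi = 0$ on $X$. Since $\varphi$ is surjective, every point $y\in Y$ has the form $\varphi(x)$, so $h(y)=h(\varphi(x))=0$. Hence $h$ vanishes at every point of $Y$. If $Y$ is reduced (for instance a variety), $h=0$, which gives the required identity.

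For brackets defined on rational functions, I would argue locally. I would work on an open set where everything is regular, noting that $\varphi^{-1}$ of a dense open set is nonempty because $\varphi$ is surjective. Alternatively, it suffices to check the identity on coordinate generators of $\C[Z]$ and extend, since both sides are biderivations.

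The only real obstacle is injectivity of $\varphi^*$. That requires reducedness of $Y$, or dominance of $\varphi$, which surjectivity supplies. In the intended applications the spaces are affine spaces or open subsets of Bott--Samelson varieties, so this poses no difficulty.
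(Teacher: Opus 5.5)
Your argument is correct and is essentially the paper's proof: both pull back $\{f,g\}_Z$ along $\psi\circ\varphi$, use that $\psi\circ\varphi$ and $\varphi$ are Poisson to get $\varphi^*\bigl(\psi^*\{f,g\}_Z\bigr)=\varphi^*\{\psi^*f,\psi^*g\}_Y$, and then cancel $\varphi^*$ using surjectivity. Your extra remarks, namely the typo fix $\psi\circ\varphi:X\to Z$ and the observation that cancelling $\varphi^*$ needs reducedness of $Y$ together with (not as an alternative to) dominance of $\varphi$, make explicit what the paper leaves implicit.
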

	\begin{proof}
		For rational functions $f,g$ on $Z$, we have
		$$\{f,g\}_Z\circ (\psi\circ\varphi) = \{f\circ (\psi\circ\varphi),g\circ (\psi\circ\varphi)\}_X = \{f\circ \psi,g\circ \psi\}_Y \circ \varphi.$$
		We conclude that $\{f,g\}_Z\circ \psi = \{f\circ \psi,g\circ \psi\}_Y$ as $\varphi$ is surjective.
	\end{proof}

	\begin{prop}\label{prop:Poissonmapsbetweencells}
		Let $m:\cO^{\gamma}\to G/B$ and $\widetilde{m}:\cO^{\widetilde{\gamma}} \to G/B$ denote the multiplication maps from two Bott--Samelson cells. A Poisson regular map $f: \cO^{\gamma} \to \cO^{\widetilde{\gamma}}$ which satisfies $\widetilde{m}\circ f = m$
		induces Poisson regular maps between higher dimensional Bott--Samelson cells
		$\cO^{(\gamma',\gamma,\gamma'')} \to \cO^{(\gamma',\widetilde{\gamma},\gamma'')}.$
	\end{prop}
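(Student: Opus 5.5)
The plan is to realise the induced map as a product of Poisson maps between the factor varieties $P_{\underline{w}}$. We then push it down through the quotient by $B^\ell$ and apply Lemma \ref{lem:Poissonfactoring}.

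\textbf{Defining the map.} Write $\gamma'$, $\gamma$, $\widetilde{\gamma}$, $\gamma''$ for signed expressions of lengths $a$, $r$, $\tilde r$, $c$. Since $\widetilde{m}\circ f = m$, for each $\underline{z}$ we have
\[
B_{\widetilde{\gamma}}(\tilde{\underline{z}}) = B_\gamma(\underline{z})\, b(\underline{z}), \qquad \tilde{\underline{z}} = f(\underline{z}),
\]
for a unique $b(\underline{z})\in B$ depending regularly on $\underline{z}$. This uses uniqueness of the framed/Bruhat factorisation, as in Lemma \ref{lem:framedsolidparameters}. We then define
\[
F\bigl(\Phi_{(\gamma',\gamma,\gamma'')}(\underline{z}',\underline{z},\underline{z}'')\bigr)
=\bigl[\Phi_{\gamma'}(\underline{z}'),\ \Phi_{\widetilde{\gamma}}(f(\underline{z})),\ b(\underline{z})^{-1}\text{ acting on the }\gamma''\text{ block}\bigr],
\]
that is, the class of the tuple of group elements in which the $\gamma$ block is replaced by the $\widetilde\gamma$ block. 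Concretely, the sequence of flags is unchanged outside the block, and the $\gamma''$ part is re-expressed in coordinates of $\cO^{\gamma''}$ after $b(\underline{z})^{-1}$ is moved rightwards. Regularity requires the re-trivialisation to stay in the chart $\cO^{\gamma''}$. This holds because left multiplication by $B$ preserves each Bott--Samelson chart: the relative-position conditions defining $\cO^{\gamma''}$ (consecutive flags distinct, or $\cB_{j-1}\neq s_{i_j}\cB_j$) are $B$-invariant after the first flag is fixed.

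\textbf{Poissonness.} Consider the Poisson variety $X=P_{\underline{w}'}\times P_{\gamma\text{-block}}\times P_{\underline{w}''}$ with the product structure $\pi_{\mathrm{st}}^\ell$. By definition (\citep{EL21}), the projection $X\supset \text{preimage}\to \cO^{(\gamma',\gamma,\gamma'')}$ is a surjective Poisson map. The map $F$ composed with this projection factors as
\[
\mathrm{id}\times\Psi\times \mathrm{id}
\]
followed by the projection to $\cO^{(\gamma',\widetilde\gamma,\gamma'')}$. Here $\Psi$ is a map on the middle factor that is Poisson onto the twisted product for the $\widetilde\gamma$ block: it is the composite of the quotient to $\cO^\gamma$, then $f$, then a lift. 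The key point is that the twisted product
\[
Z_{(\gamma',\gamma,\gamma'')} = P'\times^B Z_\gamma\text{-part}\times^B P''
\]
is obtained by iterated Poisson reduction, and the standard Poisson structure is multiplicative. The $B$-action gluing blocks is a Poisson action of the Poisson subgroup $B$, so a Poisson map between middle blocks that is compatible with the multiplication map, i.e. $B$-equivariant over $G/B$, glues to a Poisson map of twisted products. Lemma \ref{lem:Poissonfactoring} then descends the Poisson property from the product level to the quotient. This works because the quotient map is surjective and Poisson, and the composite is Poisson by multiplicativity of $\pi_{\mathrm{st}}$.

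\textbf{Main obstacle.} The delicate step is the gluing. We must show that the dressing element $b(\underline{z})$ moved into the $\gamma''$ block does not spoil Poissonness, since $b$ depends on $\underline{z}$ and is not a constant group element. I would handle it by working upstairs on the group. The map
\[
(g_\gamma, g'')\mapsto \bigl(g_\gamma b,\ b^{-1}g''\bigr)
\]
is exactly the $B$-action in (\ref{eqn:BSaction}), under which the quotient is Poisson. So it suffices that the map $\underline{z}\mapsto B_{\widetilde\gamma}(f(\underline{z}))=B_\gamma(\underline{z})b(\underline{z})$ is Poisson into $G$ modulo this action, which reduces to $f$ being Poisson together with $\widetilde m\circ f=m$.

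Alternatively, one can check the claim directly on coordinates via Lemma \ref{lem:StdPoissonStr}. Brackets within the $\gamma'$ block, within the $\gamma''$ block, and between $\gamma'$ and $\gamma''$ depend only on the signed expression strictly between the positions. Since $\gamma^j$ is a partial product and $s^+_{\widetilde\gamma}$ multiplies to the same Weyl element as $s^+_\gamma$ (because $m=\widetilde m\circ f$), these brackets are unchanged. The $\sigma_j$ terms depend only on the flags, which are preserved. Brackets inside the middle block are handled by $f$ being Poisson. The mixed brackets between a middle coordinate and an outer coordinate are the genuinely hard case, and I would treat them with the group-level argument above.
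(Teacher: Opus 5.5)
Your route is the paper's: write $\cO^{(\gamma',\gamma,\gamma'')}$ as a twisted product, apply $\mathrm{id}\times f\times\mathrm{id}$, descend through the $B$-quotients, and finish with Lemma \ref{lem:Poissonfactoring}. The gap is the step you yourself single out, and your reduction ``to $f$ being Poisson together with $\widetilde m\circ f=m$'' does not close it. Nor does ``compatible with the multiplication map'' mean ``$B$-equivariant''. The paper's proof makes the same leap when it says that respecting the multiplication maps makes $\mathrm{id}\times f\times\mathrm{id}$ $B^2$-equivariant.

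The identity $\widetilde m\circ f=m$ only fixes the final flag. It makes the exact-product lift $\widehat f(\underline z,b)=(f(\underline z),b(\underline z)^{-1}b)$ on $\cO^\gamma\cdot B$ right $B$-equivariant. For non-reduced $\gamma$, however, it neither makes $f$ commute with left multiplication by $B$ nor makes $\widehat f$ Poisson, and that is what governs the mixed brackets.

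Here is a concrete failure. On $\cO^{(i,i)}$ we have $\{z_1,z_2\}=2(z_1z_2-1)$. Put $\phi=z_1-z_2^{-1}$ and $f(z_1,z_2)=\bigl(\phi+(z_1z_2-1)^{-1},\,z_1z_2-1\bigr)$. This is a Poisson map with $m\circ f=m$; it is birational rather than regular, but neither your argument nor the paper's uses regularity. Here $b(\underline z)=\alpha_i^\vee(\phi)\,u_i\bigl((1-\phi)\phi^{-2}z_2^{-1}\bigr)$. For $j$ adjacent to $i$, your recipe then gives $(z_1,z_2,z_3)\mapsto\bigl(f(z_1,z_2),\phi z_3\bigr)$ on $\cO^{(i,i,j)}$. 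By Lemma \ref{lem:StdPoissonStr}, $\{z_1,z_3\}=z_1z_3$ and $\{z_2,z_3\}=-z_2z_3$. Hence $\{z_1z_2-1,\phi z_3\}=-2(z_1z_2-1)\phi z_3$, whereas Poissonness would require $-(z_1z_2-1)\phi z_3$.

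What is missing is the requirement that $\widehat f$ be Poisson and commute with left multiplication by $B$, wherever defined. A clean sufficient condition is that $m$ and $\widetilde m$ be injective on the charts in question. Then $\widehat f=\widehat{\widetilde m}^{-1}\circ\widehat m$ with $\widehat m(\underline z,b)=B_\gamma(\underline z)b$; both maps to $G$ are Poisson, so $\widehat f$ is Poisson and automatically $B\times B$-equivariant. This covers every case the paper actually uses: reduced braid moves, where both sides are the Schubert cell, and frayed moves coming from exact braid-matrix identities, where $b=e$.

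Two smaller errors also need fixing.
\begin{itemize}
\item Left multiplication by $B$ does \emph{not} preserve charts with negative letters, since $u_i(t)u_{-i}(-t^{-1})B=\dot s_iB$. So for signed $\gamma''$ the re-trivialisation by $b^{-1}$ is not guaranteed to stay in $\cO^{\gamma''}$; it does when $b=e$.
\item In your coordinate alternative, the $\gamma''$ coordinates are not unchanged when $b\neq e$, because they are re-trivialised by $b^{-1}$.
\end{itemize}
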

	\begin{proof}
		For $\gamma=(\gamma_1,\dots,\gamma_\ell)$, let $\cO^\gamma\cdot B$ denote the right $B$-orbit of the image of the map $\Phi_\gamma$ in $P_{\gamma_1}\times^B\cdots \times^BP_{\gamma_\ell}$, (note the lack of a quotient by $B$ on the rightmost factor).
		
		The Bott--Samelson cells can be trivialised by their intermediary products, by which we mean $\cO^{(\gamma',\gamma,\gamma'')}$ is (Poisson) isomorphic to $(\cO^{\gamma'}\cdot B)\times^B (\cO^{\gamma}\cdot B) \times^B \cO^{\gamma''}$. Suppose we have Poisson regular map $f:\cO^{\gamma} \to \cO^{\widetilde{\gamma}}$ between Bott--Samelson charts. Then we have a natural map between the product spaces given by taking the identity on the other two factors. That the map $f$ respects the multiplication maps gives that $\mathrm{id}\times f\times \mathrm{id}$ map is $B^2$-equivariant and hence descends to a map between the quotient spaces. We have the following diagram:
		\[\begin{tikzcd}
			{(\cO^{\gamma'}\cdot B)\times (\cO^{\gamma}\cdot B) \times \cO^{\gamma''}} && {(\cO^{\gamma'}\cdot B)\times (\cO^{\widetilde{\gamma}}\cdot B) \times \cO^{\gamma''}} \\
			{\cO^{\gamma'} \times^B \cO^{\gamma} \times^B \cO^{\gamma''}} && {\cO^{\gamma'} \times^B \cO^{\widetilde{\gamma}} \times^B \cO^{\gamma''}}
			\arrow["{\mathrm{id}\times f\times \mathrm{id}}", from=1-1, to=1-3]
			\arrow["q"', from=1-1, to=2-1]
			\arrow["{\widetilde{q}}", from=1-3, to=2-3]
			\arrow["{\widetilde{f}}"', from=2-1, to=2-3]
		\end{tikzcd}\]
		Note that $\widetilde{q} \circ (\mathrm{id}\times f\times \mathrm{id})$ is a Poisson map as a composition of two Poisson maps, and that $q$ is a surjective Poisson map. Applying Lemma \ref{lem:Poissonfactoring}, we conclude that the induced map $\widetilde{f}$ between the Bott--Samelson cells is Poisson.
	\end{proof}

	\begin{lem}\label{lem:Poissoncontraction}
		Let $\underline{w}$ be a word with $i_{j-1}= i_j$, and denote by $\tilde{\underline{w}}$ the word of length $\ell-1$ obtained by omitting the $j$-th letter. There is a canonical Poisson map $m_j: Z_{\underline{w}} \to Z_{\tilde{\underline{w}}}$ obtained by combining the $(j-1)$ and $j$-th entries, 
		$$ [g_1,\dots,g_{j-2},g_{j-1},g_j,g_{j+1},\dots, g_\ell] \mapsto [g_1,\dots,g_{j-2},g_{j-1}g_j, g_{j+1},\dots, g_\ell].$$
	\end{lem}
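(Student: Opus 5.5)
The map $m_j$ is the multiplication map $P_{i}\times P_{i}\to P_{i}$ (with $i=i_{j-1}=i_j$) applied in the $(j-1)$-th and $j$-th slots, and the identity in all other slots. The plan is to show it is well defined and then deduce that it is Poisson from the multiplicativity of $\pi_{\mathrm{st}}$, using Lemma \ref{lem:Poissonfactoring} as in the proof of Proposition \ref{prop:Poissonmapsbetweencells}.

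First we check well-definedness. Since $P_i$ is a subgroup, $g_{j-1}g_j\in P_i$. Consider the map on products
$$\widehat{m}_j:P_{\underline{w}}\to P_{\tilde{\underline{w}}},\qquad (g_1,\dots,g_\ell)\mapsto(g_1,\dots,g_{j-1}g_j,\dots,g_\ell).$$
It is equivariant for the homomorphism $B^\ell\to B^{\ell-1}$ that forgets the $(j-1)$-th factor $b_{j-1}$. Indeed, under the action (\ref{eqn:BSaction}) the product $(b_{j-2}^{-1}g_{j-1}b_{j-1})(b_{j-1}^{-1}g_jb_j)=b_{j-2}^{-1}g_{j-1}g_jb_j$, so $b_{j-1}$ cancels, and all other slots transform in the same way. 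Hence $\widehat{m}_j$ descends to a regular map $m_j:Z_{\underline{w}}\to Z_{\tilde{\underline{w}}}$. In terms of flags, $m_j$ simply forgets $\mathcal{B}_{j-1}$; this is consistent with the fact that $\mathcal{B}_{j-2},\mathcal{B}_j$ lie in the same $P_i$-coset.

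Next we show that $\widehat{m}_j$ is Poisson for the product structures $(\pi_{\mathrm{st}})^\ell$ on $P_{\underline{w}}$ and $(\pi_{\mathrm{st}})^{\ell-1}$ on $P_{\tilde{\underline{w}}}$. Away from slots $j-1,j$ it is the identity. On those two slots it is group multiplication $P_i\times P_i\to P_i$, which is Poisson because $\pi_{\mathrm{st}}$ is multiplicative and $P_i$ is a Poisson subgroup; the latter is used implicitly in the definition of the Bott--Samelson Poisson structure. A product of Poisson maps is Poisson.

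Finally, let $q:P_{\underline{w}}\to Z_{\underline{w}}$ and $\tilde q:P_{\tilde{\underline{w}}}\to Z_{\tilde{\underline{w}}}$ be the quotient maps. By the definition in \citep{EL21} these are Poisson, since the Bott--Samelson structure is the pushforward, and $q$ is surjective. We have $m_j\circ q=\tilde q\circ\widehat{m}_j$, which is a composition of Poisson maps. Lemma \ref{lem:Poissonfactoring} then gives that $m_j$ is Poisson. The main subtle point is the legitimacy of the pushforward, namely that $q$ is a Poisson surjection. This holds by construction: the $B^\ell$-action is Poisson-compatible because $B$ is a Poisson subgroup whose product structure makes the action map Poisson, which is exactly what makes the pushforward well defined in \citep{EL21}. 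So no new input is needed beyond that existing construction.
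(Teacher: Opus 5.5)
Your proposal is correct and follows the paper's proof: you multiply the two $P_i$ factors on $P_{\underline{w}}$, descend this map through the quotient, and conclude that $m_j$ is Poisson via Lemma \ref{lem:Poissonfactoring}, using multiplicativity of $\pi_{\mathrm{st}}$ and that $q$ is a Poisson surjection. Your equivariance statement, with respect to the homomorphism $B^\ell\to B^{\ell-1}$ forgetting $b_{j-1}$, is a more precise version of the paper's phrase ``$B^\ell$-equivariant''.
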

	\begin{proof}
		Let $\underline{w}=(\underline{w'},i,i,\underline{w''})$. Applying the multiplication map to the two $P_i$ factors in $$ P_{\underline{w'}}\times P_i\times P_i\times P_{\underline{w''}} \to P_{\underline{w'}}\times P_i\times P_{\underline{w''}}$$ defines a $B^\ell$-equivariant map, and hence descends to a map on Bott--Samelson varieties $m_j: Z_{\underline{w}} \to Z_{\tilde{\underline{w}}}$. That this map is Poisson is an application of Lemma \ref{lem:Poissonfactoring}.
	\end{proof}
	
	\begin{rem}
		If we consider the Bott--Samelson varieties using their embeddings as sequences of flags, this map corresponds to forgetting the $(j-1)$-th flag in the sequence.
	\end{rem}
	
	To show that trivalent vertices respect the standard Poisson structure, it is helpful think of the rational map between Bott--Samelson cells as being first factored through an adjacent Bott--Samelson chart:
	\[\begin{tikzcd}
		& {\cO^{(\gamma',i,-i,\gamma'')}} & \\
		{\cO^{(\gamma',i,i,\gamma'')}} && {\cO^{(\gamma',i,\gamma'')}}
		\arrow[from=1-2, to=2-3]
		\arrow[dashed, from=2-1, to=1-2]
		\arrow[from=2-1, to=2-3]
	\end{tikzcd}\]
	Here, the first map is a change of coordinates within the same Poisson variety $Z_{\underline{w}}$ and is hence Poisson.
	It is of the form
	$$\Phi_{(\gamma',i,i,\gamma'')}(z_1,\dots,z_{j-1},z_j,z_{j+1}),\dots, z_\ell) \mapsto \Phi_{(\gamma',i,-i,\gamma'')}(z_1,\dots,z_{j-1},z_j^{-1},z_{j+1}'),\dots, z_\ell')$$
	where $z_{j+1}',\cdots, z_\ell'$ are rational functions possibly with poles given by powers of $z_j$.
	
	\begin{lem}\label{lem:frayedtwiningisPoisson}
		Let $\underline{w}$ be a word with $i_{j-1}= i_j = i$, and let $\gamma = (\gamma_1, \dots, \gamma_{j-1},\gamma_j, \dots,\gamma_\ell)$ be a subexpression of length $\ell$ with $\gamma_{j-1}=i_{j-1}$ and $\gamma_j=-i_j=-i_{j-1}$.		
		Let $\underline{\tilde{w}}$ and $\tilde{\gamma}$ be obtained by omitting the $j$-th entry in both of the above.
		Then we have a biregular isomorphism $f: \cO^{\gamma} \to \cO^{\tilde{\gamma}} \times \mathbb{A}^1$
		given by
		$$(z_1,\dots,z_{j-1},z_j,\dots,z_n) \mapsto \bigl((z_1,\dots,z_{j-2},z_{j-1}-z_j,z_{j+1},\dots,z_\ell), z_j\bigr).$$
		Furthermore, if $\gamma_k>0$ implies $\gamma_{k'}>0$ for all $k<k'<j$, then we can endow the codomain with a log-canonical bracket on the product space to make this isomorphism a Poisson map.
		
	\end{lem}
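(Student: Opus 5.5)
The plan is to get biregularity from a rank-one matrix identity, and then to reduce the Poisson statement to checking brackets of the extra coordinate $z_j$ with the remaining coordinates.

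\textbf{Biregularity.} The key input is the $\SL_2$ identity
$$u_i(a)\,\dot{s}_i\,u_{-i}(b) = u_i(a-b)\,\dot{s}_i,$$
which follows from a one-line matrix computation:
$$\dot{s}_iu_{-i}(b)=\varphi_i\begin{pmatrix} -b & -1\\ 1 & 0\end{pmatrix}, \qquad\text{so}\qquad u_i(a)\dot{s}_iu_{-i}(b)=\varphi_i\begin{pmatrix} a-b & -1\\ 1&0\end{pmatrix}.$$
It shows that
$$B_\gamma(\underline{z}) = B_{\tilde\gamma}(z_1,\dots,z_{j-2},z_{j-1}-z_j,z_{j+1},\dots,z_\ell),$$
and more precisely that the same holds for every partial product past position $j$. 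Since no Borel element is produced, no coordinates to the right of position $j$ are modified.

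Hence the first component of $f$ is exactly the restriction to $\cO^\gamma$ of the contraction map $m_j$ of Lemma \ref{lem:Poissoncontraction}, which forgets the $(j-1)$-th flag. The second component is the coordinate function $z_j$. Both components are polynomial, and the inverse $\bigl((y_1,\dots,y_{\ell-1}),t\bigr)\mapsto (y_1,\dots,y_{j-2},y_{j-1}+t,t,y_j,\dots)$ is also polynomial, so $f$ is biregular.

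\textbf{The Poisson structure on the codomain.} I would put on $\cO^{\tilde\gamma}\times\mathbb{A}^1$ the bracket that restricts to the Elek--Lu bracket of Lemma \ref{lem:StdPoissonStr} on the first factor, and that has $\{y_k,t\}=c_k\,y_kt$ for constants $c_k$ to be determined. The first factor is compatible because $m_j$ is Poisson by Lemma \ref{lem:Poissoncontraction}, so the pullbacks of the $y_k$ have the correct mutual brackets. It then remains to show that each $\{f^*y_k, z_j\}_\gamma$ is a constant multiple of $f^*y_k\cdot z_j$. Jacobi for the codomain bracket then follows, since $f$ is a bijection and the bracket is transported from $\cO^\gamma$.

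\textbf{The brackets with $z_j$.} I would split into three cases according to the position $k$.
\begin{itemize}
\item For $k>j$: since $\gamma_j<0$, the first case of (\ref{eqn:StdPoissonStr}) gives $\{z_j,z_k\}=\langle\gamma^j(\alpha_j),\gamma^k(\alpha_k)\rangle z_jz_k$, which is already log-canonical.
\item For $k=j-1$: the vector field $\sigma_{j-1}$ acts on the chart $u_{-i}(z)$ via $u_i(t)u_{-i}(z)\in u_{-i}(z/(1+tz))B$, so $\sigma_{j-1}(z_j)=-z_j^2$. This gives $\{z_{j-1},z_j\}=c\,z_{j-1}z_j+2z_j^2$, where one checks $c=\langle s_i\alpha_i,\alpha_i\rangle$ up to sign equals $-2$. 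Then $\{z_{j-1}-z_j,z_j\}=-2(z_{j-1}-z_j)z_j$, which is log-canonical in the new coordinate $y_{j-1}$.
\item For $k<j-1$: the bracket has a torus term plus $-\langle\alpha,\alpha\rangle\sigma_k(z_j)$. Using the recursive description of $\sigma_k$ from \citep[Theorem 4.10]{EL21}, the element $\exp(te_{\alpha_{i_k}})$ is pushed rightwards through $u_{\gamma_{k+1}}(z_{k+1})\dot{s}_{\gamma_{k+1}}\cdots$. The hypothesis that all of $\gamma_{k+1},\dots,\gamma_{j-1}$ are positive means that it passes only through positive factors. These conjugate it to a root vector $e_\beta$ with $\beta=(\gamma^{k})^{-1}\gamma^{j-1}$-translated, plus unipotent corrections. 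The vector field reaches $u_{-i}(z_j)$ only through its $e_{\alpha_i}$-component, and in that case it contributes a term proportional to $z_j^2$ times a polynomial in intermediate coordinates. I expect this combined term to reorganise, exactly as in the $k=j-1$ case, into a multiple of $(z_{j-1}-z_j)z_j$-type expressions, leaving $\{f^*y_k,z_j\}$ log-canonical.
\end{itemize}

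\textbf{Main obstacle.} The last case is where I expect the difficulty. To organise it, I would first reduce to the sub-chart on positions $k,\dots,j$, using the fact that brackets depend only on the signed word in between (as in the corollary to Lemma \ref{lem:braidopeninchart}). I would then induct on $j-k$, using Proposition \ref{prop:Poissonmapsbetweencells} to braid-move the block of positive letters so that position $j-1$ is adjacent to $k$ where possible. If that fails, I would fall back on $T$-equivariance: the weight of $z_j$ forces $\sigma_k(z_j)$ to be a monomial of the appropriate weight, which the positivity hypothesis constrains to be a multiple of $z_j\cdot y_k$.
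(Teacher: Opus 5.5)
Your biregularity argument (the identity $u_i(a)\dot{s}_iu_{-i}(b)=u_i(a-b)\dot{s}_i$, identifying the first component of $f$ with the contraction $m_j$) matches the paper. So do your cases $k>j$ and $k=j-1$. In the latter, the pairing in (\ref{eqn:StdPoissonStr}) is $\langle\gamma^{j-1}(\alpha_i),\gamma^{j}(\alpha_i)\rangle=\langle\alpha_i,\alpha_i\rangle$ because $\gamma^j=\gamma^{j-1}$, so the coefficient is $-\langle\alpha_i,\alpha_i\rangle$ rather than a fixed $-2$.

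The genuine gap is the case $k<j-1$ with $\gamma_k>0$, and there your plan points the wrong way. For $k\le j-2$ you have $f^*y_k=z_k$, and $\sigma_k(z_j)$ is a function of $z_{k+1},\dots,z_j$ only. Hence $\{z_k,z_j\}$ is log-canonical if and only if $\sigma_k(z_j)=0$ exactly. No $z_{j-1}-z_j$ enters for such $k$. So a nonzero term ``proportional to $z_j^2$ times a polynomial in intermediate coordinates'' could not reorganise into anything log-canonical; if it existed, the lemma would be false. Your fallbacks do not close this either. A $T$-weight count alone does not exclude polynomial terms in $z_{k+1},\dots,z_j$; the case $k=j-1$, where $\sigma_{j-1}(z_j)=-z_j^2$, shows such terms do occur. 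And $\sigma_k(z_j)$ can never be a multiple of $z_jy_k$, since it does not involve $z_k$.

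The missing idea is that the $e_{\alpha_i}$-component never reaches $u_{-i}(z_j)$. Push the group element rightwards, writing $U'_m$ for the unipotent radical of $P_m$. Through a positive factor $u_m(z)\dot{s}_m$, an element $u=u_m(a)u'\in U$ with $u'\in U'_m$ becomes
$u_m(a+z)\,u''\dot{s}_m=u_m(a+z)\dot{s}_m\,(\dot{s}_m^{-1}u''\dot{s}_m)$, where $u''$ and $\dot{s}_m^{-1}u''\dot{s}_m$ lie in $U'_m$. So it stays in $U$ and acquires no torus part. This is exactly where positivity of $\gamma_{k+1},\dots,\gamma_{j-1}$ is used: passing a factor $u_{-m}$ would create an $\alpha_m^\vee$-factor, which rescales $z_j$.

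At position $j-1$, which has the same colour $i$, the $U_{\alpha_i}$-component is absorbed into $z_{j-1}$. What emerges past $\dot{s}_i$ lies in $U'_i$, which is normal in $P_i$ and hence normalised by $u_{-i}(z_j)$. Therefore $z_j$ is unchanged and $\sigma_k(z_j)=0$; this is the content of the paper's appeal to \citep[Lemma 4.6]{EL21}. You should also record the trivial subcase $\gamma_k<0$, $k<j-1$, which follows directly from the first line of (\ref{eqn:StdPoissonStr}). The braid-move induction you sketch is unnecessary.
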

	\begin{proof}
		The map onto the first factor is a result of writing the Poisson map in Lemma \ref{lem:Poissoncontraction} in the Bott--Samelson coordinates for these charts. Alternatively, a case-by-case argument could check the bracket in Lemma \ref{lem:StdPoissonStr} on each of the coordinate functions.
		
		The log-canonicity of the coordinate $z_j$ in the $\mathbb{A}^1$ factor with the other coordinate functions arises from the Elek--Lu formula (\ref{eqn:StdPoissonStr}). In the domain, $z_j$ is log-canonical with $z_k$ for $k>j$ since the $j$-th letter is unsupported. For the log-canonicity of $z_{j-1}-z_j$ with $z_j$, the Elek--Lu formula gives that $\{z_{j-1},z_j\} = -\langle\alpha_i,\alpha_i\rangle z_{j-1}z_j+\langle\alpha_i,\alpha_i\rangle z_j^2$, and therefore $$\{z_{j-1}-z_j,z_j\} = -\langle\alpha_i,\alpha_i\rangle(z_{j-1}-z_j)z_j.$$

		Finally, for $k<(j-1)$, if $\gamma_k$ is negative, then log-canonicity of $z_k$ is immediate from (\ref{eqn:StdPoissonStr}). If $\gamma_k$ is positive, then our assumption on the positivity of the letters between $k$ and $j$ is positive means that by \citep[Lemma 4.6]{EL21} the vector field acting on $z_{j-1}$ has no torus component, and therefore $\sigma_{\alpha_{i_j}}(z_j) = 0$ by \citep[Lemma 4.6 (1)]{EL21}.
	\end{proof}
	The above discussion verifies that the map in \citep[Lemma 2.22]{CGGS24} is Poisson. Together with Proposition \ref{prop:Poissonmapsbetweenshortcells}, each component map of a Demazure weave is Poisson.
	
	\begin{thm}\label{thm:weavechartsarelogcan}
		Each toric chart $T_{\mathfrak{w}} \subset X(\underline{w})$ on braid varieties obtained from a Demazure weave $\mathfrak{w}:(\underline{w})\to \delta(\underline{w})$ are log-canonical with respect to the standard Poisson structure.
	\end{thm}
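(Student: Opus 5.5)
The plan is to realise $T_{\mathfrak{w}}$ as the preimage of a log-canonical torus under a Poisson map, and to use that a Poisson map pulls back log-canonical functions to log-canonical functions. Read $\mathfrak{w}$ from top to bottom as a composition of component maps
$$\cO^{\underline{w}} = \cO^{\beta_0} \dashrightarrow \cO^{\beta_1} \dashrightarrow \cdots \dashrightarrow \cO^{\beta_N},$$
one map for each vertex, where $\beta_N$ is a word for $\delta(\underline{w})$ (taken to be $w_0$). Each horizontal slice between vertices is a word, so each $\cO^{\beta_r}$ is a Bott--Samelson cell carrying the Elek--Lu bracket of Lemma \ref{lem:StdPoissonStr}.

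The tetravalent and hexavalent vertices act on a consecutive substring. By Proposition \ref{prop:Poissonmapsbetweenshortcells} the local maps are Poisson isomorphisms. They are compatible with the multiplication map, so Proposition \ref{prop:Poissonmapsbetweencells} makes the global map $\cO^{\beta_{r}}\to\cO^{\beta_{r+1}}$ a Poisson isomorphism.

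For a trivalent vertex on letters $j-1,j$, factor the map as in the diagram preceding Lemma \ref{lem:frayedtwiningisPoisson}. The first step is the change of coordinates $\cO^{(\gamma',i,i,\gamma'')}\dashrightarrow\cO^{(\gamma',i,-i,\gamma'')}$ inside the same Poisson variety $Z$, so it is Poisson. The second step is the contraction $m_j$ of Lemma \ref{lem:Poissoncontraction}, which is also Poisson. I will also record the coordinate $z_j^{-1}$ lost at this step, using the splitting of Lemma \ref{lem:frayedtwiningisPoisson}. Its hypothesis holds because every letter of a Demazure weave slice is supported, i.e. $\gamma'$ is all positive. So the step becomes a Poisson isomorphism onto a product $\cO^{\tilde\gamma}\times\mathbb{A}^1$, with a product bracket in which the extra coordinate is log-canonical with every coordinate function.

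Iterating, I obtain a Poisson birational map
$$F:\cO^{\underline{w}}\dashrightarrow \cO^{\beta_N}\times \mathbb{A}^{d},$$
where $d$ is the number of trivalent vertices. The $\mathbb{A}^1$ factors carry the pinch parameters, and the total bracket is log-canonical among these $d$ coordinates and with all coordinates of the later cells. Restrict to the braid variety $X(\underline{w})$. After setting the bottom strand variables to zero, the bottom cell $\cO^{\beta_N}$ collapses to the point $w_0B$ (the multiplication condition). Hence $F$ restricts to an isomorphism of $T_{\mathfrak{w}}$ onto the torus $\{x_1\cdots x_d\neq 0\}\subset\mathbb{G}_m^d$ in the pinch parameters. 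This is the description of weave tori as complements of indeterminacy loci in \citep[\S5.3]{CGGS24}.

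The pinch coordinates are log-canonical with each other by construction, so the coordinates on $T_{\mathfrak{w}}$ are log-canonical. The cluster-type coordinates $A_v$ differ from these by a monomial change of coordinates (Lusztig cycles, \citep[Theorem 5.12]{CGGLSS25}), and monomial transformations preserve log-canonicity. Restriction to $X(\underline{w})$ is compatible with the bracket because $X(\underline{w})=\mathring{Z}_{\underline{w}}\cap m^{-1}(w_0B)$ is a Poisson subvariety. Lemma \ref{lem:braidopeninchart} and its corollary identify it Poisson-isomorphically with an open subset of a cell.

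The main obstacle is the inductive bookkeeping. After a pinch, the new cell sits as the first factor of a product. Later vertices act only on that factor, so I must check that each later map (identity on the $\mathbb{A}^1$ factors) stays Poisson for the product bracket. This needs the earlier pinch coordinates to remain log-canonical with the transformed coordinates. I would try to derive this from log-canonicity with every \emph{coordinate function} of the later cell: brackets with arbitrary rational functions then reduce to Euler-type derivations $x\,\partial$, and these are preserved by the biregular braid moves. It also requires that each pinch's coordinate $z_j$ be log-canonical with everything to its left, which uses the positivity hypothesis of Lemma \ref{lem:frayedtwiningisPoisson}. Here I would lean on \citep[Lemma 4.6]{EL21}: with all letters supported, the $\sigma$-terms involving $z_j$ vanish.
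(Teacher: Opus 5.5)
Your proposal is correct and is essentially the paper's argument: tetravalent and hexavalent vertices are Poisson isomorphisms by Propositions \ref{prop:Poissonmapsbetweenshortcells} and \ref{prop:Poissonmapsbetweencells}, and each trivalent vertex is factored through the adjacent chart so that Lemma \ref{lem:frayedtwiningisPoisson} splits off a log-canonical $\mathbb{A}^1$-factor at every pinch (its positivity hypothesis holds on cells). The bookkeeping point you flag is passed over silently in the paper, and your suggested fix is right in spirit but not as stated, since an arbitrary Euler derivation would not survive the hexavalent move; what makes it work is that $\{x,\cdot\}/x$ acts on the remaining coordinates as the infinitesimal left $T$-action (the coefficients $\langle\gamma^j(\alpha_{i_j}),\gamma^k(\alpha_{i_k})\rangle$ are exactly the pairings of the $T$-weight of $x$ with those of the $z_k$), and this action is diagonal in every Bott--Samelson chart and commutes with all weave maps, including the rational chart changes at later pinches.
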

	\begin{proof}
		The torus associated to a weave is obtained by iteratively factoring out a torus factor from trivalent vertices. Our positivity assumption on the subexpression $\gamma$ in Lemma \ref{lem:frayedtwiningisPoisson} applies to Bott--Samelson cells, and therefore we have the log-canonicity of this torus factor the other coordinates at each step. Proposition \ref{prop:Poissonmapsbetweenshortcells} gives that the isomorphisms coming from tetravalent and hexavalent vertices are indeed Poisson isomorphisms, and therefore the log-canonicity is preserved at each stage between the trivalent vertices.
	\end{proof}

	\subsection{Compatibility of the cluster and Poisson structures}
	
	The previous subsection showed that every weave gives a log-canonical torus. However, for quivers of infinite mutation type, there can be seeds which are not represented by any weave. In this section, we use Lemma \ref{lem:poissoncompatibilitysufficent} and check that all such cluster tori are log-canonical. 
	\begin{rem}
		The statements in this subsection for the Bott--Samelson cell are known since the Bott--Samelson coordinates give a Poisson CGL presentation \citep[Theorem 5.12]{EL21} on the coordinate ring of $\cO^{\underline{w}}$. The coincidence of the initial seed $\seed(\underline{w})$ with the initial seed from Poisson CGL theory \citep{GY18} has been checked in ongoing work by Lu and Mi \citep{LM26}. For completeness, we provide a self-contained check of compatibility without using Poisson CGL theory.
	\end{rem}
	
	Recall the successor and predecessor functions from (\ref{eqn:pred}) and  (\ref{eqn:succ}). Define two $\ell\times \ell$ upper triangular integer matrices $\Lambda= \Lambda(\underline{w}) = (\lambda_{jk})$ and $Q=Q(\underline{w})=(q_{jk})$ whose entries are respectively given by
	\begin{equation*}
		\lambda_{jk} = \begin{cases}
			1 &\text{if } k=j,\\
			-1 &\text{if } k=s(j),\\
			0 & \text{else},
		\end{cases} \quad \text{and} \quad
		q_{jk} = \begin{cases}
			-1 &\text{if } k=j,\\
			-a_{i_k,i_j} &\text{if } k>j,\\
			0 & \text{else}.
		\end{cases}
	\end{equation*}
	
	\begin{lem}\label{lem:monomialchangematrix}
		Let $\zeta_i$ denote the Bott--Samelson coordinates on $\cO^{-\underline{w}}$. For the regular functions $\varphi_j = \Delta_{\omega_{i_j},\omega_{i_j}}\circ m\circ \psi_{\underline{w},j}$ on $\cO^{\underline{w}}$, we have a monomial change of coordinates
		\begin{equation*}
			\underline{\zeta} = \underline{\varphi}^{\Lambda Q}.
		\end{equation*}
		on the intersection $\cO^{-\underline{w}} \cap \cO^{-\underline{w}}$.
	\end{lem}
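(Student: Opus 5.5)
The plan is to compute everything on the torus $\cO^{-\underline{w}}\cap\cO^{\underline{w}}$ by rewriting the braid matrix $B_{-\underline{w}}(\underline{\zeta}) = u_{-i_1}(\zeta_1)\cdots u_{-i_\ell}(\zeta_\ell)$ in the form $B_{\underline{w}}(\underline{z})\cdot b$ with $b\in B$. I will do this one letter at a time from the left, keeping track only of the torus part of the Borel factor that accumulates on the right.

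\textbf{Rank-one identity.} The basic input is the $\SL_2$ identity
$$u_{-i}(\zeta) = u_i(\zeta^{-1})\,\dot s_i\,\alpha_i^\vee(\zeta)\,u_i(\zeta^{-1}) = B_i(\zeta^{-1})\,\alpha_i^\vee(\zeta)\,u_i(\zeta^{-1}),$$
which is checked directly with $2\times 2$ matrices. It is valid precisely when $\zeta\neq 0$, which holds on the torus.

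\textbf{Induction on prefixes.} Applying this to the first letter produces the Borel element $\alpha_{i_1}^\vee(\zeta_1)u_{i_1}(\zeta_1^{-1})$, which must be pushed past $u_{-i_2}(\zeta_2)$.
\begin{itemize}
\item The torus part acts by conjugation: $t\,u_{-i}(\zeta)\,t^{-1} = u_{-i}(\alpha_i(t)^{-1}\zeta)$.
\item The unipotent part $u_{i_1}(c)$ commutes with $u_{-i_2}$ when $i_1\neq i_2$.
\item When $i_1=i_2$, the product $u_i(c)u_{-i}(\zeta)$ is again refactored in $\SL_2$ as $u_{-i}(\zeta')\,\alpha_i^\vee(1+c\zeta)\,u_i(\cdot)$. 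This produces an extra torus contribution.
\end{itemize}
Inductively, the $j$-th prefix satisfies $u_{-i_1}(\zeta_1)\cdots u_{-i_j}(\zeta_j) = B_{\underline{w}[j]}(z_1,\dots,z_j)\, t_j\, n_j$ with $n_j\in U$ and $t_j = \prod_{k\le j}\alpha_{i_k}^\vee(\zeta'_k)$. Here $\zeta'_k$ is the effective parameter of the $k$-th letter after all torus factors from earlier letters have been conjugated through it.

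\textbf{Extracting the minors.} The prefix of $B_{-\underline{w}}$ is lower unipotent, so $\Delta_{\omega_i,\omega_i}$ of it equals $1$. Combining this with $\Delta_{\omega,\omega}(g t n) = \Delta_{\omega,\omega}(g)\,\omega(t)$ gives
$$\varphi_j = \omega_{i_j}(t_j)^{-1} = \prod_{k\le j,\; i_k=i_j} (\zeta'_k)^{-1}.$$
The resulting relation $\zeta'_j = \varphi_{p(j)}/\varphi_j$ is exactly an inversion by the bidiagonal matrix $\Lambda$: its entries $1$ on the diagonal and $-1$ at $(j,s(j))$ pair each $\varphi_j$ with its successor. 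The remaining step is to express $\zeta_k$ in terms of the $\zeta'_k$. The conjugation rule gives $\zeta_k = \zeta'_k\prod_{m<k}\alpha_{i_k}(\alpha^\vee_{i_m}(\zeta'_m))^{\pm1}$, up to sign conventions. Substituting $\zeta'_m$ as a monomial in the $\varphi$'s and telescoping the products over each colour should yield the matrix $Q$, with diagonal $-1$ and entries $-a_{i_k,i_j}$ above the diagonal. This gives $\underline{\zeta}=\underline{\varphi}^{\Lambda Q}$, and I will verify the indexing and sign conventions on Example~\ref{eg:(1,2,1,-2)} type words.

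\textbf{Main obstacle.} The hardest step is the case of a repeated colour, where the unipotent residue $u_{i}(c)$ meets $u_{-i}(\zeta)$. There the torus factor $1+c\zeta$ is not obviously monomial in the $\zeta$'s. I would first try to show that this factor exactly cancels against the $\alpha_i^\vee(\zeta)$ introduced at the earlier occurrence, making the net torus contribution monomial. If that fails, I would instead organise the computation by the partial flags $\cB_j$ and use the fact that $\Delta_{\omega_{i},\omega_{i}}(\cB_j)$ is unchanged when $i_j\neq i$, so only equal-colour steps alter $\varphi$. That would reduce the check to a rank-one $\SL_2$ computation together with the Cartan-matrix twisting from the intervening letters.
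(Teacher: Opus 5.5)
Your overall route is viable and your final formulas are correct. The gap is that the step you flag as the main obstacle is exactly the content of the lemma, and it is not established. Your first proposed fix would also fail. If you follow your procedure literally, at step $k$ you Gauss-refactor $u_i(c)u_{-i}(\zeta)=u_{-i}\bigl(\zeta/(1+c\zeta)\bigr)\alpha_i^\vee(1+c\zeta)u_i(\cdot)$ and then apply the rank-one identity to the new parameter. The torus you accumulate is then not visibly $\prod_{k\le j}\alpha_{i_k}^\vee(\zeta'_k)$, so $\varphi_j=\prod_{k\le j,\,i_k=i_j}(\zeta'_k)^{-1}$ does not follow from what you wrote.

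The extra factor does not cancel against the earlier occurrence. For $\underline{w}=(i,i)$ one has $c=\zeta_1^{-1}$, and $\alpha_i^\vee(\zeta_1)\alpha_i^\vee(1+\zeta_2/\zeta_1)=\alpha_i^\vee(\zeta_1+\zeta_2)$ is still not monomial. The issue is also not confined to consecutive equal letters. After a few steps the residue $n_{k-1}$ is a general element of $U$ and typically has nonzero $U_{\alpha_{i_k}}$-component; this already happens at the third step of $(1,2,1)$. Finally, the Gauss refactoring is undefined where $1+c\zeta=0$, and that locus meets the torus.

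The missing observation is to avoid Gauss-refactoring altogether. Write $n_{k-1}=u_i(c)n'$ with $n'\in U\cap\dot s_iU\dot s_i^{-1}$. This subgroup is normalised by $U_{-\alpha_i}$, so $n'$ passes through $u_{-i}(\zeta)$. Then absorb $u_i(c)$ into the $B_i$-factor:
\[
u_i(c)u_{-i}(\zeta)=u_i(c)B_i(\zeta^{-1})\alpha_i^\vee(\zeta)u_i(\zeta^{-1})=B_i(c+\zeta^{-1})\alpha_i^\vee(\zeta)u_i(\zeta^{-1}).
\]
Hence $t_k=t_{k-1}\,\alpha_{i_k}^\vee(\zeta'_k)$ with $\zeta'_k=\alpha_{i_k}(t_{k-1})^{-1}\zeta_k$, independently of the residue. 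In your two-step version, $\alpha_i^\vee(1+c\zeta)$ cancels the factor $(1+c\zeta)^{-1}$ in the refactored parameter of the same letter, not anything from an earlier letter.

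With this your argument closes, and no telescoping is needed. The relation $\zeta'_k=\varphi_{p(k)}/\varphi_k$ says $\underline{\zeta}'=\underline{\varphi}^{-\Lambda}$. The relation $\zeta_k=\zeta'_k\prod_{m<k}(\zeta'_m)^{a_{i_k,i_m}}$ says $\underline{\zeta}=(\underline{\zeta}')^{-Q}$. Together these give $\underline{\zeta}=\underline{\varphi}^{\Lambda Q}$.

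For comparison, the paper runs the computation in the opposite direction. It writes $B_{\underline{w}[k-1]}(\underline z)=B_{-\underline{w}[k-1]}(\underline\zeta)\cdot\prod_{j<k\le s(j)}\alpha_{i_j}^\vee(\varphi_j)\cdot U$, where the torus part is monomial in the $\varphi_j$ by the definition of the minors, so your obstacle never arises. It then pushes the $U$-part through $B_{i_k}(z_k)$ to get $\varphi_k=\varphi_{p(k)}z'_k$ and reads $\zeta_k$ off the $U^-$-part. Your repaired route has the advantage of exhibiting the factorisation $\Lambda Q=(-\Lambda)(-Q)$ through the intermediate parameters $\zeta'_k$.
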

	\begin{proof}
		Let $C=\Lambda Q = (c_{jk})$. The claim is that for each $1\leq j\leq \ell$, we have $\zeta_k = \varphi_1^{c_{1k}} \varphi_2^{c_{2k}} \cdots \varphi_k^{c_{kk}}$ where 
		\begin{equation}\label{eqn:monomialchangematrix}
			c_{jk} = \begin{cases}
		-1 &\text{if } j=k,\\
		-a_{i_k,i_j} &\text{if } j<k<s(j),\\
		-2 - (-1) &\text{if } s(j)=k,\\
		-a_{i_k,i_j} - (-a_{i_k,i_{s(j)}}) &\text{if } s(j)<k,\\
		0 & \text{else},
		\end{cases}
		 = \begin{cases}
			-1 &\text{if } j=k,\\
			-a_{i_k,i_j} &\text{if } j<k<s(j),\\
			-1 &\text{if } k=s(j),\\
			0 & \text{else}.
		\end{cases}
		\end{equation}
		
		We use induction on $k$.
		When $k=1$, the identity $B_{i}(z) = u_{-i}(z^{-1}) \alpha_i^\vee(z) u_{i}(-z^{-1})$ for the root $i=i_k$ gives that $\zeta=\frac{1}{\varphi_{1}}$ as required.
		When $p(k) = -\infty$ we have,
		\begin{align*}
			\varphi_{k} &= \Delta_{\omega_{i_k},\omega_{i_k}} (B_{\underline{w}[k]}(z_1,\dots,z_k)),\\
			&= \Delta_{\omega_{i_k},\omega_{i_k}} \left(B_{\underline{w}[k-1]}(z_1,\dots,z_{k-1}) \cdot B_{i_k}(z_k)\right),\\
			&= \Delta_{\omega_{i_k},\omega_{i_k}} \left(B_{i_k}(z_k)\right),\\
			&=z_k.
		\end{align*}
		And therefore, 
		For $k>1$, with $p(k) \neq -\infty$, let $j_1,\dots,j_m$ be the last instances of each letter in $\underline{w}[k-1]$. Then we have,
		\begin{align*}
			\varphi_{k} &= \Delta_{\omega_{i_k},\omega_{i_k}} (B_{\underline{w}[k]}(z_1,\dots,z_k)),\\
			&= \Delta_{\omega_{i_k},\omega_{i_k}} \left(B_{\underline{w}[k-1]}(z_1,\dots,z_{k-1}) \cdot B_{i_k}(z_k)\right),\\
			&= \Delta_{\omega_{i_k},\omega_{i_k}} \left(\left(\prod_{j=1}^{k-1}u_{-i_j}(\zeta_j)\right) \left(\prod_{j<k \,:\; s(j)\geq k}\alpha_{i_j}^\vee(\varphi_{j})\right) U\cdot B_{i_k}(z_k)\right),\\
			&= \Delta_{\omega_{i_k},\omega_{i_k}} \left(\alpha_{i_k}^\vee(\varphi_{p(k)}) B_{i_k}(z_k')\right),\\
			&= \Delta_{\omega_{i_k},\omega_{i_k}} \left( \alpha_{i_k}^\vee(\varphi_{p(k)}) u_{-i_k}((z_k')^{-1})\alpha_{i_k}(z_k') u_{i_k}(-(z_k')^{-1})\right),\\
			& = \varphi_{p(k)}z_k',
		\end{align*}
		where $z_k' = z_k'(z_1,\dots,z_k)$ is the polynomial resulting from moving the unipotent matrix U rightward. Commuting matrices, we have the matrix identities
		\begin{align*}
			B_{\underline{w}[k]}(z_1,\dots,z_k) &= B_{\underline{w}[k-1]}(z_1,\dots,z_{k-1}) \cdot B_{i_k}(z_k),\\
			&= \left(\prod_{j=1}^{k-1}u_{-i_j}(\zeta_j)\right) \left(\prod_{j<k \,:\; s(j)\geq k}\alpha_{i_j}^\vee(\varphi_{j})\right) B_{i_k}(z_k'),\\
			&= \left(\prod_{j=1}^{k-1}u_{-i_j}(\zeta_j)\right) \left(\prod_{j<k \,:\; s(j)\geq k}\alpha_{i_j}^\vee(\varphi_{j})\right) u_{-i_k}((z_k')^{-1})\alpha_{i_k}(z_k')u_{i_k}(-z_k'),\\
			&= \left(\prod_{j=1}^{k-1}u_{-i_j}(\zeta_j)\right)u_{-i_k}\left((z_k' )^{-1}\left(\prod_{j<k \,:\; s(j)\geq k}\varphi_j^{-a_{i_k,i_j}} \right)\right) b
		\end{align*}
		for appropriate Borel element $b\in B$.
		From this, we read
		\begin{align*}
			\zeta_k &= (z_k' )^{-1}\left(\prod_{j<k \,:\; s(j)\geq k}\varphi_j^{-a_{i_k,i_j}}\right)
			= (z_k' )^{-1}\varphi_{p(k)}^{-2} \left(\prod_{j<k \,:\; s(j)> k}\varphi_j^{-a_{i_k,i_j}}\right),\\
			&= \varphi_{p(k)}^{-1} \varphi_k^{-1} \left(\prod_{j<k \,:\; s(j)> k}\varphi_j^{-a_{i_k,i_j}}\right).\qedhere
		\end{align*}
	\end{proof}                        
	
	We now have an explicit description of initial cluster variables $A_j=\varphi_j$ of the Bott--Samelson cell $\cO^{\underline{w}}$ as Laurent monomials in the coordinates $\zeta_j$ of the opposite chart $\cO^{-\underline{w}}$. The Elek--Lu formula (\ref{eqn:StdPoissonStr}) gives that these are log-canonical, and therefore the $A_j$ are log-canonical. The following statement allows us to determine the Poisson coefficient matrix for the cluster variables of the cell $\cO^{\underline{w}}$.
	
	\begin{prop}\label{prop:Poissonconjugation}
		Let $\underline{\zeta}=(\zeta_1,\dots,\zeta_\ell)$ and  $\underline{\varphi}=(\varphi_{1},\dots,\varphi_\ell)$ be two collections of log-canonical coordinates with Poisson coefficient matrices $\varTheta=(\vartheta_{ij})$ and $\Omega=(\omega_{ij})$ so that
		$$\{\zeta_i,\zeta_j\}=\vartheta_{ij}\zeta_i\zeta_j \quad \text{and} \quad \{\varphi_i,\varphi_j\}=\omega_{ij}\varphi_i\varphi_j.$$		
		If $\underline{\zeta}=\underline{\varphi}^C$ for an integer matrix $C=(c_{ij})$, then we have $\varTheta=C^T\Omega C$.
	\end{prop}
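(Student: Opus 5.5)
The plan is a direct computation with logarithmic derivatives, using only that the bracket is a biderivation on the field of rational functions. The idea is to rewrite the bracket of two monomials in the $\varphi$'s as a weighted sum of the brackets $\{\varphi_k,\varphi_m\}$, so that the coefficient matrix transforms bilinearly under $C$.

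First I would record the logarithmic Leibniz rule. Since $\{\cdot,\cdot\}$ is a derivation in each entry and extends to $\mathrm{Frac}(\mathcal{A})$, for invertible $f,g$ we have
\[
\frac{\{f^a,g^b\}}{f^ag^b}=ab\,\frac{\{f,g\}}{fg}\qquad(a,b\in\Z),
\]
including negative exponents, via $\{f^{-1},g\}=-f^{-2}\{f,g\}$. Similarly, for products,
\[
\frac{\{f_1f_2,g\}}{f_1f_2g}=\frac{\{f_1,g\}}{f_1g}+\frac{\{f_2,g\}}{f_2g}.
\]
In other words, the bilinear map $(f,g)\mapsto \{f,g\}/(fg)$ is additive in each argument with respect to multiplication of functions.

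Next I would apply this to $\zeta_i=\prod_k\varphi_k^{c_{ki}}$ and $\zeta_j=\prod_m\varphi_m^{c_{mj}}$. Bi-additivity gives
\[
\frac{\{\zeta_i,\zeta_j\}}{\zeta_i\zeta_j}=\sum_{k,m}c_{ki}\,c_{mj}\,\frac{\{\varphi_k,\varphi_m\}}{\varphi_k\varphi_m}=\sum_{k,m}c_{ki}\,\omega_{km}\,c_{mj}=(C^T\Omega C)_{ij}.
\]
By hypothesis the left-hand side equals $\vartheta_{ij}$, which gives $\varTheta=C^T\Omega C$ entrywise.

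The only step needing care, and the mild obstacle, is checking the exponent convention. Here $\underline{\varphi}^C$ means that the $i$-th column of $C$ gives the exponents of $\zeta_i$, so the transpose must sit on the left. I would also justify working with inverses by noting that the identity holds on the open torus where all $\varphi_k$ are nonzero, which is where the monomial relation is stated. No further input from the paper is needed.
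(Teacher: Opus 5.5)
Your proposal is correct and is essentially the paper's argument: the paper also expands $\{\zeta_i,\zeta_j\}$ by the Leibniz rule, first in the left and then in the right argument, to arrive at $\bigl(\sum_{k,k'} c_{ki}\,\omega_{kk'}\,c_{k'j}\bigr)\zeta_i\zeta_j$. Your formulation via bi-additivity of $\{f,g\}/(fg)$ is a cleaner packaging of the same computation, and your reading of the exponent convention (columns of $C$ give the exponents of each $\zeta_i$) is the right one.
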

	\begin{proof}
		We calculate:
		\begin{align*}
			\vartheta_{ij}\zeta_i\zeta_j &= \{\zeta_i,\zeta_j\} =  \left\{\prod_{k=1}^{\ell}\varphi_k^{c_{ki}},\zeta_j\right\},\\
			&=  \sum_{k=1}^\ell c_{ki}\frac{\varphi_1^{c_{1i}}\cdots\varphi_{\ell}^{c_{\ell i}}}{\varphi_k} \{\varphi_k, \zeta_j\},\\
			&=  \sum_{k=1}^\ell c_{ki}\frac{\varphi_1^{c_{1i}}\cdots\varphi_{\ell}^{c_{\ell i}}}{\varphi_k} \left(\sum_{k'=1}^\ell c_{k'j}\frac{\varphi_1^{c_{1j}}\cdots\varphi_{\ell}^{c_{\ell j}}}{\varphi_{k'}}\{\varphi_k,\varphi_{k'}\}\right),\\
			&=  \sum_{k=1}^\ell c_{ki}(\varphi_1^{c_{1i}}\cdots\varphi_{\ell}^{c_{\ell i}}) \left(\sum_{k'=1}^\ell c_{k'j}(\varphi_1^{c_{1j}}\cdots\varphi_{\ell}^{c_{\ell j}})\omega_{kk'}\right),\\
			&=  \left(\sum_{k,k'=1}^\ell c_{ki}\omega_{kk'}c_{k'j}\right) \zeta_i\zeta_j,\\
		\end{align*}
		Therefore, $\vartheta_{ij} = \sum_{k'=1}^\ell\left(\left(\sum_{k=1}^\ell c_{ki}\omega_{kk'}\right)c_{k'j}\right)$, and $\varTheta=C^T\Omega C$.
	\end{proof}
	
	From (\ref{eqn:StdPoissonStr}), we know that the Poisson coefficient matrix of $(\zeta_1,\dots, \zeta_\ell)$ has entries $a_{jk} = \langle \alpha_j,\alpha_k\rangle$ for $j<k$, and hence $\varTheta=(\varDelta Q)^T-\varDelta Q$ where $\varDelta = \mathrm{diag}(\delta_1,\dots,\delta_\ell)$  is a diagonal matrix whose entries
		$\delta_j = d_{i_j}$
	are from the symmetriser $D=\mathrm{diag}(d_1,\dots,d_n)$ of the Cartan matrix. Therefore, the Poisson coefficient matrix $\Omega$ for the seed $\seed(\underline{w})$ is 
	\begin{equation}\label{eqn:PoissonCoefficientMatrix}
		\begin{split}
		\Omega = (C^T)^{-1}\varTheta C^{-1} &= \left((\Lambda Q)^T\right)^{-1}\left((\varDelta Q)^T-\varDelta Q\right)(\Lambda Q)^{-1},\\
		&= (\Lambda^T)^{-1}\left( \varDelta Q^{-1}-(Q^T)^{-1}\varDelta \right) \Lambda^{-1}.
	\end{split}
	\end{equation}
	
	\begin{lem}\label{lem:RectangularExchangeMatrix}
		The submatrix $\widetilde{E}$ of $E(\underline{w})$ formed by the mutable columns is equal to the mutable columns of the product $\Lambda Q \Lambda^T$.
	\end{lem}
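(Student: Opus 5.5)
Since Lemma \ref{lem:monomialchangematrix} already computes $C=\Lambda Q$ explicitly, I would write $\Lambda Q\Lambda^T = C\Lambda^T$. Then I would compare the mutable columns of this product entry by entry with the interlacing description (\ref{eqn:exchangematrixofW}) of $E(\underline{w})$.

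The row $k$ of $\Lambda$ has a $1$ in position $k$ and a $-1$ in position $s(k)$, the latter only when $s(k)<\infty$. Hence
$$(C\Lambda^T)_{jk} = c_{jk} - c_{j,s(k)},$$
with the convention that $c_{j,\infty}=0$. A column $k$ is mutable exactly when $s(k)<\infty$, so in the mutable columns both terms genuinely appear. This is the only place where mutability enters. Since $i_{s(k)}=i_k$, we have $a_{i_{s(k)},i_j}=a_{i_k,i_j}$, so every nonzero off-diagonal contribution is $\pm a_{i_k,i_j}$.

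Next I would substitute the simplified formula (\ref{eqn:monomialchangematrix}):
\begin{itemize}
\item $c_{jk}=-1$ if $j=k$ or $k=s(j)$;
\item $c_{jk}=-a_{i_k,i_j}$ if $j<k<s(j)$;
\item $c_{jk}=0$ otherwise, in particular $c_{jk}=0$ whenever $j>k$.
\end{itemize}
The proof is then a case split on the relative position of $j$, $k$, $s(j)$ and $s(k)$, with $k$ mutable.

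\emph{Letters with the same colour as $k$.}
\begin{itemize}
\item If $j=k$, the entry is $-1-(-1)=0$.
\item If $j=s(k)$, then $c_{s(k),k}=0$ by upper triangularity and $c_{s(k),s(k)}=-1$, so the entry is $1$.
\item If $j=p(k)$, then $c_{p(k),k}=-1$. Also $c_{p(k),s(k)}=0$, because $s(k)>s(p(k))=k$. So the entry is $-1$.
\item Any other same-colour $j$ gives $0$.
\end{itemize}

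\emph{Letters $j$ of a different colour.} Here $(C\Lambda^T)_{jk}=-a\,[\,j<k<s(j)\,]+a\,[\,j<s(k)<s(j)\,]$, writing $a=a_{i_k,i_j}$.
\begin{itemize}
\item If $k<j<s(k)<s(j)$, the entry is $+a$. This matches $\varepsilon_{jk}=a_{i_k,i_j}$, including the case $s(j)=\infty$.
\item If $j<k<s(j)<s(k)$, the entry is $-a$, which matches $\varepsilon_{jk}$.
\item If the intervals are nested, i.e. $j<k<s(k)<s(j)$ or $k<j<s(j)<s(k)$, the two terms either cancel or both vanish, giving $0$.
\item If the intervals are disjoint, both terms vanish, giving $0$.
\end{itemize}
The half-integer entries of (\ref{eqn:exchangematrixofW}) occur only when $s(j)=s(k)=\infty$, i.e. between two frozen indices. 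They therefore never appear in a mutable column, which is consistent with the integrality of $C\Lambda^T$.

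I expect the main obstacle to be bookkeeping rather than any conceptual step:
\begin{itemize}
\item handling $s(\cdot)=\infty$ correctly, so that the $\infty$-th column of $\Lambda$ contributes zero;
\item checking that deleting the frozen vertices $(i,0)$ in the construction of $\seed(\underline{w})$ matches indexing by positions $1,\dots,\ell$;
\item keeping the transpose convention straight: $E$ read by columns, and $a_{i_k,i_j}$ versus $a_{i_j,i_k}$.
\end{itemize}
Because the Cartan matrix is only symmetrisable, I would verify that last point on the ordering of indices in the $\pm a$ cases, rather than appealing to symmetry.
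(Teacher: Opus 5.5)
Your proposal is correct and follows essentially the paper's proof. The paper likewise writes $\Lambda Q\Lambda^T=C\Lambda^T$ with $C=\Lambda Q$, uses $(C\Lambda^T)_{jk}=c_{jk}-c_{j,s(k)}$ for mutable $k$, and checks case by case against (\ref{eqn:exchangematrixofW}) using the explicit form (\ref{eqn:monomialchangematrix}) of $C$; your same-colour/different-colour split writes out the verification the paper leaves to the reader, and each case checks out.
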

	\begin{proof}
		Again, let $C=\Lambda Q$. Whenever $s(k)<\infty$, we have $(C \Lambda^T)_{j,k} = c_{j,k} - c_{j,s(k)}$.
		We can then check case-by-case that using the description of $C$ from (\ref{eqn:monomialchangematrix}), that
		\begin{equation*}
			(\Lambda Q \Lambda^T)_{j,k} = (C \Lambda^T)_{j,k}=\begin{cases}
				a_{i_k,i_j} & \text{if } k<j<s(k)\leq s(j),\\
				-a_{i_k,i_j} & \text{if } j<k<s(j)\leq s(k),\\
				1 & \text{if } j=s(k),\\				
				-1 & \text{if } j=p(k),\\				
				0 & \text{else.}
			\end{cases}
		\end{equation*}
		whenever $k$ is a mutable index, i.e. $s(k)<\infty$. This matches the description of the exchange matrix from (\ref{eqn:exchangematrixofW}).
	\end{proof}

	\begin{lem}\label{lem:auxillarymatrixidentity}
		We have  $\left[I_{\mathrm{uf}} \;\; 0\right](\varDelta ^{-1}\Lambda Q^T \varDelta ) = -\left[I_{\mathrm{uf}} \;\; 0\right](\Lambda Q)$.
	\end{lem}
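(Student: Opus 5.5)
The plan is a direct entrywise computation restricted to the mutable rows, i.e.\ rows $j$ with $s(j)<\infty$. The target is the explicit description of $C=\Lambda Q$ from (\ref{eqn:monomialchangematrix}). The only structural input beyond the definitions is the symmetry of $AD$, in the form $\langle\alpha_i,\alpha_j\rangle = a_{ij}d_j$ being symmetric in $i,j$, so that $a_{i,j}d_{j}=a_{j,i}d_{i}$.

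First I compute the conjugated matrix $R:=\varDelta^{-1}Q^T\varDelta$. Since $\Lambda$ commutes past nothing, I will instead use the identity
\[
\varDelta^{-1}\Lambda Q^T\varDelta=(\varDelta^{-1}\Lambda\varDelta)\,R .
\]
For a mutable row $j$, the only nonzero entries of row $j$ of $\Lambda$ are $\lambda_{jj}=1$ and $\lambda_{j,s(j)}=-1$. Moreover $\delta_j=d_{i_j}=d_{i_{s(j)}}=\delta_{s(j)}$ because $i_{s(j)}=i_j$. Hence row $j$ of $\varDelta^{-1}\Lambda\varDelta$ equals row $j$ of $\Lambda$, and row $j$ of the left-hand side is $R_{j,\bullet}-R_{s(j),\bullet}$.

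Next, $R_{jk}=q_{kj}\,d_{i_k}/d_{i_j}$. By the definition of $Q$ this is $-1$ if $k=j$ and $-a_{i_j,i_k}d_{i_k}/d_{i_j}$ if $k<j$, and it vanishes otherwise. By the symmetrisability identity the middle case simplifies to $-a_{i_k,i_j}$. So $R$ is "$Q$ transposed with the Cartan indices reordered": $R_{jk}=-1$ for $k=j$, $R_{jk}=-a_{i_k,i_j}$ for $k<j$, and $0$ otherwise.

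Then I compare $R_{jk}-R_{s(j),k}$ with $-c_{jk}$ case by case in the position of $k$ relative to $j<s(j)$, using $i_{s(j)}=i_j$ throughout.
\begin{itemize}
\item For $k<j$, the entry is $-a_{i_k,i_j}+a_{i_k,i_j}=0$.
\item For $k=j$, it is $-1+a_{i_j,i_j}=1$.
\item For $j<k<s(j)$, it is $0+a_{i_k,i_j}$.
\item For $k=s(j)$, it is $0-(-1)=1$.
\item For $k>s(j)$, both terms vanish.
\end{itemize}
Against (\ref{eqn:monomialchangematrix}) these are exactly $-c_{jk}$ in every case. This gives the claimed equality $\left[I_{\mathrm{uf}}\;\;0\right](\varDelta^{-1}\Lambda Q^T\varDelta)=-\left[I_{\mathrm{uf}}\;\;0\right](\Lambda Q)$.

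The only delicate points are bookkeeping rather than conceptual. The first is the index transposition in $Q^T$ together with the diagonal rescaling, which is where the symmetriser identity must be applied with the correct index order. The second is the diagonal case, which uses $a_{ii}=2$ to turn $-1-(-2)$ into $+1$. The restriction to mutable rows is essential, since for frozen $j$ the $-R_{s(j),\bullet}$ correction is absent and the identity fails.
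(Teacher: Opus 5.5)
Your proof is correct and follows essentially the same route as the paper: an entrywise computation on the mutable rows, using $a_{i_j,i_k}\delta_k=a_{i_k,i_j}\delta_j$ and comparing with the explicit entries of $C=\Lambda Q$ in (\ref{eqn:monomialchangematrix}). The only difference is the order of operations. The paper forms $\Lambda Q^T$ first and then rescales by $\delta_k/\delta_j$, whereas you conjugate $Q^T$ first and use $\delta_{s(j)}=\delta_j$ to see that $\varDelta$ fixes the mutable rows of $\Lambda$.
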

	\begin{proof}
	We need to show that the two matrices $\varDelta ^{-1}\Lambda Q^T \varDelta $ and $-\Lambda Q$ agree on their mutable rows.
	Note that when $s(j)<\infty$, we have $(\Lambda Q^T)_{j,k} = q_{k,j} - q_{k,s(j)}$. Hence,
	\begin{equation*}
		(\Lambda Q^T)_{j,k} = \begin{cases}
			1 & \text{if } j=k,\\
			a_{i_j,i_k} & \text{if } j<k<s(j),\\
			1 & \text{if } j=s(k),\\
			0 & \text{else.}
		\end{cases}
	\end{equation*}
	We have $(\varDelta^{-1}\Lambda Q^T\varDelta)_{j,k} = \frac{\delta_k}{\delta_j} \cdot (\Lambda Q^T)_{j,k} $.	
	Since $\varDelta$ was defined in terms of the symmetrisers of the Cartan matrix, $a_{i_j,i_k} \delta_k = a_{i_k,i_j} \delta_j$, and we are done after comparing with the negated entries for $C=\Lambda Q$ in (\ref{eqn:monomialchangematrix}).
	\end{proof}
	
	\begin{prop}\label{prop:cellclusteralgebraisPoissoncompatible}
		The cluster algebra structure $\cA(\seed(\underline{w}))$ on $\cO^{\underline{w}}= \mathring{Z}_{\underline{w}}$ is compatible with its standard Poisson structure.
	\end{prop}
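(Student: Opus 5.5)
The plan is to apply Lemma \ref{lem:poissoncompatibilitysufficent}. The initial cluster $A_j=\varphi_j$ is log-canonical: by Lemma \ref{lem:monomialchangematrix} the $\varphi_j$ are Laurent monomials in the coordinates $\zeta_j$ of $\cO^{-\underline{w}}$, and these are log-canonical by (\ref{eqn:StdPoissonStr}). It therefore suffices to show that $\widetilde{E}^T\Omega=[\Delta\;\;0]$ with $\Delta$ diagonal, where $\Omega$ is given by (\ref{eqn:PoissonCoefficientMatrix}) and $\widetilde{E}$ is the mutable-column submatrix of $E(\underline{w})$.

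Write $P=[I_{\mathrm{uf}}\;\;0]^T$, so that $\widetilde{E}=E(\underline{w})P$. By Lemma \ref{lem:RectangularExchangeMatrix}, $\widetilde{E}=\Lambda Q\Lambda^T P$, hence $\widetilde{E}^T=P^T\Lambda Q^T\Lambda^T$. Substituting (\ref{eqn:PoissonCoefficientMatrix}), the factors $\Lambda^T(\Lambda^T)^{-1}$ cancel and we get
\begin{equation*}
\widetilde{E}^T\Omega = P^T\Lambda Q^T\left(\varDelta Q^{-1}-(Q^T)^{-1}\varDelta\right)\Lambda^{-1}
= P^T\Lambda Q^T\varDelta Q^{-1}\Lambda^{-1} - P^T\Lambda\varDelta\Lambda^{-1}.
\end{equation*}

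For the first term, Lemma \ref{lem:auxillarymatrixidentity} gives $P^T\Lambda Q^T\varDelta = -P^T\varDelta\Lambda Q$. Multiplying on the right by $Q^{-1}\Lambda^{-1}$ yields $-P^T\varDelta$, and this works only because $\varDelta$ commutes past $P^T$ as a diagonal matrix. The main care needed is this row/diagonal bookkeeping: $P^T\varDelta^{-1}=\varDelta_{\mathrm{uf}}^{-1}P^T$, so the identity in Lemma \ref{lem:auxillarymatrixidentity} can be rescaled to $P^T\Lambda Q^T\varDelta=-\varDelta_{\mathrm{uf}}P^T\Lambda Q=-P^T\varDelta\Lambda Q$.

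For the second term, $P^T\Lambda\varDelta\Lambda^{-1}$ needs to be computed on the mutable rows. For a mutable $j$ we have $s(j)<\infty$ and $\delta_j=\delta_{s(j)}$, since both positions carry the same letter. Row $j$ of $\Lambda\varDelta$ is $\delta_j(e_j-e_{s(j)})$, which equals $\delta_j$ times row $j$ of $\Lambda$. Hence $P^T\Lambda\varDelta=\varDelta_{\mathrm{uf}}P^T\Lambda$, and so $P^T\Lambda\varDelta\Lambda^{-1}=\varDelta_{\mathrm{uf}}P^T$.

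Combining the two terms gives $\widetilde{E}^T\Omega=-2\varDelta_{\mathrm{uf}}P^T=[-2\varDelta_{\mathrm{uf}}\;\;0]$, which is of the required form with $\Delta$ diagonal. Lemma \ref{lem:poissoncompatibilitysufficent} then gives compatibility. The only real obstacle is keeping the signs and the transposition conventions consistent with Lemma \ref{lem:poissoncompatibilitysufficent}. If a sign or transpose is off, the same computation should still produce a diagonal matrix up to sign, which is all the lemma requires.
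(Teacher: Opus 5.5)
Your proposal is correct and follows the paper's proof essentially step for step. You cancel $\Lambda^T(\Lambda^T)^{-1}$ the same way, apply Lemma~\ref{lem:auxillarymatrixidentity} to the first term, replace $\Lambda\varDelta\Lambda^{-1}$ by $\varDelta$ on the mutable rows, and reach the same answer $[-2\varDelta_{\mathrm{uf}}\;\;0]$; your explicit justifications of that replacement via $\delta_j=\delta_{s(j)}$, and of the row-rescaling $P^T\varDelta=\varDelta_{\mathrm{uf}}P^T$, fill in steps the paper uses without comment.
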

	\begin{proof}
		Using Lemma \ref{lem:monomialchangematrix}, we can determine the Poisson coefficient matrix for the cluster variables $A_j = \varphi_j$ in the seed $\seed(\underline{w})$. We then utilise the criterion in Lemma \ref{lem:poissoncompatibilitysufficent}.
		
		Applying Lemma \ref{lem:RectangularExchangeMatrix}, and (\ref{eqn:PoissonCoefficientMatrix}), up to reordering indices, we have
		\begin{align*}
			\widetilde{E}^T \Omega &= \left(\left[I_{\mathrm{uf}} \;\; 0\right] \Lambda Q^T \Lambda^T\right) \left((\Lambda^T)^{-1}\left( \varDelta Q^{-1}-(Q^T)^{-1}\varDelta \right) \Lambda^{-1}\right),\\ &=\left[I_{\mathrm{uf}} \;\; 0\right]  \left(\Lambda Q^T\varDelta (Q^{-1}\Lambda^{-1}) - \Lambda \varDelta  \Lambda^{-1}\right),\\
			&=\left[I_{\mathrm{uf}} \;\; 0\right]  \left(\Lambda Q^T\varDelta (Q^{-1}\Lambda^{-1}) - \varDelta \right),\\
			&=\left[I_{\mathrm{uf}} \;\; 0\right]\varDelta  \cdot  \left(\left(\varDelta ^{-1}\Lambda Q^T\varDelta \right)\left(Q^{-1}\Lambda^{-1}\right) - I_\ell\right).
		\end{align*}
		Then applying Lemma \ref{lem:auxillarymatrixidentity}, we have
		\begin{align*}
			\widetilde{E}^T \Omega 
			&=\left[I_{\mathrm{uf}} \;\; 0\right]\varDelta   \left(-\Lambda Q Q^{-1}\Lambda^{-1} - I_\ell\right),\\
			&=-2\left[I_{\mathrm{uf}} \;\; 0\right]\varDelta 
		\end{align*}
		which is comprised of a diagonal block and a zero block. Thus, the exchange matrix $\widetilde{E}$ and the Poisson coefficient matrix $\Omega$ form a compatible pair in the sense of \citep{BZ05} and Lemma \ref{lem:poissoncompatibilitysufficent} says the cluster and Poisson structures are compatible.
	\end{proof}

	\begin{cor}\label{cor:ClusterPoissonCompatible}
		The cluster structure constructed on braid varieties in \citep{CGGLSS25} and \citep{GLSB25} is compatible with the standard Poisson structure.
	\end{cor}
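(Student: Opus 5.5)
The plan is to transport Proposition \ref{prop:cellclusteralgebraisPoissoncompatible} from the Bott--Samelson cell to the braid variety, using the open embedding of Remark \ref{rem:braidvarisopenpartofBScell} together with the identification of seeds recalled in \S\ref{sec:almagmatedseeds}.

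First, I would realise the braid variety $X(\underline{w})$ inside a cell. By \citep[Lemma 3.16]{CGGLSS25}, in the form of Lemma \ref{lem:braidopeninchart} with $\gamma$ the full positive word, $X((\underline{w_0},\underline{w}))$ is isomorphic to the open subset $\mathring{Z}_{\underline{w}}\cap m^{-1}(B^-B/B)$ of the cell $\cO^{\underline{w}}$. The corollary following Lemma \ref{lem:StdPoissonStr} shows that this isomorphism is Poisson. For a general braid variety $X(\beta)$ with $\delta(\beta)=w_0$, I would then use that prepending a reduced word for $w_0$ and applying braid moves does not change the variety. The braid moves are realised by Propositions \ref{prop:Poissonmapsbetweenshortcells} and \ref{prop:Poissonmapsbetweencells}, and these isomorphisms are Poisson. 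I would also pass through the standard reduction of \citep{CGGLSS25} relating $X(\beta)$ to a cell $\mathring{Z}_{\underline{u}}$ intersected with $m^{-1}(B^-B/B)$, which amounts to removing an initial $w_0$ factor. The upshot is that $X(\beta)$ is Poisson isomorphic to an open subset of some $\cO^{\underline{u}}$.

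Second, I would match the cluster structures. The seed of \citep{CGGLSS25} from the right inductive weave agrees with $\seed(\underline{u})$ by \citep[\S4.8, Corollary 4.45, Proposition 5.20]{CGGLSS25}: the quivers coincide and the cluster variables coincide with the minors $\varphi_j$. The frozen variables $\Delta_{\omega_i,\omega_i}$ of the final flag are inverted exactly on the open set $m^{-1}(B^-B/B)$, so $\cA(\seed(\underline{u}))$ is the coordinate ring of this open set. For \citep{GLSB25}, I would invoke the comparison \citep{CGGSSBS25}, which shows that the two cluster structures agree up to quasi-equivalence. Quasi-equivalence preserves log-canonicity of clusters, because cluster variables change only by frozen monomials, and frozen variables are log-canonical with everything in a compatible structure.

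Third, I would conclude. Compatibility means that every extended cluster is log-canonical, and this property is preserved under Poisson isomorphisms and under restriction to an open Poisson subvariety. By Proposition \ref{prop:cellclusteralgebraisPoissoncompatible}, every seed mutation-equivalent to $\seed(\underline{u})$ is log-canonical for $\pi_{\mathrm{st}}$ on $\cO^{\underline{u}}$. Pulling back through the Poisson isomorphisms of the first step then gives the claim.

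The main obstacle is bookkeeping in the second step. One has to confirm that the transposed-Cartan and column-mutation conventions (Remark \ref{rem:rowvscolumnmutation}) turn the \citep{CGGLSS25} seed into exactly $\seed(\underline{u})$, rather than its opposite. In the opposite case the compatibility criterion would give $-\Delta$ in place of $\Delta$, which is still diagonal, so the conclusion would survive either way. I would check this against Example \ref{eg:(1,2,1,-2)} in rank two.
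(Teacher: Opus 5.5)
\textbf{The gap is in your first step.} The failing sentence is the claim that prepending a reduced word for $w_0$ and applying braid moves does not change the variety.

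The principle from \citep{CGGLSS25} only lets you add letters that \emph{strictly increase} the Demazure product. Once $\delta(\beta)=w_0$ it no longer applies. Indeed $X(\underline{w_0},\beta)\cong\mathring{Z}_\beta\cap m^{-1}(B^-B/B)$ has dimension $\ell(\beta)$, while $X(\beta)$ has dimension $\ell(\beta)-\ell(w_0)$. The reduction of \citep[Lemma 3.16]{CGGLSS25} (Lemma \ref{lem:braidopeninchart}) describes $X(\underline{w_0},\underline{u})$, not $X(\beta)$.

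So you can only ``remove an initial $w_0$ factor'' when $\beta$ is braid-equivalent to some $(\underline{w_0},\underline{u})$. That is exactly the case covered by the first paragraph of the paper's proof, and your argument is fine there.

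Beyond that case the identification fails outright. Take type $A_2$ and $\beta=(1,1,2,2,1,1)$:
\begin{itemize}
\item $\delta(\beta)=w_0$, and no braid move applies to this word.
\item Each of the three pinches of the obvious weave $\beta\to(1,2,1)$ is forced to be nondegenerate on all of $X(\beta)$. Hence $X(\beta)$ equals its weave torus $(\mathbb{C}^*)^3$, with all three cluster variables frozen.
\item For any $\underline{u}$ of length $3$ in type $A_2$, the variety $\mathring{Z}_{\underline{u}}\cap m^{-1}(B^-B/B)$ is $\mathbb{A}^3$ minus the zero loci of $\Delta_{\omega_1,\omega_1}\circ m$ and $\Delta_{\omega_2,\omega_2}\circ m$. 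Each of these is irreducible or constant, so its unit group has rank at most $2$, against rank $3$ for $(\mathbb{C}^*)^3$.
\end{itemize}
So $X(\beta)$ is not isomorphic to any such open part of a cell. Correspondingly, $\seed(\underline{u})$ has at most two frozens here, so your second step (matching the \citep{CGGLSS25} seed with $\seed(\underline{u})$) has nothing to apply to either.

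\textbf{What is missing is the paper's treatment of general $\beta$.} It does not seek an isomorphism with an open part of a cell. Instead it uses the structure of the proof of \citep[Theorem 5.22]{CGGLSS25}: every reachable seed of $X(\beta)$ is obtained from a seed of $X(\underline{w_0},\beta)$ by iterated freezings and by deletions, i.e.\ specialisations of frozen variables.

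These operations commute with the mutation sequences. Your argument together with Proposition \ref{prop:cellclusteralgebraisPoissoncompatible} gives log-canonicity of all clusters of $X(\underline{w_0},\beta)$, and this is then inherited by all clusters of $X(\beta)$. With this reduction in place of your ``upshot'' that $X(\beta)$ is an open subset of a cell, the rest of your argument goes through.
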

	\begin{proof}
		Since the left inductive weave of the braid variety $X(\underline{w_0},\underline{w})$  the seed for the Bott--Samelson cell, we have the statement in the case that $w_0$ is a prefix of the braid word by Corollary \ref{prop:cellclusteralgebraisPoissoncompatible}.
		
		As shown in the proofs of \citep[Theorem 5.22]{CGGLSS25} and its component lemmas, reachable seeds of $X(\underline{w})$ are obtained by iterated freezings of seeds from $X(\underline{w_0},\underline{w})$, together with deletions by specialising frozen variables. These freezings and deletions commute with the mutation sequence to reach said seed, and therefore log-canonicity of the clusters inherited.
	\end{proof}
	
	\section{Weaves for non-full Bott--Samelson charts}\label{sec:weavesfornonfullcharts}
	\subsection{New vertices with frayed strands}
	In Section \ref{subsec:Poisson weaves}, checking the Poisson-ness of trivalent vertices was made substantially easier by first factoring the rational map through an adjacent (and not-fully-supported) Bott--Samelson chart. This motivates a construction of weaves for an arbitrary Bott--Samelson chart. We show that non-full Bott--Samelson charts have canonical Poisson maps between them which allows us to find a cluster structure on each Bott--Samelson chart.
	
	Recall that in a classical Demazure weave, we used $I$-coloured strands to separate flags which are in relative position $s_i$. For the Bott--Samelson charts, our negative letters indicate when two consecutive flags might be equal, or in relative position $s_i$, but not opposite. For this, we will introduce new strands coloured by labels in $-I$. These are informally referred to as \emph{frayed strands}. In diagrams, these are denoted with dashed lines, which are meant to indicate how flags on regions separated by a dashed line can be equal when its corresponding affine parameter is zero.
	
	Figure \ref{fig:frayedweavevertices} shows an expanded set of permitted vertices for simply-laced type. These will be further expanded upon in Appendix \ref{sec:FrayedWeaves}, but the arguments in this section will only require these initial six.
	\begin{figure}[htbp]
		\centering
		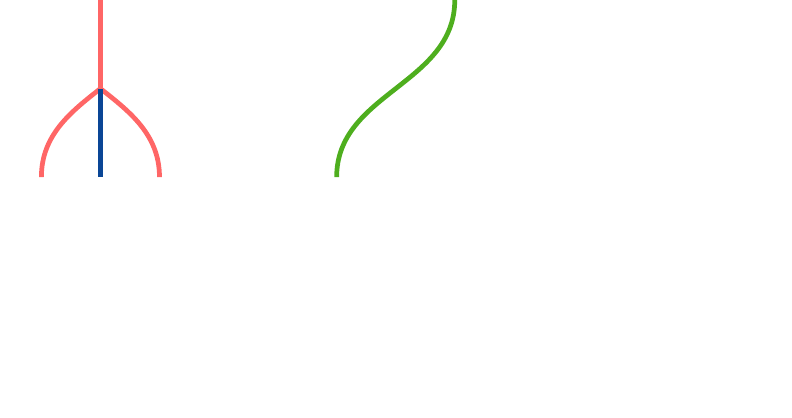
		\caption{An expanded set of permitted vertices involving frayed strands.}
		\label{fig:frayedweavevertices}
	\end{figure}
	\begin{figure}[htbp]
		\centering
		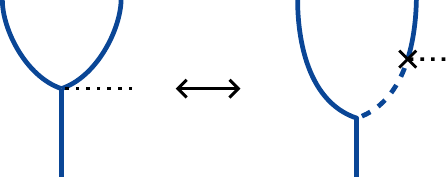
		\caption{Factoring a classical trivalent vertex as a fraying followed by a twining.}
		\label{fig:factoringwithfrayings}
	\end{figure}
	
	In Figure \ref{fig:frayedweavevertices}, we will call the top-right vertex a \emph{fraying}. The bottom-right vertex will be called a \emph{twining}. Similar to the trivalent vertices in Figure \ref{fig:classicalweavevertices}, fraying vertices have a rightward raking ray which reminds us to modify the variables of strands to the right. The trivalent vertices of the previous section will be replaced with a fraying followed immediately with a twining (see Figure \ref{fig:factoringwithfrayings}), but we will also be able to twine frayed strands which did not originate from a fraying. There are two new vertices involving frayed strands in the bottom row of Figure \ref{fig:frayedweavevertices}, and their associated maps between charts are based on the following six matrix identities:
	\begin{align}
		B_i(z_1)B_j(z_2)B_i(z_3) &= B_j(z_3)B_i(z_1z_3-z_2)B_j(z_1),\\  B_i(z_1)B_k(z_2) &= B_k(z_2)B_i(z_1),\\
		B_i(z_1) &= u_{-i}(z_1^{-1})\alpha^\vee_i(z_1)u_{i}(-z_1^{-1}),\label{eqn:frayingvertex}\\
		B_i(z_1)B_j(z_2)u_{-i}(z_3) &= u_-j(z_3)B_i(z_1-z_2z_3)B_j(z_2),\\ B_i(z_1)u_{-k}(z_2) &=  u_{-k}(z_2)B_i(z_1)\,\; \text{ and}\\ B_i(z_1)u_{-i}(z_2) &=  B_i(z_1-z_2).
	\end{align}
	\begin{rem}
		In \citep[\S2.3, see also Theorem 5.18]{CGGS24}, there is a process of opening crossings to obtain maps between Bott--Samelson cells. The difference with the fraying vertex corresponding to (\ref{eqn:frayingvertex}) is that we do not (yet) propagate the lower triangular matrix leftwards. Our resulting map $\cO^{(\dots,i,\dots)}\dasharrow \cO^{(\dots,-i,\dots)}$ is one between Bott--Samelson charts where the index at the relevant position has flipped sign and is the rational change of coordinates between adjacent Bott--Samelson charts.
	\end{rem}
	\begin{prop}\label{prop:fraysleftwards}
		Suppose $i$ and $j$ are adjacent and $i$ and $k$ are not. We have Poisson maps between $2$ and $3$ dimensional Bott--Samelson charts given by
		$$\cO^{(i,-k)} \to \cO^{(-k,i)},\quad \Phi_{(i,-k)}(z_1,z_2)\mapsto \Phi_{(-k,i)}(z_2,z_1)$$
		and
		$$\cO^{(i,j,-i)}\to \cO^{(-j,i,j)}, \quad \Phi_{(i,j,-i)}(z_1,z_2,z_3) \mapsto \Phi_{(-j,i,j)}(z_3,z_1-z_2z_3, z_2).$$
	\end{prop}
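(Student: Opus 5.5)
The plan is the same as for Proposition \ref{prop:Poissonmapsbetweenshortcells}. First I check that each map is a well-defined biregular isomorphism of charts compatible with the multiplication maps. Then I compute the Elek--Lu brackets of Lemma \ref{lem:StdPoissonStr} on the domain and the codomain. Finally I verify directly that the pullbacks of the codomain coordinate brackets agree with the domain brackets of the pulled-back functions.

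For well-definedness, the two maps are read off from the matrix identities $B_i(z_1)u_{-k}(z_2)=u_{-k}(z_2)B_i(z_1)$ and $B_i(z_1)B_j(z_2)u_{-i}(z_3)=u_{-j}(z_3)B_i(z_1-z_2z_3)B_j(z_2)$. These say exactly that $B_\gamma(\underline z)=B_{\widetilde\gamma}(f(\underline z))$, so $\widetilde m\circ f=m$. The inverse map is given explicitly by $(w_1,w_2,w_3)\mapsto(w_2+w_3w_1,\,w_3,\,w_1)$ in the second case, so both maps are isomorphisms of affine spaces. Compatibility with $m$ is what later lets Proposition \ref{prop:Poissonmapsbetweencells} extend the result to longer words.

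For the non-adjacent case, on $\cO^{(i,-k)}$ the letter $z_1$ is supported, so the Elek--Lu formula gives $\{z_1,z_2\}=-\langle s_i\alpha_i,\alpha_k\rangle z_1z_2-2\sigma_1(z_2)$. The torus term vanishes since $\langle\alpha_i,\alpha_k\rangle=0$. The term $\sigma_1(z_2)$ vanishes because $u_i(t)$ commutes with $u_{-k}(z_2)$, so the left action of $\exp(te_{\alpha_i})$ does not change $z_2$. On $\cO^{(-k,i)}$ the first letter is unsupported, so the bracket is $\langle\alpha_k,\alpha_i\rangle z_1z_2=0$. The swap therefore preserves the (trivial) bracket.

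For the adjacent case, I first compute on the codomain $\cO^{(-j,i,j)}$, where $\gamma^2=s_i$ and $\gamma^3=s_is_j$. Since $\widetilde z_1$ is unsupported, $\{\widetilde z_1,\widetilde z_2\}=\langle\alpha_j,-\alpha_i\rangle\widetilde z_1\widetilde z_2=\widetilde z_1\widetilde z_2$ and $\{\widetilde z_1,\widetilde z_3\}=\langle\alpha_j,-\alpha_i-\alpha_j\rangle\widetilde z_1\widetilde z_3=-\widetilde z_1\widetilde z_3$. For $\{\widetilde z_2,\widetilde z_3\}$ the torus part is $-\langle-\alpha_i,-\alpha_i-\alpha_j\rangle\widetilde z_2\widetilde z_3=-\widetilde z_2\widetilde z_3$. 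The vector field term vanishes: commuting $u_i(t)$ past $u_j(\widetilde z_3)\dot s_j$ produces only root subgroups for $\alpha_i$ and $\alpha_i+\alpha_j$, which conjugate under $\dot s_j$ into $U\subset B$.

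The domain $\cO^{(i,j,-i)}$ is where I expect the main work, because $z_1$ and $z_2$ are supported and the $\sigma$-terms need not vanish. The bracket $\{z_2,z_3\}$ should be pure torus, $-\langle s_j\alpha_j, s_j\alpha_i\rangle z_2z_3 = -z_2z_3$ plus a $\sigma$-term that vanishes by the same argument as for the codomain. For $\{z_1,z_2\}$, I expect $-z_1z_2$ exactly as in the cell $\cO^{(i,j,i)}$. For $\{z_1,z_3\}$, the torus part is $-\langle\alpha_i,s_is_j(\alpha_i)\rangle z_1z_3=-z_1z_3$. I will compute $\sigma_1(z_3)$ by moving $u_i(t)$ rightwards through $u_j(z_2)\dot s_j u_{-i}(z_3)$ and reading off the first-order change in $z_3$ (or via \citep[Theorem 4.10]{EL21}); I expect it to be a multiple of $z_2z_3^2$, as in Example \ref{eg:(1,2,1,-2)}.

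With all six brackets in hand, I check the three pulled-back identities: $f^*\{\widetilde z_1,\widetilde z_2\}=\{z_3,z_1-z_2z_3\}$, $f^*\{\widetilde z_1,\widetilde z_3\}=\{z_3,z_2\}$ and $f^*\{\widetilde z_2,\widetilde z_3\}=\{z_1-z_2z_3,z_2\}$, expanding by the Leibniz rule. The first identity is the one that pins down the coefficient of $\sigma_1(z_3)$. If the sign bookkeeping there fails, I would fall back on Proposition \ref{prop:Poissonmapsbetweencells}-style reasoning: both charts map Poisson-ly to $P_i\times^B P_j\times^B P_i/B$, so $f$ is Poisson by Lemma \ref{lem:Poissonfactoring}.
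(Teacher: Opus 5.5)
Your strategy is the paper's: compute the Elek--Lu brackets on both charts and check the three pulled-back identities. Your codomain brackets and the non-adjacent case are correct.

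\textbf{The sign error in $\{z_2,z_3\}$.} On $\cO^{(i,j,-i)}$ this bracket has the wrong sign. We have $\gamma^2(\alpha_j)=s_is_j\alpha_j=-\alpha_i-\alpha_j$ and $\gamma^3(\alpha_i)=s_is_j\alpha_i=\alpha_j$. So the torus term is $-\langle-\alpha_i-\alpha_j,\alpha_j\rangle z_2z_3=+z_2z_3$. Your own expression also evaluates to this, since $-\langle s_j\alpha_j,s_j\alpha_i\rangle=-\langle\alpha_j,\alpha_i\rangle=1$. Moreover $\sigma_2(z_3)=0$, because $u_j(t)$ commutes with $u_{-i}(z_3)$.

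With your value $-z_2z_3$ the verification breaks. We have $f^*\{\widetilde z_1,\widetilde z_3\}=-z_2z_3$, but $\{z_3,z_2\}$ would be $+z_2z_3$. The third identity fails in the same way, and the first would force $\sigma_1(z_3)=0$.

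\textbf{Computing $\sigma_1(z_3)$.} You cannot pin down $\sigma_1(z_3)$ from the identity you are trying to prove; you must compute it independently. Moving $u_i(t)$ rightwards gives $u_i(t)B_j(z_2)u_{-i}(z_3)\cdot B=B_j(z_2)u_{-i}\bigl(z_3/(1+tz_2z_3)\bigr)\cdot B$. Hence $\sigma_1(z_3)=-z_2z_3^2$ and $\{z_1,z_3\}=-z_1z_3+2z_2z_3^2$. Your torus term there should read $-\langle s_i\alpha_i,s_is_j\alpha_i\rangle$; as written, $-\langle\alpha_i,s_is_j\alpha_i\rangle=+1$.

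With $\{z_1,z_2\}=-z_1z_2$, $\{z_1,z_3\}=-z_1z_3+2z_2z_3^2$ and $\{z_2,z_3\}=z_2z_3$, all three identities hold. This is precisely the paper's computation.

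\textbf{The fallback.} It does not work as stated, for two reasons.
\begin{enumerate}
\item $\cO^{(-j,i,j)}$ lies in $Z_{(j,i,j)}$, not in $P_i\times^BP_j\times^BP_i/B$, so it has no a priori Poisson map to $Z_{(i,j,i)}$.
\item Lemma~\ref{lem:Poissonfactoring} runs the other way. It deduces that the \emph{second} map of a composite is Poisson from the first being a Poisson surjection, whereas $f$ is the first map.
\end{enumerate}

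A correct version uses birationality instead. The multiplication maps $m$ and $\widetilde m$ are Poisson: apply Lemma~\ref{lem:Poissonfactoring} to the quotient $P_{\underline{w}}\to Z_{\underline{w}}$, using multiplicativity of $\pi_{\mathrm{st}}$. Since $(i,j,i)$ and $(j,i,j)$ are reduced, both maps are birational onto the Schubert variety $\overline{Bs_is_js_iB}/B$. Then $\widetilde m\circ f=m$ gives $f^*=m^*\circ(\widetilde m^*)^{-1}$ on function fields, so $f$ is Poisson. Repaired this way, your fallback is a valid computation-free alternative to the paper's direct check.
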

	\begin{proof}
		We use the Elek--Lu formula to check that these maps are Poisson with  respect to the standard Poisson structures on these cells. Similarly to the classical case, the length $2$ relation is Poisson as the bracket is trivial on both the domain and codomain.
		
		Let $\gamma=(i,j,-i)$ and $\widetilde{\gamma}=(-j,i,j)$, and consider the map $f:\cO^{\gamma} \to \cO^{\widetilde{\gamma}}$ defined by $\Phi_{\gamma}(z_1,z_2,z_3) \mapsto \Phi_{\widetilde{\gamma}}(z_3,z_1-z_2z_3,z_2)$.
		We can use the Elek--Lu formula to calculate the bracket on the domain $\cO^{\gamma}$:
		$$\{z_1,z_2\}_{\gamma} = -z_1z_2,\quad \{z_1,z_3\}_{\gamma} = -z_1z_3 +2z_2z_3^2,\quad \{z_2,z_3\}_{\gamma}=z_2z_3,$$
		and on the codomain $\cO^{\widetilde{\gamma}}$:
		$$\{\widetilde{z}_1,\widetilde{z}_2\}_{\widetilde{\gamma}} = \widetilde{z}_1\widetilde{z}_2,\quad \{\widetilde{z}_1,\widetilde{z}_3\}_{\widetilde{\gamma}} =-\widetilde{z}_1\widetilde{z}_3,\quad \{\widetilde{z}_2,\widetilde{z}_3\}_{\widetilde{\gamma}} =-\widetilde{z}_2\widetilde{z}_3.$$
		Now explicitly checking,
		\begin{align*}
			f^*\left(\{\widetilde{z}_1,\widetilde{z}_2\}_{\widetilde{\gamma}}\right) &= f^*(\widetilde{z}_1\widetilde{z}_2) = z_3(z_1-z_2z_3)= z_1z_3 - 2z_2z_3^2 + z_2z_3^3,\\&= \{z_3,z_1-z_2z_3\}_{\gamma} = \{f^*(\widetilde{z}_1),f^*(\widetilde{z}_2)\}_\gamma,\\
			f^*\left(\{\widetilde{z}_1,\widetilde{z}_3\}_{\widetilde{\gamma}}\right) &= f^*(-\widetilde{z}_1\widetilde{z}_3)= - z_3z_2,\\
			&= \{z_3,z_2\}_{\gamma} = \{f^*(\widetilde{z}_1),f^*(\widetilde{z}_3)\}_\gamma, \qquad \text{ and}\\
			f^*\left(\{\widetilde{z}_2,\widetilde{z}_3\}_{\widetilde{\gamma}}\right) &= f^*(-\widetilde{z}_2\widetilde{z}_3) = -(z_1-z_2z_3)z_2= -z_1z_2 + z_2^2z_3,\\&= \{z_1-z_2z_3,z_2\}_{\gamma} = \{f^*(\widetilde{z}_2),f^*(\widetilde{z}_3)\}_\gamma.
		\end{align*}
		and so the map $\cO^{\gamma}\to \cO^{\widetilde{\gamma}}$ is a Poisson map.
	\end{proof}
	
	\begin{rem}
		The map $\cO^{(i,j,-i)}\to \cO^{(-j,i,j)}$ is an isomorphism. We will not need its inverse for now, but it will give another permitted vertex in Section \ref{sec:FrayedWeaves}.
	\end{rem}
	
	Both these above maps respect the multiplication maps on these cells. Therefore the argument in Proposition \ref{prop:Poissonmapsbetweencells} can be repeated, and we can apply these moves within a signed expression to ``bring the negative letter leftward''.
	The following lemma gives some equivalences for these moves. They are diagrammatically shown in Figures \ref{fig:commutingfrayedweave} and \ref{fig:frayedweaveequivlabelled}.
	\begin{lem}\label{lem:leftwardfrayedequivalences}
		When $i,j$ are adjacent roots, the compositions $$\cO^{(i,j,i,-j)}\to \cO^{(i,-i,j,i)} \to \cO^{(i,j,i)} \quad \text{and} \quad \cO^{(i,j,i,-j)} \to \cO^{(j,i,j,-j)} \to \cO^{j,i,j} \to \cO^{(i,j,i)}$$ give the same Poisson map between the Bott--Samelson charts $\cO^{(i,j,i,-j)}\to \cO^{(i,j,i)}$.
		
		For non-adjacent roots $i$ and $k$, the compositions $$\cO^{(i,k,-i)} \to \cO^{(k,i,-i)}\to \cO^{(k,i)} \to \cO^{(i,k)}\quad \text{and} \quad\cO^{(i,k,-i)} \to \cO^{(i,-i,k)} \to \cO^{(i,k)}$$ give the same map $\cO^{(i,k,-i)} \to \cO^{(i,k)}$.
	\end{lem}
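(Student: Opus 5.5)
The plan is a direct computation in Bott--Samelson coordinates. Every arrow in the two compositions is one of the explicit coordinate maps already written down: the braid isomorphisms of Proposition \ref{prop:Poissonmapsbetweenshortcells}, the leftward frayed moves of Proposition \ref{prop:fraysleftwards}, and the twining $(i,-i)\mapsto i$ coming from $B_i(z_1)u_{-i}(z_2)=B_i(z_1-z_2)$, which is the first factor of Lemma \ref{lem:frayedtwiningisPoisson}. Each of these extends to longer charts by Proposition \ref{prop:Poissonmapsbetweencells}, acting as the identity on the untouched positions. So I will write each composite as an explicit polynomial map $\mathbb{A}^4\to\mathbb{A}^3$ (respectively $\mathbb{A}^3\to\mathbb{A}^2$) and compare. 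Poissonness of the common map is then automatic, since each factor is Poisson: the twinings are Poisson by Lemma \ref{lem:Poissoncontraction}, and the others by the two propositions just cited.

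For the adjacent case, let $(z_1,z_2,z_3,z_4)$ be the coordinates on $\cO^{(i,j,i,-j)}$.

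First route. Apply Proposition \ref{prop:fraysleftwards} with the roles of $i$ and $j$ swapped to the last three letters $(j,i,-j)$, via the identity $B_j(z_2)B_i(z_3)u_{-j}(z_4)=u_{-i}(z_4)B_j(z_2-z_3z_4)B_i(z_3)$. This gives the point $(z_1,z_4,z_2-z_3z_4,z_3)$ of $\cO^{(i,-i,j,i)}$. Twining the first two letters then gives $(z_1-z_4,\,z_2-z_3z_4,\,z_3)$ in $\cO^{(i,j,i)}$.

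Second route. The braid move on the first three letters gives $(z_3,\,z_1z_3-z_2,\,z_1,\,z_4)$ in $\cO^{(j,i,j,-j)}$. Twining the last two letters gives $(z_3,\,z_1z_3-z_2,\,z_1-z_4)$ in $\cO^{(j,i,j)}$. The braid move $(x_1,x_2,x_3)\mapsto(x_3,x_1x_3-x_2,x_1)$ then gives
\[
\bigl(z_1-z_4,\; z_3(z_1-z_4)-z_1z_3+z_2,\; z_3\bigr)=\bigl(z_1-z_4,\; z_2-z_3z_4,\; z_3\bigr).
\]
The two routes agree.

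The non-adjacent case works the same way. Starting from $(z_1,z_2,z_3)\in\cO^{(i,k,-i)}$, the first route commutes $i$ past $k$, twines, and commutes back: $(z_2,z_1,z_3)\mapsto(z_2,z_1-z_3)\mapsto(z_1-z_3,z_2)$. The second route moves $-i$ leftward past $k$ using $B_k(z_2)u_{-i}(z_3)=u_{-i}(z_3)B_k(z_2)$, then twines: $(z_1,z_3,z_2)\mapsto(z_1-z_3,z_2)$. These agree.

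I expect the main obstacle to be bookkeeping rather than anything conceptual. The frayed hexavalent move is stated in Proposition \ref{prop:fraysleftwards} only for the pattern $(i,j,-i)$, so I must apply it correctly with $i$ and $j$ swapped and keep track of which coordinate goes where. I also need to check that each local move embedded via Proposition \ref{prop:Poissonmapsbetweencells} really acts as the identity on the other coordinates. This holds because every identity used is an exact equality of braid matrices with no leftover Borel factor, so no raking ray modifies the coordinates to the right. As a cross-check, both composites send $B_{(i,j,i,-j)}(\underline z)$ to the same braid matrix, because each step preserves the product exactly.
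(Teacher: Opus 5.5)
Your proposal is correct and takes the same approach as the paper, whose proof simply composes the explicit coordinate maps along each route, as recorded in Figures \ref{fig:commutingfrayedweave} and \ref{fig:frayedweaveequivlabelled}. Your common composite $(z_1-z_4,\,z_2-z_3z_4,\,z_3)$ is exactly the map $\rho:\cO^{(1,2,1,-2)}\to\cO^{(1,2,1)}$ in Example \ref{eg:mutationsequences(1,2,1,-2)}, and Poissonness does follow factor by factor, as you say.
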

	\begin{proof}
		This is checked by composing the relevant maps, see Figures \ref{fig:commutingfrayedweave} and \ref{fig:frayedweaveequivlabelled}.
	\end{proof}

	\begin{figure}[htbp]
		\centering
		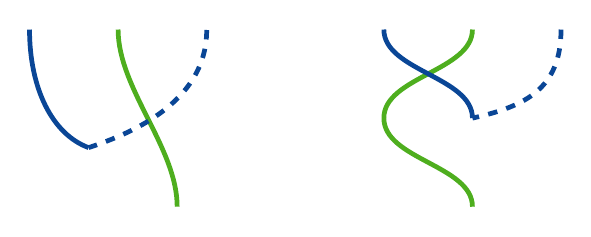
		\caption{Equivalent frayed weaves from $(i,k,-i)$ to $(i,k)$.}
		\label{fig:commutingfrayedweave}
	\end{figure}

	\begin{figure}[htbp]
		\centering
		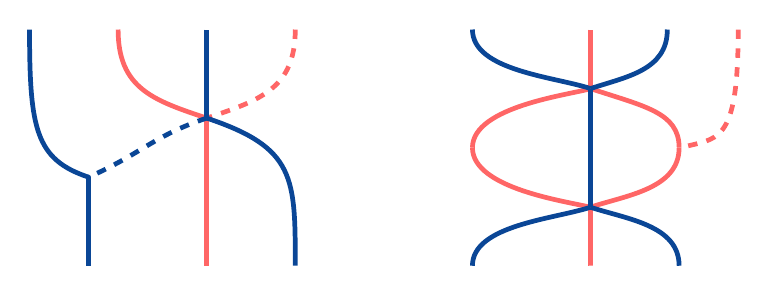
		\caption{Equivalent frayed weaves from $(i,j,i,-j)$ to $(i,j,i)$.}
		\label{fig:frayedweaveequivlabelled}
	\end{figure}

	\begin{cor}\label{cor:twinepastlongestelt}
		In simply-laced type, we can apply leftward moves as described above to get from the signed expression $(\underline{w_0}, -i)$ to $(i^*,-i^*,\underline{v})$, where $v=w_0s_i$ and $i^*$ is such that $s_{i^*}=w_0 s_i w_0$.
	\end{cor}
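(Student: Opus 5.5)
The plan is to exploit the freedom in choosing a reduced word for $w_0$. Since $\ell(w_0s_i)<\ell(w_0)$, there is a reduced expression of $w_0$ ending in $i$, and we write $\underline{w_0}$ in this form. The relevant moves are the braid moves of Proposition \ref{prop:Poissonmapsbetweenshortcells} together with the leftward frayed moves of Proposition \ref{prop:fraysleftwards}. By Proposition \ref{prop:Poissonmapsbetweencells}, all of these can be applied inside longer signed expressions, so we may freely change $\underline{w_0}$ to any other reduced word. The overall strategy is to carry the frayed letter from the right end of the word to the left end, one step at a time.

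The key observation is the invariant carried along by each leftward move. A move has the form $(\dots,a,-b,\dots)\to(\dots,-b,a,\dots)$ when $a$ and $b$ are not adjacent, or $(\dots,a,c,-a,\dots)\to(\dots,-c,a,c,\dots)$ when $a$ and $c$ are adjacent. In the first case $s_a\alpha_b=\alpha_b$, and in the second $s_as_c\alpha_a=\alpha_c$. So if the positive word to the left of the frayed letter $-b$ is $\underline{u}$, the root $u(\alpha_b)$ is unchanged by the move. This is exactly the lexicographic or root-sequence bookkeeping for reduced words.

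The inductive step is as follows. Suppose the frayed letter is $-b$ and the positive prefix to its left is a reduced word $\underline{u}$ with $u(\alpha_b)$ a positive root. Take a simple root $a$ such that $\underline{u}$ can be rewritten by braid moves to end in $a$.
\begin{enumerate}
\item If $a$ and $b$ are not adjacent, apply the commutation move.
\item If $a$ and $b$ are adjacent, first braid-move $\underline{u}$ to end in $(\dots,b,a)$ and then apply the hexavalent-type frayed move. Writing $(\dots,b,a,-b)\to(\dots,-a,b,a)$, this shortens the positive prefix by two letters and pushes two letters past the frayed strand.
\item If $a=b$, we would have a twining instead. This is the stopping condition.
\end{enumerate}

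The point needing care is the case analysis. We must choose the final letter of the prefix so that either a move applies or we have reached $a=b$ at the very left.

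The main obstacle is making sure we never get stuck with the prefix ending in $b$ while the prefix still has more than one letter. I would handle this using the invariant. Initially $u=w_0s_i$ and $u(\alpha_i)=w_0s_i(\alpha_i)=-w_0\alpha_i=\alpha_{i^*}$, which is a simple root.

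Since the invariant is preserved, at the end the prefix is $(b')$, the frayed letter is $-b''$, and the product of prefix letters applied to $\alpha_{b''}$ gives $\alpha_{i^*}$. I would show the procedure can be steered to reach this form. At each stage, a prefix ending in $b$ with $u(\alpha_b)$ positive is impossible, because it would make $u(\alpha_b)$ negative. Hence we can always choose a final letter $a\neq b$, and a move applies whenever the prefix has length at least $2$; when the prefix is a single letter, that letter must be $b$ itself.

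To handle the adjacent case, I would argue via the Bruhat/weak order that a reduced word of $u$ ending in $(b,a)$ exists whenever $u$ has $a$ as a right descent and $b$ is adjacent to $a$ with $u(\alpha_b)>0$. If that fails, I would instead choose $a$ non-adjacent, or choose a different right descent.

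Each move that reaches further into the prefix makes the prefix shorter or keeps the length while decreasing a length function. The prefix therefore shrinks to a single letter $c$ with the frayed letter $-c$. The invariant then gives $s_c(\alpha_c)$ applied appropriately. Tracking this more carefully, the final frayed letter $b$ has $\alpha_b$ equal to the invariant root up to the remaining prefix, and this forces $c=i^*$. Since the letters to the right form a reduced word of total length $\ell(w_0)-1$ whose product is determined by $w_0$, we read off $v$. Finally $s_{i^*}v=w_0s_i$, and this is consistent with the stated form via $w_0s_i=s_{i^*}w_0$.
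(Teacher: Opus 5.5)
\textbf{Verdict: there is a genuine gap.} Your strategy differs from the paper's. You carry the frayed letter left one move at a time while tracking $u(\alpha_b)$, where $\underline{u}$ is the positive prefix to the left of $-b$. Your check that both leftward moves preserve $u(\alpha_b)$ is correct. The trouble is that the invariant is initialised wrongly, and this breaks the rest of the argument.

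In $(\underline{w_0},-i)$ the positive prefix to the left of $-i$ is all of $\underline{w_0}$. So the invariant is $w_0(\alpha_i)=-\alpha_{i^*}$, not $w_0s_i(\alpha_i)=\alpha_{i^*}$; you have dropped the final letter $i$. The negative sign is forced by the target: in $(i^*,-i^*,\underline{v})$ the invariant is $s_{i^*}(\alpha_{i^*})=-\alpha_{i^*}$. Your positive invariant is incompatible with your own terminal state $(c,-c)$, since $s_c(\alpha_c)=-\alpha_c$. This is why your closing ``tracking this more carefully'' step cannot be completed. Also, $s_{i^*}v=w_0$, not $w_0s_i$.

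With the correct sign, your ``never get stuck'' argument addresses the wrong question. Since $u(\alpha_b)<0$ at every stage, $b$ is always a right descent of $u$, so a twining is always available. What must be proved is:
\begin{itemize}
\item when $\ell(u)\geq 2$, there is also a right descent $a\neq b$;
\item when $a$ is adjacent to $b$, the prefix has a reduced word ending in $(b,a)$.
\end{itemize}
Your appeal to the weak order, with the fallback ``choose another descent'', leaves exactly this unproved.

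It can be supplied as follows.
\begin{enumerate}
\item Put $x=us_b$. Then $x(\alpha_b)=\alpha_{i^*}$, and $x\neq e$ because $\ell(x)=\ell(u)-1\geq 1$.
\item Take a right descent $a$ of $x$ (necessarily $a\neq b$) and write $x=x^Jx_J$ for $J=\{a,b\}$.
\item Since $x^J$ maps $\Phi_J^+$ into $\Phi^+$, $x_J(\alpha_b)$ must be a simple root of $\Phi_J$; otherwise $x(\alpha_b)$ would have height at least $2$.
\item Since $x_J$ has right descent $a$ but not $b$, this forces $x_J=s_a$ when $a,b$ are non-adjacent, and $x_J=s_bs_a$ when they are adjacent.
\item Hence $u=x^Js_as_b$ or $u=x^Js_as_bs_a$, both reduced. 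This gives the commutation move or the frayed hexavalent move $(b,a,-b)\to(-a,b,a)$.
\end{enumerate}
The invariant also rules out emptying the prefix, and it forces the final one-letter prefix to be $(i^*)$. Every move preserves the product of the positive letters, so the letters to the right form a reduced word for $s_{i^*}w_0=w_0s_i=v$.

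The paper bypasses this combinatorics. It braid-moves $\underline{w_0}$ to a word ending in $i$, twines at once, and reverses the braid moves. It then uses the equivalences of Lemma~\ref{lem:leftwardfrayedequivalences} to rewrite the braid moves preceding the twining as leftward moves. Once repaired, your argument is a more explicit alternative that also explains why the twining partner is $i^*$.
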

	\begin{proof}
		From any reduced word for $w_0$, we use a sequence of braid moves and commutations to get to a word $\underline{w_0}$ which ends with $i$. Apply the trivalent twining vertex, and then apply the reversed sequence of braid moves and commutators to ``undo'' the coordinate change. By our equivalences in \ref{lem:leftwardfrayedequivalences}, we can equally write our weave as an appropriate sequence of leftward moves.
	\end{proof}
	
	\begin{rem}
		In type $A$, a direct proof is possible by using a good choice of reduced word $\underline{w_0}$ and appealing to a braid group argument. See example \citep[Lemma~2]{Gar69}.
	\end{rem}
	\begin{rem}
		Although this relation appears like the braid identity $\sigma_i\sigma_j\sigma_i^{-1} = \sigma_j^{-1}\sigma_i\sigma_j$, not all frayed weave vertices used will look like this. See Figure \ref{fig:frayedweavevertices2} in the next section.
	\end{rem}
	
	\begin{prop}\label{prop:twinablestrands}
		For a signed expression of the form $(w_0,\gamma)$, the leftmost frayed strand corresponding to the leftmost negative letter in $\gamma$ can be twined by bringing it leftwards using an appropriate sequence of frayed weave moves.
	\end{prop}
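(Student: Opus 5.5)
Write the signed expression as $(\underline{w_0},\gamma',-i,\gamma'')$, where $\gamma'$ consists only of positive letters and $-i$ is the leftmost negative letter of $\gamma$. The plan is to absorb the positive prefix $\gamma'$ into the reduced word for $w_0$, so that the frayed strand sits immediately to the right of a reduced word of $w_0$. Then Corollary \ref{cor:twinepastlongestelt} applies. I would argue by induction on the length of $\gamma'$, moving the frayed strand one letter at a time while keeping the part to its left of the form $(\underline{w_0},\text{positive letters})$.

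The base case $\gamma'=\emptyset$ is exactly Corollary \ref{cor:twinepastlongestelt}. The leftward moves of Proposition \ref{prop:fraysleftwards} (applied inside the longer expression via Proposition \ref{prop:Poissonmapsbetweencells}) carry $(\underline{w_0},-i)$ to $(i^*,-i^*,\underline{v})$. The adjacent pair $(i^*,-i^*)$ is then twined by the twining vertex, whose map is Poisson by Lemma \ref{lem:frayedtwiningisPoisson}.

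For the inductive step, let $j$ be the last letter of $\gamma'$, so that it sits immediately to the left of $-i$. Since $w_0$ is longest, $w_0s_j<w_0$, so $\underline{w_0}$ can be rewritten by braid moves and commutations to a reduced word $\underline{w_0}'$ ending in $j^*$ with $s_{j^*}=w_0s_jw_0$. This gives the rewriting
$$(\underline{w_0},\gamma'_{<},j)\;\rightsquigarrow\;(\underline{w_0},\gamma'_{<},j).$$
I would use the identity $\underline{w_0}\,s_j = s_{j^*}\,\underline{w_0}$ at the level of positive braids: the positive letter $j$ can be moved to the front as $j^*$ past $\underline{w_0}$ using only hexavalent and tetravalent moves. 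These moves are Poisson isomorphisms by Proposition \ref{prop:Poissonmapsbetweenshortcells} and Proposition \ref{prop:Poissonmapsbetweencells}. Repeating this for every letter of $\gamma'$ transforms $(\underline{w_0},\gamma',-i,\gamma'')$ into $(\underline{u},\underline{w_0},-i,\gamma'')$, where $\underline{u}$ consists of positive letters. We are then in the base case, applied to the subword $(\underline{w_0},-i)$ with $\underline{u}$ and $\gamma''$ as untouched context. Here $\gamma''$ is unaffected because every map involved respects the multiplication map.

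The main obstacle is making this transport step rigorous. Only classical braid moves are used on positive letters, so no frayed moves are needed; but one must check that the conjugation $s_j\mapsto s_{j^*}$ through $w_0$ is realised by an honest sequence of braid relations on words. I would argue this via Matsumoto's theorem: both $(\underline{w_0},j)$ and $(j^*,\underline{w_0})$ are words for the same positive braid $\Delta\sigma_j=\sigma_{j^*}\Delta$, and positive braid words representing the same element are related by braid moves. This is the point where I might need to restrict to positive words that represent the same element of the positive braid monoid, rather than working only in the Weyl group. An alternative, if that route fails, is to move the frayed strand leftward past each positive letter of $\gamma'$ directly using Proposition \ref{prop:fraysleftwards}. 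However, that requires frayed vertices for configurations such as $(j,-i)$ with $j=i$ or $j$ adjacent to $i$ that are not of the form $(i,j,-i)$, which is why I prefer the transport argument above.
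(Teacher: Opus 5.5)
Your proposal is correct and is essentially the paper's argument: transport the positive prefix $\gamma'$ leftward through $\underline{w_0}$ by braid moves (via $\Delta\sigma_j=\sigma_{j^*}\Delta$), then apply Corollary \ref{cor:twinepastlongestelt} to the block $(\underline{w_0},-i)$ and twine. The paper additionally undoes the braid moves afterwards to return to the form $(\underline{w_0},\gamma',\gamma'')$ needed in Proposition \ref{prop:mapstosupportedsubword}. Two small repairs: the displayed rewriting in your inductive step is a typo, and Matsumoto's theorem applies only to reduced words. The relation you need still follows from it by taking $\underline{y}$ reduced for $w_0s_j$ and using $\underline{w_0}\sim(j^*,\underline{y})$ and $(\underline{y},j)\sim\underline{w_0}$.
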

	\begin{proof}
		We use  the procedure in \citep[Definition 5.15]{CGGS24}, where one chooses a word for $w_0$ ending with $i$ and then twining them.
		
		Suppose $\gamma = (\underline{u},-i,\gamma'')$, where $\underline{u}$ is a sequence of positive letters. Pick a reduced word for $\underline{w_0}$ for $w_0$ which has $(\underline{u})^*$ as a prefix. Then there is a sequence of braid moves between positive braids $(\underline{u},\underline{w_0})$ and $((\underline{u})^*,\underline{w_0})$. By Corollary \ref{cor:twinepastlongestelt}, we can bring the negative letter leftwards and twine it with the first letter in $\underline{w_0}$. Undoing the previous sequence of braid moves brings us back to $(\underline{w_0},\underline{u},\gamma'')$.
	\end{proof}
	
	\begin{rem}
		This procedure is similar to the opening crossing procedure which appears in \citep[\S2.3, see also Theorem 5.18]{CGGS24}. The difference here is that we might already begin with a lower triangular factor to bring leftward which did not originate from fraying a strand.
	\end{rem}
	\begin{prop}\label{prop:mapstosupportedsubword}
		Let $\gamma$ be a subexpression of $\underline{w}$ and let $\gamma^+ = (\gamma_i)_{\gamma_i >0}$. Then the Bott--Samelson chart $\cO^\gamma$ is isomorphic to $\cO^{\gamma^{+}} \times \mathbb{A}^{m}$, where $m$ is the number of unsupported letters in $\gamma$.
		Endowing the product with a Poisson bracket which is log-canonical with respect to the Bott--Samelson coordinates, this becomes a Poisson isomorphism.
	\end{prop}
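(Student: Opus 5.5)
We prove the statement by induction on $m$, the number of unsupported letters in $\gamma$. At each step we remove the rightmost negative letter, or equivalently the leftmost one, by moving it leftward with frayed weave moves until it can be split off as an $\mathbb{A}^1$ factor.

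\emph{The local move.} Suppose $\gamma=(\gamma',\gamma_a,-i,\gamma'')$, where the letter immediately before $-i$ is positive. There are three cases.
\begin{itemize}
\item If $\gamma_a=i$, Lemma \ref{lem:frayedtwiningisPoisson} gives a biregular isomorphism $\cO^{\gamma}\cong\cO^{(\gamma',i,\gamma'')}\times\mathbb{A}^1$.
\item If $\gamma_a=k$ with $k$ not adjacent to $i$, the first map of Proposition \ref{prop:fraysleftwards} is an isomorphism $\cO^{(k,-i)}\to\cO^{(-i,k)}$.
\item If $\gamma_a$ is adjacent to $i$, it is only a single letter, so the relevant move needs a preceding $i$; we treat this case below.
\end{itemize}

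\emph{Avoiding the adjacent case.} The adjacent case can be sidestepped. The proposition is purely about an isomorphism of varieties, and Lemma \ref{lem:StdPoissonStr} says a negative coordinate $z_j$ is log-canonical with everything to its right. So we move $-i$ leftward only through negative letters, which commute trivially in the sense that $u_{-i}$ and $u_{-k}$ factors can simply be reordered. Then we move it through positive letters using the isomorphism
\[
B_i(z_1)\,u_{-i}(z_2)=B_i(z_1-z_2),
\]
together with Proposition \ref{prop:fraysleftwards}.

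\emph{Cleaner global approach.} I would actually work from the left. Let $j$ be the position of the \emph{leftmost} negative letter. Every letter before it is positive, so the hypothesis of Lemma \ref{lem:frayedtwiningisPoisson} ("$\gamma_k>0$ implies $\gamma_{k'}>0$ for $k<k'<j$") holds automatically. Write the braid matrix as
\[
B_\gamma=B_{\gamma[j-1]}\,u_{-i}(z_j)\,B_{(\gamma_{j+1},\dots)}.
\]
We absorb $u_{-i}(z_j)$ into the prefix by a Bott--Samelson coordinate change $\cO^{\gamma[j-1]}\to\cO^{\gamma[j-1]}$ depending on $z_j$. This is possible because $B_{\gamma[j-1]}\,u_{-i}(z_j)\,B$ lies in the same Schubert-type cell whenever the prefix contains $i$ appropriately. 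When the prefix is not a reduced word with the right descent, we instead pass to $(\underline{w_0},\gamma)$ via Lemma \ref{lem:braidopeninchart} and twine using Proposition \ref{prop:twinablestrands}. That, however, only gives an isomorphism on an open set.

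The honest plan is therefore the following. The map
\[
\cO^\gamma\to\cO^{\gamma^+}\times\mathbb{A}^m
\]
is defined by successively applying Lemma \ref{lem:frayedtwiningisPoisson} and the moves of Proposition \ref{prop:fraysleftwards}, under the identification $\cO^{(\gamma',i,\gamma'')}\times\mathbb{A}^1$. Each step is a triangular polynomial change of coordinates of the form $z_{a}\mapsto z_a-(\text{polynomial in } z_{a+1},\dots)$, or a permutation, so each step is biregular.

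\emph{Poisson structure.} At each step the split-off coordinate $z_j$ is log-canonical with all remaining coordinates. By Lemma \ref{lem:StdPoissonStr} it is log-canonical with coordinates to its right, since it is unsupported. For coordinates to its left, the computation in Lemma \ref{lem:frayedtwiningisPoisson} shows that $\sigma(z_j)=0$ once only positive letters precede it. The maps of Propositions \ref{prop:Poissonmapsbetweenshortcells} and \ref{prop:fraysleftwards}, extended via Proposition \ref{prop:Poissonmapsbetweencells}, are Poisson. The product bracket is then the one induced on $\cO^{\gamma^+}$ (the standard one, by Lemma \ref{lem:Poissoncontraction}) together with log-canonical brackets between the $\mathbb{A}^m$ coordinates and everything else, with constants read off from Lemma \ref{lem:StdPoissonStr}.

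\emph{Main obstacle.} The hard step is handling a negative letter that has no supported copy of its own colour available to twine with, and is blocked by adjacent positive letters. There the only available move, $(i,j,-i)\to(-j,i,j)$, changes the colour of the frayed strand, so termination and global biregularity are unclear. I would first try to show that repeatedly moving the strand left either twines it or eventually brings it to the far left. At the far left it becomes a leftmost unsupported letter, which splits off as an $\mathbb{A}^1$ factor directly, since $\Phi_\gamma$ with a leading $u_{-i}$ factor fibres trivially over the remaining chart.
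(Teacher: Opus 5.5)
Your proposal has a genuine gap, and it is the one you flag yourself at the end. Inside $\cO^\gamma$ alone, only a few moves carry a frayed strand $-i$ leftward past positive letters. These are $(k,-i)\to(-i,k)$ for $k$ not adjacent to $i$, and $(i,j,-i)\to(-j,i,j)$ from Proposition \ref{prop:fraysleftwards}, plus twining when an $i$ sits immediately to the left. When none of these applies, even after braid moves on the positive letters, the strand is stuck, and it need not be at the far left. For $i,j$ adjacent, $\gamma=(j,-i)$ and $\gamma=(j,j,-i)$ admit no move at all.

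So your fallback claim is false: repeated leftward moves need not either twine the strand or bring it to the front. You also give no argument that, in a stuck position, the coordinate splits off log-canonically with a Poisson projection to $\cO^{\gamma^+}$. In these two small examples the Bott--Samelson coordinates happen to be log-canonical already, but nothing in your plan establishes this in general.

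Your attempt to sidestep the adjacent case does not help either. Passing positive letters still requires the same two moves. Moreover, for adjacent $i,j$ the factors $u_{-i}$ and $u_{-j}$ do not commute: their commutator lies in $U_{-\alpha_i-\alpha_j}$.

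The missing ingredient is the one you considered and discarded. The paper passes to $(\underline{w_0},\gamma)$ using Lemma \ref{lem:braidopeninchart}, where there is always room on the left. Braid moves carry the positive letters preceding the leftmost frayed strand across $\underline{w_0}$. Then Corollary \ref{cor:twinepastlongestelt} and Proposition \ref{prop:twinablestrands} bring the strand into the $\underline{w_0}$ block and twine it with a strand of its own colour. Since every letter to its left is positive, Lemma \ref{lem:frayedtwiningisPoisson} makes the split-off coordinate log-canonical. Iterating gives a Poisson isomorphism $\cO^{\gamma}\cap m^{-1}(B^-B/B)\cong(\cO^{\gamma^+}\cap m^{-1}(B^-B/B))\times\mathbb{A}^m$.

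Your objection that this only holds on an open set is answered by the paper's observation that all component maps are regular in the Bott--Samelson coordinates. So the isomorphism extends to all of $\cO^\gamma$, and the Poisson identities, which hold on a dense open set, extend with it. Finally, note that both sides are copies of $\mathbb{A}^\ell$, so biregularity was never the real difficulty. The content of the statement is that this particular map is Poisson onto a log-canonical product, and that is what the $\underline{w_0}$ trick secures.
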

	
	\begin{proof}
		From the isomorphism in Lemma \ref{lem:braidopeninchart}, we can replace the open part of the Bott--Samelson cell with the braid variety by left concatenation by the longest element $w_0$. Using our braid moves, we can iteratively bring the left-most frayed strand as far left as possible, before twining them with a strand of the same colour as in Lemma \ref{lem:frayedtwiningisPoisson} and Proposition \ref{prop:twinablestrands} to obtain a log-canonical product.
		This gives us an isomorphism on the open subsets $$\cO^{\gamma}\cap m^{-1}(B^-B/B) \cong (\cO^{\gamma^+}\cap m^{-1}(B^-B/B))\times \mathbb{A}^m,$$ and the isomorphism can be extended to the whole spaces $\cO^{\gamma} \cong \cO^{\gamma^+}\times \mathbb{A}^m$ since each of the component maps is regular in the Bott--Samelson coordinates.
	\end{proof}
	
	Let $\rho:\cO^{\gamma}\to \cO^{\gamma^+}$ denote the map given by composing the above isomorphism with the projection to the first factor.
	We define the seed $\seed(\gamma)$ to be the pullback of the seed on $\cO^{\gamma^+}$ by $\rho$ together with isolated frozens corresponding to the unsupported letters. Explicitly, if $j_1<\dots<j_m$ are the unsupported positions in $\gamma$, the seed $\seed(\gamma)$ is given by:
	\begin{itemize}
		\item Cluster variables $(\rho^*(A_j))_{A_j\in \seed(\gamma^+)}$ together with the coordinates $(z_{j_k})_{k=1}^m$.
		\item Frozen variables are the frozens from $\seed(\gamma^+)$ together with the coordinates $(z_{j_k})_{k=1}^m$.
		\item Exchange matrix given by that from $\seed(\gamma^+)$ extended by zeroes.
	\end{itemize}
	\begin{thm}\label{thm:BSchartshaveclusterstructure}
		For $G$ is simply-laced, the seed $\seed(\gamma)$ endows the Bott--Samelson chart $\cO^\gamma$ with a cluster algebra structure, $\cA(\seed(\gamma),\emptyset)=\C[\cO^\gamma]$. Furthermore, this is compatible with the standard Poisson structure.
	\end{thm}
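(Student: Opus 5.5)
The plan is to transport everything through the Poisson isomorphism $\cO^\gamma \cong \cO^{\gamma^+}\times \mathbb{A}^m$ of Proposition \ref{prop:mapstosupportedsubword}, and to reduce both claims to the corresponding statements for the Bott--Samelson cell $\cO^{\gamma^+}=\mathring{Z}_{\underline{w}[\gamma^+]}$ of the supported subword.

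\textbf{Algebra.} The isomorphism gives $\C[\cO^\gamma]\cong \C[\cO^{\gamma^+}]\otimes \C[z_{j_1},\dots,z_{j_m}]$. Here the $z_{j_k}$ are the coordinates on the $\mathbb{A}^m$ factor, which the construction in Proposition \ref{prop:mapstosupportedsubword} preserves; this is visible from Lemma \ref{lem:frayedtwiningisPoisson}, where the twined coordinate $z_j$ is carried along unchanged. Under this identification, $\seed(\gamma)$ is the disjoint union of $\rho^*\seed(\gamma^+)$ with $m$ isolated frozen vertices, and these frozens are not inverted. Mutations only occur at mutable vertices of $\seed(\gamma^+)$, and the extended exchange matrix has zero entries in the new rows and columns. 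So every seed mutation-equivalent to $\seed(\gamma)$ is $\rho^*\seed'\sqcup\{z_{j_k}\}$ for some $\seed'\sim\seed(\gamma^+)$, and therefore
\[\cA(\seed(\gamma),\emptyset)=\rho^*\cA(\seed(\gamma^+),\emptyset)[z_{j_1},\dots,z_{j_m}].\]
By the statement recalled in \S\ref{sec:almagmatedseeds}, $\cA(\seed(\gamma^+),\emptyset)=\C[\mathring{Z}_{\underline{w}[\gamma^+]}]$. Pulling back along the isomorphism then gives $\C[\cO^\gamma]$, which proves the first claim.

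\textbf{Poisson.} The bracket on $\cO^\gamma$ corresponds to a bracket on the product that is log-canonical in the Bott--Samelson coordinates. I would first check that the restriction of this bracket to the first factor is the standard bracket on $\cO^{\gamma^+}$. My approach is to use Lemma \ref{lem:Poissoncontraction} together with Lemma \ref{lem:Poissonfactoring}: the projection $\rho$ is a composite of Poisson maps (the moves of Proposition \ref{prop:fraysleftwards}, the contractions, and the isomorphism of Lemma \ref{lem:braidopeninchart}), so $\rho$ is Poisson.

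Next, for any cluster $\seed'$ of $\cO^{\gamma^+}$, Proposition \ref{prop:cellclusteralgebraisPoissoncompatible} shows that its variables are log-canonical. Their pullbacks under the Poisson map $\rho$ are then log-canonical among themselves. It remains to show that each $z_{j_k}$ is log-canonical with the pulled-back cluster variables and with the other $z_{j_{k'}}$. The product bracket is log-canonical in the coordinates, so for each $k$ and each coordinate $x$ of $\cO^{\gamma^+}$ there is a constant $c$ with $\{z_{j_k},x\}=c\,z_{j_k}x$. Thus $\{z_{j_k},\cdot\}$ acts on $\C[\cO^{\gamma^+}]$ as $z_{j_k}$ times a torus-weight derivation. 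Every cluster variable of $\seed(\gamma^+)$ is a $T$-weight vector, either a generalised minor or a mutation of such, so it is homogeneous for this grading. Hence $\{z_{j_k},A\}=c_A z_{j_k}A$, and every extended cluster of $\seed(\gamma)$ is log-canonical.

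\textbf{Main obstacle.} The delicate step is the Poisson claim about the first factor. One has to verify that the log-canonical bracket on $\cO^{\gamma^+}\times\mathbb{A}^m$ produced in Proposition \ref{prop:mapstosupportedsubword} restricts to the standard bracket on $\cO^{\gamma^+}$, rather than a twisted one. One also has to check that the brackets between the $z_{j_k}$ and the first-factor coordinates really are given by a diagonalisable derivation of $T$-weight type, since the bracket coefficients come from the $\langle\gamma^j\alpha,\gamma^k\alpha\rangle$ terms of Lemma \ref{lem:StdPoissonStr}. I would first try to identify these cross terms explicitly with a Hamiltonian torus action, namely $\{z_{j_k},f\}=z_{j_k}\,h_k\cdot f$ for some $h_k\in\mathfrak{h}$ acting on $\cO^{\gamma^+}$ by the left torus action. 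Homogeneity of the cluster variables would then be immediate.
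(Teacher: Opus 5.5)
Your proposal is correct and follows the paper's own proof. The paper likewise transports along the Poisson isomorphism $\cO^\gamma\cong\cO^{\gamma^+}\times\mathbb{A}^m$ of Proposition \ref{prop:mapstosupportedsubword}, uses $\cA(\seed(\gamma^+),\emptyset)=\C[\cO^{\gamma^+}]$ for the algebra statement, and inherits compatibility from the cell $\cO^{\gamma^+}$.

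Your torus-weight argument for the isolated frozens spells out what the paper compresses into ``log-canonical product''. It closes as follows. On $\rho^*\C[\cO^{\gamma^+}]$, the bracket $\{z_{j_k},\cdot\}$ is $z_{j_k}$ times the infinitesimal left action of an element of $\mathfrak{h}$; this is the torus term in (\ref{eqn:StdPoissonStr}). Every cluster variable is then $T$-homogeneous because, with $\zeta$ the coordinates of $\cO^{-\gamma^+}$, Lemmas \ref{lem:monomialchangematrix} and \ref{lem:RectangularExchangeMatrix} give exchange ratios $\zeta_k/\zeta_{s(k)}$, which have $T$-weight zero. That weight-zero fact is what you actually need, since being ``a mutation of'' weight vectors does not by itself make a function a weight vector.
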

	\begin{proof}
		By the isomorphism in \ref{prop:mapstosupportedsubword}, we have
		$\C[\cO^\gamma]\cong \C[\cO^{\gamma^+}][z_{i_k}]_{k=1}^m$.
		The cluster algebra structure on $\cO^{\gamma^+}$ is that from \citep{SW21} or \citep{CGGLSS25}, and is known that $\cA(\seed(\gamma^+),\emptyset) = \C[\cO^{\gamma^+}]$. Hence, 
		$\cA(\seed(\gamma),\emptyset) \cong \cA(\seed(\gamma^+),\emptyset)[z_{i_k}]_{k=1}^m = \C[\cO^{\gamma^+}][z_{i_k}]_{k=1}^m$
		under the same isomorphism from Proposition \ref{prop:mapstosupportedsubword}.
		Composing with the inverse, we have that $\cA(\seed(\gamma),\emptyset)=\C[\cO^\gamma]$ as subalgebras of the field of rational functions.
		
		Compatibility with the standard Poisson structure follows because	the cluster structure on $\cO^{\gamma^+}$ is compatible with the standard Poisson structure by Corollary \ref{cor:ClusterPoissonCompatible} and the map in Proposition  \ref{prop:mapstosupportedsubword} is a Poisson isomorphism with a log-canonical product.
	\end{proof}
	
	\begin{cor}
		The projection map $\rho:\cO^\gamma \to \cO^{\gamma^+}$ is a quasi-cluster morphism.
	\end{cor}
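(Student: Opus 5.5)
The plan is to check the definition of a quasi-cluster morphism from \citep{Fra16} directly for the pullback $\rho^*:\C[\cO^{\gamma^+}]\to\C[\cO^\gamma]$, comparing the seed $\seed(\gamma^+)$ on the target with the seed $\seed(\gamma)$ on the source. The key point is that $\seed(\gamma)$ was \emph{defined} as the pullback of $\seed(\gamma^+)$ along $\rho$, padded with the isolated frozen coordinates $z_{j_1},\dots,z_{j_m}$. So the check should be essentially tautological once the index sets are matched. Because the numbers of frozen variables differ, I would phrase the statement as a rational quasi-cluster map in the sense of \citep[Definition 4.1]{BY25}.

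First, I would fix the index map. The vertex set of $\seed(\gamma^+)$ injects into that of $\seed(\gamma)$, mutable to mutable, by the identity on the old indices. The new indices are exactly the $m$ frozens attached to the unsupported letters. I take $J_{\mathrm{sup}}$ to be all of $J(\seed(\gamma^+))$ and $S'$ to be the frozen set of $\seed(\gamma)$.

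Next, I would verify conditions (1)--(3) of that definition.
\begin{enumerate}[(1)]
\item For every index, $\rho^*(A_j)$ is literally the variable of $\seed(\gamma)$ at the same index, so each monomial $M_j$ equals $1$.
\item The exchange matrix of $\seed(\gamma)$ is that of $\seed(\gamma^+)$ extended by zeroes. Hence for each mutable $j$ the exchange ratio $\prod_i A_i^{\ep_{ij}}$ involves no $z_{j_k}$, and it pulls back to the corresponding exchange ratio in $\seed(\gamma)$.
\item This condition is vacuous, since $J_{\mathrm{sup}}$ is everything.
\end{enumerate}
The two seeds have the same mutable part, so the rank is preserved. Frozen variables also map to frozen variables, so the map is quasi-cluster in the sense of \citep{Fra16}, up to the extra isolated frozens.

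The remaining step is to make sure $\rho^*$ is an honest algebra map between the cluster algebras and not just a map on seeds. This follows from Theorem \ref{thm:BSchartshaveclusterstructure}: we have $\C[\cO^\gamma]\cong\C[\cO^{\gamma^+}][z_{j_k}]$, and $\rho$ is the projection onto the first factor. Consequently $\rho^*$ is the inclusion of $\cA(\seed(\gamma^+),\emptyset)$ into $\cA(\seed(\gamma),\emptyset)$.

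The one real subtlety, and the step I expect to be the main obstacle, is that the injectivity and well-definedness rest on Proposition \ref{prop:mapstosupportedsubword}. That proposition gives an isomorphism that is regular on the whole chart and not only on the open part $m^{-1}(B^-B/B)$. I would cite it as stated rather than reprove it. I would also remark that mutation commutes with pullback because the padding frozens are isolated, with zero rows and columns. So every seed mutation-equivalent to $\seed(\gamma^+)$ pulls back to the corresponding mutation of $\seed(\gamma)$, which confirms the map is compatible with all clusters.
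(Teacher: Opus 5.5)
Your argument is correct and is essentially the paper's: since $\seed(\gamma)$ is by definition $\rho^*\seed(\gamma^+)$ padded with isolated frozens (zero rows and columns), the variables correspond exactly and the mutable exchange ratios are unchanged. One small slip: in the Bao--Ye definition $S'$ must be disjoint from $\iota(J_{\mathrm{sup}})$, so with $J_{\mathrm{sup}}$ taken to be everything you should let $S'$ be just the $m$ new frozens (all $M_j=1$ anyway); the detour is in fact unnecessary, since the quasi-cluster homomorphism definition quoted in the paper only requires equal rank, which holds here.
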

	\begin{proof}
		The algebra map is given by embedding the seeds of $\cO^{\gamma^+}$ into copies of the same seed with extra isolated frozen vertices. The statement follows as the extra vertices do not affect the exchange ratios of mutable vertices.
	\end{proof}

	\begin{lem}\label{lem:sectionissection}
		The embedding $\sigma: \cO^{\underline{w}[\ell-1]} \hookrightarrow \cO^{\underline{w}}$ is a section to the projection map $\rho$.
	\end{lem}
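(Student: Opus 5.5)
The plan is to prove this by evaluating the isomorphism of Proposition \ref{prop:mapstosupportedsubword} along the image of $\sigma$. The image of $\sigma=\iota_{\underline{w}[\ell-1]}$ lands in the chart $\cO^{\gamma}$ with $\gamma=(\underline{w}[\ell-1],-i_\ell)$: the embedding only restricts to charts unsupported at the omitted position. I will therefore read $\rho$ as the map $\cO^{\gamma}\to\cO^{\gamma^+}=\cO^{\underline{w}[\ell-1]}$ and show $\rho\circ\sigma=\mathrm{id}$.

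First, in Bott--Samelson coordinates $\sigma$ is the map
$$\Phi_{\underline{w}[\ell-1]}(z_1,\dots,z_{\ell-1})\mapsto \Phi_\gamma(z_1,\dots,z_{\ell-1},0).$$
This holds because $u_{-i_\ell}(0)\dot s^+_{-i_\ell}=e$, which is exactly the identity inserted in the last slot. So $\sigma$ identifies $\cO^{\underline{w}[\ell-1]}$ with the hyperplane $\{z_\ell=0\}\subset\cO^\gamma$, and it suffices to show that $\rho(z_1,\dots,z_{\ell-1},0)=(z_1,\dots,z_{\ell-1})$.

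Next I trace the construction of $\rho$ from the proof of Proposition \ref{prop:mapstosupportedsubword}.
\begin{enumerate}[(1)]
	\item We pass to the open part $\cO^\gamma\cap m^{-1}(B^-B/B)$. By Lemma \ref{lem:braidopeninchart} we prepend $\underline{w_0}$, with coordinates $\underline\zeta$ determined by the factorisation $B_\gamma(\underline z)=x_-x_+$. When $z_\ell=0$ we have $B_\gamma(\underline z)=B_{\underline{w}[\ell-1]}(z_1,\dots,z_{\ell-1})$, so these $\underline\zeta$ agree with those used for $\cO^{\underline{w}[\ell-1]}$.
	\item The frayed strand is moved leftwards by the vertices of Proposition \ref{prop:fraysleftwards}. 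Each such move, specialised at frayed parameter $0$, is the identity on the remaining coordinates. For the hexavalent-type move, $(z_1,z_2,0)\mapsto(0,z_1-z_2\cdot 0,z_2)=(0,z_1,z_2)$, which is the braid-move reindexing on the positive letters with the frayed coordinate remaining $0$. The commutation move $(z_1,0)\mapsto(0,z_1)$ behaves the same way.
	\item The final twining $B_i(z)u_{-i}(0)=B_i(z)$ of Lemma \ref{lem:frayedtwiningisPoisson} likewise leaves the positive coordinate unchanged: $z_{j-1}-z_j=z_{j-1}$.
	\item Undoing the braid moves as in Proposition \ref{prop:twinablestrands} then returns exactly $(\underline\zeta,z_1,\dots,z_{\ell-1})$.
\end{enumerate}
At no stage does the raking from Borel factors alter anything, since the only inserted factor is the identity. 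It follows that $\rho\circ\sigma$ is the identity on the dense open set $\cO^{\underline{w}[\ell-1]}\cap m^{-1}(B^-B/B)$.

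Finally, both $\rho$ and $\sigma$ are regular in Bott--Samelson coordinates (Proposition \ref{prop:mapstosupportedsubword} and the explicit formula for $\sigma$ above). Since $\rho\circ\sigma$ agrees with the identity on a dense open subset of an irreducible variety, it is the identity everywhere.

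The main obstacle is step (2) when the frayed strand passes through the $\underline{w_0}$ prefix and must later be re-expanded, as in Corollary \ref{cor:twinepastlongestelt}. For that part I would use that every vertex map involved is a matrix identity which, at frayed parameter $0$, reduces to the corresponding positive braid identity. The composite of a braid-move sequence and its reverse is then the identity, by the weave equivalences of Lemma \ref{lem:leftwardfrayedequivalences}.
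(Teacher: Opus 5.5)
Your proposal is correct and follows essentially the same route as the paper. The paper's proof observes that, by construction, each $\rho^*(\widetilde{z}_j)$ has the form $z_j - p\cdot z_\ell$, so setting the frayed coordinate to $0$ (which is exactly what $\sigma$ does) yields the identity; your step-by-step check that every vertex map degenerates to the identity at frayed parameter $0$, together with the density extension, is a more detailed justification of that ``by construction'' claim, and your reading of $\rho$ and $\sigma$ as maps to and from $\cO^{(\underline{w}[\ell-1],-i_\ell)}$ rather than $\cO^{\underline{w}}$ is the right one, matching the paper's worked example with $\gamma=(1,2,1,-2)$.
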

	\begin{proof}
		Denote the Bott--Samelson coordinates on $\cO^{\underline{w}}$ and $\cO^{\underline{w}[\ell-1]}$ by $z_i$ and $\widetilde{z_i}$ respectively. By construction, the $\rho^*(\widetilde{z_j})\in \C[z_1,\dots,z_\ell]$ are of the form $\rho^*(\widetilde{z_j}) = z_j - p(z_{j+1},\dots, z_{\ell-1})\cdot z_\ell$, where $p\in \C[z_{j+1},\dots,z_{\ell-1}]$ is some polynomial.
		The embedding $\sigma$ is defined by $\Phi_{\underline{w}[\ell-1]}(z_1,\dots,z_{\ell-1})\mapsto \Phi_{\underline{w}}(z_1,\dots,z_{\ell-1},0)$, which becomes the identity map after composing by $\rho$.
	\end{proof}
	
	\begin{rem}
		We emphasise that this morphism $\rho: \cO^{\underline{w}}\to \cO^{\underline{w}[\ell-1]}$ is \emph{not} the truncation morphism $\psi_{\underline{w}}$ despite $\sigma$ being a section to both $\rho$ and $\psi_{\underline{w}}$. See Example \ref{eg:mutationsequences(1,2,1,-2)} for the construction in an explicit case.
	\end{rem}

	\begin{eg}\label{eg:(2,1,3,-2,2,-1)}
		In type $A$, let $\gamma = (2,1,3,-2,2,-1)$ so that $\gamma^+= (2,1,3,2)$. For Bott--Samelson coordinates $z_1,\dots,z_{6}$ on $\cO^\gamma$, applying the above procedure to the first negative letter, $-2$ in the fourth position, yields the map
		$$\cO^{(2,1,3,-2,2,-1)}\to \cO^{(2,3,1,2,-1)},\quad (z_1,z_2,z_3,z_4,z_5,z_6) \mapsto (z_1-z_2z_3z_4, z_2,z_3,z_5,z_6).$$
		Applying the procedure to the second negative letter, $-1$ in the fifth  (formerly sixth) position, yields the map
		$$\cO^{(2,1,3,2,-1)}\to \cO^{(2,3,1,2)},\quad(z_1',z_2',z_3',z_4',z_5') \mapsto (z_1'-z_2'z_5', z_2',z_3'-z_4'z_5',z_5').$$
		These compose to get the map
		$$\rho:\cO^{\gamma}\to \cO^{\gamma^+}, \quad (z_1,z_2,z_3,z_4,z_5,z_6) \mapsto (z_1-z_2z_3z_4-z_2z_6,z_2,z_3-z_5z_6,z_5),$$ and a weave representing this is shown in Figure \ref{fig:reductiontosupport}.
		
		If we continued the weave in Figure \ref{fig:reductiontosupport} kept reducing down to the Demazure product of $w_0$, this would give us a log-canonical toric chart in $\cO^{\gamma}\cap m^{-1}(B^-B/B)$, similar to Remark \ref{rem:braidvarisopenpartofBScell}. The Lusztig cycle procedure from \citep{CGGLSS25} would give us a quiver for the corresponding toric chart in $\cO^{\gamma}\cap m^{-1}(B^-B/B) \subset^{\gamma}$. In this sense, pulling back the cluster structure by $\rho$ comes from precomposing a weave $(\underline{w_0},\gamma^+)\to \underline{w_0}$ by our ``frayed weave'' $(\underline{w}_0,\gamma)\to(\underline{w_0},\gamma^+)$.
		If we chose the right inductive weave, the seed $\seed(\gamma^+)$ on $\cO^{\gamma+} = \cO^{(2,3,1,2)}$ would be:
		\[\begin{tikzcd}[sep=tiny]
			&& {\boxed{z_2}} & \\
			{z_1} &&& {\boxed{z_1z_4-z_2z_3}} \\
			&& {\boxed{z_3}}
			\arrow[dashed, from=1-3, to=2-4]
			\arrow[from=2-1, to=1-3]
			\arrow[from=2-4, to=2-1]
			\arrow[dashed, from=3-3, to=2-4]
			\arrow[from=3-3, to=2-1]
		\end{tikzcd}\]
		where the boxed functions denote frozen variables, and the dashed arrows are half-weighted.
		For comparison, the corresponding seed $\seed(\gamma) = \rho^*(\seed(\gamma^+)) \sqcup \left\{\boxed{z_4},\boxed{z_6}\right\}$ on $\cO^\gamma$ is:
		\[\begin{tikzcd}[sep=tiny]
			&&& {\boxed{z_2}} & \\
			{\boxed{z_4}} && {z_1-z_2z_3z_4-z_5z_6} && {\boxed{(z_1-z_2z_3z_4-z_5z_6)z_5-z_2(z_3-z_5z_6)}} \\
			{\boxed{z_6}} &&& {\boxed{z_3-z_5z_6}}
			\arrow[dashed, from=1-4, to=2-5]
			\arrow[from=2-3, to=1-4]
			\arrow[from=2-5, to=2-3]
			\arrow[from=3-4, to=2-3]
			\arrow[dashed, from=3-4, to=2-5]
		\end{tikzcd}\]
	\end{eg}
	
	\begin{figure}[htbp]
		\centering
		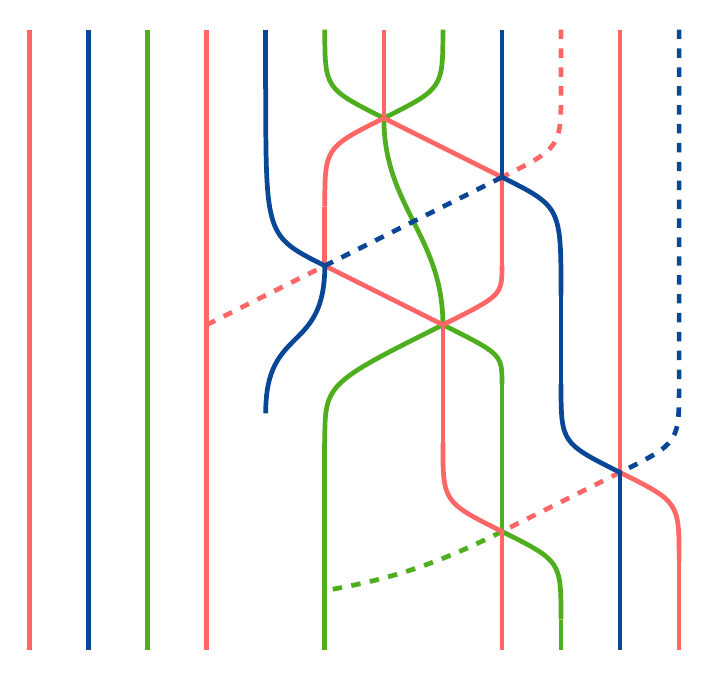
		\caption{A frayed weave $(\underline{w_0},2,3,1,-2,2,-1) \to (\underline{w_0},2,3,1,2)$ representing the map $\rho:\cO^{(2,3,1,-2,2,-1)} \to \cO^{(2,3,1,2)}$. For convenience, we've chosen the reduced word $\underline{w_0}=(2,1,3,2,1,3)$ for the longest element to minimise braid moves. The map does not depend on the choice of word for $w_0$ nor on the coordinates for those strands.}
		\label{fig:reductiontosupport}
	\end{figure}

	\begin{rem}
		There has been some study of Legendrian links and braid varieties which have negative crossings (see \citep[\S3.4]{CGGS26}). It is unclear to the author if there is any relation to our frayed strands, or whether there is another topological interpretation of frayed strands.
	\end{rem}

	\subsection{Non-simply-laced types}\label{sec:FoldedTypes}
		Similar to the Demazure weaves in \citep{CGGLSS25}, we can extend the notion of frayed weaves to non-simply-laced types by including new vertices of higher valency. These will correspond to maps between Bott--Samelson charts supported on a sub-root-system of types $B_2$ or $G_2$.
		
		Just as in \citep{CGGLSS25}, we can understand higher valency vertices in weaves as a result of folding a simply-laced weave. This allows us to intuit what the extra vertices are required when we involve frayed strands. In type $B_2$ we use folding from $A_3$ to get octavalent vertices corresponding to braid matrix identities of length $4$. We state the relations and maps for the $B_2$ case below and draw the corresponding weaves. The statements for $12$-valent vertices in type $G_2$ folded from $D_4$ are analogous.
		
	\begin{lem}\label{lem:B2frayedvertices0}
		On a rank $2$ root subsystem with Cartan submatrix $\begin{pmatrix} 2& -1 \\ -2 &2 \end{pmatrix}$, when $\underline{z} = (z_1,z_2,z_3,z_4)$, we have the following relations:
		\begin{align}
			B_{(1,2,1,2)}(\underline{z}) &= B_2(z_4)B_1(z_1z_4-z_3)B_2(z_1^2z_4-2z_1z_3+z_2)B_1(z_1),\\
			B_{(2,1,2,1)}(\underline{z}) &= B_1(z_4)B_2(z_1z_4^2-2z_2+z_3))B_1(z_1z_4-z_2)B_2(z_1),\\
			B_{(1,2,1,-2)}(\underline{z}) &= u_{-2}(z_4)B_1(z_1-z_3z_4)B_2(z_2-z_3^2z_4)B_1(z_3),\\
			B_{(2,1,2,-1)}(\underline{z}) &= u_{-1}(z_4)B_2(z_1-2z_2z_4+z_3z_4^2)B_1(z_2-z_3z_4)B_2(z_3).
		\end{align}
		The corresponding octavalent vertices and their unfolded weaves are depicted in Figure \ref{fig:B2folding2}.
	\end{lem}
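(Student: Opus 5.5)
These are four identities in the rank $2$ subgroup of type $B_2$ generated by $\varphi_1,\varphi_2$, so the plan is to verify them there. I will not use folding. Instead I will work inside $\mathrm{Sp}_4$ (or $\mathrm{SO}_5$) with a pinning whose Cartan matrix is $\begin{pmatrix} 2&-1\\-2&2\end{pmatrix}$. Under this convention $\alpha_1(\alpha_2^\vee(x))=x^{-1}$ and $\alpha_2(\alpha_1^\vee(x))=x^{-2}$, so $\alpha_1$ is the long root. Each $B_i(z)=u_i(z)\dot s_i$ and each $u_{-i}(z)$ is then an explicit $4\times 4$ matrix. The relations become polynomial matrix identities in $z_1,\dots,z_4$, and I will check them entry by entry.

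The structured route avoids brute multiplication. For the two braid-type identities, the key step is to rewrite both sides as $u(\cdot)\,\dot w_0$ with $u\in U$. For this I commute all the $\dot s_i$ to the right using $\dot s_i u_j(z)\dot s_i^{-1}=u_{s_i\alpha_j}(\pm z)$, with signs fixed by the pinning. Then $B_{(1,2,1,2)}(\underline z)=u_1(z_1)u_{s_1\alpha_2}(\pm z_2)u_{s_1s_2\alpha_1}(\pm z_3)u_{s_1s_2s_1\alpha_2}(\pm z_4)\,\dot w_0$, and similarly for the other word. The two unipotent products are ordered along the two reduced words of $w_0$. Equating them amounts to the change of Lusztig-type parameters between the two orderings of the four positive roots $\alpha_1,\alpha_1+\alpha_2,\alpha_1+2\alpha_2,\alpha_2$ (or their long/short analogues). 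I will compute this change using the Chevalley commutator relations in $B_2$, in particular $[u_{\alpha}(a),u_{\beta}(b)]$ with its quadratic term $u_{\alpha+2\beta}(\pm ab^2)$. The quadratic expressions $z_1^2z_4-2z_1z_3+z_2$ and $z_1z_4^2-2z_2+z_3$ should come out of exactly these terms. Since the second braid identity is the first with the labels swapped, I can deduce it by inverting the coordinate change of the first, applied to the reversed word.

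For the frayed identities, I will use a shortcut. I will write $u_{-2}(z_4)=\dot s_2^{-1}u_2(\mp z_4)\dot s_2$, which makes the left side $B_{(1,2,1)}(z_1,z_2,z_3)\,\dot s_2^{-1}u_2(\mp z_4)\dot s_2$. Then $B_{(1,2,1)}\dot s_2^{-1}$ equals $u_1(z_1)u_{\beta}(\cdot)u_{\beta'}(\cdot)\,\dot s_1\dot s_2\dot s_1\dot s_2^{-1}$ up to signs. Conjugating $u_2$ through $\dot s_1\dot s_2\dot s_1$ yields a root subgroup for a positive root in the inversion set, namely $s_1s_2s_1\alpha_2=\alpha_2$ up to length. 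This reduces the identity again to reordering unipotent factors by the commutator relations, followed by moving $u_{-2}$ to the far left. The terms $z_1-z_3z_4$ and $z_2-z_3^2z_4$ are the expected linear and quadratic commutator corrections. The fourth identity is analogous, with the roles of short and long roots swapped, which explains the coefficients $2$ there.

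I expect the main obstacle to be sign bookkeeping: the structure constants $N_{\alpha,\beta}$ and the signs in $\dot s_i u_j(z)\dot s_i^{-1}$ under the chosen pinning. The safest fallback is direct multiplication of explicit $4\times4$ symplectic matrices, with $\varphi_1,\varphi_2$ embedded so that the Cartan matrix convention matches. For example, take $\varphi_2$ on the middle $2\times 2$ block and $\varphi_1$ diagonally on the blocks $(1,2),(3,4)$, then swap which is called long if needed. I would do the first identity by hand to fix the signs and verify the rest the same way.
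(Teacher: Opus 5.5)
Your method, explicit verification in a rank-$2$ group of type $B_2$ realized by $4\times 4$ matrices, is viable and close to the paper's. The paper lifts each relation to $A_3$ by folding ($B_{\hat 1}=B_1B_3$, $B_{\hat 2}=B_2$, $u_{-\hat 1}=u_{-1}u_{-3}$). It derives each relation there by a short chain of already-verified simply-laced moves (braid moves, commutations, $B_iB_ju_{-i}=u_{-j}B_iB_j$) and confirms it in $\SL_4$. Your fallback embedding ($\varphi_2$ on the middle block, $\varphi_1$ diagonally on the blocks $(1,2),(3,4)$) is exactly this folded model inside $\mathrm{Sp}_4\subset\SL_4$. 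Folding buys the paper freedom from the sign bookkeeping you flag, since no $B_2$ structure constants ever enter. Your commutator route is more intrinsic but has to fix those signs. As written, your plan has three concrete problems.

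\textbf{Long versus short.} Your formulas $\alpha_1(\alpha_2^\vee(x))=x^{-1}$ and $\alpha_2(\alpha_1^\vee(x))=x^{-2}$ are right, but they give $\langle\alpha_2,\alpha_1^\vee\rangle=-2$. So $\alpha_2$ is long and $\alpha_1$ is short, and the positive roots are $\alpha_1,\alpha_2,\alpha_1+\alpha_2,2\alpha_1+\alpha_2$ (not $\alpha_1+2\alpha_2$). Your block embedding already has this assignment, so do not ``swap''. With $\alpha_1$ long the first relation as printed fails, because the quadratic term would sit in the second slot.

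\textbf{The second relation is misprinted.} In $B_{(2,1,2,1)}$ the coordinates $z_1,\dots,z_4$ have torus weights $\alpha_2,\ \alpha_1+\alpha_2,\ 2\alpha_1+\alpha_2,\ \alpha_1$. The coordinate change is $T$-equivariant, hence weight-homogeneous, so $z_1z_4^2-2z_2+z_3$ cannot be right; the middle argument must be $z_1z_4^2-2z_2z_4+z_3$. Your own plan produces this: inverting $w=(z_4,\,z_1z_4-z_3,\,z_1^2z_4-2z_1z_3+z_2,\,z_1)$ gives $z_2=w_3+w_1w_4^2-2w_2w_4$. The unfolded computation agrees, via the $A_3$ move $B_2(z_1z_4-z_2)B_3(z_2z_4-z_3)B_2(z_4)=B_3(z_4)B_2(z_1z_4^2-2z_2z_4+z_3)B_3(z_1z_4-z_2)$. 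So you should state and prove the corrected formula, not assert that the printed one ``should come out''. The other three relations are correct as printed.

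\textbf{The frayed shortcut.} Moving $u_{-2}$ leftward through $\dot s_1\dot s_2\dot s_1$ produces $u_{-\alpha_2}$, not a positive root subgroup. The root $\alpha_2$ is exactly the positive root \emph{not} inverted by $s_1s_2s_1$. Your error is dropping the $\dot s_2^{-1}$: $\dot s_1\dot s_2\dot s_1\dot s_2^{-1}$ lifts $w_0$ and sends $\alpha_2$ to $-\alpha_2$. The reduction survives for a different reason. Since $\{-\alpha_2,\alpha_1,\alpha_1+\alpha_2,2\alpha_1+\alpha_2\}=s_2\Phi_+$, all factors lie in the unipotent group $\dot s_2U\dot s_2^{-1}$. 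There the commutator relations do move $u_{-\alpha_2}$ to the front. The fourth relation works the same way in $\dot s_1U\dot s_1^{-1}$.
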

	\begin{proof}
		These relations can seen by applying the previous simply-laced identities to an unfolded weave for type $B_2$, see Figure \ref{fig:B2folding2}. The formulas have been checked by an $SL_4$ computation in SAGE.
	\end{proof}
	Note that how strand labels propagate in the non-simply-laced case depends on the relative lengths of the roots.
	
	\begin{figure}[htbp]
		\centering
		\def\svgwidth{\linewidth}
		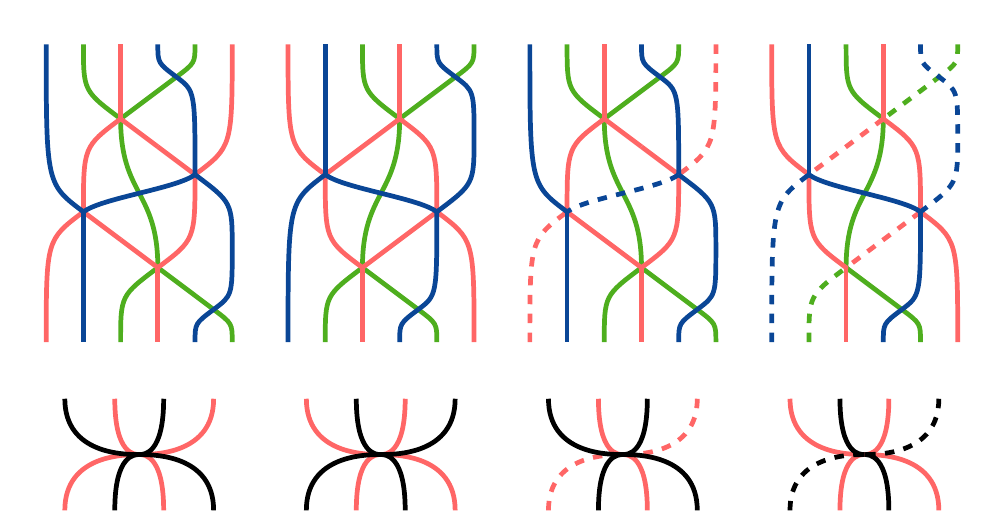
		\caption{Frayed weaves in type $A_3$ and the octavalent vertices they fold to in type $B_2$.}
		\label{fig:B2folding2}
	\end{figure}
	
	\begin{lem}
		When $i,j$ form a $B_2$ root subsystem, the compositions
		\begin{align*}
			\cO^{(i,j,i,j,-i)} \to \cO^{(i,-i,j,i,j)} \to \cO^{(i,j,i,j)} \;\text{ and }\; \cO^{(i,j,i,j,-i)} \to \cO^{(j,i,j,i,-i)}\to \cO^{(j,i,j,i)}\to \cO^{(i,j,i,j)}
		\end{align*}
		give the same Poisson map between the Bott--Samelson charts $\cO^{(i,j,i,j,-i)}\to \cO^{(i,j,i,j)}$.
	\end{lem}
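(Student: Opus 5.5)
The plan is to mirror Lemma \ref{lem:leftwardfrayedequivalences}: compute both composites explicitly in Bott--Samelson coordinates and check that they agree. Poissonness then comes for free, because each component map is Poisson. Concretely, for a $B_2$ pair I take the Cartan submatrix of Lemma \ref{lem:B2frayedvertices0}, with $i=1,j=2$, and also treat the case $i=2,j=1$. Write $\underline{z}=(z_1,\dots,z_5)$ for the coordinates on $\cO^{(i,j,i,j,-i)}$.

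\textbf{First composite.} The octavalent fraying move of Lemma \ref{lem:B2frayedvertices0} rewrites $B_{(i,j,i,j)}(z_1,\dots,z_4)u_{-i}(z_5)$ as $u_{-i}(z_5)B_{(i,j,i,j)}(\underline{z}')$. Here $\underline{z}'$ is the explicit polynomial substitution obtained from the $(1,2,1,-2)$ or $(2,1,2,-1)$ identity, applied after reversing the roles of the letters; equivalently, I read it off by folding the $A_3$ weave in Figure \ref{fig:B2folding2}. I then twine $u_{-i}(z_5)$ into the adjacent letter using $B_i(y)u_{-i}(z)=B_i(y-z)$.

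\textbf{Second composite.} I apply the octavalent braid move $(i,j,i,j)\to(j,i,j,i)$ from Lemma \ref{lem:B2frayedvertices0}, twine directly with $B_i(w_4)u_{-i}(z_5)=B_i(w_4-z_5)$, and then apply the inverse octavalent braid move $(j,i,j,i)\to(i,j,i,j)$.

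\textbf{Comparison.} Both composites produce an expression $B_{(i,j,i,j)}(\cdot)$ that equals $B_{(i,j,i,j)}(z_1,\dots,z_4)u_{-i}(z_5)$ as a group element. Since the map $\underline{y}\mapsto B_{(i,j,i,j)}(\underline{y})\cdot U$ is injective on parameters (iterate Lemma \ref{lem:framedsolidparameters} strand by strand, with trivial torus parts), the two coordinate outputs must coincide. This argument avoids most of the polynomial bookkeeping.

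The remaining requirement is that every component map be Poisson. This needs, first, that the octavalent braid maps are Poisson isomorphisms, since both charts are Poisson isomorphic to the Schubert cell $Bs_is_js_is_jB/B$, exactly as in the remark after Proposition \ref{prop:Poissonmapsbetweenshortcells}. Second, the twining map must be Poisson; this is Lemma \ref{lem:Poissoncontraction} written in coordinates, as in Lemma \ref{lem:frayedtwiningisPoisson}. Third, the octavalent fraying map $\cO^{(i,j,i,j,-i)}\to\cO^{(-i,j,i,j,i)}$ must be Poisson. Proposition \ref{prop:Poissonmapsbetweencells} then lets all of these act inside longer words.

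The main obstacle is the Poisson property of the octavalent fraying map, the $B_2$ analogue of Proposition \ref{prop:fraysleftwards}. I would try two routes.
\begin{itemize}
\item \textbf{Direct check.} Compute the Elek--Lu brackets from Lemma \ref{lem:StdPoissonStr} on both four-dimensional charts, using $\langle\alpha,\alpha\rangle\in\{2,4\}$, and verify the pullback identities for the ten pairs.
\item \textbf{Conceptual check.} The map is the unique map compatible with the multiplication maps, and it can be realized as the composite of a change of chart inside $Z_{(i,j,i,j,i)}$ with the contraction $m_j$. Arguing as in the factorization used before Lemma \ref{lem:frayedtwiningisPoisson}, together with Lemma \ref{lem:Poissonfactoring}, this would yield the Poisson property without computation.
\end{itemize}
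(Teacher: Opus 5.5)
Your comparison step is sound, but your description of the first composite is not the one in the lemma. The octavalent vertex of Lemma \ref{lem:B2frayedvertices0} acts only on the last four letters, $(j,i,j,-i)\to(-i,j,i,j)$. It therefore produces $B_i(z_1)\,u_{-i}(z_5)\,B_{(j,i,j)}(\cdots)$, and you then twine with the $B_i(z_1)$ sitting to the \emph{left} of the frayed letter.

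The identity you wrote, $B_{(i,j,i,j)}(\underline{z})u_{-i}(z_5)=u_{-i}(z_5)B_{(i,j,i,j)}(\underline{z}')$, is false for $z_5\neq 0$. The left side lies in $U\dot{w}_0$, because $\dot{w}_0U_{-\alpha_i}\dot{w}_0^{-1}\subset U$. The right side lies in $u_{-i}(z_5)U\dot{w}_0$. Moreover, $B_i(y)u_{-i}(z)=B_i(y-z)$ could not be applied to a frayed letter on the far left. The same misreading produces the map $\cO^{(i,j,i,j,-i)}\to\cO^{(-i,j,i,j,i)}$ in your Poisson discussion.

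None of this damages the core argument. It only needs every step to be an exact group identity ending in some $B_{(i,j,i,j)}(\cdot)$, and that holds for the correct composite. For injectivity, cite the isomorphism of $\mathring{Z}_{\underline{w}}$ with the Schubert cell $Bw_0B/B$ for the reduced word $(i,j,i,j)$. Iterating Lemma \ref{lem:framedsolidparameters} strand by strand requires a common prefix $g$, which presupposes that the intermediate flags already agree.

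What you call the main obstacle is in fact unnecessary. Once the two maps are equal, the second composite uses only two kinds of steps:
\begin{enumerate}
\item the octavalent braid isomorphisms, which are Poisson because the Poisson map $m$ identifies both charts with the Schubert cell;
\item a twining, which is Poisson by Lemma \ref{lem:Poissoncontraction}.
\end{enumerate}
So the common map is Poisson and extends to longer words by Proposition \ref{prop:Poissonmapsbetweencells}. You never need the Poisson property of the octavalent fraying vertex.

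The paper instead lifts both composites to an unfolded $A_3$ weave and applies Lemma \ref{lem:leftwardfrayedequivalences} repeatedly. Your argument is intrinsic to $B_2$. It needs no folding, no computer check of folded identities, and no tacit compatibility between folding and the standard Poisson structure. It works verbatim for $G_2$. It even shows that any two such composites into $\cO^{\underline{v}}$ with $\underline{v}$ reduced coincide, since $m$ is injective there. The paper's route keeps the statement inside the weave calculus, deriving it from simply-laced weave equivalences. That is the form in which such moves are later used as local relations between frayed weaves.
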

	\begin{proof}
		This is analogous the first statement in Lemma \ref{lem:leftwardfrayedequivalences}, and can be proved by lifting the relations to an  appropriate unfolding weave and iteratively applying Lemma \ref{lem:leftwardfrayedequivalences}.
	\end{proof}
	These statements allows us to construct the projection map $\rho:\cO^\gamma\to \cO^{\gamma^+}$ for any semisimple algebraic group.
	
	\begin{cor}
		Theorem \ref{thm:BSchartshaveclusterstructure} holds when $G$ is any semisimple algebraic group.
	\end{cor}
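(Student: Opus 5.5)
The plan is to run the proof of Theorem \ref{thm:BSchartshaveclusterstructure} again, with the simply-laced inputs replaced by their non-simply-laced analogues. There are three ingredients.

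\emph{(a)} The isomorphism $\cO^\gamma\cong\cO^{\gamma^+}\times\mathbb{A}^m$ of Proposition \ref{prop:mapstosupportedsubword} must be Poisson, with the product carrying a log-canonical bracket.

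\emph{(b)} The equality $\cA(\seed(\gamma^+),\emptyset)=\C[\cO^{\gamma^+}]$ must hold. This is already known for any symmetrisable type from \citep{SW21,CGGLSS25}, whose constructions cover all Dynkin types.

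\emph{(c)} Compatibility of $\seed(\gamma^+)$ with $\pi_{\mathrm{st}}$ must hold. Proposition \ref{prop:cellclusteralgebraisPoissoncompatible} was proved without any simply-laced assumption: Lemma \ref{lem:monomialchangematrix}, \eqref{eqn:PoissonCoefficientMatrix} and Lemmas \ref{lem:RectangularExchangeMatrix}, \ref{lem:auxillarymatrixidentity} all carry the symmetriser $\varDelta$ and the identity $a_{i_j,i_k}\delta_k=a_{i_k,i_j}\delta_j$. So (c) holds verbatim, and Corollary \ref{cor:ClusterPoissonCompatible} extends in the same way.

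The real work is (a). I would proceed as follows.

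First, rebuild the leftward-moving procedure of Proposition \ref{prop:twinablestrands} and Corollary \ref{cor:twinepastlongestelt} in arbitrary type. Within a rank $2$ subsystem of type $A_1\times A_1$, $A_2$, $B_2$ or $G_2$, a negative letter sitting to the right of a reduced word is moved left using the identities of Proposition \ref{prop:fraysleftwards} and Lemma \ref{lem:B2frayedvertices0}, together with their $G_2$ analogues obtained by folding from $D_4$.

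Next, each such local map must be Poisson and must respect the multiplication maps. For this I would use the folding argument: realise $G$ as the fixed points of a diagram automorphism of a simply-laced $\widetilde G$, so that the rank $2$ octavalent or $12$-valent map is the restriction of a composite of simply-laced frayed moves, each Poisson by Proposition \ref{prop:fraysleftwards}. If restriction to fixed loci turns out to be awkward, the fallback is a direct Elek--Lu check via Lemma \ref{lem:StdPoissonStr} on the four-variable and six-variable charts, which is a finite computation. Proposition \ref{prop:Poissonmapsbetweencells} then globalises these maps inside longer words.

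The main obstacle is that the folded structure is only a Poisson subvariety after rescaling the form $\langle\cdot,\cdot\rangle$, so I would check that this normalisation matches the one in $\pi_{\mathrm{st}}$.

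Then I would establish independence of the choice of braid-move path, using the $B_2$ equivalence lemma above and its $G_2$ analogue. Combined with Lemma \ref{lem:allweavecharts}-style connectivity of reduced words for $w_0$ via rank $2$ moves, this yields the analogue of Corollary \ref{cor:twinepastlongestelt}: $(\underline{w_0},-i)\to(i^*,-i^*,\underline v)$.

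Once each negative letter has reached its twin, Lemma \ref{lem:frayedtwiningisPoisson} splits off a log-canonical $\mathbb{A}^1$ factor. That lemma is type-independent, since its proof uses only $\{z_{j-1},z_j\}$ and \citep[Lemma 4.6]{EL21}.

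Iterating over the unsupported letters as in Proposition \ref{prop:mapstosupportedsubword} gives the isomorphism on $m^{-1}(B^-B/B)$ via Lemma \ref{lem:braidopeninchart}. Regularity of each factor then extends it to the whole chart. With (a), (b) and (c) in hand, the argument of Theorem \ref{thm:BSchartshaveclusterstructure} applies word for word.
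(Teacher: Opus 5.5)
Your proposal is correct and follows essentially the paper's route. The paper also derives the octavalent frayed vertices, and by folding from $D_4$ the $12$-valent ones, together with the accompanying $B_2$ path-equivalence lemma. It uses these to construct $\rho:\cO^\gamma\to\cO^{\gamma^+}$ for any semisimple $G$ and then reruns the proof of Theorem \ref{thm:BSchartshaveclusterstructure}, leaving implicit the type-independence of Proposition \ref{prop:cellclusteralgebraisPoissoncompatible} and Lemma \ref{lem:frayedtwiningisPoisson} that you make explicit.

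The normalisation issue you flag is harmless. The folded group is not literally a Poisson subvariety; it is the fixed locus of a finite-order Poisson diagram automorphism, and the bracket it inherits that way differs from $\pi_{\mathrm{st}}$ only by a constant factor. A constant factor affects neither Poissonness of maps nor log-canonicity, and your fallback Elek--Lu check avoids the issue in any case.
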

	
	\begin{eg}\label{eg:B2example}
		In type $B_2$, with Cartan matrix $\begin{pmatrix} 2& -1 \\ -2 &2 \end{pmatrix}$,
		consider the signed expression $\gamma = (1,2,1,-2,2)$, so that $\gamma^+=(1,2,1,2)$. In the Bott--Samelson coordinates $z_1,\dots,z_4$ for $\cO^{\gamma^+}$, the exchange matrix $E(\gamma^+)$ and seed $\seed(\gamma^+)$ are:
		$$\begin{pmatrix}
			0 & 2 & -1 & 0\\
			-1 & 0 & 1 &-1\\
			1 & -2 & 0 & 1\\
			0 & 1 & -\tfrac{1}{2}& 0
		\end{pmatrix} \quad \text{and} \quad \begin{tikzcd}[sep=small]
		& {z_2} && {\boxed{z_2z_4-z_3^2}} \\
		{z_1} && {\boxed{z_1z_3-z_2}}
		\arrow["{2:1}", from=1-2, to=2-3]
		\arrow[from=1-4, to=1-2]
		\arrow["{1:2}", from=2-1, to=1-2]
		\arrow["{1:2}", dashed, from=2-3, to=1-4]
		\arrow[from=2-3, to=2-1]
		\end{tikzcd}$$
		
		Denoting the coordinates for $\cO^\gamma$ by $z_i'$, the identities in Lemma \ref{lem:B2frayedvertices0} give the map $$\rho^*:\C[\cO^{\gamma^+}] \to \C[\cO^\gamma], \quad (z_1,z_2,z_3,z_4) \mapsto (z_1'-z_3'z_4',z_2'-(z_3')^2z_4',z_3',z_5').$$
		Then the seed $\seed(\gamma)$ is then:
		\[\begin{tikzcd}[sep=small]
			{\boxed{z_4'}} && {z_2'-(z_3')^2z_4'} && {\boxed{(z_2'-(z_3')^2z_4')z_5'-(z'_3)^2}} \\
			& {z'_1-z'_3z'_4} && {\boxed{z'_1z'_3-z'_2}}
			\arrow["{{2:1}}", from=1-3, to=2-4]
			\arrow[from=1-5, to=1-3]
			\arrow["{{1:2}}", from=2-2, to=1-3]
			\arrow["{{1:2}}", dashed, from=2-4, to=1-5]
			\arrow[from=2-4, to=2-2]
		\end{tikzcd}\]

		Mutating at the first position yields the cluster variable $z_3' = \rho^*(z_3)$, and following this with mutation at the second position yields the cluster variable $z_5'=\rho^*(z_4)$.
		Together with the initial cluster variables $z_4'$, $z_1'-z_3'z_4'$, and $z_2'-(z_3')^2z_4'$ these generate the polynomial ring $\C[\cO^\gamma]$.
	\end{eg}
	
	\begin{rem}\label{rem:LuYakimovCGLknown}
		In unpublished work, it was previously observed by Jiang-Hua Lu and Milen Yakimov that the Bott--Samelson coordinates for any $\cO^{\gamma}$ are, up to a polynomial transformation, isomorphic to a symmetric Poisson CGL extension. This geometric interpretation using weaves to construct the isomorphism $\cO^\gamma\cong\cO^{\gamma^+}\times \mathbb{A}^m$ by factoring the maps through several other Bott--Samelson charts is new, and recovers their change of coordinates.
	\end{rem}

	\section{Compatibility with transition functions}\label{sec:clustertransitions}
	\subsection{Mutation sequences and pulling back seeds}
	\begin{defn}\label{defn:levelmutation}
		For each position $1\leq j\leq \ell$, we define a sequence of mutations. Let $i=i_j$ be the $a_j$-th appearance of $i$ in $\underline{w}$ and let $n_i$ be the total number of appearances of $i$ in $\underline{w}$. Then define $\widetilde{\mu}_{j} = \widetilde{\mu}_j(\underline{w})$ to be the mutation sequence
		\begin{equation}
			\widetilde{\mu}_{j} = \begin{cases}
				\mu_{(i,n_i-a_j)}\circ\cdots \circ \mu_{(i,1)} & \text{ if } a_j<n_i;\\
				\mathrm{id} & \text{ if } a_j=n_i,
			\end{cases}
		\end{equation}
			and denote their composition by 
		\begin{equation}
			\overrightarrow{\mu_{\underline{w}}} := \widetilde{\mu}_\ell \circ \cdots \circ \widetilde{\mu}_2 \circ \widetilde{\mu}_1.
		\end{equation}
		One thinks of this sequence as mutating in ascending order along the level of each letter, where we skip some final mutations dependent on the number of times we have already mutated at this level.
	\end{defn}
	By \citep[\S13.1]{GLS11} or repeated applications of \citep[Proposition 3.25(1)]{SW21} (see also the formulation in \citep[Proposition 4.15]{BY25}), the sequence $\overrightarrow{\mu_{\underline{w}}}$
	takes the seed $\seed(\underline{w})$ to the reversed seed $\seed(\overline{i_\ell}, \overline{i_{\ell-1}},\dots, \overline{i_2},\overline{i_1})$. 
	In fact \citep[Theorem 4.1]{SW21} proves that this is a maximal green sequence.
	This is the seed obtained from weaves using the opposite opening sequence $(\ell,\ell-1,\dots,2,1)$, or the reversed proper reordering from CGL theory. Its cluster variables are given by the generalised minors of the products right aligned intervals:
	\begin{equation}\label{eqn:reversedseedvariables}
		\widetilde{A_j} = \Delta_{\omega_{i_{\ell-j+1}},\omega_{i_{\ell-j+1}}} (B_{\underline{w}[\ell-j+1,\ell]}(z_{\ell-j+1},\dots,z_\ell)).
	\end{equation}
	In particular, the first cluster variable $\widetilde{A_1} = z_\ell$ is the last Bott--Samelson coordinate.
	
	We now give an alternative mutation sequence which also reaches the reversed seed and will be useful later when comparing mutation sequences.
	\begin{defn}\label{defn:reversedlevelmutation}
		With the same notation as in Definition \ref{defn:levelmutation}, we also define the mutation sequences
		\begin{equation}
			\widehat{\mu}_{j} = \begin{cases}
				\mu_{(i,1)} \circ\cdots \circ \mu_{(i,a_j-1)}  & \text{ if } 1<a_j;\\
				\mathrm{id} & \text{ if } a_j=1.
			\end{cases}
		\end{equation}
		We define $\overrightarrow{\mu'_{\underline{w}}}= \widehat{\mu}_{\ell} \circ \cdots\circ \widehat{\mu}_{2} \circ \widehat{\mu}_{1}$.
	\end{defn}
	Notice that by construction we have
	\begin{equation}\label{eqn:prefixmutsequence}
		\overrightarrow{\mu'_{\underline{w}}}= \widehat{\mu}_{\ell} \circ \cdots\circ \widehat{\mu}_{2} \circ \widehat{\mu}_{1} = \widehat{\mu}_{\ell}\circ \cdots \circ \widehat{\mu}_{j+1} \circ \overrightarrow{\mu'_{\underline{w[j]}}}
	\end{equation}
	
	\begin{lem}\label{lem:reversingmutationsequences}
		The two seeds 
		\begin{equation*}
			\overrightarrow{\mu_{\underline{w}}}(\seed(\underline{w})) = \widetilde{\mu}_\ell \circ \cdots \circ \widetilde{\mu}_1 (\seed(\underline{w}))\quad \text{and} \quad \overrightarrow{\mu'_{\underline{w}}}(\seed(\underline{w}))= \widehat{\mu}_{\ell} \circ \cdots\circ  \widehat{\mu}_{1}(\seed(\underline{w}))
		\end{equation*}
		are both equal to $\seed(\overline{i_\ell}, \overline{i_{\ell-1}},\dots, \overline{i_2},\overline{i_1})$.
	\end{lem}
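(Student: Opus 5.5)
The claim for $\overrightarrow{\mu_{\underline{w}}}$ is already stated in the text (via \citep[\S13.1]{GLS11} or iterated \citep[Proposition 3.25(1)]{SW21}). So the plan is to prove the claim for $\overrightarrow{\mu'_{\underline{w}}}$ by induction on $\ell$, using the prefix factorisation (\ref{eqn:prefixmutsequence}) together with the amalgamation description of $\seed(\underline{w})$.

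For the base of the induction, note that $\widehat{\mu}_1=\mathrm{id}$. A length-one word gives a single vertex, and its reversed seed coincides with $\seed(i_1)$ after the convention $\overline{i}\leftrightarrow -E(i)$, rewritten on the opposite side. This is the usual one-letter identification used in \citep{SW21}.

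For the inductive step, I write $\underline{w}=(\underline{w}[\ell-1],i)$, with $i=i_\ell$ the $n_i$-th appearance of $i$. The seed $\seed(\underline{w})$ is the amalgamation of $\seed(\underline{w}[\ell-1])$ with $E(i)$, after defrosting $(i,n_i-1)$.

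The first point to check is that mutations at vertices of $\seed(\underline{w}[\ell-1])$ commute with this amalgamation. Suppose $\mu$ is a mutation sequence on the smaller seed whose vertices are all mutable there. Then $\mu(\seed(\underline{w}))$ is the amalgamation of $\mu(\seed(\underline{w}[\ell-1]))$ with $E(i)$. The reason is that the frozen vertices glued to $E(i)$ are never mutated, and the matrix mutation rule is additive on the amalgamated part. Lemma~\ref{lem:truncationmapsarequasicluster} shows the cluster variables agree, because they are the same minors.

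Applying the inductive hypothesis, $\overrightarrow{\mu'_{\underline{w}[\ell-1]}}(\seed(\underline{w}))$ becomes the amalgamation of the reversed seed $\seed(\overline{i_{\ell-1}},\dots,\overline{i_1})$ with $E(i)$. In the reversed seed, the copies of $i$ appear in reversed order. The amalgamated $E(i)$ sits on the frozen copy that is now at the \emph{right} end of the reversed word. It therefore has to be moved to the left end by $\widehat{\mu}_\ell=\mu_{(i,1)}\circ\cdots\circ\mu_{(i,n_i-1)}$.

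The remaining and hardest step is to show that this chain of mutations along level $i$ relocates the new letter $\overline{i}$ from the last position to the first position of the reversed word. This is exactly the single-letter move \citep[Proposition 3.25(1)]{SW21}, in the formulation of \citep[Proposition 4.15]{BY25}. Mutating consecutively along the $i$-level, from the most recent instance back to the oldest, converts $(\dots,\overline{i},\dots)$ amalgamated with $E(i)$ into $E(\overline{i})$ amalgamated on the other side, and it shifts the vertex indices by one. I would verify this on exchange matrices using the interlacing description (\ref{eqn:exchangematrixofW}). On cluster variables, I would check that the exchange relations produce the right-aligned minors (\ref{eqn:reversedseedvariables}). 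The check reduces to the generalised-minor identity $\Delta_{\omega_i,\omega_i}(x\dot s_i)\Delta_{\omega_i,\omega_i}(\dot s_i^{-1}y)\ldots$, which is the standard three-term Plücker-type identity of \citep{FZ99}.

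The main obstacle is the careful bookkeeping of indices, namely which instance $(i,m)$ becomes which reversed instance. I expect to handle it by matching the order of $\widehat{\mu}_\ell$ against the order in \citep[Proposition 4.15]{BY25}. Finally, both sequences end at the same seed by uniqueness of the reversed seed, which completes the proof.
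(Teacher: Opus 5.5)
Your proposal is correct and follows the paper's argument. Both proceed by induction on $\ell$ through the prefix factorisation (\ref{eqn:prefixmutsequence}), use the inductive hypothesis to turn $\seed(\underline{w})$ into $\seed(\overline{i_{\ell-1}},\dots,\overline{i_1},i_\ell)$, and then apply the single-letter move along level $i_\ell$ from \citep[Proposition 3.25(1)]{SW21}, which $\widehat{\mu}_\ell$ realises. You also check explicitly that mutations at prefix-mutable vertices commute with the amalgamation, since the glued vertices are never mutated; the paper uses this step without comment, and your check is correct.
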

	\begin{proof}
		Inductively assuming the statement for the truncated word $\underline{w}[\ell-1]$, we have
		\begin{align*}
			\overrightarrow{\mu'_{\underline{w}}}(\seed(\underline{w})) &= \widehat{\mu}_{\ell} \circ \overrightarrow{\mu_{\underline{w}[\ell-1]}}(\seed(\underline{w})),\\
			&= \widehat{\mu}_{\ell}  \left(\seed(\overline{i_{\ell-1}},\dots,\overline{i_2},\overline{i_1},i_\ell)\right),\\
			&= \widehat{\mu}_{\ell} \circ (\mu_{(i_{\ell},n_{i_l})}\circ \dots \circ \mu_{(i_{\ell},1)}) \left(\seed(\overline{i_\ell},\overline{i_{\ell-1}},\dots,\overline{i_2},\overline{i_1})\right),\\
			&= \seed(\overline{i_\ell},\overline{i_{\ell-1}},\dots,\overline{i_2},\overline{i_1}).\qedhere
		\end{align*}
	\end{proof}
	\begin{rem}
		It is also possible to prove the coincidence of the two seeds by commuting mutations of non-adjacent vertices. Alternatively, one checks that both sequences are layered $T$-systems in the sense of Yakimov \citep{Yak26}, and hence both are maximal green sequences.
	\end{rem}
	\begin{rem}
		The sequence $\overrightarrow{\mu_{\underline{w}}}$ has a nice interpretation in terms of triangulations in configurations of flags, see \citep[Proposition 4.2]{SW21}.
		The sequence $\overrightarrow{\mu'_{\underline{w}}}$ can be interpreted as doing the same but in the opposite direction, iteratively swapping letters $i$ to their opposites $\overline{i}$ and bringing them leftward instead of rightward.
	\end{rem}
	
	\begin{eg}\label{eg:(1,2,1,2,1,2)}
		We illustrate this sequence with an example. In type $A_2$, let $\underline{w} = (1,2,1,2,1,2)$ and $\gamma= (1,2,1,2,1,-2)$. The seed $\seed(\underline{w})$ looks like
		\[\begin{tikzcd}[sep=tiny]
			& 2 && 4 && {\boxed{5}} \\
			1 && 3 && {\boxed{5}}
			\arrow[from=1-2, to=2-3]
			\arrow[from=1-4, to=1-2]
			\arrow[from=1-4, to=2-5]
			\arrow[from=1-6, to=1-4]
			\arrow[from=2-1, to=1-2]
			\arrow[from=2-3, to=1-4]
			\arrow[from=2-3, to=2-1]
			\arrow[dashed, from=2-5, to=1-6]
			\arrow[from=2-5, to=2-3]
		\end{tikzcd}\]
		Calculating generalised minors for the partial products, the cluster is given by
		\begin{align*}
			A_1&= z_1, \quad A_2 = z_2, \quad A_3 = z_1z_3-z_2, \quad A_4 = z_2z_4-z_3,\\
			A_5&=z_1z_3z_5-z_2z_5-z_1z_4+1,\quad A_6 = z_2z_4z_6-z_3z_6-z_2z_5+1.
		\end{align*}
		Our two forms of the reversing sequence are $\overrightarrow{\mu_{\underline{w}}}=\mathrm{id}\circ\mathrm{id}\circ (\mu_2)\circ (\mu_1) \circ (\mu_4\circ\mu_2) \circ (\mu_3\circ\mu_1)$ and $\overrightarrow{\mu'_{\underline{w}}}=\mathrm{id}\circ\mathrm{id}\circ (\mu_2\circ \mu_4)\circ (\mu_1\circ \mu_3) \circ (\mu_2) \circ (\mu_1)$  Applying either of these sequences to $\seed(\underline{w})$ yields the reversed seed:
		\[\begin{tikzcd}[sep=tiny]
			& 2 && 4 && {\boxed{6}} \\
			1 && 3 && {\boxed{5}}
			\arrow[from=1-2, to=1-4]
			\arrow[from=1-4, to=1-6]
			\arrow[from=1-4, to=2-1]
			\arrow[from=1-6, to=2-3]
			\arrow[from=2-1, to=1-2]
			\arrow[from=2-1, to=2-3]
			\arrow[from=2-3, to=1-4]
			\arrow[from=2-3, to=2-5]
			\arrow[dashed, from=2-5, to=1-6]
		\end{tikzcd} \quad \sim \quad
		\begin{tikzcd}[sep=tiny]
			2 && 4 && {\boxed{6}} & \\
			& 1 && 3 && {\boxed{5}}
			\arrow[from=1-1, to=1-3]
			\arrow[from=1-3, to=1-5]
			\arrow[from=1-3, to=2-2]
			\arrow[from=1-5, to=2-4]
			\arrow[from=2-2, to=1-1]
			\arrow[from=2-2, to=2-4]
			\arrow[from=2-4, to=1-3]
			\arrow[from=2-4, to=2-6]
			\arrow[dashed, from=2-6, to=1-5]
		\end{tikzcd}\]
		with mutable variables $A_1= z_5$, $A_2 = z_6$, $A_3 = z_3z_5-z_4$, and $A_4 = z_4z_6-z_5$.
	\end{eg}
	For any $\underline{w}$, we also define the sequence $\overleftarrow{\mu_{\underline{w}}}$ to be the reversed sequence of mutations, so that $\overleftarrow{\mu_{\underline{w}}}\circ \overrightarrow{\mu_{\underline{w}}} (\seed(\underline{w})) = \seed(\underline{w})$.

	\begin{lem}\label{lem:pullbackvars}
		Let $\gamma = \underline{w}(\ell) = (i_1,i_2,\dots,i_{\ell-1},-i_\ell)$, be the subexpression obtained by negating the last letter.
		Let $(A_j^+)_{j=1}^{\ell-1}$ be the cluster variables of the  seed $\overrightarrow{\mu_{\underline{w}[\ell-1]}}(\seed(\underline{w}[\ell-1]))$ on $\cO^{\gamma^+}$ and $(\widetilde{A_j})_{j=1}^{\ell}$ be the cluster variables of the seed $\overrightarrow{\mu_{\underline{w}}}(\seed(\underline{w}))$ on $\cO^{\underline{w}}$.
		Then as rational functions on $\cO^\gamma$, under the pullback by the map $\rho : \cO^\gamma \to \cO^{\gamma^+}$, we have
		\begin{equation*}
			\rho^*(A_j^+)=\begin{cases}
				\widetilde{A_{j+1}} &\text{if } i_{\ell-j+1} \neq i_\ell,\\
				{\widetilde{A_{j+1}}}\cdot {z_\ell^{-1}} &\text{if } i_{\ell-j+1}=i_\ell.
			\end{cases}
		\end{equation*}
	\end{lem}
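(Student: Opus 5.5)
The plan is to compare generalised minors of right-aligned partial products directly, using (\ref{eqn:reversedseedvariables}) for both seeds. Concretely, for $1\le j\le \ell-1$ put $m=\ell-j$. By (\ref{eqn:reversedseedvariables}) applied to $\underline{w}[\ell-1]$,
\[
A_j^+=\Delta_{\omega_{i_m},\omega_{i_m}}\bigl(B_{\underline{w}[m,\ell-1]}(z^+_m,\dots,z^+_{\ell-1})\bigr),
\]
where the $z^+$ are the coordinates on $\cO^{\gamma^+}$. Likewise $\widetilde{A_{j+1}}$ is $\Delta_{\omega_{i_m},\omega_{i_m}}$ applied to $B_{\underline{w}[m,\ell]}(z_m,\dots,z_\ell)$, which is written in the coordinates of $\cO^{\underline{w}}$. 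So both sides are the same minor, indexed by the letter at position $m$, evaluated on suffix products that differ only in the last factor. The exponent in the statement should therefore be governed by whether the letter $i_m$ equals $i_\ell$; I will track this carefully against the stated indexing $i_{\ell-j+1}$.

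\textbf{Step 1: locality of $\rho$ on suffixes.} Since $\gamma$ has a single negative letter, in the last position, $\rho$ is obtained by moving the frayed factor $u_{-i_\ell}(z'_\ell)$ leftwards, using the moves of Proposition \ref{prop:fraysleftwards} and the identity $B_i(z_1)u_{-i}(z_2)=B_i(z_1-z_2)$, before any twining with $\underline{w_0}$ occurs. Each such move only changes coordinates at positions the frayed strand is currently passing. Hence the $\rho$-coordinates at positions $\ge m$ depend only on $z'_m,\dots,z'_\ell$. Reading off the matrix identities then gives, for every $m$,
\[
B_{\underline{w}[m,\ell-1]}(z'_m,\dots,z'_{\ell-1})\,u_{-i_\ell}(z'_\ell)=u_{-k}(\xi)\,B_{\underline{w}[m,\ell-1]}\bigl(\rho(z')_m,\dots,\rho(z')_{\ell-1}\bigr)
\]
for some $k$ and some function $\xi$; when the strand is fully absorbed at a twining, the left factor is trivial instead. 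In either case the left factor lies in $U^-$, and $\Delta_{\omega,\omega}(u_-g)=\Delta_{\omega,\omega}(g)$ for $u_-\in U^-$. It follows that $\rho^*(A_j^+)=\Delta_{\omega_{i_m},\omega_{i_m}}\bigl(B_{\underline{w}[m,\ell-1]}(z')\,u_{-i_\ell}(z'_\ell)\bigr)$.

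\textbf{Step 2: change of chart in the last position.} The charts $\cO^{\underline{w}}$ and $\cO^\gamma$ are adjacent, and the transition between them only inverts the last coordinate, via $B_i(z)=u_{-i}(z^{-1})\alpha_i^\vee(z)u_i(-z^{-1})$ (cf.\ (\ref{eqn:frayingvertex})); the first $\ell-1$ coordinates agree. Substituting this into $\widetilde{A_{j+1}}$ gives
\[
\widetilde{A_{j+1}}=\Delta_{\omega_{i_m},\omega_{i_m}}\bigl(B_{\underline{w}[m,\ell-1]}\,u_{-i_\ell}(z_\ell^{-1})\,\alpha_{i_\ell}^\vee(z_\ell)\bigr)=\Delta_{\omega_{i_m},\omega_{i_m}}\bigl(B_{\underline{w}[m,\ell-1]}\,u_{-i_\ell}(z_\ell^{-1})\bigr)\cdot z_\ell^{\langle\omega_{i_m},\alpha_{i_\ell}^\vee\rangle}.
\]
This uses right $U$-invariance of the minor and the rule for pulling a torus element out on the right. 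The exponent $\langle\omega_{i_m},\alpha_{i_\ell}^\vee\rangle$ equals $1$ when $i_m=i_\ell$ and $0$ otherwise. Comparing with Step 1, where $z'_\ell=z_\ell^{-1}$ is the last coordinate on $\cO^\gamma$, yields exactly the two cases of the claimed formula, with the factor $z_\ell^{-1}$ appearing precisely when the letters coincide.

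\textbf{Main obstacle.} I expect Step 1 to be the hardest part. One must verify, by induction on the number of leftward moves, that every identity used (the hexavalent-type frayed move, commutation, and twining) keeps the accumulated left factor inside $U^-$. One must also check that the coordinates produced at positions $\ge m$ coincide with those of the global map $\rho$. This independence from how the strand continues further left, including the choice of $\underline{w_0}$, I would deduce from the equivalences in Lemma \ref{lem:leftwardfrayedequivalences}. The remaining work is index bookkeeping, namely matching the position $m=\ell-j$ with the letter condition as stated.
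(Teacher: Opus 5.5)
Your proposal is the paper's own argument. Both $A_j^+$ and $\widetilde{A_{j+1}}$ are written as the same minor $\Delta_{\omega_{i_m},\omega_{i_m}}$ of right-aligned products via (\ref{eqn:reversedseedvariables}). The last factor is frayed, pushing $u_{-i_\ell}(z_\ell^{-1})$ leftwards is identified with $\rho$, and $z_\ell^{\langle\omega_{i_m},\alpha_{i_\ell}^\vee\rangle}$ is read off. The paper only asserts the identification with $\rho$ (``this is the definition of the map $\rho$''), so your explicit use of left $U^-$-invariance of the minor is, if anything, more careful than the source.

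\textbf{Correction 1: the index.} The bookkeeping you deferred cannot be matched to the statement as written. Your computation correctly shows that the relevant letter is $i_m=i_{\ell-j}$, whereas the statement says $i_{\ell-j+1}$. The paper's proof is phrased for $\widetilde{A_j}$ with $j>1$, whose letter $i_{\ell-j+1}$ goes with $A_{j-1}^+$, and the shift was lost in the restatement. For $\underline{w}=(1,2,1,2)$ one finds $\rho^*(A_1^+)=z_3=\widetilde{A_2}$ and $\rho^*(A_2^+)=z_2-z_3z_4^{-1}=\widetilde{A_3}\,z_4^{-1}$. This agrees with the condition $i_{\ell-j}=i_\ell$ and contradicts the literal one, so do not try to force agreement.

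\textbf{Correction 2: Step 1.} The factor left over on a suffix need not be a single $u_{-k}(\xi)$, and the frayed strand does not cross positions one at a time from the right. For example, for $(2,1,3,2,-1)$ in type $A_3$, a commutation $B_1B_3=B_3B_1$ precedes the frayed hexavalent move. The leftover factor on the suffix $(3,2)$ then has nonzero components in $U_{-\alpha_1}$, $U_{-\alpha_1-\alpha_2}$ and $U_{-\alpha_1-\alpha_2-\alpha_3}$.

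What you actually need is only that this factor lies in $U^-$. That follows from the one-letter identity $B_k(y)\,u_{-k}(t)\,x'=\tilde{x}_-\,B_k(y-t)$ with $\tilde{x}_-\in U^-$. It holds for any $x'$ in the subgroup generated by the $U_{-\beta}$ with $\beta\in\Phi_+\setminus\{\alpha_k\}$, because $U_{\alpha_k}$ normalises that subgroup. Pushing a $U^-$ factor from right to left in this way is automatically compatible with every suffix. The commutation, frayed hexavalent and twining moves that define $\rho$ all agree with it.
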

	\begin{proof}
		We use the expression of the cluster variables in \ref{eqn:reversedseedvariables} and applying the identity corresponding to fraying a strand. For $j>1$, we have
		\begin{align*}
			\widetilde{A_j}&=\Delta_{\omega_{i_{\ell-j+1}},\omega_{i_{\ell-j+1}}} (B_{\underline{w}[\ell-j+1,\ell]}(z_{\ell-j+1},\dots,z_\ell)),\\
			&=\Delta_{\omega_{i_{\ell-j+1}},\omega_{i_{\ell-j+1}}} (B_{\underline{w}[\ell-j+1,\ell-1]}(z_{\ell-j+1},\dots,z_{\ell-1}) B_{i_\ell}(z_\ell)),\\
			&=\Delta_{\omega_{i_{\ell-j+1}},\omega_{i_{\ell-j+1}}} (B_{\underline{w}[\ell-j+1,\ell-1]}(z_{\ell-j+1},\dots,z_{\ell-1}) u_{-i_\ell}(z_\ell^{-1}) \alpha_{i_\ell}^\vee(z_\ell) u_{i_\ell}(-z_{\ell}^{-1})),\\
			&=\Delta_{\omega_{i_{\ell-j+1}},\omega_{i_{\ell-j+1}}} (B_{\underline{w}[\ell-j+1,\ell-1]}(z'_{\ell-j+1},\dots,z'_{\ell-1}) \alpha_{i_\ell}^\vee(z_\ell))
		\end{align*}
		where the $z'_i$ arise from the coordinate change from bringing $u_{-i_\ell}(z_\ell^{-1})$ to the left. This is the definition of the map $\rho$, and therefore
		\begin{align*}
			\widetilde{A_j}&= \rho^*(A_{j-1}^+) \cdot \Delta_{\omega_{i_{\ell-j+1}},\omega_{i_{\ell-j+1}}}(\alpha_{i_\ell}^\vee(z_\ell))
		\end{align*}
		which yields the result.
	\end{proof}

	\begin{prop}\label{prop:pullbackseed}
		The freezing of the seed $\overrightarrow{\mu_{\underline{w}}} (\seed(\underline{w}))$ on $\cO^{\underline{w}}$ at the first cluster variable $\widetilde{A_1}=z_\ell$ is quasi-equivalent to the seed 
		on $\cO^\gamma = \cO^{(i_1,\dots, i_{\ell-1},-i_\ell)}$ given by
		$$\overrightarrow{\mu_{\underline{w}[\ell-1]}}(\seed(\gamma)) =  \rho^*(\overrightarrow{\mu_{\underline{w}[\ell-1]}}(\seed(\underline{w}[\ell-1]))) \sqcup \{\boxed{z_\ell'}\}.$$
	\end{prop}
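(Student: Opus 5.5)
We verify the four conditions of Definition \ref{defn:quasiequivseeds} directly, comparing the frozen-at-$z_\ell$ seed $\overrightarrow{\mu_{\underline{w}}}(\seed(\underline{w}))$ with $\rho^*(\overrightarrow{\mu_{\underline{w}[\ell-1]}}(\seed(\underline{w}[\ell-1])))\sqcup\{\boxed{z_\ell'}\}$. All functions are viewed in the common field of rational functions on $Z_{\underline{w}}$, using the change of coordinates between $\cO^{\underline{w}}$ and $\cO^\gamma$. This transition only inverts the last coordinate, $z_\ell' = z_\ell^{-1}$ up to the sign convention of the fraying identity (\ref{eqn:frayingvertex}), and leaves $z_1,\dots,z_{\ell-1}$ unchanged. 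The bijection $\phi$ sends $\widetilde{A_1}=z_\ell$ to the isolated frozen $z_\ell'$, and $\widetilde{A_{j+1}}$ to $\rho^*(A_j^+)$ for $1\le j\le \ell-1$. We then check that mutable indices correspond and that symmetrisers agree.

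For the mutable/frozen bookkeeping, recall that $\overrightarrow{\mu_{\underline{w}}}(\seed(\underline{w}))=\seed(\overline{i_\ell},\dots,\overline{i_1})$ by Lemma \ref{lem:reversingmutationsequences}. Its vertex $1$ corresponds to the letter $\overline{i_\ell}$ and is mutable unless $i_\ell$ occurs once. After freezing vertex $1$, the remaining vertices are indexed exactly as $\seed(\overline{i_{\ell-1}},\dots,\overline{i_1})$: the last occurrence of each letter is still the frozen one. This matches $\overrightarrow{\mu_{\underline{w}[\ell-1]}}(\seed(\underline{w}[\ell-1]))$.

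Conditions (2) and (3) follow from Lemma \ref{lem:pullbackvars}. Each $\widetilde{A_{j+1}}$ equals $\rho^*(A_j^+)$ times either $1$ or $z_\ell^{\pm1}$, which is a Laurent monomial in the frozen variable $z_\ell'=z_\ell^{-1}$. Hence the mutable variables differ by frozen monomials. The frozen sets $\{z_\ell\}\cup\{\widetilde{A_k}\}_{k\ \mathrm{frozen}}$ and $\{z_\ell'\}\cup\{\rho^*A^+_k\}$ generate the same multiplicative group.

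Condition (4), preservation of exchange ratios, is the main work. Write $y_j$ for the exchange ratio at a mutable vertex $j+1\ge2$ of the reversed seed. By the explicit form (\ref{eqn:exchangematrixofW}) applied to the reversed word, the exchange matrix restricted to vertices $\ge2$ coincides with that of $\seed(\overline{i_{\ell-1}},\dots,\overline{i_1})$. The only extra contribution is the column entry $\ep_{1,j+1}$ from vertex $1$. So we need
\[
z_\ell^{\ep_{1,j+1}}\prod_{k\ge 2}\widetilde{A_k}^{\ep_{k,j+1}} \;=\; \prod_{k}\rho^*(A_k^+)^{\ep^+_{k,j}}.
\]
Substituting $\widetilde{A_k}=\rho^*(A^+_{k-1})\,z_\ell^{\delta_{i_{\ell-k+1},i_\ell}}$ reduces this to the exponent identity
\[
\ep_{1,j+1}+\sum_{k\ge2,\ i_{\ell-k+1}=i_\ell}\ep_{k,j+1}=0 ,
\]
that is, the column sum over all vertices of colour $i_\ell$ vanishes. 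I would check this identity using the interlacing description. Equivalently, $\sum_k \ep_{k,j}\langle\omega_{i_\ell},\alpha^\vee_{i_k}\rangle$-type sums vanish because each mutable column of the amalgamated matrix is a difference of two elementary matrices $E(i)$, whose colour-$i_\ell$ row sums cancel. The identity can also be read off from Lemma \ref{lem:RectangularExchangeMatrix} by computing $\mathbf{1}_{i_\ell}^T\Lambda Q\Lambda^T$ on mutable columns. I expect this exponent cancellation, and getting the signs right under the reversal of the word, to be the main obstacle. The case $j+1$ adjacent to vertex $1$ with equal colour needs separate care because of the $\pm1$ entries.
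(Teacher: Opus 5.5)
Your reduction is the paper's argument. The paper also rescales via Lemma~\ref{lem:pullbackvars}, computes the $z_\ell$-degree $D_k$ of each exchange ratio of the shortened seed by a case analysis on (\ref{eqn:exchangematrixofW}), and finds $D_k$ equal to the entry from vertex $1$ to $k$, which is exactly your vanishing colour-$i_\ell$ column sum. You leave that identity unchecked, and it is not a general property of mutable columns, so your ``difference of two $E(i)$'' heuristic overreaches: column $1$ of $E(1,2,1,2)$ has colour-$2$ sum $-1$. It holds because vertex $1$ is the first occurrence of $i_\ell$ in the reversed word, and your third route proves it in one line: for the reversed word $\mathbf{1}_{i_\ell}^T\Lambda=e_1^T$, and $(Q\Lambda^T)_{1,m}=q_{1,m}-q_{1,s(m)}=0$ for every mutable $m\geq 2$.
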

	\begin{proof}
		Our seed $\seed(\gamma)$ for $\cO^\gamma$ has one extra frozen vertex whose variable is $z_\ell'=\frac{1}{z_\ell}$. By Lemma \ref{lem:pullbackvars}, we can rescale the variables $\rho^*(A_j^+)$ by a monomial in this frozen variable to get the desired cluster variables $\widetilde{A_j}$. It remains to check that doing so preserves the exchange ratios at each mutable vertex $k$.
		From Lemma \ref{lem:pullbackvars}, the degree of $z_\ell$ in the exchange ratio at vertex $k$ will be
		$$D_k = -\sum_j \ep_{jk} \delta_{i_{\ell-j-1},i_\ell}, \quad \text{where}\quad \delta_{i_{\ell-j-1},i_\ell} = \begin{cases}
			1 &\text{if } i_{\ell-j-1} = i_\ell,\\
			0 &\text{if } i_{\ell-j-1} \neq i_\ell.
		\end{cases}$$
		First consider the case where $i_k\neq i_\ell$. From the structure of the exchange matrix for the seed $\overrightarrow{\mu_{\underline{w}[\ell-1]}}(\seed(\underline{w}[\ell-1])) = \seed(\overline{i_{\ell-1}},\dots,\overline{i_1}) = \seed'$, exactly one of the following occurs for the column $(\ep_{1,k},\dots,\ep_{\ell,k})^T$:
		\begin{enumerate}[(i)]
			\item The column is such that $\ep_{j,k}=0$ whenever $i_j=i_\ell$, and in this case $D_k=0$.
			\item There are exactly two indices $j_1<k<j_2$ with $i_{j_1} = i_{j_2} = i_\ell$ with $\ep_{j_1,k},\ep_{j_2,k}\neq0$. We have $j_1<k<s(j_1)$, $k<j_2<s(k)$, so $\ep_{j_1,k} = a_{i_k,i_\ell}=-\ep_{j_2,k}$, and thus $D_k=0$.
			
			\item There is exactly one index $j$ with $i_{j}= i_\ell$ and $\ep_{j,k}\neq0$. In this case, $k<j<s(k)$, $\ep_{j,k} = a_{i_k,i_\ell}$ and $D_k = -a_{i_k,i_\ell}$.
		\end{enumerate}
		A similar consideration when $i_k=i_\ell$ shows that for mutable vertices $(i_\ell,m)$ with $m>1$ on level $i_\ell$, they are all have $-\ep_{p(k),k} = 1 = \ep_{s(k),k}$, and therefore $D_k=0$. But there is one vertex, namely $(i_{\ell},1)$ the first vertex on level $i_\ell$ which has no predecessor, and for this index we have $D_k=-1$.
		
		Therefore, the degree $D_k$ is zero at any mutable vertex $k$ except for those indices in case (iii) and the first index in the level $i_\ell$. In these cases, the $D_k$ is precisely equal to the $-a_{i_k,i_\ell}$ and $-1$ respectively. Therefore, to preserve exchange ratios when rescaling the factor of $z_\ell$ in the cluster variables, we add $D_k$ arrows from the frozen vertex $\boxed{z_\ell}$ to vertex $k$.
		This recovers the freezing of $z_\ell$ in the seed $\overrightarrow{\mu_{\underline{w}}} (\seed(\underline{w}))$.
	\end{proof}

	\begin{prop}\label{prop:pullbackcompatiblewithchangeofcoords}
		For $\gamma = \underline{w}(\ell) = (i_1,i_2,\dots,i_{\ell-1},-i_\ell)$, there exists a rational quasi-cluster morphism $\cO^{\underline{w}}\to \cO^{\underline{w}[\ell-1]}$ such that the following picture is commutative
		\[\begin{tikzcd}
			{\left(\cO^{\underline{w}}, \overleftarrow{\mu_{\underline{w}[\ell-1]}}\circ \overrightarrow{\mu_{\underline{w}}} (\seed(\underline{w}))\right) } && {\cO^{\gamma}} & {\cO^{\underline{w}[\ell-1]} \times \mathbb{A}^1} \\
			\\
			&& {(\cO^{\underline{w}[\ell-1]},\seed(\underline{w}[\ell-1])).}
			\arrow[dashed, tail reversed, from=1-1, to=1-3]
			\arrow["{{\bigl(J(\underline{w}[\ell-1]) { , } \{(i_\ell, 1)\}\bigr)}}"', dashed, from=1-1, to=3-3]
			\arrow["\cong", tail reversed, from=1-3, to=1-4]
			\arrow["{{\rho}}", from=1-3, to=3-3]
			\arrow["{\rho_1}", from=1-4, to=3-3]
		\end{tikzcd}\]
	\end{prop}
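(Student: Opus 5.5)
The plan is to take the dashed map to be the composite
\[
\cO^{\underline{w}} \dashrightarrow \cO^{\gamma} \xrightarrow{\ \rho\ } \cO^{\underline{w}[\ell-1]},
\]
where the first arrow is the birational change of coordinates between the adjacent charts $\cO^{\underline{w}}$ and $\cO^\gamma$, i.e.\ $z_\ell' = z_\ell^{-1}$ with the other coordinates adjusted. Commutativity of the diagram is then automatic: the identification $\cO^\gamma\cong\cO^{\underline{w}[\ell-1]}\times\mathbb{A}^1$ is Proposition \ref{prop:mapstosupportedsubword}, and $\rho_1$ is projection onto the first factor, so $\rho=\rho_1\circ(\cong)$ holds by the definition of $\rho$. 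What needs proof is that the composite is rational quasi-cluster with respect to the seed $\overleftarrow{\mu_{\underline{w}[\ell-1]}}\circ\overrightarrow{\mu_{\underline{w}}}(\seed(\underline{w}))$ on the source and $\seed(\underline{w}[\ell-1])$ on the target, with $J_{\mathrm{sup}}=J(\underline{w}[\ell-1])$ and $S'=\{(i_\ell,1)\}$.

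First I would reduce to the mutated seeds. By Proposition \ref{prop:pullbackseed}, the freezing of $\overrightarrow{\mu_{\underline{w}}}(\seed(\underline{w}))$ at $\widetilde{A_1}=z_\ell$ is quasi-equivalent to $\rho^*\bigl(\overrightarrow{\mu_{\underline{w}[\ell-1]}}(\seed(\underline{w}[\ell-1]))\bigr)\sqcup\{\boxed{z_\ell'}\}$. Under this quasi-equivalence:
\begin{itemize}
\item the variables $\widetilde{A_{j+1}}$ correspond to $\rho^*(A_j^+)$ up to the factor $z_\ell^{\pm1}$ recorded in Lemma \ref{lem:pullbackvars};
\item exchange ratios agree at every vertex other than the frozen one;
\item the extra frozen variable $z_\ell$ is the monomial $(z_\ell')^{-1}$.
\end{itemize}
These are exactly conditions (1)--(3) of the rational quasi-cluster definition for the pair of mutated seeds. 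Here the vertex of $z_\ell$, namely the first vertex of the reversed seed (which becomes $(i_\ell,1)$ after relabelling), plays the role of $S'$, and the remaining indices form $J_{\mathrm{sup}}$. Condition (2) needs the freezing, because freezing removes the mutable vertex $z_\ell$ whose exchange ratio has no counterpart.

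Next I would undo the mutations. Since quasi-equivalence commutes with mutation at corresponding mutable vertices, applying $\overleftarrow{\mu_{\underline{w}[\ell-1]}}$ to both sides preserves the relation. On the target side this returns $\seed(\underline{w}[\ell-1])$. On the source side it gives the freezing at $z_\ell$ of $\overleftarrow{\mu_{\underline{w}[\ell-1]}}\circ\overrightarrow{\mu_{\underline{w}}}(\seed(\underline{w}))$. This is legitimate only if $\overleftarrow{\mu_{\underline{w}[\ell-1]}}$ never mutates at the frozen vertex. That holds because the indices of $\underline{w}[\ell-1]$ embed into the supported vertices under the relabelling of Proposition \ref{prop:pullbackseed}, which is the source of the index set $J(\underline{w}[\ell-1])$ in the label.

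The main obstacle is the bookkeeping of indices between the reversed seed $\seed(\overline{i_\ell},\dots,\overline{i_1})$ and $\seed(\overline{i_{\ell-1}},\dots,\overline{i_1})$. I must check that the mutation sequence $\overleftarrow{\mu_{\underline{w}[\ell-1]}}$, written in the target's labels, transports to the same sequence on the source under the shift $j\mapsto j+1$, and that the frozen vertex lands at $(i_\ell,1)$. I would verify this level by level, using the level description of Definition \ref{defn:levelmutation}. Finally, the monomial rescalings by powers of $z_\ell'$ must stay Laurent monomials in $S'$ only after the backward mutations. This follows because quasi-equivalence is preserved by mutation, as in the earlier lemma on quasi-equivalent seeds, but I would double-check that the rescaling factors remain monomials in the single frozen variable $z_\ell'$ rather than in other frozens.
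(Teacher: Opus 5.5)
Your proposal is correct and follows the paper's route. The paper's proof likewise combines Lemma \ref{lem:pullbackvars} with Proposition \ref{prop:pullbackseed}, then applies the reversing sequence $\overleftarrow{\mu_{\underline{w}[\ell-1]}}$ to both quasi-equivalent seeds, using the level-$i_\ell$ shift $(i_\ell,m)\mapsto(i_\ell,m+1)$ noted after the statement. Your final check also goes through immediately: every variable of one seed, frozen or mutable, differs from its partner by a power of $z_\ell$, and equality of exchange ratios preserves this under each mutation.
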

	\begin{proof}
		This follows from Proposition \ref{lem:pullbackvars}, Corollary \ref{prop:pullbackseed}, and applying the reversing sequence $\overleftarrow{\mu_{\underline{w}[\ell-1]}}$ to both seeds.
	\end{proof}
	Note that there is a change of the indexing in the morphism. The embedding of seed vertices $\overrightarrow{\mu_{\underline{w}[\ell-1]}}(\seed(\underline{w}[\ell-1])) \to \overrightarrow{\mu_{\underline{w}}}(\seed(\underline{w}))$ is given by $(i,m)\mapsto(i,m)$ if $i\neq i_\ell$ and $(i,m)\mapsto(i,m+1)$ if $i=i_\ell$, so the second mutation sequence factor $\overleftarrow{\mu_{\underline{w}[\ell-1]}}$ is applied to vertices in level $i_\ell$ shifted by one position.

	\begin{eg}
		Continuing Example \ref{eg:(1,2,1,2,1,2)} and applying $\overleftarrow{\mu_{\underline{w}[\ell-1]}} = (\mu_1\circ\mu_3)\circ\mu_4\circ\mu_1\circ\mathrm{id}\circ\mathrm{id}$ to the last seed, we get
		\[\begin{tikzcd}[sep=tiny]
			& 2 && 4 && {\boxed{6}} \\
			1 && 3 && {\boxed{5}}
			\arrow[from=1-2, to=2-3]
			\arrow[from=1-4, to=2-3]
			\arrow[from=1-6, to=1-4]
			\arrow[dashed, from=1-6, to=2-5]
			\arrow[from=2-1, to=1-2]
			\arrow[from=2-1, to=1-4]
			\arrow[from=2-3, to=1-6]
			\arrow[from=2-3, to=2-1]
			\arrow[from=2-5, to=2-3]
		\end{tikzcd}\quad \sim \quad
		\begin{tikzcd}[sep=tiny]
			2 && 4 && {\boxed{6}} & \\
			& 1 && 3 && {\boxed{5}}
			\arrow[from=1-1, to=2-4]
			\arrow[from=1-3, to=2-4]
			\arrow[from=1-5, to=1-3]
			\arrow[dashed, from=1-5, to=2-6]
			\arrow[from=2-2, to=1-1]
			\arrow[from=2-2, to=1-3]
			\arrow[from=2-4, to=1-5]
			\arrow[from=2-4, to=2-2]
			\arrow[from=2-6, to=2-4]
		\end{tikzcd}\]
		with mutable variables $A_2=z_6$, $A_1 = z_1$, $A_4=z_2$, and $A_3 = (z_1z_3-z_2)z_6-z_1$. Note the copy of the quiver for $\seed(\gamma^+)$ supported on the vertices $J\setminus\{2\} = J\setminus \{(2,1)\}$.
	\end{eg}
	
	If $\underline{v} = (i_1,\dots,i_m)$ is a prefix of $\underline{w} = (i_1,\dots,i_\ell)$ with $m<\ell$, there is an embedding of seeds $\seed(\underline{v})\hookrightarrow \seed(\underline{w})$. This means we can apply the mutation sequence $\overrightarrow{\mu_{\underline{v}}}$ to $\seed(\underline{w})$ to get the seed $\seed(\overline{i_m}, \overline{i_{m-1}},\dots, \overline{i_2},\overline{i_1},i_{m+1},\dots,i_\ell)$, whose cluster variables are 
	\begin{equation}\label{eqn:reorderedseedvariables}
		\widetilde{A_j} = \begin{cases}
			\Delta_{\omega_{i_{m-j+1}},\omega_{i_{m-j+1}}} (B_{\underline{w}[m-j+1,m]}(z_{m-j+1},\dots,z_m)) & \text{ if } 1\leq j\leq m,\\
			\Delta_{\omega_{i_j},\omega_{i_j}} (B_{\underline{w}[j]}(z_1,\dots,z_j)) & \text{ if } m<j\leq \ell.
		\end{cases}
	\end{equation}
	
	\begin{lem}\label{lem:pullbackvars2}
		Let $\gamma = \underline{w}(m) = (i_1,i_2,\dots,i_{m-1},-i_m,i_{m+1},\dots,i_\ell)$, be the subexpression obtained by negating the $m$-th letter.
		Let $(A_j^+)_{j=1}^{\ell-1}$ be the cluster variables of the seed $\overrightarrow{\mu_{\underline{w}[m-1]}}(\seed(\underline{w}[m-1],\underline{w}[m+1,\ell]))$ on $\cO^{\gamma^+}$ and $(\widetilde{A_j})_{j=1}^{\ell}$ be the cluster variables of the seed $\overrightarrow{\mu_{\underline{w}}[m]}(\seed(\underline{w}))$ on $\cO^{\underline{w}}$.
		Then as rational functions on $\cO^\gamma$, under the pullback by the map $\rho : \cO^\gamma \to \cO^{\gamma^+}$, we have
		\begin{equation*}
			\rho^*(A_j^+)=\begin{cases}
				\widetilde{A_{j+1}} &\text{if } i_{m-j+1} \neq i_m \text{ and } 1\leq j<m,\\
				\widetilde{A_{j+1}}\cdot z_m^{-1} &\text{if } i_{m-j+1}=i_m  \text{ and } 1\leq j<m,\\
				\widetilde{A_{j+1}}\cdot  z_m^{-\langle\underline{w}^j(\omega_{i_j}),\underline{w}^m(\alpha_{i_m}^\vee)\rangle} &\text{if } m\leq j \leq \ell-1,
			\end{cases}
		\end{equation*}
		\begin{proof}
			Similar to the proof of Lemma \ref{lem:pullbackvars}, we use the expression of the cluster variables in \ref{eqn:reorderedseedvariables} and apply identities corresponding to fraying a strand.
			The first two cases are the same as in the proof of Lemma \ref{lem:pullbackvars}. For $m+1\leq j\leq \ell$, we have
			\begin{align*}
				\widetilde{A_j}&=\Delta_{\omega_{i_{j}},\omega_{i_{j}}} (B_{\underline{w}[j]}(z_{\ell-j+1},\dots,z_\ell)),\\
				&=\Delta_{\omega_{i_{j}},\omega_{i_{j}}} (B_{\underline{w}[m-1]}(z_{1},\dots,z_{m-1}) B_{i_m}(z_m)B_{\underline{w}[m+1,j]}(z_{m+1},\dots,z_j)),\\
				&=\Delta_{\omega_{i_{j}},\omega_{i_{j}}} (B_{\underline{w}[m-1]}(z_{1},\dots,z_{m-1}) u_{-i_m}(z_m^{-1}) \alpha_{i_m}^\vee(z_m) u_{i_m}(-z_{m}^{-1})B_{\underline{w}[m+1,j]}(z_{m+1},\dots,z_j)),\\
				&=\Delta_{\omega_{i_{j}},\omega_{i_{j}}} (B_{\underline{w}[m-1]}(z'_{1},\dots,z'_{m-1})B_{\underline{w}[m+1,j]}(z_{m+1}',\dots,z_j') ((s_{i_j}\cdots s_{i_{m+1}})\cdot \alpha_{i_m})^\vee(z_m) \cdot U)
			\end{align*}
			where the $z'_i$ arise from the coordinate change from bringing $u_{-i_\ell}(z_\ell^{-1})$ to the left and the Borel element $\alpha_{i_m}^\vee(z_m) u_{i_m}(-z_{m}^{-1})$ to the right. The first factor here matches the construction of the map $\rho$, and therefore
			\begin{align*}
				A_j&= \rho^*({A_{j-1}^+}) \cdot \Delta_{\omega_{i_{j}},\omega_{i_{j}}}(((s_{i_j}\cdots s_{i_{m+1}})\cdot \alpha_{i_m})^\vee(z_m)),\\
				&= \rho^*({A_{j-1}^+}) \cdot \omega_{i_j}(((s_{i_j}\cdots s_{i_{m+1}})\cdot \alpha_{i_m})^\vee(z_m)),\\
				&= \rho^*({A_{j-1}^+}) \cdot z_m^{\langle\omega_{i_j},(s_{i_j}\cdots s_{i_{m+1}})(\alpha_{i_m}^\vee)\rangle}.
			\end{align*}
			which yields the result.
		\end{proof}
	\end{lem}
	
	\begin{prop}\label{prop:pullbackseed2}
		For $1\leq m\leq \ell$ and $\gamma = \underline{w}(m)$, the freezing of the seed $\overrightarrow{\mu_{\underline{w}[m]}} (\seed(\underline{w}))$ at the first cluster variable $\widetilde{A_1}=z_m$ is quasi-equivalent to the seed on $\cO^\gamma$ given by
		$$\overrightarrow{\mu_{\underline{w}[m-1]}}(\seed(\gamma)) =  \rho^*(\overrightarrow{\mu_{\underline{w}[m-1]}}(\seed(\gamma^+))) \sqcup \{\boxed{z_m'}\}.$$
	\end{prop}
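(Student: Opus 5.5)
We follow the proof of Proposition \ref{prop:pullbackseed}, now using Lemma \ref{lem:pullbackvars2} in place of Lemma \ref{lem:pullbackvars}.

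\textbf{Setting up the quasi-equivalence.} The seed $\seed(\gamma)$ has exactly one extra isolated frozen vertex, whose variable is $z_m'=z_m^{-1}$. Apply the mutation sequence $\overrightarrow{\mu_{\underline{w}[m-1]}}$ to the copy of $\seed(\gamma^+)$ inside $\seed(\gamma)$. The extra frozen vertex is isolated, so this sequence does not involve it and commutes with pullback along $\rho$. The bijection of vertices is the shift used after Proposition \ref{prop:pullbackcompatiblewithchangeofcoords}: levels $i\neq i_m$ map identically, and within level $i_m$ the index $(i_m,k)$ of $\gamma^+$ goes to $(i_m,k+1)$, while $(i_m,1)$ in $\overrightarrow{\mu_{\underline{w}[m]}}(\seed(\underline{w}))$ is matched with the new frozen $z_m'$. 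Lemma \ref{lem:pullbackvars2} then says each $\rho^*(A_j^+)$ equals $\widetilde{A_{j+1}}$ times a power of $z_m$, hence a Laurent monomial in the frozen $z_m'$. This gives conditions (1)–(3) of Definition \ref{defn:quasiequivseeds}. The only remaining frozen of the freezing is $\widetilde{A_1}=z_m$, which matches $z_m'$ up to inversion, so the frozen groups agree.

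\textbf{Exchange ratios.} We must check that the exchange ratios agree. For a mutable vertex $k$, the $z_m$-degree of the pulled-back exchange ratio is $D_k=-\sum_j \ep_{jk}e_j$, where $e_j$ is the exponent from Lemma \ref{lem:pullbackvars2}: $e_j\in\{0,1\}$ on the reversed part $j<m$, and $e_j=\langle \omega_{i_j},(s_{i_j}\cdots s_{i_{m+1}})\alpha_{i_m}^\vee\rangle$ for $j\ge m$. The claim is that $D_k$ equals the entry $\ep_{(i_m,1),k}$ of $\overrightarrow{\mu_{\underline{w}[m]}}(\seed(\underline{w}))$, so that adding $D_k$ arrows from the frozen vertex reproduces the frozen row of $z_m$.

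For vertices $k$ lying within the reversed block $\seed(\overline{i_{m}},\dots,\overline{i_1})$, the case analysis (i)–(iii) of Proposition \ref{prop:pullbackseed} carries over unchanged, since there the relevant exponents are the same $0/1$ indicators.

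\textbf{Main obstacle.} The hard part is the mutable vertices $k$ in the unreversed tail $j>m$, together with the interface vertices whose columns meet both blocks. For these, the exponents $e_j$ are pairings of fundamental weights with the reflected coroot $\beta^\vee=(s_{i_j}\cdots s_{i_{m+1}})\alpha_{i_m}^\vee$. We would handle them as follows.

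1. Use the interlacing description \eqref{eqn:exchangematrixofW} of the tail columns. By Lemma \ref{lem:RectangularExchangeMatrix}, these columns are columns of $\Lambda Q\Lambda^T$.
2. Express $\sum_j \ep_{jk}\,\omega_{i_j}$ via $C=\Lambda Q$. By Lemma \ref{lem:monomialchangematrix} this combination telescopes to $-\alpha_{i_k}$ plus correction terms at the interface.
3. The inner sum with the reflected coroots then collapses using $\langle\omega_i,\alpha_{i'}^\vee\rangle=\delta_{ii'}$ and $s_i\omega_i=\omega_i-\alpha_i$. We expect $D_k=0$ for tail vertices, except those adjacent to the interface, where $D_k=-a_{i_k,i_m}$ or $-1$, matching the arrows from $(i_m,1)$.

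Failing a clean telescoping argument, the fallback is a different route. Use Proposition \ref{prop:Poissonmapsbetweencells} and Lemma \ref{lem:truncationmapsarequasicluster} to reduce to the suffix case: the tail seed is obtained by amalgamation, so exchange ratios at tail vertices can be computed by treating the prefix $\underline{w}[m]$ as the truncation case already settled in Proposition \ref{prop:pullbackseed}. Then check that the amalgamation with $E(i_{m+1}),\dots$ introduces no $z_m$-degree, because the $\alpha^\vee$ factor commutes rightward in $B$.
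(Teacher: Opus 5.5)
Your setup is the same as the paper's: you match vertices with the shift on level $i_m$, rescale by powers of the frozen $z_m'=z_m^{-1}$ via Lemma \ref{lem:pullbackvars2}, and reduce everything to the $z_m$-degree $D_k=\sum_j\ep_{jk}\beta_j$ of the exchange ratio at each mutable $k$. There is a genuine gap, however. For $k\geq m$, which is the only part not already covered by Proposition \ref{prop:pullbackseed}, you only record the values you \emph{expect}, and neither of your mechanisms establishes them.

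In the telescoping sketch, the exponents are $\beta_j=-\langle\omega_{i_j},(s_{i_j}\cdots s_{i_{m+1}})\alpha_{i_m}^\vee\rangle$, and the coroot here depends on $j$. So $\sum_j\ep_{jk}\beta_j$ is not the pairing of $\sum_j\ep_{jk}\omega_{i_j}$ with a single coroot. Lemma \ref{lem:monomialchangematrix}, which is the monomial relation $\underline{\zeta}=\underline{\varphi}^{\Lambda Q}$, gives no telescoping of bare fundamental weights.

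The fallback fails outright. The factor $\alpha_{i_m}^\vee(z_m)$ does not commute past the later factors $B_{i_j}(z_j)$; it rescales their parameters and is reflected by them. That is exactly why the tail variables in Lemma \ref{lem:pullbackvars2} carry (in general nonzero) powers of $z_m$. So the letters after position $m$ do contribute $z_m$-degree, as your own predicted $D_k=-a_{i_k,i_m}\neq 0$ at tail vertices shows. Moreover, Proposition \ref{prop:Poissonmapsbetweencells} and Lemma \ref{lem:truncationmapsarequasicluster} say nothing about exchange ratios at tail vertices.

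The missing idea is a linear recursion for the $\beta_j$ coming from the Weyl group action. Write $\beta_j=-\langle u_j\omega_{i_j},\alpha_{i_m}^\vee\rangle$ with $u_j=s_{i_{m+1}}\cdots s_{i_j}$. Since no $s_{i_k}$ occurs strictly between $k$ and $s(k)$, the identities $s_i\omega_i=\omega_i-\alpha_i$ and $\alpha_i=\sum_{i'}a_{i,i'}\omega_{i'}$ give
\[
\beta_{s(k)}+\beta_k+\sum_{j<s(k)<s(j),\; i_j\neq i_k}a_{i_k,i_j}\beta_j=0,
\]
and the same relation holds with $k$ replaced by $p(k)$. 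Substitute both into the column of $k$ from \eqref{eqn:exchangematrixofW}, which has entries $1$ at $s(k)$, $-1$ at $p(k)$, and $\pm a_{i_k,i_j}$ at the interlacing positions. All terms cancel and $D_k=0$.

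The relations break only when the window reaches back to position $m$ or earlier. There the weights are untranslated, so they pair with $\alpha_{i_m}^\vee$ to give $\delta_{i_j,i_m}$, and the position-$m$ term is the frozen $\widetilde{A_1}$ itself. This boundary defect is what produces $D_{s(m)}=-1$ and $D_k=-a_{i_k,i_m}$ for $m<k<s(m)<s(k)$, that is, the arrows at $z_m$.

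This recursion also rescues your $\Lambda Q$ idea. The quantity $(\beta^T\Lambda Q)_k$ is exactly minus the left side of the relation at $p(k)$. Since the tail columns have the interlacing form of Lemma \ref{lem:RectangularExchangeMatrix}, you get $D_k=(\beta^T\Lambda Q)_k-(\beta^T\Lambda Q)_{s(k)}$. But it is the recursion, not Lemma \ref{lem:monomialchangematrix}, that makes these terms vanish, and without it the exchange-ratio condition remains unproved.
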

	\begin{proof}
		The proof is an extension of that from Proposition \ref{prop:pullbackseed}. As before, we wish to rescale the cluster variables by appropriate powers of the frozen variable $z_m = \frac{1}{z_m'}$. We therefore need to check the degree $D_k$ of $z_m$ in the exchange ratio at vertex $k$ in the pulled back seed $\rho^*(\overrightarrow{\mu_{\underline{w}[m-1]}}(\seed(\gamma^+))$.
		The case for indices with $k<m$ is checked in the proof of Proposition \ref{prop:pullbackseed}, we now determine the discrepancy for mutable indices $k\geq m$. For $j>m$, let $\beta_j$ denote the integer $-\langle\underline{w}^j(\omega_{i_j}),\underline{w}^m(\alpha_{i_m}^\vee)\rangle = -\langle \omega_{i_j}, (s_{i_j}\cdots s_{i_{m+1}})(\alpha_{i_{m}}^\vee)\rangle$, and for $j<m$, let $\beta_j = -\delta_{i_{m-j-1},i_m}$ as in Proposition \ref{prop:pullbackseed}.
		By the action of simple reflections,
		$$\beta_{s(k)} = -\beta_k - \left(\sum_{ k<j<s(k)<s(j)} a_{i_k,i_j}\beta_j\right) \; \text{ and } \; \beta_k =  -\beta_{p(k)} - \left(\sum_{ j<k<s(j)<s(k)} a_{i_k,i_j}\beta_j\right).$$
		Then for $k$ with one of $k>s(m)$ or $s(k)<s(m)$, we have
		\begin{align*}
			D_k &= \sum_{j}\ep_{jk}\beta_{j},\\ &= \beta_{s(k)} + \left( \sum_{ k<j<s(k)<s(j)}a_{i_k,i_j}\beta_{j} \right) -\beta_{p(k)} -\left( \sum_{j<k<s(j)<s(k)}a_{i_k,i_j}\beta_{j}\right),\\
			&= -\beta_k +\beta_k = 0.
		\end{align*}
		So as in the proof of Proposition \ref{prop:pullbackseed}, a nonzero discrepancy occurs only at positions $k$ with $i_k= i_m$  when $m<k<s(m)<s(k)$ or the $k=s(m)$. In the case $k=s(m)$, we have
		\begin{align*}
			D_k
			&= -\beta_k -\left( \sum_{j<k<s(j)<s(k)}a_{i_k,i_j}\beta_{j}\right)= \langle\omega_{i_m},\alpha_{i_m}^\vee\rangle=-1,
		\end{align*}
		and when $m<k<s(m)<s(k)$ we have $i_k\neq i_m$, and
		\begin{align*}
			D_k
			&= -\beta_k - \beta_{p(k)}-\left( \sum_{\substack{j<k<s(j)<s(k)\\j\neq m}}a_{i_k,i_j}\beta_{j}\right)= -a_{i_k,i_m},
		\end{align*}
		Adding these many arrows to/from the new frozen vertex therefore preserves the exchange ratios when rescaling to remove all the factors of $z_m$ from the cluster variables.
	\end{proof}
		
	\begin{rem}
		The above arguments were inspired by a definition of \emph{regular pullback of seeds} under rational maps by Gekhtman--Shapiro--Vainshtein, see \citep[\S2.3]{GSV26a} and \citep{GSV26b}. The procedure here is slightly different as we rescale the mutable variables by all powers of the pole of the rational map, not just those factors which are in the denominator.
	\end{rem}

	\begin{cor}\label{prop:pullbackcompatiblewithchangeofcoords2}
		For $\gamma = \underline{w}(j) = (i_1,\dots,i_{j-1},-i_j,i_{j+1},\dots,i_\ell)$, we similarly have
		\[\begin{tikzcd}[column sep=small]
			{\left(\cO^{\underline{w}}, \overleftarrow{\mu_{\underline{w}[j-1]}}\circ \overrightarrow{\mu_{\underline{w}[j]}} (\seed(\underline{w}))\right) } && {\cO^{\gamma}} & {\cO^{\gamma^+} \times \mathbb{A}^1} \\
			\\
			&& {(\cO^{\gamma^+},\seed(\underline{w}[j-1],\underline{w}[j+1,\ell])).}
			\arrow[dashed, tail reversed, from=1-1, to=1-3]
			\arrow["{{\bigl(J(\underline{w}[j-1],\underline{w}[j+1,\ell]) { , } \{(i_j, 1)\}\bigr)}}"', dashed, from=1-1, to=3-3]
			\arrow["\cong", from=1-3, to=1-4]
			\arrow["\rho", from=1-3, to=3-3]
			\arrow["{{\rho_1}}", from=1-4, to=3-3]
		\end{tikzcd}\]
	\end{cor}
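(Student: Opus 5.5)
This will be proved by mirroring Proposition \ref{prop:pullbackcompatiblewithchangeofcoords}, with Lemma \ref{lem:pullbackvars2} and Proposition \ref{prop:pullbackseed2} in place of Lemma \ref{lem:pullbackvars} and Proposition \ref{prop:pullbackseed}.

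First, the top row. The dashed double arrow $\cO^{\underline{w}}\dashrightarrow\cO^{\gamma}$ is the change of coordinates between adjacent charts, which is birational. The isomorphism $\cO^\gamma\cong\cO^{\gamma^+}\times\mathbb{A}^1$ is Proposition \ref{prop:mapstosupportedsubword} with a single unsupported letter. By construction, $\rho$ is this isomorphism followed by the projection $\rho_1$, so the triangle on the right commutes. The content of the statement is therefore the diagonal arrow. We take it to be the composite of the coordinate change with $\rho$, and must show it is rational quasi-cluster with data $\bigl(J(\underline{w}[j-1],\underline{w}[j+1,\ell]),\{(i_j,1)\}\bigr)$.

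For this, Proposition \ref{prop:pullbackseed2} (with $m=j$) says the following. The freezing at $\widetilde{A_1}=z_j$ of $\overrightarrow{\mu_{\underline{w}[j]}}(\seed(\underline{w}))$ is quasi-equivalent to
\[
\rho^*\bigl(\overrightarrow{\mu_{\underline{w}[j-1]}}(\seed(\gamma^+))\bigr)\sqcup\{\boxed{z_j'}\}.
\]
Concretely, by Lemma \ref{lem:pullbackvars2}, each pulled-back variable $\rho^*(A^+_k)$ equals $\widetilde{A_{k+1}}$ times a power of $z_j$, and exchange ratios at mutable vertices agree. Reading this in the language of the rational quasi-cluster definition, set $\sigma=\rho^*$ and $J_{\mathrm{sup}}=J(\gamma^+)$. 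Let $\iota$ be the index shift $(i,m)\mapsto(i,m)$ for $i\ne i_j$ and $(i,m)\mapsto(i,m+1)$ for $i=i_j$, as in the remark following Proposition \ref{prop:pullbackcompatiblewithchangeofcoords}. Let $S'=\{(i_j,1)\}$, the vertex carrying $z_j$. Condition (1) is Lemma \ref{lem:pullbackvars2}, and condition (2) is the exchange-ratio check in Proposition \ref{prop:pullbackseed2}. Condition (3) is vacuous, since every index of $\seed(\gamma^+)$ lies in $J_{\mathrm{sup}}$; the frozen variables of $\seed(\gamma^+)$ are among the minors covered by the lemma. Thus $\rho^*$ is rational quasi-cluster from $\overrightarrow{\mu_{\underline{w}[j-1]}}(\seed(\gamma^+))$ to $\overrightarrow{\mu_{\underline{w}[j]}}(\seed(\underline{w}))$.

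Finally, we transport along mutations. Apply $\overleftarrow{\mu_{\underline{w}[j-1]}}$ to the source, which returns $\seed(\gamma^+)=\seed(\underline{w}[j-1],\underline{w}[j+1,\ell])$. Apply the same sequence, transported by $\iota$, to the target. This step relies on the fact that rational quasi-cluster maps commute with mutation at mutable indices in $\iota(J_{\mathrm{uf}})$. This is the analogue of the lemma that quasi-equivalence commutes with mutation. Only exchange ratios and the monomial discrepancy in $S'$-variables enter the mutation formula, and since $(i_j,1)$ is frozen in the target the discrepancy monomials transform consistently.

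The main obstacle is bookkeeping rather than new mathematics. One must check that the transported sequence never mutates at $(i_j,1)$. One must also check that the vertices $k\ge j$ (the third case of Lemma \ref{lem:pullbackvars2}) map under $\iota$ to mutable vertices of $\overrightarrow{\mu_{\underline{w}[j]}}(\seed(\underline{w}))$, so that the suffix part of the word is untouched. I would verify this by noting that $\overleftarrow{\mu_{\underline{w}[j-1]}}$ only involves vertices coming from the prefix $\underline{w}[j-1]$. These sit at levels shifted by one exactly at level $i_j$, and the shift skips $(i_j,1)$.
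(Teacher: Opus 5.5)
Your proposal is correct and follows the paper's own argument. The paper also obtains the rational quasi-cluster property for the reversed-prefix seeds from Lemma \ref{lem:pullbackvars2} and Proposition \ref{prop:pullbackseed2}, and then applies $\overleftarrow{\mu_{\underline{w}[j-1]}}$, shifted by one at level $i_j$, to both seeds. You make explicit the bookkeeping the paper leaves implicit: the index map $\iota$, compatibility of rational quasi-cluster maps with mutation at $\iota(J_{\mathrm{uf}})$, and the fact that $(i_j,1)$ is never mutated. One small correction: $(i_j,1)$ is not actually frozen in the displayed seed, but since it is never mutated this does not affect your argument.
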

	\begin{proof}
		This follows from Proposition \ref{lem:pullbackvars2}, Corollary \ref{prop:pullbackseed2}, and applying the reversing sequence $\overleftarrow{\mu_{\underline{w}[j-1]}}$ to both seeds.
	\end{proof}
	As before, there is a change of the indexing in the morphism. The embedding of seed vertices $\overrightarrow{\mu_{\underline{w}[m-1]}}(\seed(\underline{w}[m-1],\underline{w}[m+1,\ell])) \to \overrightarrow{\mu_{\underline{w[m]}}}(\seed(\underline{w}))$ is given by $(i,m)\mapsto(i,m)$ if $i\neq i_\ell$ and $(i,m)\mapsto(i,m+1)$ if $i=i_\ell$, so the second mutation sequence factor $\overleftarrow{\mu_{\underline{w}[\ell-1]}}$ is applied to vertices in level $i_m$ shifted by one position.

	\begin{eg}\label{eg:mutationsequences(1,2,1,-2)}
		Continuing with Example \ref{eg:(1,2,1,-2)}, consider the word and subexpression $\underline{w}=(1,2,1,2)$ and $\gamma=(1,2,1,-2)$ in type $A$. Note that $\gamma^+ = \underline{w}[\ell-1] = (1,2,1)$.
		Let the Bott--Samelson coordinates on the Bott--Samelson charts $\cO^{\underline{w}}$, $\cO^{\gamma}$, and $\cO^{\gamma^+}$ be given by $z_i$, $z_i'$, and  $\widetilde{z}_i$ respectively.
		
		The seed $\seed(\underline{w})$ on $\cO^{\underline{w}}$ is given in coordinates by 
		\[\begin{tikzcd}
			& {z_2} && {\boxed{z_2z_4-z_3}} \\
			{z_1} && {\boxed{z_1z_3-z_2}}
			\arrow[from=1-2, to=2-3]
			\arrow[from=1-4, to=1-2]
			\arrow[from=2-1, to=1-2]
			\arrow[dashed, from=2-3, to=1-4]
			\arrow[from=2-3, to=2-1]
		\end{tikzcd}\]		
		One can compare this against the initial seed $\seed(\gamma^+)$ on $\cO^{{\gamma}^+}$
		\[\begin{tikzcd}
			& {\boxed{\widetilde{z}_2}} & \\
			{\widetilde{z}_1} && {\boxed{\widetilde{z}_1\widetilde{z}_3-\widetilde{z}_2}}
			\arrow[dashed, from=1-2, to=2-3]
			\arrow[from=2-1, to=1-2]
			\arrow[from=2-3, to=2-1]
		\end{tikzcd}\]
		to see that the truncation map $\psi:\cO^{\underline{w}} \to \cO^{\underline{w}[\ell-1]}$, $\psi^*(\widetilde{z}_i)=z_i$ is rational quasi-cluster.
		Applying the mutation sequence $\overleftarrow{\mu_{\underline{w}[\ell-1]}}\circ \overrightarrow{\mu_{\underline{w}}} =  \mu_1\circ(\mu_2\circ\mu_1)$ to $\seed(\underline{w})$, we get the seed
		\[\begin{tikzcd}
			& {z_4} && {\boxed{z_2z_4-z_3}} \\
			{z_1z_4-1} && {\boxed{z_1z_3-z_2}}
			\arrow[from=1-2, to=2-1]
			\arrow[dashed, from=1-4, to=2-3]
			\arrow[from=2-1, to=1-4]
			\arrow[from=2-3, to=2-1]
		\end{tikzcd}\]
		This corresponds to finding an appropriate seed for $\cO^{(1,2,1,2)}$, so that we may freeze and isolate the coordinate $z_4$ as one of our cluster variables.
		The Poisson map described in Proposition \ref{prop:mapstosupportedsubword} (see Figure \ref{fig:frayedweaveequivlabelled} for example weaves) is given by
		$$\rho: \cO^{(1,2,1-2)} \to \cO^{(1,2,1)}, \quad \Phi_{\gamma}(z'_1,z'_2,z'_3,z'_4) \mapsto  \Phi_{\gamma^+}(z'_1-z'_4,z'_2-z'_3z'_4,z'_3).$$
		Observe that the embedding $\sigma: \mathring{Z}_{(1,2,1)} \hookrightarrow \mathring{Z}_{(1,2,1,2)}$ given by $[g_1,g_2,g_3] \mapsto [g_1,g_2,g_3,e]$ restricts to a section of this map. Indeed, in the coordinates $\widetilde{z}_i$, the can be expressed as $$\sigma: \cO^{(1,2,1)} \hookrightarrow \cO^{(1,2,1,-2)}, \quad \Phi_{\gamma^+}(\widetilde{z}_1,\widetilde{z}_2,\widetilde{z}_3) \mapsto \Phi_{\gamma}(\widetilde{z}_1,\widetilde{z}_2,\widetilde{z}_3,0).$$
		Pulling back our cluster structure on $\cO^{(1,2,1)}$ via $\rho$ and adding an isolated frozen $z_4'$, the initial seed $\seed(\gamma)$ described in Theorem \ref{thm:BSchartshaveclusterstructure} is:
		\[\begin{tikzcd}
			& {\boxed{z'_2-z'_3z'_4}} && {\boxed{z'_4}} \\
			{z'_1-z'_4} && {\boxed{z'_1z'_3-z'_2}}
			\arrow[dashed, from=1-2, to=2-3]
			\arrow[from=2-1, to=1-2]
			\arrow[from=2-3, to=2-1]
		\end{tikzcd}\]
		Mutating at the first cluster variable $z'_1-z'_4$ (resp. $z_1$) yields the variable $z_3'$ (resp. $z_3$). This makes it evident that the quotient by $z_4'$ is a quasi-cluster morphism $\sigma:\cO^{(1,2,1)} \hookrightarrow \cO^{(1,2,1,-2)}$.
		
		Using the coordinate change $z_4'=z_4^{-1}$, observe that the exchange ratios for the bottom left variable are preserved between the initial seed of $\cO^{(1,2,1,-2)}$ and that of $\mu_{1}\circ\mu_2\circ\mu_1(\seed(\underline{w}))$, and the variables themselves differ by a monomial of the isolated frozen, $$z_1'-z_4' = z_4'(z_1z_4-1).$$
		To summarise, we have the projection map $\rho:\cO^{(1,2,1-2)}\twoheadrightarrow \cO^{(1,2,1)}$ which is rational quasi-cluster. Its natural section, $\sigma: \cO^{(1,2,1)} \hookrightarrow \cO^{(1,2,1-2)}$, is a quasi-cluster morphism given by taking the quotient by an isolated frozen variable. We also have the change of coordinates $\cO^{(1,2,1,2)} \dashrightarrow\cO^{(1,2,1,-2)}$, which is rational quasi-cluster and composes with $\rho$ to give a rational quasi-cluster map $\cO^{(1,2,1,2)} \dashrightarrow\cO^{(1,2,1)}$.
		\[\begin{tikzcd}
			& {\cO^{(1,2,1,2)}} & {\Phi_{\underline{w}}(z_1',z_2',z_3',(z_4')^{-1})} \\
			{\Phi_{\gamma}(\widetilde{z}_1,\widetilde{z}_2,\widetilde{z}_3,0)} & {\cO^{(1,2,1,-2)}} & {\Phi_{\gamma}(z_1',z_2',z_3',z_4')} \\
			{(\Phi_{\gamma^+}(\widetilde{z}_1,\widetilde{z}_2,\widetilde{z}_3);0)} & {\cO^{(1,2,1)} \times \mathbb{A}^1} & {(\Phi_{\gamma^+}(z_1'-z_4',z_2'-z_3'z_4',z_3');z_4')} \\
			{\Phi_{\gamma^+}(\widetilde{z}_1,\widetilde{z}_2,\widetilde{z}_3)} & {\cO^{(1,2,1)}} & {\Phi_{\gamma^+}(z_1'-z_4',z_2'-z_3'z_4',z_3')}
			\arrow[dashed, tail reversed, from=1-2, to=2-2]
			\arrow["\cong", tail reversed, from=2-2, to=3-2]
			\arrow[maps to, from=2-3, to=1-3]
			\arrow[maps to, from=2-3, to=3-3]
			\arrow[maps to, from=3-1, to=2-1]
			\arrow["\rho", shift left, two heads, from=3-2, to=4-2]
			\arrow["\rho", maps to, from=3-3, to=4-3]
			\arrow["\sigma", maps to, from=4-1, to=3-1]
			\arrow["\sigma", shift left, hook, from=4-2, to=3-2]
		\end{tikzcd}\]       
	\end{eg}
	
	Let $\gamma$ be any subexpression of $\underline{w}$ and let $j_1<\dots<j_m$ be the indices where $\gamma_{j_l}<0$. The change of coordinates from $\cO^{\underline{w}}$ to $\cO^{\gamma}$ is obtained by a composition of adjacent charts
	$$ \cO^{\underline{w}} \dashrightarrow \cO^{\gamma_{(1)}}\dashrightarrow \cO^{\gamma_{(2)}} \dashrightarrow\cdots \dashrightarrow \cO^{\gamma_{(m)}}=\cO^{\gamma},$$
	where $\gamma_{(l)}$ denotes the subexpression of $\underline{w}$ with $\gamma_{j_{l'}}<0$ for $l'\leq l$, and $\gamma_k>0$ else. This motivates the following definition which arises from composing the maps in Corollary \ref{prop:pullbackcompatiblewithchangeofcoords2}.
	
	\begin{defn}\label{defn:rhomutationsequence}
		Let $\gamma$ be any subexpression of $\underline{w}$ and let $j_1<\dots<j_m$ be the indices where $\gamma_{j_l}<0$.
		Then we iteratively define the mutation sequence $\overrightarrow{\mu_\gamma}$ by
		\begin{equation*}
			\overrightarrow{\mu_{\gamma_{(l)}}} =  (\overleftarrow{\mu_{\gamma_{(l-1)}^+[j_l-1]}}\circ\overrightarrow{\mu_{\gamma_{(l-1)}^+[j_l]}}) \circ \overrightarrow{\mu_{\gamma_{(l-1)}}},
		\end{equation*}
		so that
		\begin{equation*}
			\overrightarrow{\mu_{\gamma}} = (\overleftarrow{\mu_{\gamma_{(m-1)}^+[j_m-m]}}\circ\overrightarrow{\mu_{\gamma_{(m-1)}^+[j_m-(m-1)]}})\circ \cdots \circ (\overleftarrow{\mu_{\gamma_{(1)}^+[j_2-2]}}\circ\overrightarrow{\mu_{\gamma_{(1)}^+[j_2-1]}})\circ (\overleftarrow{\mu_{\underline{w}[j_1-1]}}\circ\overrightarrow{\mu_{\underline{w}[j_1]}}).
		\end{equation*}
	\end{defn}
	
	\begin{eg}
		Continuing Example \ref{eg:(2,1,3,-2,2,-1)}, let $\underline{w} = (2,1,3,2,2,1)$, so that $\seed(\underline{w})$ for $\cO^{\underline{w}}$ is:
		\[\begin{tikzcd}[sep=tiny]
			& {\boxed{z_2}} &&&&&& \\
			{z_1} &&& {z_1z_4-z_2z_3} &&& {\boxed{(z_1z_4-z_2z_3)z_5-z_1}} \\
			&& {z_3} &&&&& {\boxed{z_3z_6-z_4z_5+1}}
			\arrow[dashed, from=1-2, to=2-7]
			\arrow[from=2-1, to=1-2]
			\arrow[from=2-1, to=3-3]
			\arrow[from=2-4, to=2-1]
			\arrow[from=2-7, to=2-4]
			\arrow[dashed, from=2-7, to=3-8]
			\arrow[from=3-3, to=2-7]
			\arrow[from=3-8, to=3-3]
		\end{tikzcd}\]
		The mutation sequence $\overrightarrow{\mu_{\gamma}}(\seed(\underline{w}))$ gives the seed
		\begin{align*}
			\overrightarrow{\mu_\gamma}(\seed(\underline{w})) &= (\overleftarrow{\mu_{\gamma_{(1)}^+[5]}}\circ\overrightarrow{\mu_{\gamma_{(1)}^+[5]}})\circ (\overleftarrow{\mu_{\underline{w}[3]}}\circ\overrightarrow{\mu_{\underline{w}[4]}})(\seed(\underline{w})),\\
			&=\bigl(\mu_4\circ \mu_3\circ \mu_4\bigr) \circ \bigl(\mathrm{id}\circ\mu_1\bigr)(\seed(\underline{w})),
		\end{align*}
		which looks like:
		\[\begin{tikzcd}[sep=tiny]
			& {\boxed{z_2}} &&&&&& \\
			{{z_4}} &&& {(z_1z_4-z_2z_3)z_6-z_2} &&& {\boxed{(z_1z_4-z_2z_3)z_5-z_1}} \\
			&& {{z_6}} &&&&& {\boxed{z_3z_6-z_4z_5+1}}
			\arrow[from=1-2, to=2-1]
			\arrow[dashed, from=1-2, to=2-7]
			\arrow[from=1-2, to=3-3]
			\arrow[from=2-1, to=2-4]
			\arrow[from=2-4, to=1-2]
			\arrow[curve={height=6pt}, from=2-4, to=3-8]
			\arrow[from=2-7, to=2-4]
			\arrow[from=3-3, to=2-4]
			\arrow[curve={height=-12pt}, from=3-8, to=2-1]
			\arrow[dashed, from=3-8, to=2-7]
		\end{tikzcd}\]
		Freezing the variables $z_4$ and $z_6$, will give a seed quasi-equivalent to the seed $\seed(\gamma)$ obtained in Example \ref{eg:(2,1,3,-2,2,-1)}, where we only need to rescale by the (now frozen) poles $z_4$, $z_6$ of the transition function $\cO^{\underline{w}}\dashrightarrow\cO^\gamma$.
	\end{eg}

	\begin{thm}
		The seed $\overrightarrow{\mu_{\gamma}} (\seed(\underline{w}))$ is an iterated pullback and defrosting of the seed $\seed(\gamma^+)$ under the Poisson rational maps
		$$\cO^{\underline{w}} \dashrightarrow \cO^{\gamma_{(1)}^+}\dashrightarrow \cO^{\gamma_{(2)}^+} \dashrightarrow\cdots \dashrightarrow \cO^\gamma \to \cO^{\gamma^+}.$$
		Furthermore, we have rational quasi-cluster morphisms
		\[\begin{tikzcd}
			{\left(\cO^{\underline{w}}, \overrightarrow{\mu_{\gamma}} (\seed(\underline{w}))\right) } && {\cO^{\gamma}} & {\cO^{\gamma^+} \times \mathbb{A}^m} \\
			\\
			&& {(\cO^{\gamma^+},\seed(\gamma^+)).}
			\arrow[dashed, tail reversed, from=1-1, to=1-3]
			\arrow["{{\bigl(J(\gamma^+) { , } S_{\gamma}\bigr)}}"', dashed, from=1-1, to=3-3]
			\arrow["\cong", from=1-3, to=1-4]
			\arrow["\rho", from=1-3, to=3-3]
			\arrow["{{\rho_1}}", from=1-4, to=3-3]
		\end{tikzcd}\]
		arising from the embeddings of seeds $\seed(\gamma^+) \hookrightarrow \seed(\gamma) \rightarrow \overline{\mu_{\gamma}} (\seed(\underline{w}))$.
	\end{thm}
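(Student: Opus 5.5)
We argue by induction on the number $m$ of unsupported letters of $\gamma$, using the factorisation $\cO^{\underline{w}} \dashrightarrow \cO^{\gamma_{(1)}}\dashrightarrow \cdots \dashrightarrow \cO^{\gamma_{(m)}}=\cO^\gamma$ of Definition \ref{defn:rhomutationsequence}. For $m=1$ the statement is exactly Corollary \ref{prop:pullbackcompatiblewithchangeofcoords2} applied with $j=j_1$. There, $\overrightarrow{\mu_{\gamma_{(1)}}}(\seed(\underline{w}))$, after freezing the vertex carrying $z_{j_1}$, is quasi-equivalent to $\seed(\gamma_{(1)})=\rho^*\seed(\gamma_{(1)}^+)\sqcup\{\boxed{z'_{j_1}}\}$. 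Reading this quasi-equivalence backwards exhibits $\overrightarrow{\mu_{\gamma_{(1)}}}(\seed(\underline{w}))$ as the pullback of $\seed(\gamma^+_{(1)})$ with one additional vertex defrosted, namely the one carrying the pole $z_{j_1}$ of the transition map.

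For the inductive step, write $\gamma_{(l)}$ as obtained from $\gamma_{(l-1)}$ by negating position $j_l$. Under $\rho_{(l-1)}:\cO^{\gamma_{(l-1)}}\to\cO^{\gamma_{(l-1)}^+}$, the letter $j_l$ sits at position $j_l-(l-1)$ of the fully supported word $\gamma_{(l-1)}^+$. Note that $\rho$ only shifts coordinates to the left of the unsupported letters, so it is compatible with the next adjacent-chart change. The diagram in Corollary \ref{prop:pullbackcompatiblewithchangeofcoords2} for the word $\gamma_{(l-1)}^+$ and the position $j_l-(l-1)$ then gives a quasi-equivalence, up to freezing the new pole, between $(\overleftarrow{\mu}\circ\overrightarrow{\mu})(\seed(\gamma^+_{(l-1)}))$ and $\rho^*\seed(\gamma^+_{(l)})\sqcup\{\boxed{z'}\}$.

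Next we pull this back along $\rho_{(l-1)}$. Since $\seed(\gamma_{(l-1)})$ is $\rho_{(l-1)}^*\seed(\gamma_{(l-1)}^+)$ together with isolated frozens, the same mutation sequence applies verbatim on the pullback. Isolated frozens do not change exchange ratios, which is the argument of the Corollary following Theorem \ref{thm:BSchartshaveclusterstructure}. By induction, $\seed(\gamma_{(l-1)})$ is quasi-equivalent to a freezing of $\overrightarrow{\mu_{\gamma_{(l-1)}}}(\seed(\underline{w}))$. Lemma~2.2 (mutations commute with quasi-equivalence) lets us transport the new mutation block $\overleftarrow{\mu}\circ\overrightarrow{\mu}$ across this quasi-equivalence. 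Composing gives the recursive definition of $\overrightarrow{\mu_{\gamma_{(l)}}}$ and expresses the result as an iterated pullback and defrosting. Each map in the chain is Poisson, by Proposition \ref{prop:mapstosupportedsubword} and the fact that the adjacent-chart changes are coordinate changes in $Z_{\underline{w}}$.

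For the rational quasi-cluster diagram, take $J_{\mathrm{sup}}=J(\gamma^+)$ and $S_\gamma$ to be the $m$ vertices carrying the poles $z_{j_1},\dots,z_{j_m}$. Condition (1) follows from composing the monomial rescalings of Lemma \ref{lem:pullbackvars2} at each stage. Each rescaling is a Laurent monomial in a new pole, and that pole stays frozen in all later stages, so the monomials only accumulate. Condition (2), preservation of exchange ratios, is transitive under composition, and is established at each step by the degree computations $D_k$ in Proposition \ref{prop:pullbackseed2}. The embedding of seeds is the composite of the index shifts $(i,a)\mapsto(i,a+1)$ on level $i_{j_l}$ described after each Corollary.

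The main obstacle is the bookkeeping needed to check that the mutation block from stage $l$, which is defined on the indexing of $\seed(\gamma^+_{(l-1)})$, acts on the correct vertices of $\overrightarrow{\mu_{\gamma_{(l-1)}}}(\seed(\underline{w}))$ after the earlier shifts. We must also check that it never mutates at a previously frozen pole vertex. I would handle this by showing that the composite vertex embedding avoids the vertices $(i_{j_{l'}},1)$, $l'<l$, which are frozen. I would then verify that mutations supported away from isolated frozens commute with the earlier quasi-equivalences.
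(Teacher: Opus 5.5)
Your proposal is correct and takes the same route as the paper. The paper's proof is just the iterated application of Proposition \ref{prop:pullbackseed2} and Corollary \ref{prop:pullbackcompatiblewithchangeofcoords2} along the chain of adjacent charts in Definition \ref{defn:rhomutationsequence}; your induction on $m$ spells this out, including the compatibility of $\rho_{(l-1)}$ with the next chart change and the transport of each mutation block across the earlier quasi-equivalence. The bookkeeping you flag as the main obstacle is automatic: each stage's shift $(i,a)\mapsto(i,a+1)$ on level $i_{j_l}$ skips that stage's pole, so in the indexing of $\seed(\underline{w})$ the poles on level $i$ are $(i,1),\dots,(i,c_i)$, where $c_i$ is the number of letters $-i$ in $\gamma$. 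These are exactly the vertices M\'enard's sequence skips.
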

	\begin{proof}
		This first part from the iterative definition of the sequence $\overrightarrow{\mu_{\gamma}}$ and the application of Proposition \ref{prop:pullbackseed2} to each composition factor.
		The second part is the iterative application of Corollary \ref{prop:pullbackcompatiblewithchangeofcoords2}.
	\end{proof}
		
	\begin{rem}
		The construction of the maps $\rho:\cO^\gamma \to \cO^{\gamma^+}$ presented in this paper relied on the existence of a longest element $w_0$ and therefore does not extend to the Kac--Moody setting of \citep{BY25}. We expect a Poisson isomorphism $\cO^\gamma\times \mathbb{A}^m$ to still exist, just not with this construction. The quasi-equivalence arguments in this section just use data from the Cartan matrix, and we expect this to carry though to the Kac--Moody setting.
	\end{rem}
	
	\begin{rem}
		Note different orderings of the unsupported letters of $\gamma$ give different compositions of maps between of adjacent charts. These give the same rational map after the composition, but the component maps may have different poles.
		The pullback of $\seed(\gamma^+)$ to $\cO^{\underline{w}}$, therefore depends on the order in which we negate the letters.
		This is also reflected in how the mutation sequences for the adjacent charts need not necessarily commute.
	\end{rem}
	\begin{eg}
		Consider $\underline{w}=(1,1,1)$ and $\gamma=(1,-1,-1)$.
		The rational map $\cO^{(\underline{w})}\dasharrow\cO^{\gamma} \to \cO^{(1)}$ is given by
		$$\Phi_{\underline{w}}(z_1,z_2,z_3) \mapsto \Phi_{(1)}\left(\frac{z_1z_2z_3-z_1z_3}{z_2z_3-1}\right).$$
		In particular, the rational map only has one irreducible component of the denominators, and the regular pullback procedure from \citep{GSV26a,GSV26b} would only add one additional vertex. Choosing a path of adjacent charts adds to the indeterminacy locus, and allows us to use the previous results to construct seeds for $\underline{w}=(1,1,1)$ from those of $\cO^{\gamma}$ and $\cO^{(1)}$.
		Note however, different orderings of adjacent charts may yield different seeds.
		The seed $\seed(\underline{w})$ is:
		\[\begin{tikzcd}
			{z_1} & {z_1z_2-1} & {\boxed{z_1z_2z_3-z_1-z_3}}
			\arrow[from=1-2, to=1-1]
			\arrow[from=1-3, to=1-2]
		\end{tikzcd}\]
		Let $\gamma' = (1,-1,1)$, applying the sequence $\overrightarrow{\mu_{\gamma'}} := \overleftarrow{\mu_{\underline{w}[1]}}\circ\overrightarrow{\mu_{\underline{w}[2]}} = \mathrm{id} \circ\mu_1$ to $\seed(\underline{w})$ yields
		\[\begin{tikzcd}
			{z_2} & {z_1z_2-1} & {\boxed{z_1z_2z_3-z_1-z_3}}
			\arrow[from=1-1, to=1-2]
			\arrow[from=1-3, to=1-2]
		\end{tikzcd}\]
		whereas for $\gamma''=(1,1,-1)$, the sequence $\overrightarrow{\mu_{\gamma''}} := \overleftarrow{\mu_{\underline{w}[2]}}\circ\overrightarrow{\mu_{\underline{w}}} = \mu_2\circ \mu_1 \circ(\mu_2\circ\mu_1)$ gives
		\[\begin{tikzcd}
			{z_3} & {z_1} & {\boxed{z_1z_2z_3-z_1-z_3}}
			\arrow[curve={height=30pt}, from=1-1, to=1-3]
			\arrow[from=1-2, to=1-1]
			\arrow[from=1-3, to=1-2]
		\end{tikzcd}\]
		Composing the sequences with $\overleftarrow{\mu_{(\gamma)^+[1]}}\circ\overrightarrow{\mu_{(\gamma')^+}} =  \overleftarrow{\mu_{(\gamma)^+[1]}}\circ\overrightarrow{\mu_{(\gamma'')^+}}=\mathrm{id} \circ \mu_2$, gives the seeds
		\[\begin{tikzcd}
			{z_2} & {z_2z_3-1} & {\boxed{z_1z_2z_3-z_1-z_3}}
			\arrow[from=1-2, to=1-1]
			\arrow[from=1-2, to=1-3]
		\end{tikzcd}\]
		in the first case, and
		\[\begin{tikzcd}
			{z_3} & {z_2z_3-1} & {\boxed{z_1z_2z_3-z_1-z_3}}
			\arrow[from=1-1, to=1-2]
			\arrow[from=1-2, to=1-3]
		\end{tikzcd}\]
		in the second. The next subsection illustrates how this left-to-right choice is already present in the literature, but it might be interesting to study seeds arising from other orderings.
	\end{eg}

	\subsection{Comparison with M\'enard's mutation sequence}
	In \citep{M22}, M\'enard constructed a specific of mutations, freezings, and deletions which takes $\seed(\underline{w})$ to a seed for open Richardson varieties. Bao and Ye \citep{BY25} generalised this algorithm for Kac--Moody open Richardson varieties $\mathring{Z}_{\underline{w}}^v$ in the twisted product of flags. Here, we show the relationship between the sequence $\overrightarrow{\mu_{\gamma}}$ defined in the previous section and M\'enard's  mutation sequence $M(\gamma)$.
	\begin{defn}[\protect{Adapted from \citep[Definition 6.1]{M22} and \citep[Definition 5.1]{BY25}}]
		As before, say the letter $i$ appears $n_i$ times in $\underline{w}$ and for $1\leq j\leq \ell$ let $i_j=i$ be the $a_j$-th appearance of $i=i_j$ in $\underline{w}$. Further define $b_j = |\{ k < j\mid \gamma_k = -i\}|$ to be the number of instances of $i$ before position $j$ that are \emph{not} supported the subexpression.
		
		Define $\widetilde{\widetilde{\mu}}_j = \widetilde{\widetilde{\mu}}_j(\gamma)$ to be the mutation sequence
		\begin{equation}
			\widetilde{\widetilde{\mu}}_j = \begin{cases}
				\mu_{(i,n_i-(a_j-b_j))}\circ\cdots \circ \mu_{(i,b_j+1)} & \text{ if } a_l<n_i \text{ and } \gamma_j=i;\\
				\mathrm{id} & \text{ if } \gamma_j=-i;\\
				\mathrm{id} & \text{ if } a_j=n_i.
			\end{cases}
		\end{equation}
		Finally, we define $M(\gamma) = \widetilde{\widetilde{\mu}}_{\ell} \circ \widetilde{\widetilde{\mu}}_{\ell-1}\circ \cdots \circ \widetilde{\widetilde{\mu}}_2\circ \widetilde{\widetilde{\mu}}_1$.
	\end{defn}
	As described in \citep{M22}, this sequence mutates in ascending order on each level for each supported letter, but skips some initial mutations for each unsupported letter and a number of final mutations dependent on how many times we have already mutated along level.
	\begin{rem}
		The results for open Richardson varieties in \citep{M22,BY25} require $\gamma$ to be the left-most subexpression of some $\gamma^+ =v \leq \delta(\underline{w})$, however the mutation sequence itself can be constructed for any subexpression $\gamma \in \Upsilon_{\underline{w}}$.
	\end{rem}
	\begin{lem}\label{lem:Menardfactoring}
		Let $\gamma = (\gamma',\gamma_{\ell})$ be a subexpression of $\underline{w}=(\underline{w}',i)$, and $b$ be the number of instances of $-i$ in $\gamma'$.
		Then
		pushing forward M\'enard's sequence $M(\gamma')$ under the inclusion of seeds $\seed(\underline{w}') \hookrightarrow \seed(\underline{w})$,
		we have
		\begin{equation}\label{eqn:Menardfactoring}
			M(\gamma)(\seed(\underline{w})) = \left(\mu_{(i,b+1)}\circ\dots\circ\mu_{(i,n_i-1)}\right)\circ  M(\gamma')(\seed(\underline{w})).
		\end{equation}
		
	\end{lem}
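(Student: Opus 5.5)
The plan is to compare the two mutation sequences factor by factor from their definitions. The word $\underline{w}=(\underline{w}',i)$ has one more letter than $\underline{w}'$, and that letter is $i$ in position $\ell$. The counts change only on level $i$: we have $n_i(\underline{w})=n_i(\underline{w}')+1$, while $n_{i'}$ is unchanged for $i'\neq i$. For $j<\ell$ the numbers $a_j$ and $b_j$ are the same whether computed in $\underline{w}$ or $\underline{w}'$, and likewise for $\gamma$ or $\gamma'$, since both only look at positions at most $j$.

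\textbf{Factors with $j<\ell$.} For $i_j\neq i$, the factor $\widetilde{\widetilde{\mu}}_j(\gamma)$ coincides with $\widetilde{\widetilde{\mu}}_j(\gamma')$ pushed forward under $\seed(\underline{w}')\hookrightarrow\seed(\underline{w})$. For $i_j=i$ with $\gamma_j=i$, the factor becomes
\[
\mu_{(i,n_i-(a_j-b_j))}\circ\cdots\circ\mu_{(i,b_j+1)},
\]
which is longer by exactly one mutation than the corresponding factor for $\underline{w}'$: the extra final mutation is at $(i,n_i-(a_j-b_j))$. A small point needs checking here. The last occurrence of $i$ in $\underline{w}'$ (where $a_j=n_i-1$) was an identity factor for $\underline{w}'$. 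For $\underline{w}$ it is mutable, so if it is supported it now contributes the single mutation $\mu_{(i,b_j+1)}$.

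\textbf{The final factor.} For $j=\ell$ we have $a_\ell=n_i$, so $\widetilde{\widetilde{\mu}}_\ell=\mathrm{id}$. Hence $M(\gamma)$ is the interleaving of $M(\gamma')$ with the extra mutations on level $i$, one for each supported occurrence of $i$ in $\underline{w}'$. There are $n_i-1-b$ such occurrences. Reading them in order of $j$, the $t$-th supported occurrence has $a_j-b_j=t$, so its extra mutation is at $(i,n_i-t)$. The extra mutations are therefore at $(i,n_i-1),(i,n_i-2),\dots,(i,b+1)$, applied in that order, which is exactly the composite $\mu_{(i,b+1)}\circ\cdots\circ\mu_{(i,n_i-1)}$ in \eqref{eqn:Menardfactoring}, applied last.

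\textbf{Main obstacle: moving the extra mutations to the end.} We must show that each extra mutation $\mu_{(i,n_i-t)}$ commutes past all later mutations in $M(\gamma')$, so that it can be postponed to the end. The key observation is that vertex $(i,n_i-t)$ is not mutated again by $M(\gamma')$: later level-$i$ factors of $M(\gamma')$ stop at strictly smaller indices $n_i-1-(t'+\cdot)$. But mutations of non-adjacent vertices commute, and never-again-mutated is not the same as non-adjacent, so this alone is not enough.

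To handle this, I would instead argue on seeds directly, in the manner of Lemma \ref{lem:reversingmutationsequences}, by induction on the number of factors. The approach is to track the current quiver on level $i$ and its neighbours and verify that the vertex $(i,n_i-t)$, once passed, stays adjacent only to the level-$i$ vertex $(i,n_i-t-1)$ and to frozen or later vertices, and that these are unaffected in the relevant way. Alternatively, one can use the interpretation of the prefix sequences via \citep[Proposition 3.25]{SW21} and \citep[Proposition 4.15]{BY25}: after processing a prefix, the seed is $\seed$ of a reordered signed word, and the final tail vertex on level $i$ sits at the right end, so it decouples from the remaining mutations on the other levels.

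I expect this commutation and decoupling check to be the main difficulty. I would first attempt it through the reordered-word description of the intermediate seeds, since it gives explicit quivers via \eqref{eqn:exchangematrixofW}.
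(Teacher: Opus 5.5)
Your reduction is correct and matches the paper's. Comparing the factors of $M(\gamma)$ and $M(\gamma')$ one by one, $M(\gamma)$ is $M(\gamma')$ with one extra mutation at $(i,n_i-t)$ appended to the factor of the $t$-th supported occurrence $k_t$ of $i$. So \eqref{eqn:Menardfactoring} amounts to moving these extra mutations to the end.

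\textbf{The gap.} That commutation is the only non-formal step, and you leave it open. The paper closes it with a structural property of M\'enard's sequence, taken from \citep[\S5]{BY25}: once $\widetilde{\widetilde{\mu}}'_{k_t}\circ\cdots\circ\widetilde{\widetilde{\mu}}'_1$ has been applied to $\seed(\underline{w})$, no \emph{neighbour} of $(i,n_i-t)$ is mutated in the rest of $M(\gamma')$. This is exactly the criterion you need:
\begin{itemize}
\item A mutation at a non-neighbour of $u$ leaves the row and column of $u$ unchanged. So the star of $u$ stays fixed, and $\mu_u$ commutes past every later mutation.
\item The earlier extra mutations $\mu_{(i,n_i-t')}$, $t'<t$, only alter edges among $(i,n_i-t')$ and its neighbours, none of which is mutated again in $M(\gamma')$. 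So the condition can be checked on the seeds of $M(\gamma')$ alone.
\end{itemize}

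\textbf{Your two sketched routes do not deliver this as stated.}

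First, the neighbourhood of the extra vertex need not have the shape you guess. Take $\underline{w}=(1,2,1,2,1,2)$, $i=2$, $\gamma'=(1,2,1,2,1)$. After $\mu_{(2,1)}\circ\mu_{(1,2)}\circ\mu_{(1,1)}$, the vertex $(2,2)$ is adjacent to the mutable vertex $(1,2)$. The commutation works only because $(1,2)$ is never mutated again.

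Second, the reordered-word description via \citep[Proposition 3.25]{SW21} covers only fully supported prefixes. When $\gamma'$ has unsupported letters, the level factors start at $(i,b_j+1)$, and the intermediate seeds need not be amalgamated seeds at all. For $\underline{w}=(1,1,1,1)$ and $\gamma'=(-1,1,1)$, $M(\gamma')=\mu_{(1,2)}$ creates an oriented triangle on $(1,1),(1,2),(1,3)$. In any amalgamated seed, two vertices on the same level are joined only if they are consecutive, so no such triangle can occur.

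To finish, you need either the explicit description of M\'enard's intermediate seeds from \citep[\S5]{BY25}, or your own induction. That induction would have to track, through $M(\gamma')$ with its skipped initial mutations, which vertices neighbour the level-$i$ vertices $(i,n_i-t)$.
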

	\begin{proof} 
		Consider the decompositions $M(\gamma)= \widetilde{\widetilde{\mu}}_{\ell}\circ \cdots \circ \widetilde{\widetilde{\mu}}_2\circ \widetilde{\widetilde{\mu}}_1$ and $M(\gamma')= \widetilde{\widetilde{\mu}}_{\ell-1}'\circ \cdots \circ \widetilde{\widetilde{\mu}}_2'\circ \widetilde{\widetilde{\mu}}_1'$. Note that $\widetilde{\widetilde{\mu}}_\ell = \mathrm{id}$ and for $j<\ell$, we have
		\begin{equation*}
			\widetilde{\widetilde{\mu}}_j = \begin{cases}
				\widetilde{\widetilde{\mu}}_j' & \text{if } i_j \neq i,\\
				\mu_{(i,n_i-(a_j-b_j))}\circ \widetilde{\widetilde{\mu}}_j' & \text{if } \gamma_j = i \neq -i.
			\end{cases}
		\end{equation*}		
		Let $k_1<\dots<k_{n_i-b-1}= \ell$ denote the positions in $\gamma'$ such that $\gamma_{k_l} = i=i_\ell$. It suffices to argue that we can apply commutations to bring the $n_i-b-1$ mutations from the first factor in (\ref{eqn:Menardfactoring}) adjacent to its respective instance of $\widetilde{\widetilde{\mu}}_{k_l}'$.

		From the description of M\'enard's algorithm for open Richardson seeds in \citep[\S5]{BY25}, after the application of $\widetilde{\widetilde{\mu}}_{k_l}'\circ \cdots \circ \widetilde{\widetilde{\mu}}_2'\circ \widetilde{\widetilde{\mu}}_1'$ to $\seed(\underline{w})$, we never again mutate at any vertex adjacent to $(i,n_i-k_l+1)$ in the rest of the sequence $M(\gamma')$.
		This means the star neighbourhood of $(i,n_i-k_l+1)$ is preserved, and we can iteratively apply commutations to get the equalities
		\begin{align*}
			\bigl(\mu_{(i,b+1)}&\circ\dots\circ\mu_{(i,n_i-2)}\circ\mu_{(i,n_i-1)}\bigr)\circ M(\gamma')(\seed(\underline{w})),\\
			&=\left(\mu_{(i,b+1)}\circ\dots\circ\mu_{(i,n_i-2)}\circ\mu_{(i,n_i-1)}\right)\circ \widetilde{\widetilde{\mu}}_{\ell-1}'\circ \cdots \circ \widetilde{\widetilde{\mu}}_2'\circ \widetilde{\widetilde{\mu}}_1'(\seed(\underline{w})),\\
			&=\left(\mu_{(i,b+1)}\circ\dots\circ\mu_{(i,n_i-2)}\right)\circ \widetilde{\widetilde{\mu}}_{\ell-1}'\circ \cdots \circ \widetilde{\widetilde{\mu}}_{k_1+1}'\circ\widetilde{\widetilde{\mu}}_{k_1}\circ \cdots \circ\widetilde{\widetilde{\mu}}_1(\seed(\underline{w})),\\
			&=\left(\mu_{(i,b+1)}\circ\dots\circ\mu_{(i,n_i-3)}\right)\circ \widetilde{\widetilde{\mu}}_{\ell-1}'\circ \cdots \circ \widetilde{\widetilde{\mu}}_{k_2+1}'\circ\widetilde{\widetilde{\mu}}_{k_2}\circ \cdots \circ\widetilde{\widetilde{\mu}}_1(\seed(\underline{w})),\\
			& \;\,\vdots\\
			&=\widetilde{\widetilde{\mu}}_{\ell-1}\circ \cdots \circ\widetilde{\widetilde{\mu}}_{k_1}\circ \cdots \circ\widetilde{\widetilde{\mu}}_1(\seed(\underline{w})),\\
			&=\widetilde{\widetilde{\mu}}_{\ell}\circ \widetilde{\widetilde{\mu}}_{\ell-1}\circ \cdots \circ\widetilde{\widetilde{\mu}}_{k_1}\circ \cdots \circ\widetilde{\widetilde{\mu}}_1(\seed(\underline{w})),\\
			&=M(\gamma)(\seed(\underline{w})). \qedhere
		\end{align*}		
	\end{proof}
	Lemma \ref{lem:Menardfactoring} implies that
	M\'enard's sequence $M(\gamma)$ applied to $\seed(\underline{w})$ yields the same seed as the application of the sequence $M'(\gamma)$ defined by:
	\begin{equation}
		\widehat{\widehat{\mu}}_j = \begin{cases}
			\mu_{(i,b_j+1)}\circ\cdots \circ \mu_{(i,a_j-1)} & \text{ if } 1<a_j;\\
			\mathrm{id} & \text{ if } a_j=1,
		\end{cases}
	\end{equation}
	and $M'(\gamma) = \widehat{\widehat{\mu}}_\ell \circ \widehat{\widehat{\mu}}_{\ell-1}\circ \cdots \circ \widehat{\widehat{\mu}}_1$.

	\begin{prop}\label{prop:MutSequenceAgrees}
		By cancelling repeated mutations and commuting mutations of non-adjacent indices, we have that  $M(\gamma)$ and $\overrightarrow{\mu_{\gamma}}$ are satisfy		
		\begin{equation}\label{eqn:mutationsequencerelated}
			\overleftarrow{\mu_{\gamma^+}}\circ M(\gamma) (\seed(\underline{w})) =\overrightarrow{\mu_\gamma} (\seed(\underline{w}))
		.
		\end{equation}
	\end{prop}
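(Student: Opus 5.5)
We argue by induction on the number $m$ of unsupported letters of $\gamma$, ordered $j_1<\dots<j_m$ as in Definition \ref{defn:rhomutationsequence}, and compare the two sequences factor by factor. Throughout, the only manipulations allowed are the ones named in the statement: cancelling $\mu_k\circ\mu_k$, and commuting $\mu_k,\mu_{k'}$ when $k,k'$ are non-adjacent in the current quiver. Rather than $M(\gamma)$ itself, we will work with the ``reversed'' form $M'(\gamma)=\widehat{\widehat{\mu}}_\ell\circ\cdots\circ\widehat{\widehat{\mu}}_1$ obtained from Lemma \ref{lem:Menardfactoring}. Its advantage is that it has the same prefix structure as $\overrightarrow{\mu'_{\underline{w}}}$ in (\ref{eqn:prefixmutsequence}).

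The base case $m=0$ reads $\gamma=\underline{w}=\gamma^+$. Here $\widehat{\widehat{\mu}}_j=\widehat{\mu}_j$, since all $b_j=0$, and so $M'(\underline{w})=\overrightarrow{\mu'_{\underline{w}}}$. By Lemma \ref{lem:reversingmutationsequences}, the seed $M(\underline{w})(\seed(\underline{w}))$ equals $\overrightarrow{\mu_{\underline{w}}}(\seed(\underline{w}))$. Applying $\overleftarrow{\mu_{\underline{w}}}$ then returns $\seed(\underline{w})=\overrightarrow{\mu_{\underline{w}}}(\seed(\underline{w}))$ for the empty set of negated letters, which is (\ref{eqn:mutationsequencerelated}).

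For the inductive step, we first treat a single negated letter, $\gamma=\underline{w}(j)$, with $i=i_j$ the $a$-th occurrence of $i$. In $M'(\gamma)$, the factors $\widehat{\widehat{\mu}}_k$ for $k<j$ coincide with $\widehat{\mu}_k$, and $\widehat{\widehat{\mu}}_j=\mathrm{id}$. For $k>j$ with $i_k=i$, the factor $\widehat{\widehat{\mu}}_k$ is $\widehat{\mu}_k$ with its final mutation $\mu_{(i,1)}$ removed; all other factors are unchanged. Using (\ref{eqn:prefixmutsequence}), the product $\widehat{\mu}_{j}\circ\cdots\circ\widehat{\mu}_1$ is $\overrightarrow{\mu'_{\underline{w}[j]}}$, which realises $\overrightarrow{\mu_{\underline{w}[j]}}(\seed(\underline{w}))$. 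The factor $\overleftarrow{\mu_{\underline{w}[j-1]}}$ in $\overrightarrow{\mu_\gamma}$ should then be matched, after the reindexing $(i,m)\mapsto(i,m+1)$ noted after Corollary \ref{prop:pullbackcompatiblewithchangeofcoords2}, against the tail $\widehat{\widehat{\mu}}_\ell\circ\cdots\circ\widehat{\widehat{\mu}}_{j+1}$ together with the final $\overleftarrow{\mu_{\gamma^+}}$.

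Concretely, $M'(\gamma)$ restricted to the positions of $\gamma^+$ is exactly $\overrightarrow{\mu'_{\gamma^+}}$ on the shifted vertex set $J\setminus\{(i,1)\}$. This is where freezing $(i,1)$, the vertex carrying $z_j$, becomes visible. The claim then becomes
\[
\overleftarrow{\mu_{\gamma^+}}\circ\overrightarrow{\mu'_{\gamma^+}}\circ\overrightarrow{\mu_{\underline{w}[j]}}\,(\seed(\underline{w})) = \overleftarrow{\mu_{\underline{w}[j-1]}}\circ\overrightarrow{\mu_{\underline{w}[j]}}\,(\seed(\underline{w})).
\]
We intend to prove this by writing $\overleftarrow{\mu_{\gamma^+}}\circ\overrightarrow{\mu'_{\gamma^+}}$ as a composite in which the mutations at positions after $j-1$ cancel in pairs. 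That cancellation should follow from Lemma \ref{lem:reversingmutationsequences} applied to the prefix $\gamma^+[j-1]=\underline{w}[j-1]$ and the suffix.

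For general $m$, we peel off the last unsupported letter $j_m$ using Lemma \ref{lem:Menardfactoring}, which isolates the contribution of the final letter, and apply the induction hypothesis to $\gamma_{(m-1)}$ inside $\gamma_{(m-1)}^+$. The new factor $\overleftarrow{\mu_{\gamma_{(m-1)}^+[j_m-m]}}\circ\overrightarrow{\mu_{\gamma_{(m-1)}^+[j_m-(m-1)]}}$ is then handled by the single-letter case above, applied to the word $\gamma^+_{(m-1)}$.

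The main obstacle is justifying the commutations. One must check that each mutation moved past another is at a vertex non-adjacent in the current intermediate quiver, not merely in $\seed(\underline{w})$. For this we would use the explicit intermediate seeds $\seed(\overline{i_k},\dots,\overline{i_1},i_{k+1},\dots,i_\ell)$ and their interlacing description (\ref{eqn:exchangematrixofW}). We would also invoke the star-neighbourhood preservation argument already used in the proof of Lemma \ref{lem:Menardfactoring}. Keeping the index shift on level $i$ consistent through iterated negations is the delicate bookkeeping step.
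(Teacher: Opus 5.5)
There is a genuine gap. It starts with your identification of $M'(\gamma)$ in the single-letter case $\gamma=\underline{w}(j)$.

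Lemma \ref{lem:Menardfactoring} appends $\mu_{(i,b+1)}\circ\cdots\circ\mu_{(i,n_i-1)}$ whatever the sign of the new letter. Hence $\widehat{\widehat{\mu}}_j=\mu_{(i,1)}\circ\cdots\circ\mu_{(i,a-1)}=\widehat{\mu}_j$, not $\mathrm{id}$; your next sentence in fact uses $\widehat{\mu}_j$.

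More seriously, once the prefix $\overrightarrow{\mu'_{\underline{w}[j]}}$ has been applied, what remains of $M'(\gamma)$ is only $\widehat{\widehat{\mu}}_\ell\circ\cdots\circ\widehat{\widehat{\mu}}_{j+1}$. This agrees with the shifted $\overrightarrow{\mu'_{\gamma^+}}$ only at positions after $j$. For $k<j$ with $i_k=i$, the shifted $\gamma^+$-factor mutates at $(i,2),\dots,(i,a_k)$, whereas $\widehat{\widehat{\mu}}_k$ mutates at $(i,1),\dots,(i,a_k-1)$. Your displayed reformulation therefore contains a spurious shifted copy of $\overrightarrow{\mu'_{\underline{w}[j-1]}}$, and it is false.

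For a counterexample, take $\underline{w}=(1,1,1)$ and $\gamma=(1,1,-1)$. Then $M(\gamma)=\mu_1\circ\mu_2\circ\mu_1=\overrightarrow{\mu_{\underline{w}}}$, and after the shift $\overleftarrow{\mu_{\gamma^+}}=\overrightarrow{\mu'_{\gamma^+}}=\mu_2$. Your left-hand side is $\overrightarrow{\mu_{\underline{w}}}(\seed(\underline{w}))$, with cluster $(z_3,z_2z_3-1)$. The right-hand side $\mu_2\circ\overrightarrow{\mu_{\underline{w}}}(\seed(\underline{w}))$ has cluster $(z_3,z_1)$. The correct single-letter identity has only the tail $\widehat{\widehat{\mu}}_\ell\circ\cdots\circ\widehat{\widehat{\mu}}_{j+1}$ in place of $\overrightarrow{\mu'_{\gamma^+}}$.

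Your passage to general $m$ also fails as stated. Lemma \ref{lem:Menardfactoring} removes the last letter of the \emph{word}, not the last unsupported letter. When $j_m<\ell$, passing from $\gamma_{(m-1)}$ to $\gamma$ replaces $\widetilde{\widetilde{\mu}}_{j_m}$ by $\mathrm{id}$ and shifts $b_k$ at every later occurrence of $i_{j_m}$. Nothing in the paper relates $M(\gamma)$ to $M(\gamma_{(m-1)})$, or to a M\'enard sequence for the different word $\gamma^+_{(m-1)}$.

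The paper avoids this by inducting on $\ell$ and removing the last letter whatever its sign. By Definition \ref{defn:rhomutationsequence}, $\overrightarrow{\mu_\gamma}=\overrightarrow{\mu_{\gamma'}}$ if $\gamma_\ell>0$, and $\overrightarrow{\mu_\gamma}=(\overleftarrow{\mu_{(\gamma')^+}}\circ\overrightarrow{\mu_{((\gamma')^+,i_\ell)}})\circ\overrightarrow{\mu_{\gamma'}}$ if $\gamma_\ell<0$. In both cases Lemma \ref{lem:Menardfactoring} gives $M(\gamma)=(\mu_{(i,b+1)}\circ\cdots\circ\mu_{(i,n_i-1)})\circ M(\gamma')$. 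Via (\ref{eqn:prefixmutsequence}) and Lemma \ref{lem:reversingmutationsequences}, the appended block is the last $\widehat{\mu}$-factor of $((\gamma')^+,i_\ell)$ shifted by $b$, i.e.\ $\overrightarrow{\mu_{((\gamma')^+,i_\ell)}}\circ\overleftarrow{\mu_{(\gamma')^+}}$.

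Your single-letter computation, once corrected, is one instance of the unsupported case of this argument. The induction should run over word length rather than over the number of negated letters.
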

	\begin{proof}
	We proceed by induction on the length of $\underline{w}$ and separately handle the two cases of whether the last letter is supported or unsupported in $\gamma$.
	When $\underline{w}= (i_1)$ is of length $1$, both sequences are identity, regardless of the choice of $\gamma$. Similarly, when all letters are supported $\gamma=\underline{w}$, then $M(\gamma) = \overrightarrow{\mu_{\underline{w}}}$ and $\overleftarrow{\mu_{\gamma^+}} = \overleftarrow{\mu_{\underline{w}}}$, and both seeds in \ref{eqn:mutationsequencerelated} are equal to $\seed(\underline{w})$.
	
	Now suppose inductively that we have $\overrightarrow{\mu_{\gamma'}} (\seed(\underline{w}')) = \overleftarrow{\mu_{(\gamma')^+}} \circ M(\gamma')(\seed(\underline{w}'))$ for some $\gamma'$ a subexpression of a word $\underline{w}'$ of length $\ell-1$. Let $\gamma = (\gamma',\gamma_{\ell})$ be one of two subexpressions of $\underline{w}=(\underline{w}',i_\ell)$.
	We handle the two cases of $\gamma_{\ell} = i_\ell$ and $\gamma_{\ell}=-i_\ell$ separately.
	
	In the first case, when $\gamma_{\ell+1} = i_\ell$, we have
	\begin{align*}
		\overrightarrow{\mu_{\gamma}} (\seed(\underline{w}))&= \overrightarrow{\mu_{\gamma'}},\\
		&= \overleftarrow{\mu_{(\gamma')^+}}\circ M(\gamma')(\seed(\underline{w})),\\
		&= \left(\overleftarrow{\mu_{\gamma^+}}\circ \overrightarrow{\mu_{\gamma^+}}\right)\circ\overleftarrow{\mu_{(\gamma')^+}}\circ M(\gamma')(\seed(\underline{w})),\\
		&= \overleftarrow{\mu_{\gamma^+}}\circ \overrightarrow{\mu_{\gamma^+}}\circ\overleftarrow{\mu_{\gamma^+[\ell]}}\circ M(\gamma')(\seed(\underline{w})),\\
		&= \overleftarrow{\mu_{\gamma^+}}\circ \left(\mu_{(i,b_{\ell}+1)}\circ\dots\circ\mu_{(i,n_i-1)}\right)\circ M(\gamma')(\seed(\underline{w})),\\
		&= \overleftarrow{\mu_{\gamma^+}}\circ M(\gamma)(\seed(\underline{w})),
	\end{align*}
	where the last line is an application of Lemma \ref{lem:Menardfactoring}.
	
	Similarly, when $\gamma_{\ell+1} = - i_\ell$ we have
	\begin{align*}
		\overrightarrow{\mu_{\gamma}} (\seed(\underline{w}))&= \left(\overleftarrow{\mu_{(\gamma')^+}} \circ \overrightarrow{\mu_{((\gamma')^+,i_\ell)}}\right)\circ \overrightarrow{\mu_{\gamma'}}(\seed(\underline{w})),\\
		&= \left(\overleftarrow{\mu_{(\gamma')^+}} \circ \overrightarrow{\mu_{((\gamma')^+,i_\ell)}}\right)\circ\left(\overleftarrow{\mu_{(\gamma')^+}}\circ M(\gamma')\right)(\seed(\underline{w})),\\
		&= \overleftarrow{\mu_{\gamma^+}}\circ \left(\mu_{(i,b_{\ell}+1)}\circ\dots\circ\mu_{(i,n_i-1)}\right)\circ M(\gamma')(\seed(\underline{w})),\\
		&= \overleftarrow{\mu_{\gamma^+}}\circ M(\gamma).\qedhere
	\end{align*}
	\end{proof}

	In \citep[\S5]{BY25}, M\'enard's algorithm is geometrically interpreted as iteratively viewing 
	$\mathring{Z}_{\underline{w}}^{v}$ as a closed subvariety of $\mathring{Z}_{\underline{w}}^{vs_i}$ for $vs_i<v$.	
	The coincidence of the sequence means our procedure gives another geometric interpretation for open Richardson seeds.
	\begin{cor}
		The seeds for the open Richardson varieties obtained by M\'enard's algorithm \citep{M22} are obtained by iterated pullbacks, quasi-equivalences, and defrostings of $\overrightarrow{\mu_{\gamma^+}}(\seed(\gamma^+))$ under the sequence of rational maps $$\cO^{\underline{w}} \dashrightarrow \cO^{\gamma_{(1)}^+}\dashrightarrow \cO^{\gamma_{(2)}^+} \dashrightarrow\cdots \dashrightarrow \cO^\gamma \to \cO^{\gamma^+},$$
		followed by the freezing and deletion of the remaining copy of $\overrightarrow{\mu_{\gamma^+}}(\seed(\gamma^+))$.
	\end{cor}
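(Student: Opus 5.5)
This corollary is obtained by combining Proposition \ref{prop:MutSequenceAgrees} with the preceding theorem, which describes $\overrightarrow{\mu_{\gamma}}(\seed(\underline{w}))$ as an iterated pullback. By Proposition \ref{prop:MutSequenceAgrees},
\begin{equation*}
M(\gamma)(\seed(\underline{w})) = \overrightarrow{\mu_{\gamma^+}}\circ \overrightarrow{\mu_\gamma}(\seed(\underline{w})),
\end{equation*}
since $\overrightarrow{\mu_{\gamma^+}}$ inverts $\overleftarrow{\mu_{\gamma^+}}$. Here we regard $\overrightarrow{\mu_{\gamma^+}}$ as acting on the copy of $\seed(\gamma^+)$ embedded in $\overrightarrow{\mu_{\gamma}}(\seed(\underline{w}))$, with the level-wise reindexing described after Proposition \ref{prop:pullbackcompatiblewithchangeofcoords}. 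So the mutation part of M\'enard's algorithm is the seed $\overrightarrow{\mu_\gamma}(\seed(\underline{w}))$ followed by the reversing sequence on the supported copy. I then take the freezing and deletion steps of M\'enard's algorithm, in the form described in \citep[\S5]{BY25}, as given.

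First, apply the theorem: $\overrightarrow{\mu_{\gamma}}(\seed(\underline{w}))$ is obtained, after freezing the poles $z_{j_1},\dots,z_{j_m}$, from $\seed(\gamma^+)$ by the chain of pullbacks along $\cO^{\underline{w}} \dashrightarrow \cO^{\gamma_{(1)}^+}\dashrightarrow \cdots \dashrightarrow \cO^\gamma \to \cO^{\gamma^+}$. Each stage consists of a quasi-equivalence (Proposition \ref{prop:pullbackseed2}) followed by a defrosting of the new isolated frozen vertex. Second, show that mutation at the vertices in the image of $J(\gamma^+)$ commutes with the quasi-equivalences and pullbacks. This holds by the lemma that quasi-equivalence commutes with mutation, together with the fact that pullback by $\rho$ is compatible with mutation because $\rho^*$ is a field embedding. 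Hence $\overrightarrow{\mu_{\gamma^+}}\circ\overrightarrow{\mu_{\gamma}}(\seed(\underline{w}))$ is the same iterated pullback applied to $\overrightarrow{\mu_{\gamma^+}}(\seed(\gamma^+))$.

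Third, compare this with the end of M\'enard's algorithm. By the remark preceding Proposition \ref{prop:pullbackseed}, the frozen vertices $(i,1),\dots,(i,b)$ on each level correspond exactly to the unsupported letters. The remaining vertices form the copy of $\overrightarrow{\mu_{\gamma^+}}(\seed(\gamma^+))$, and M\'enard freezes and deletes them. Since the Richardson seed lives on the other part, this gives the stated description.

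The main obstacle I expect is this last identification. We must check that the vertex set M\'enard freezes or deletes is exactly the image of $J(\gamma^+)$ under the reindexing $(i,m)\mapsto(i,m+b)$, and not the complementary vertices coming from unsupported letters. I would verify this level by level by tracking indices via $b_j$, using the factorisation of Lemma \ref{lem:Menardfactoring} and the description of $M'(\gamma)$ with its sequences $\widehat{\widehat{\mu}}_j$. In those sequences, the mutations at $(i,1),\dots,(i,b_j)$ are exactly the ones skipped. For this to work, $\gamma$ must be the leftmost subexpression for $v=\gamma^+$, so that the result of \citep{M22,BY25} applies.
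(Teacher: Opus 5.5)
Your proposal is correct and follows essentially the route the paper intends: rewrite Proposition \ref{prop:MutSequenceAgrees} as $M(\gamma)(\seed(\underline{w}))=\overrightarrow{\mu_{\gamma^+}}\circ\overrightarrow{\mu_\gamma}(\seed(\underline{w}))$, then combine it with the preceding iterated-pullback theorem by pushing the mutations on the copy of $\seed(\gamma^+)$ through the quasi-equivalences and pullbacks. The paper gives no separate argument and simply takes from \citep{M22,BY25} that M\'enard's freezing and deletion act on the shifted image of this copy, so your planned level-by-level check of that identification via the $b_j$ goes beyond what the paper does.
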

	
	\begin{rem}	
		In \citep{M22,BY25}, the sequence $M(\gamma)$ of mutations was used in the case that $\gamma$ was the left-most subexpression of a reduced word $\gamma^+=v\leq w$. The mutation sequence is followed by freezing and deletions of (a mutation of) this copy of $\overrightarrow{\mu_{\gamma^+}}(\seed(\gamma^+))$ to obtain seeds for the open Richardson variety $\mathring{Z}_{\underline{w}}^v$. 
		In contrast, our seed $\seed(\gamma)$ for the chart $\cO^\gamma$ is obtained by the freezing (and isolation via quasi-equivalence) of the vertices in the complement of $\seed(\gamma^+)$.
		We expect this to have some interpretation as some form of semi-orthogonal decomposition in the cluster categorification of $\C[\cO^{\underline{w}}]$ via preprojective algebra modules \citep{GLS11,Lec16} and hope to return to this in future work.
		
		It is unclear to the author whether there is a reasonable categorification of $\C[\cO^{\gamma}]$ such that its maps to $\C[\cO^{\underline{w}}]$ and from $\C[\cO^{\gamma^+}]$ can be represented by appropriate functors. The isolated frozens in the seeds for $\cO^{\gamma^+}$ should correspond to indecomposable projectives with no morphisms to the other summands of the cluster tilting object.
	\end{rem}
	
	\pagebreak
	\appendix
	\section{Frayed Demazure weaves}\label{sec:FrayedWeaves}
	
	In this section we broaden our definition of weaves with frayed strands to define a \emph{frayed weave}, which will have more vertices than was previously allowed. The previous sections did not require the extra vertices we introduce here, but they arise naturally when considering isotopies of fraying vertices. As in previous sections, we initially work in the simply-laced case before showing how to extend to arbitrary finite type by folding arguments.
	
	\subsection{Vertices for frayed weaves}
	Frayed Demazure weaves will be the analogous combinatorial notion which describe particular local isomorphisms between Bott--Samelson charts. A \emph{frayed Demazure weave} is a planar graph consisting of $\pm I$-coloured edges and vertices of specified valences arising from the Dynkin diagram. The identities corresponding to the vertices in simply-laced type are listed in the following lemma, with each part corresponding to a row in Figure \ref{fig:frayedweavevertices2}.
		\begin{lem}\label{lem:frayedbraididentities}\leavevmode
		\begin{enumerate}[\normalfont(a)]
			\item\label{vertices:row1} If $i$ and $j$ are adjacent, then we have the equalities
			\begin{align*}
				B_i(z_1)B_j(z_2)B_i(z_3) &= B_j(z_3) B_i(z_1 z_3 -z_2) B_j(z_1), \\
				B_i(z_1)B_j(z_2)u_{-i}(z_3) &=u_{-j}(z_3) B_i(z_1 -z_2z_3) B_j(z_2),\text{ and}\\
				u_{-i}(z_1)B_j(z_2)B_i(z_3) &= B_j(z_2 +z_1z_3) B_i(z_3) u_{-j}(z_1).
			\end{align*}
			\item\label{vertices:row2} If $i$ and $k$ are not adjacent, then we have the equalities
			\begin{align*}
				B_i(z_1)B_k(z_2) &= B_k(z_2)B_i(z_1),\\
				B_i(z_1)u_{-k}(z_2) &= u_{-k}(z_2)B_i(z_1),\\
				u_{-i}(z_1)B_k(z_2) &= B_k(z_2)u_{-i}(z_1), \text{ and}\\
				u_{-i}(z_1)u_{-k}(z_2) &= u_{-k}(z_2)u_{-i}(z_1).
			\end{align*}
			\item\label{vertices:row3} If $i$ and $j$ are adjacent, then we have the equalities
			\begin{align*}
				B_i(z_1)u_{-j}(z_2)u_{-i}(z_3) &= u_{-j}(z_2z_3) B_i(z_1-z_3) u_{-j}(z_2), \\
				u_{-j}(z_1)B_i(z_2)u_{-j}(z_3) &= B_i\left(z_2+\frac{z_1}{z_3}\right)u_{-j}(z_3)u_{-i}\left(\frac{z_1}{z_3}\right),\\
				u_{-i}(z_1)u_{-j}(z_2)u_{-i}(z_3) &= u_{-j}\left(\frac{z_2z_3}{z_1+z_3}\right) u_{-i}(z_1+z_3) u_{-j}\left(\frac{z_1z_2}{z_1+z_3}\right), \text{ and}\\
				B_i(z) &= u_{-\alpha_{i}}(z^{-1}) \alpha_i^\vee(z) u_{\alpha_{i}}(-z^{-1}),
			\end{align*}
			with the last three only when $z_3\neq0$, $z_1+z_3\neq 0$, and $z_1\neq0$ respectively.
			\item\label{vertices:row4} We have the equalities
			\begin{align*}
				B_i(z_1) u_{-i}(z_2) = B_i(z_1-z_2), \text{ and}\\
				u_{-i}(z_1) u_{-i}(z_2) = u_{-i}(z_1+z_2).
			\end{align*}
		\end{enumerate}
	\end{lem}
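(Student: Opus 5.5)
The plan is to reduce every identity to a computation inside the rank-one or rank-two subgroup generated by the relevant root subgroups. For (b) and (d) this needs only structural facts. For (a) and (c) we compute in $\SL_3$ via a homomorphism $\varphi_{ij}:\SL_3\to G$ extending $\varphi_i$ and $\varphi_j$, which exists by the pinning in simply-laced type. Throughout we write $B_i(z)=u_i(z)\dot s_i$, so that under $\varphi_i$ it is the matrix $\begin{pmatrix} z & -1\\ 1 & 0\end{pmatrix}$, and $u_{-i}(z)$ is the lower unitriangular matrix.

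\textbf{Rank one identities.} For part (d), the first identity follows from $\dot s_i u_{-i}(z_2)\dot s_i^{-1}=u_i(-z_2)$: we have $u_i(z_1)\dot s_i u_{-i}(z_2)=u_i(z_1)u_i(-z_2)\dot s_i$. The second identity in (d) is just additivity of the root subgroup $U_{-\alpha_i}$. The last identity in (c) is the $\SL_2$ factorisation $\begin{pmatrix} z&-1\\1&0\end{pmatrix}=\begin{pmatrix}1&0\\z^{-1}&1\end{pmatrix}\begin{pmatrix} z&0\\0&z^{-1}\end{pmatrix}\begin{pmatrix}1&-z^{-1}\\0&1\end{pmatrix}$, valid for $z\neq 0$; this was already used as (\ref{eqn:frayingvertex}).

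\textbf{Non-adjacent roots.} For part (b), take $i,k$ non-adjacent, so that $\langle\alpha_i,\alpha_k\rangle=0$. Then $\pm\alpha_i\pm\alpha_k$ is never a root, and the images of $\varphi_i$ and $\varphi_k$ commute elementwise. This gives all four commutations directly, including those involving $\dot s_i$ and $\dot s_k$.

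\textbf{Adjacent roots.} For parts (a) and (c), we realise $\varphi_i,\varphi_j$ as the embeddings of $\SL_2$ in rows and columns $\{1,2\}$ and $\{2,3\}$ of $\SL_3$. This is legitimate because both sides of each identity lie in the subgroup generated by $U_{\pm\alpha_i},U_{\pm\alpha_j}$, and the pinning conventions fix the relevant structure constants. The first identity of (a) is the classical braid matrix relation from \citep{CGGLSS25}. The second identity of (a) was already used in Proposition \ref{prop:fraysleftwards}. The third identity of (a) follows from the second: invert both sides and substitute, or simply multiply out the two $3\times 3$ matrices. The first two identities of (c) are direct $3\times3$ multiplications, each side being a product of three explicit matrices. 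The third identity of (c) is the standard identity for $U^-$ in $\SL_3$: both sides are lower unitriangular, and we match the three entries $(2,1)$, $(3,2)$, $(3,1)$, where the $(3,1)$ entry is the product term. We solve $a+c=z_1+z_3$, $b=\ldots$, and so on, which forces the stated rational expressions under the hypothesis $z_1+z_3\neq0$.

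\textbf{Main obstacle.} The main difficulty is not conceptual but concerns signs and conventions. We must ensure that the $\SL_3$ model reproduces the pinning, in particular the sign of the commutator $[u_i(a),u_j(b)]$ and the sign in $\dot s_i$, so that each $z$-expression comes out exactly as stated rather than up to sign. To settle this we first check the first identity of (a) in the model, since it is known. Having fixed the conventions this way, the remaining identities are verified by the same explicit matrix multiplication; this could also be cross-checked symbolically, as was done for Lemma \ref{lem:B2frayedvertices0}.
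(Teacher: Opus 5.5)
Your proposal is correct and is essentially the paper's proof: the paper simply states that the identities are verified by explicit computation in $\SL_3$, and your rank-one $\SL_2$ checks and the commutation argument for non-adjacent roots are the routine parts of that verification. Two small slips. First, in the third identity of (c), if you write the right-hand side as $u_{-j}(a)u_{-i}(b)u_{-j}(c)$, matching entries gives $b=z_1+z_3$, $a+c=z_2$ and $ab=z_2z_3$, not $a+c=z_1+z_3$. Second, the third identity of (a) comes from the second by interchanging $i$ and $j$, reading it right to left, and substituting $(z_1,z_2,z_3)\mapsto(z_2+z_1z_3,z_3,z_1)$; this inverts the coordinate change, not the group elements.
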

	\begin{proof}
		These identities can be verified by computations in $SL_3$.
	\end{proof}
	\begin{rem}
		This third identity in \ref{vertices:row3} is the change of Lusztig coordinates for the unipotent cell, and tropicalised in \citep[Definition 4.9]{CGGLSS25} to define the propagation rules for Lusztig cycles.
	\end{rem}
	\begin{figure}[htbp]
		\centering
		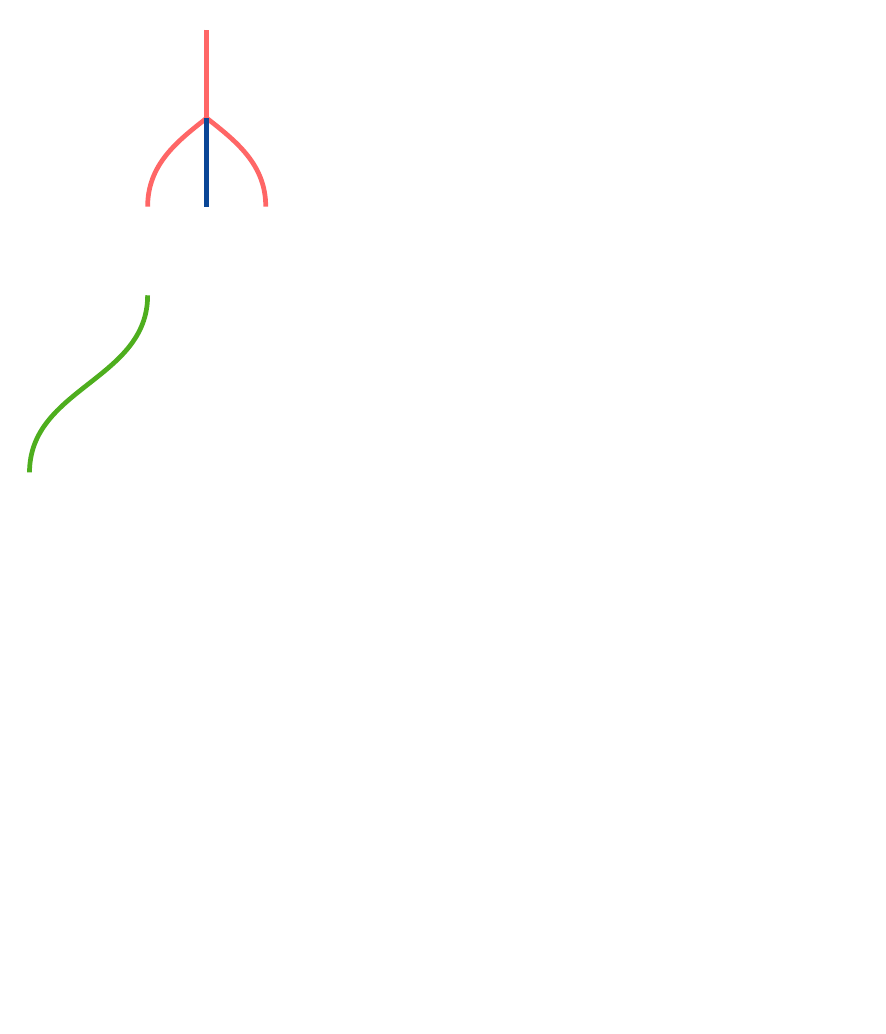
		\caption{The types of vertices allowed for frayed weaves in simply-laced type, and their effect on the strand labellings. As before, {\textbf{\textcolor{S1}{blue}}} and {\textbf{\textcolor{S2}{red}}} strands represent adjacent roots $i$ and $j$, and {\textbf{\textcolor{S3}{green}}} strands to denote a root $k$ not adjacent to $i$. The vertices in the first two rows correspond to isomorphisms between charts. Those in the third row correspond to maps which are invertible on an open set given by the non-vanishing of a variable carried by a particular frayed. The fourth row consists trivalent vertices whose maps which reduce the dimension.}
		\label{fig:frayedweavevertices2}
	\end{figure}

	Each (generic) horizontal slice of a frayed Demazure weave is a signed word with letters in $\pm I$, and in this way each frayed weave represents a sequence of rational maps between Bott--Samelson charts.
	The Demazure product of a signed expression will be the Demazure product of its subsequence of positive entries. In this way, each of the above vertices only (weakly) decreases the Demazure product as move down the weave.
	
	\begin{rem}
		If there are no frayed strands to the right of a fraying vertex, the indeterminacy locus of this rational map is simply the coordinate at that position. However, when there are negative letters to the right, depending on the intermediate word between the two positions, the indeterminacy locus might have other components. See Subsection \ref{subsec:moduliembedding}.
	\end{rem}
	\begin{rem}\label{rem:onlybraididentities}
		The above are the only identities between braid matrices on two roots which give local isomorphisms between Bott--Samelson charts. Indeed, the equality $$B_{(-i,-j,i)}(z_1,z_2,z_3) = u_{-i}(z_1)u_{-j}(z_2)B_i(z_3) = u_{-j}(z_1')B_i(z_2')u_{-j}(z_3') = B_{(-j,i,-j)}(z'_1,z'_2,z'_3)$$ only holds when $z_1=z_3'=0$ and results in $z_1'=z_2$, $z_2'=z_3$.
		Similarly, the equality
		$$B_i(z_1)u_{-j}(z_2)B_i(z_3) = u_{-j}(z_1')B_i(z_2')u_{-j}(z_3')$$ never holds.
	\end{rem}
	
	\begin{prop}\label{prop:shortPoissonmapsbetweencharts}
		The identities in Lemma \ref{lem:frayedbraididentities} give rise to Poisson maps between Bott--Samelson charts.
	\end{prop}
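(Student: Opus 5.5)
The plan is a case-by-case verification with the Elek--Lu formula (Lemma \ref{lem:StdPoissonStr}), in the style of Propositions \ref{prop:Poissonmapsbetweenshortcells} and \ref{prop:fraysleftwards}. It then extends from short charts to arbitrary ones in the same way as there.

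\emph{Reductions.} Each identity in Lemma \ref{lem:frayedbraididentities} reads $B_\gamma(\underline z)=B_{\widetilde\gamma}(\underline z')\,b$ for some $b$ in the Borel subgroup. In most cases $b=e$; for the fraying vertex and the second identity of (c), $b$ is a nontrivial Borel element. So each identity defines a map $f:\cO^\gamma\dashrightarrow\cO^{\widetilde\gamma}$ compatible with the multiplication maps. By Proposition \ref{prop:Poissonmapsbetweencells}, which applies equally to rational maps on their domain of definition, it suffices to treat charts of length at most $3$ supported on a rank $2$ subsystem. Several cases are already done:
\begin{itemize}
\item the first identities of (a) and (b), by Proposition \ref{prop:Poissonmapsbetweenshortcells};
\item the second identities of (a) and (b), by Proposition \ref{prop:fraysleftwards};
\item the fraying identity, since it is a change of coordinates between adjacent charts of the same Poisson variety $Z_{\underline w}$;
\item the first identity of (d), by Lemma \ref{lem:frayedtwiningisPoisson} (equivalently, Lemma \ref{lem:Poissoncontraction}).
\end{itemize}

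\emph{Remaining cases.} For the third identity of (a), the last two of (b), and the first three of (c), I would compute the brackets on domain and codomain using \eqref{eqn:StdPoissonStr}. Whenever the first letter is negative, its coordinate is log-canonical with everything to its right, with exponents $\langle\gamma^j(\alpha_j),\gamma^k(\alpha_k)\rangle$. For a positive letter, the correction term $\sigma_j(z_k)$ must be computed explicitly on charts of length $\le 3$, as was done for $(i,j,-i)$. I would then check $f^*\{\widetilde z_a,\widetilde z_b\}=\{f^*\widetilde z_a,f^*\widetilde z_b\}$ for the three pairs, exactly as in the earlier proofs. The non-adjacent cases are immediate, since both brackets vanish or are trivially log-canonical with matching exponents.

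The all-negative identity $u_{-i}u_{-j}u_{-i}$ has log-canonical brackets on both sides. Its verification reduces to checking that the rational substitution preserves these brackets, a direct computation with Laurent-type expressions in $z_1+z_3$. The second identity of (d) is the contraction map of Lemma \ref{lem:Poissoncontraction} applied to two unsupported positions. Its Poisson-ness follows from that lemma (through Lemma \ref{lem:Poissonfactoring}), because the product map $P_i\times P_i\to P_i$ is Poisson.

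\emph{Main obstacle.} I expect the hardest cases to be the mixed identities in (c), such as $u_{-j}(z_1)B_i(z_2)u_{-j}(z_3)$, and the third identity of (a). There the domain has a positive letter sitting between negative letters, so $\sigma_j(z_k)$ contributes nontrivially, and the target coordinates are rational rather than polynomial. The first thing I would try is to avoid raw computation: factor each such map as a composition of maps already known to be Poisson, namely adjacent-chart changes of coordinates (which are automatically Poisson) together with the moves of Proposition \ref{prop:fraysleftwards}. For instance, the second identity of (c) should equal the fraying of the middle letter, followed by commuting and twining moves, after which one inverts a fraying. Where no such factorisation is evident, I would fall back on the explicit bracket check on a three-dimensional chart, which is a finite $SL_3$ computation.
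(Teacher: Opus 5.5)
Your proposal is correct and follows essentially the same case-by-case route as the paper: the first two identities of (a) via Propositions \ref{prop:Poissonmapsbetweenshortcells} and \ref{prop:fraysleftwards}, vanishing brackets for the non-adjacent relations in (b), the adjacent-chart transition for the fraying, Lemma \ref{lem:Poissoncontraction} for (d), and a direct check with \eqref{eqn:StdPoissonStr} for the remaining vertices of (c). The only shortcut you miss is that the third identity of (a) needs no computation, since it is the inverse of the biregular Poisson isomorphism $\cO^{(j,i,-j)}\to\cO^{(-i,j,i)}$ (Remark \ref{rem:someareisomorphisms}); likewise the second identity of (c) is the rational inverse of the first (and, contrary to your reduction paragraph, it holds exactly with no Borel factor), so most of your ``main obstacle'' cases come for free.
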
	
	\begin{proof}
		First consider the length two relations in part \ref{vertices:row2}. Let $$\gamma=(\gamma_1,\gamma_2) \in \{i,-i\} \times \{ k,-k\} \quad \text{and} \quad \widetilde{\gamma} = (\gamma_2,\gamma_1).$$ In the Bott--Samelson coordinates on $\cO^{\gamma}$, the maps in from Lemma \ref{lem:frayedbraididentities} \ref{vertices:row2} are  $$f:\cO^{\gamma}\to \cO^{\widetilde{\gamma}},\quad (z_1,z_2) \mapsto(z_2,z_1).$$
		By the Elek--Lu formula in Lemma \ref{lem:StdPoissonStr}, since the roots $i$ and $k$ are nonadjacent, the bracket between the coordinate functions is trivial on both the domain and codomain of this isomorphism, and the map $\cO^{\gamma}\to \cO^{\widetilde{\gamma}}$ is a Poisson map.

		Now let $i$ and $j$ denote adjacent roots in a simply-laced case.
		The first two maps $(i,j,i)\to(j,i,j)$ and $(i,j,-i)\to(-j,i,j)$ in part \ref{vertices:row1} were previously shown in Propositions \ref{prop:Poissonmapsbetweenshortcells} and \ref{prop:fraysleftwards} respectively. The remaining map $(-i,j,i)\to(j,i,-j)$ is inverse to the latter map, see Remark \ref{rem:someareisomorphisms} below, and therefore also Poisson.
		The two maps in part \ref{vertices:row4} are the Poisson map from Lemma \ref{lem:Poissoncontraction} expressed in their respective charts.
		
		Finally, we handle the new vertices in part \ref{vertices:row3}.
		The last map corresponds to a transition function between adjacent charts on the same Poisson variety and is therefore Poisson.
		The remaining vertices can be checked with the formula  (\ref{eqn:StdPoissonStr}) in the same manner as in Propositions \ref{prop:Poissonmapsbetweenshortcells} and \ref{prop:fraysleftwards}.
	\end{proof}

	\begin{rem}\label{rem:someareisomorphisms}
		The maps for the vertices $(i,j,i)\to (j,i,j)$, $(i,j,-i)\to (-j,i,j)$, and $(-i,j,i)\to (j,i,-j)$ are (Poisson) biregular isomorphisms. The inverse of the first is itself with $i$ and $j$ swapped, and the latter two are inverse to each other (after swapping $i$ and $j$). That is,
		$$ \cO^{(i,j,i)} \cong \cO^{(j,i,j)} \quad \text{and} \quad \cO^{(-i,j,i)} \cong \cO^{(j,i,-j)}.$$
		Similarly, the length two commuting relations are also isomorphisms.
		However, the map $\cO^{(i,-j,-i)} \to \cO^{(-j, i ,-j)}$ is only invertible on the open set $z_3\neq 0$.
	\end{rem}
	
	\begin{cor}\label{cor:Poissonmapsbetweencharts}
		The Poisson rational (resp. rational) maps $f:\cO^{\gamma} \to \cO^{\widetilde{\gamma}}$ in Proposition \ref{prop:shortPoissonmapsbetweencharts} induce Poisson regular (resp. rational) maps 
		$\cO^{(\gamma',\gamma,\gamma'')} \to \cO^{(\gamma',\widetilde{\gamma},\gamma'')}$, where $\gamma'$ and $\gamma''$ are sequences of signed letters.
	\end{cor}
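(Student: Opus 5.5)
The plan is to repeat the argument of Proposition \ref{prop:Poissonmapsbetweencells}, with the local map $f$ now taken from Proposition \ref{prop:shortPoissonmapsbetweencharts} and allowed to be rational. First I will check that each identity in Lemma \ref{lem:frayedbraididentities} intertwines the multiplication maps: $\widetilde{m}\circ f = m$ wherever $f$ is defined. In every case the identity has the form $B_\gamma(\underline{z}) = B_{\widetilde{\gamma}}(f(\underline{z}))\cdot b(\underline{z})$ with $b(\underline{z})\in B$. Most of the time $b=e$. For the fraying identity $B_i(z)=u_{-i}(z^{-1})\alpha_i^\vee(z)u_i(-z^{-1})$ we have $b=\alpha_i^\vee(z)u_i(-z^{-1})$, and for the second identity of part (c) we have $b=u_{-i}(z_1/z_3)\notin B$.

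The last case needs care. I will read that vertex, like the fraying, as a map to a chart of the same length, with the trailing factor $u_{-i}(z_1/z_3)$ recorded as the frayed coordinate of the new chart. So the relevant statement is equality of the full braid matrices, and the Borel remainder is genuinely trivial or lies in $B$.

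Next, for $\gamma'$ of length $a$ and $\gamma''$ of length $c$, I will define $\widetilde{f}$ on $\cO^{(\gamma',\gamma,\gamma'')}$ in Bott--Samelson coordinates by
\[(\underline{z}',\underline{z},\underline{z}'')\mapsto\bigl(\underline{z}',f(\underline{z}),b(\underline{z})\cdot\underline{z}''\bigr).\]
The third component is the coordinate change produced by pushing $b(\underline{z})$ rightward through $B_{\gamma''}(\underline{z}'')$. This is the ``raking ray''. It is well defined because $b\,u_{\gamma_k}(z)\dot s^+_{\gamma_k}\in u_{\gamma_k}(z')\dot s^+_{\gamma_k}B$ for $b\in B$. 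That inclusion is clear when $\gamma_k>0$. When $\gamma_k<0$ it holds on a Zariski open set, namely where the $U_{-\alpha}$ component survives; this is exactly why the induced map is only rational when $f$ (or $b$) is. Equivalently, I will use the description $\cO^{(\gamma',\gamma,\gamma'')}\cong(\cO^{\gamma'}\cdot B)\times^B(\cO^{\gamma}\cdot B)\times^B\cO^{\gamma''}$ from Proposition \ref{prop:Poissonmapsbetweencells}. The map $\mathrm{id}\times f\times\mathrm{id}$ is $B^2$-equivariant on its domain of definition because $f$ respects multiplication, so it descends to the quotient and recovers $\widetilde f$.

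For the Poisson claim I will apply Lemma \ref{lem:Poissonfactoring} to the same square. The quotient map $q$ is a surjective Poisson map by the definition of the standard structure on $Z_{\underline{w}}$ as a pushforward, and $\widetilde q\circ(\mathrm{id}\times f\times\mathrm{id})$ is Poisson since $f$ is Poisson by Proposition \ref{prop:shortPoissonmapsbetweencharts}. In the rational case I will replace every space by the dense open set where $f$ is defined and where the raking-ray coordinates are regular. Lemma \ref{lem:Poissonfactoring} is an identity of rational functions, so it applies on these opens, and Poissonness of a rational map is determined on a dense open set.

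The main obstacle is making the descent rigorous when $f$ is only rational and the chart $\cO^{\gamma}$ contains negative letters. In that setting $\cO^{\gamma}\cdot B$ is not obviously a product, and equivariance must be checked on the domain of definition. The approach I would try first is to pass to the open $B$-stable subsets on which the relevant coordinate (for instance $z_3\neq0$, or $z_1+z_3\neq0$) is nonzero, verify $B$-stability of the domain directly from the identity, and only then run the factoring lemma.
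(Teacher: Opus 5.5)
Your proposal follows the paper's proof: the paper notes that the identities of Lemma \ref{lem:frayedbraididentities} give $\widetilde{m}\circ f=m$ and then repeats the factoring argument of Proposition \ref{prop:Poissonmapsbetweencells}. That is your plan, and your raking-ray formula and restriction to dense open sets are a more explicit version of the same descent. One clarification: the second identity of part (c) already has trivial Borel remainder, since its right-hand side is exactly $B_{(i,-j,-i)}(z_2+z_1/z_3,\,z_3,\,z_1/z_3)$, so the fraying is the only vertex with $b\neq e$.
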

	\begin{proof}
		As checked in Lemma \ref{lem:frayedbraididentities}, the maps $f:\cO^{\gamma} \to \cO^{\widetilde{\gamma}}$ as defined satisfy $\widetilde{m}\circ f = m$. The argument used to prove Proposition \ref{prop:Poissonmapsbetweencells} can be repeated.
	\end{proof}

	\subsection{Non-simply-laced types}
	As in Section \ref{sec:FoldedTypes}, we need vertices with higher valency when the Dynkin diagram is non-simply-laced. As before, we state things for $B_2$ using the folding from $A_3$. The necessary vertices for type $G_2$ are analogously folded from type $D_4$.
	
	\begin{lem}\label{lem:B2frayedvertices1}
		On a rank $2$ root subsystem with Cartan submatrix $\begin{pmatrix} 2& -1 \\ -2 &2 \end{pmatrix}$, when $\underline{z} = (z_1,z_2,z_3,z_4)$, we have the following relations:
		\begin{align}
			B_{(1,2,1,-2)}(\underline{z}) &= u_{-2}(z_4)B_1(z_1-z_3z_4)B_2(z_2-z_3^2z_4)B_1(z_3), \nonumber\\
			B_{(2,1,2,-1)}(\underline{z}) &= u_{-1}(z_4)B_2(z_1-2z_2z_4+z_3z_4^2)B_1(z_2-z_3z_4)B_2(z_3), \nonumber\\
			B_{(1,2,-1,-2)}(\underline{z}) &=  u_{-2}(z_3^2z_4)B_1(z_1-z_3z_4)B_2(z_2-z_4)u_{-1}(z_3),\label{B2frayvert1}\\
			B_{(2,1,-2,-1)}(\underline{z}) &= u_{-1}(z_3z_4)B_2(z_1-2z_2z_3z_4+z_3z_4^2)B_1(z_2-z_4)u_{-2}(z_3),\\
			B_{(1,-2,-1,-2)}(\underline{z}) &= u_{-2}(\tfrac{z_2z_3^2z_4}{z_2+z_4})B_1(\tfrac{z_1(z_2+z_4)-z_3z_4}{z_2+z_4})u_{-2}(z_2+z_4)u_{-1}(\tfrac{z_2z_3}{z_2+z_4}),\\
			B_{(2,-1,-2,-1)}(\underline{z}) &= u_{-1}(\tfrac{z_2z_3z_4}{z_2+z_4})B_2(\tfrac{z_1(z_2+z_4)^2-(2z_2+z_4)z_3z_4}{(z_2+z_4)^2})u_{-1}(z_2+z_3)u_{-2}(\tfrac{z_2^2z_3}{(z_2+z_4)^2})\label{B2frayvert4},\\
			B_{(-1,-2,-1,-2)}(\underline{z}) &= u_{-2}(\tfrac{z_2z_3^2z_4}{p_2})u_{-1}(\tfrac{p_2}{p_1})u_{-2}(\tfrac{p_1^2}{p_2})u_{-1}(\tfrac{z_1z_2z_3}{p_1}),\\
			B_{(-2,-1,-2,-1)}(\underline{z}) &= u_{-1}(\tfrac{z_2z_3z_4}{p_1})u_{-2}(\tfrac{p_1^2}{p_2^*})u_{-1}(\tfrac{p_2^2}{p_1})u_{-2}(\tfrac{z_1z_2^2z_3}{p_2^*}),
		\end{align}
		where $p_1=z_1z_2+z_1z_4+z_3z_4$, $p_2 = z_1^2z_2+(z_1 + z_3)^2z_4$, and $p_2^* = z_3z_4^2+z_1(z_2 + z_4)^2$.
		The unfolded weaves of (\ref{B2frayvert1})--(\ref{B2frayvert4}) are depicted in Figure \ref{fig:B2folding3}.
	\end{lem}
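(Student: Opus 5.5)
The plan is the same folding strategy the paper uses for Lemma \ref{lem:B2frayedvertices0}. Realise the $B_2$ root subsystem with Cartan matrix $\begin{pmatrix} 2& -1 \\ -2 &2 \end{pmatrix}$ as the fixed points of the diagram automorphism of $A_3$ that swaps the outer nodes $1,3$ and fixes $2$. Then the short simple root pins via $u_{\pm 1}^{B_2}(z)=u_{\pm 1}(z)u_{\pm 3}(z)$, and the long one via $u_{\pm 2}^{B_2}(z)=u_{\pm 2}(z)$. Accordingly $B^{B_2}_1(z)=B_1(z)B_3(z)$ and $B^{B_2}_2(z)=B_2(z)$ inside $SL_4$.

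Each signed $B_2$ word of length four unfolds to a signed $A_3$ word of length six, for instance $(1,2,-1,-2)\mapsto(1,3,2,-1,-3,-2)$. To each such word I apply a sequence of the simply-laced identities of Lemma \ref{lem:frayedbraididentities}:
\begin{itemize}
\item hexavalent moves and their frayed versions from part \ref{vertices:row1};
\item commutations of the non-adjacent roots $1,3$ from part \ref{vertices:row2};
\item the rational moves of part \ref{vertices:row3}.
\end{itemize}
These are chosen to carry the product to the unfolded form of the right-hand side, e.g.\ $(-2,1,3,2,-1,-3)$. Concretely, I move the rightmost frayed strands leftwards past positive letters, as in Figure \ref{fig:B2folding3}. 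For the first two relations, which already appeared in Lemma \ref{lem:B2frayedvertices0}, this is the same computation.

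The key consistency check is \emph{symmetry}. At the end of the weave, the labels on the two strands coloured $1$ and $3$ that fold to a single short strand must coincide. That is the requirement for the $SL_4$ product to lie in the image of the folded group, so that it reads off as a $B_2$ product. I expect this to be automatic, because the unfolded weaves are chosen to be symmetric under the automorphism, and each local move is equivariant.

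The main obstacle is the last four relations, which involve several frayed strands. Here the part \ref{vertices:row3} moves introduce denominators such as $z_2+z_4$, $p_1$, $p_2$ and $p_2^*$. Tracking these by hand through roughly six to ten local moves is error-prone, and the non-symmetric intermediate stages must be reconciled before folding. For these cases I would first verify the identities on the Zariski-open locus where all denominators are nonzero, since both sides are rational in $\underline{z}$. I would do this by direct multiplication of $4\times 4$ matrices (a computer algebra check, as was done for Lemma \ref{lem:B2frayedvertices0}). I would then use the unfolded weave only to explain where each factor comes from.

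Finally, as a sanity check on the formulas, I would compare the leading monomials with the tropicalised Lusztig-coordinate change. This is in the spirit of the remark after Lemma \ref{lem:frayedbraididentities}, where the case $(-1,-2,-1,-2)$ should reproduce the known $B_2$ Lusztig transition with $p_1,p_2$.
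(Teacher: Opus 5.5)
Your proposal follows essentially the same route as the paper: unfold via $\widehat{1}=\{1,3\}$, $\widehat{2}=\{2\}$ into $SL_4$, obtain the relations from the simply-laced frayed identities along the unfolded weaves of Figure~\ref{fig:B2folding3}, and certify them by direct $4\times 4$ matrix multiplication, which is the paper's SAGE check. When you run that check, two of the printed relations hold only after correcting evident misprints: the third factor of (\ref{B2frayvert4}) must be $u_{-1}(z_2+z_4)$ rather than $u_{-1}(z_2+z_3)$, and the third factor of the last relation must be $u_{-1}(p_2^*/p_1)$ rather than $u_{-1}(p_2^2/p_1)$.
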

	\begin{proof}
		These relations can be seen by applying the previous simply-laced identities to an unfolded weave for type $B_2$, see Figure \ref{fig:B2folding3}. The formulas have been checked by an $SL_4$ computation in SAGE.
	\end{proof}

	\begin{figure}[htbp]
		\centering
		\def\svgwidth{\linewidth}
		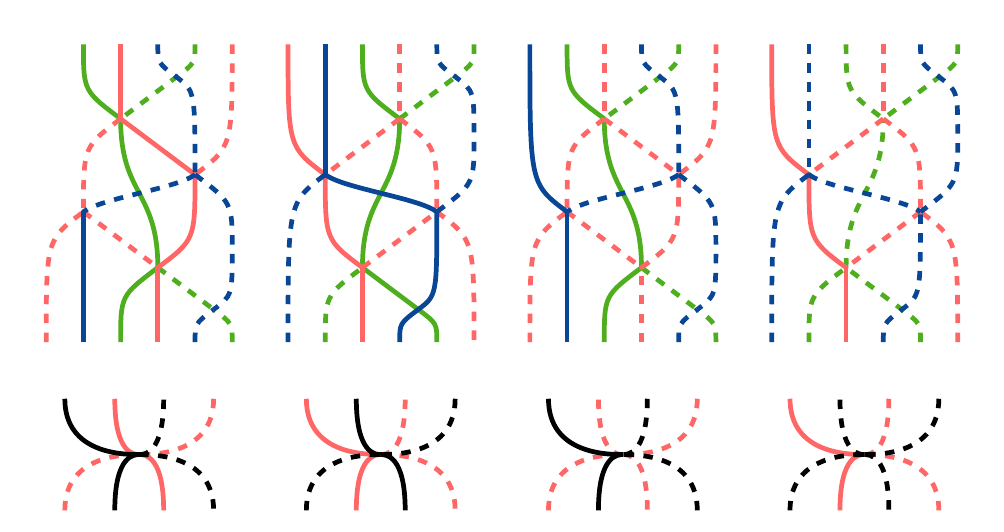
		\caption{Frayed weaves in type $A_3$ and the octavalent vertices they fold to in type $B_2$.}
		\label{fig:B2folding3}
	\end{figure}
	
	\begin{lem}\label{lem:B2frayedvertices2}
		The identities corresponding to the inverse maps of those in Lemma \ref{lem:B2frayedvertices1} are:
		\begin{align*}
			B_{(-2,1,2,1)}(\underline{z}) &= B_1(z_2+z_1z_4)B_2(z_1z_4^2+z_3)B_1(z_4)U_2(z_1),\\
			B_{(-1,2,1,2)}(\underline{z}) &= B_2(z_2+2z_1z_3+z_1^2z_4)B_1(z_3+z_1z_4)B_2(z_4)u_{-1}(z_1), \\
			B_{(-2,1,2,-1)}(\underline{z}) &= B_1(z_2+\tfrac{z_1}{z_4})B_2(z_3+\tfrac{z_1}{z_4^2})u_{-1}(z_4)u_{-2}(\tfrac{z_1}{z_4^2}), \\
			B_{(-1,2,1,-2)}(\underline{z}) &= B_2(z_2+2z_1z_3+\tfrac{z_1^2}{z_4})B_1(z_3+\tfrac{z_1}{z_4})u_{-2}(z_4)u_{-1}(\tfrac{z_1}{z_4}),\\
			B_{(-2,1,-2,-1)}(\underline{z}) &= B_1(z_2-\tfrac{z_1}{z_3z_4})u_{-2}(\tfrac{z_3^2z_4^2}{z_1+z_3z_4^2})u_{-1}(\tfrac{z_1+z_3z_4^2}{z_3z_4})u_{-2}(\tfrac{z_1z_3}{z_1+z_3z_4^2}), \\
			B_{(-1,2,-1,-2)}(\underline{z}) &=B_2(\tfrac{z_2z_3^2z_4+z_1^2+2z_1z_3z_4}{z_3^2z_4})u_{-1}(\tfrac{z_3^2z_4}{z_1+z_3z_4})u_{-2}(\tfrac{(z_1+z_3z_4)^2}{z_3^2z_4})u_{-1}(\tfrac{z_1z_3}{z_1+z_3z_4}) .
		\end{align*}
	\end{lem}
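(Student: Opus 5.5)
The strategy is to verify each identity as the inverse of the corresponding identity in Lemma \ref{lem:B2frayedvertices1}, rather than computing the braid matrices from scratch. Each relation there has the form $B_{\gamma}(\underline{z}) = B_{\widetilde{\gamma}}(\underline{z}')$ with $\underline{z}'=F(\underline{z})$ an explicit rational map. Here $\widetilde{\gamma}$ is obtained by moving the trailing frayed letter to the front, or by rearranging the frayed letters. Solving $\underline{z}'=F(\underline{z})$ for $\underline{z}$ gives an identity $B_{\widetilde{\gamma}}(\underline{z}') = B_{\gamma}(F^{-1}(\underline{z}'))$. After renaming $\underline{z}'$ as $\underline{z}$, this should be exactly the stated relation. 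The pairing I intend is:
\begin{itemize}
\item the first two lines invert the relations for $B_{(1,2,1,-2)}$ and $B_{(2,1,2,-1)}$ (the signed words $(-2,1,2,1)$ and $(-1,2,1,2)$);
\item the next two lines invert (\ref{B2frayvert1}) and its companion;
\item the last two lines invert the relations for $(1,-2,-1,-2)$ and $(2,-1,-2,-1)$.
\end{itemize}

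\textbf{Polynomial cases.} For the first two pairs the maps $F$ are triangular, so inversion is easy. For instance, from $B_{(1,2,1,-2)}(\underline{z}) = u_{-2}(z_4)B_1(z_1-z_3z_4)B_2(z_2-z_3^2z_4)B_1(z_3)$, set $y_1=z_4$, $y_2=z_1-z_3z_4$, $y_3=z_2-z_3^2z_4$, $y_4=z_3$. Solving gives $z_1=y_2+y_1y_4$, $z_2=y_3+y_1y_4^2$, $z_3=y_4$, $z_4=y_1$. This reproduces the first stated line, reading the trailing $U_2(z_1)$ as $u_{-2}(z_1)$. The second line follows in the same way.

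\textbf{Rational cases.} For the middle pair and the last pair, $F$ is only birational. I would solve on the open set where the relevant denominators do not vanish, namely $z_4\neq 0$, $z_3z_4\neq0$, and $z_1+z_3z_4^2\neq0$. The identities then hold as equalities of rational maps, as in part \ref{vertices:row3} of Lemma \ref{lem:frayedbraididentities}.

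\textbf{Independent check by unfolding.} Alternatively, each identity can be verified directly by unfolding to type $A_3$, as in the proofs of Lemmas \ref{lem:B2frayedvertices0} and \ref{lem:B2frayedvertices1}. The steps are:
\begin{itemize}
\item realise $B_2$ inside $A_3$ via the folding $1\mapsto\{1,3\}$, $2\mapsto 2$ (or its transpose, matching the chosen Cartan matrix convention), so that $B_1(z)\mapsto B_1(z)B_3(z)$ and $u_{-1}(z)\mapsto u_{-1}(z)u_{-3}(z)$;
\item rewrite the unfolded word using the simply-laced moves of Lemma \ref{lem:frayedbraididentities} to move the leading frayed letters rightward, using the inverse vertex $(-i,j,i)\to(j,i,-j)$ from Remark \ref{rem:someareisomorphisms};
\item refold the result. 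Since the unfolded weave is symmetric under the diagram automorphism, the labels on strands $1$ and $3$ agree at each stage.
\end{itemize}

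\textbf{Expected obstacle.} The main difficulty is the last pair. There the inversion produces nested fractions such as $\frac{z_1+z_3z_4^2}{z_3z_4}$, and one must confirm that the closed forms simplify exactly as stated and that the order of factors ($u_{-2}$ versus $u_{-1}$ on the far right) matches. For these two lines I would rely on a direct $SL_4$ (equivalently $Sp_4$) matrix computation, in the same way the earlier $B_2$ lemmas were verified in SAGE. The inverse relationship with Lemma \ref{lem:B2frayedvertices1} serves as a consistency check.
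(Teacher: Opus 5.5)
Your route is the paper's own. Its proof says the identities follow either by unfolding to $A_3$ and applying the simply-laced moves, or by inverting Lemma~\ref{lem:B2frayedvertices1}, with the formulas confirmed by an $SL_4$ computation. Your pairing of the six lines with the six relations is correct. For the first four lines the inversion reproduces the printed formulas exactly, reading $U_2(z_1)$ as $u_{-2}(z_1)$.

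The step that fails is your claim that inversion gives ``exactly the stated relation'' for the last pair.

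For line five, put $y_1=\frac{z_2z_3^2z_4}{z_2+z_4}$, $y_2=z_1-\frac{z_3z_4}{z_2+z_4}$, $y_3=z_2+z_4$, $y_4=\frac{z_2z_3}{z_2+z_4}$. Then $z_2z_3=y_3y_4$ and $y_1=y_4z_3z_4$, so $z_1=y_2+\frac{y_1}{y_3y_4}$. The other coordinates are $z_2=\frac{y_3^2y_4^2}{y_1+y_3y_4^2}$, $z_3=\frac{y_1+y_3y_4^2}{y_3y_4}$, $z_4=\frac{y_1y_3}{y_1+y_3y_4^2}$. The last three factors therefore match the printed line, but the first factor is $B_1\bigl(z_2+\tfrac{z_1}{z_3z_4}\bigr)$, not $B_1\bigl(z_2-\tfrac{z_1}{z_3z_4}\bigr)$.

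Your unfolding check gives the same sign independently:
\begin{enumerate}[(1)]
\item Start from $u_{-2}(y_1)B_1(y_2)B_3(y_2)u_{-2}(y_3)u_{-1}(y_4)u_{-3}(y_4)$ and commute $u_{-1}(y_4)$ past $u_{-3}(y_4)$.
\item Rewrite $B_3(y_2)u_{-2}(y_3)u_{-3}(y_4)=u_{-2}(y_3y_4)B_3(y_2-y_4)u_{-2}(y_3)$.
\item Part (c) of Lemma~\ref{lem:frayedbraididentities} gives $u_{-2}(y_1)B_1(y_2)u_{-2}(y_3y_4)=B_1\bigl(y_2+\tfrac{y_1}{y_3y_4}\bigr)u_{-2}(y_3y_4)u_{-1}\bigl(\tfrac{y_1}{y_3y_4}\bigr)$.
\item The remaining factors rearrange into the required shape without touching this leading factor, and the chart coordinates are unique.
\end{enumerate}
So line five as printed has a sign typo and cannot be verified. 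What you can prove is the corrected version with $+\tfrac{z_1}{z_3z_4}$.

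Likewise, before inverting for line six you must correct (\ref{B2frayvert4}). Your unfolding shows its third factor is $u_{-1}(z_2+z_4)$, not $u_{-1}(z_2+z_3)$. With that correction the inversion reproduces line six exactly. Add $z_1+z_3z_4\neq0$ to your list of open conditions for that line.
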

	\begin{proof}
		These relations can be seen by applying the previous simply-laced identities to an unfolded weave for type $B_2$, or by inverting the identities in Lemma \ref{lem:B2frayedvertices1}. The formulas have been checked by an $SL_4$ computation in SAGE.
	\end{proof}

	\begin{eg}\label{eg:B2example2}
		In type $B_2$, recall the seed $\seed_0=\seed(1,2,1,2)$ which was shown in Example \ref{eg:B2example}. The mutable part of the valued quiver is of mutation type $B_2$, and its exchange graph consists of six distinct clusters arranged in a cycle.
		We list these in the Table \ref{table:B2clusters}.
		\begin{table}[htbp]
			\centering
			\begin{tabular}{|c|cc|}
				\hline
				Seed                         & \multicolumn{2}{c|}{Cluster variables}                         \\ \hline
				$\seed_0$                    & \multicolumn{1}{c|}{$z_1$}        & $z_2$                      \\ \hline
				$\mu_1(\seed_0)$             & \multicolumn{1}{c|}{$z_3$}        & $z_2$                      \\ \hline
				$\mu_{2,1}(\seed_0)$         & \multicolumn{1}{c|}{$z_3$}        & $z_4$                      \\ \hline
				$\mu_{1,2,1}(\seed_0)$       & \multicolumn{1}{c|}{$z_1z_4-z_3$} & $z_4$                      \\ \hline
				$\mu_{2,1,2,1}(\seed_0)$     & \multicolumn{1}{c|}{$z_1z_4-z_3$} & $z_1^2z_4 - 2z_1z_3 + z_2$ \\ \hline
				$\mu_{1,2,1,2,1}(\seed_0)$   & \multicolumn{1}{c|}{$z_1$}        & $z_1^2z_4 - 2z_1z_3 + z_2$ \\ \hline
				$\mu_{2,1,2,1,2,1}(\seed_0)$ & \multicolumn{1}{c|}{$z_1$}        & $z_2$                      \\ \hline
			\end{tabular}
			\caption{The six distinct clusters reachable by mutation from $\seed_0$.}
			\label{table:B2clusters}
		\end{table}
		
		To get the change of coordinates $\cO^{(1,2,1,2)}\dashrightarrow\cO^{(-1,2,1,2)}$, we invert the first coordinate and calculate the change to the rest of the variables. We do this calculation in $\SL_4$ using the unfolded roots $\widehat{1} = \{1,3\}$ and $\widehat{2} = \{2\}$, so the one parameter subgroups are given by
		\begin{equation*}
				u_{\widehat{1}}(z) = u_{1}(z) u_3(z) = \begin{pmatrix}
						1 & z & 0 & 0 \\ 0 & 1 & 0 & 0 \\ 0 & 0 & 1 & z \\ 0 & 0 & 0 & 1
					\end{pmatrix}\quad \text{and} \quad
			 u_{\widehat{2}}(z) = u_{2}(z) = \begin{pmatrix}
					 	1 & 0 & 0 & 0 \\ 0 & 1 & z & 0 \\ 0 & 0 & 1 & 0 \\ 0 & 0 & 0 & 1
				 \end{pmatrix}.
			\end{equation*}
		Fraying the first strand and modifying the variables to the right, we calculate that
		\begin{align*}
				B_1(z_1)B_2(z_2)B_1(z_3)&B_2(z_4) = u_{-1}(z_1^{-1}) \alpha_1^\vee(z_1)u_1(-z_1^{-1})B_2(z_2)B_1(z_3)B_2(z_4),\\
				&= u_{-1}(z_1^{-1})B_2(\tfrac{z_2}{z_1^2})B_1(\tfrac{z_1z_3-z_2}{z_1})B_2(z_1^2z_4 - 2z_1z_3 + z_2) \alpha_1^\vee(z_1)u_1(-z_1^{-1}).
			\end{align*}
		
		Observe that the cluster variable $\mu_2(z_2) = z_1^2z_4 - 2z_1z_3 + z_2$ appears in the numerator of the fourth coordinate after the change of coordinates $\cO^{(1,2,1,2)}\dashrightarrow\cO^{(-1,2,1,2)}$.
		It is thus the second cluster variable under the opening sequence $(1,4,2,3)$.

		Also note that the variable $z_1^2z_4+z_2^3-2z_1z_2z_3$ is not in any cluster which is mutation-adjacent to a cluster containing $z_4$. However, if we consider the transposition swapping between opening sequences $(1,4,2,3)$ and $(4,1,2,3)$ we get the two seeds $\mu_{2}(\mathbf{s}_0)$ and $\mu_{1,2,1}(\mathbf{s}_0)$. These distinct seeds are not related by a one-step mutation.
		It might be interesting to compare the cluster exchange graph, its subgraph of weave mutations, and the graph given by transpositions in the opening sequence.
	\end{eg}

	\subsection{Framed flag relations}
	In \citep{CGGLSS25}, frayed versions of the identities are constructed to determine how framed labellings propagate in order to construct Lusztig cycles. In this section, we list framed versions of our simply-laced identities.
	
	The following is the version of Lemma \ref{lem:framedsolidparameters} for frayed strands. 
	\begin{lem}
		Suppose we have $g\cdot U\in G/U$ a framed flag, and parameters $z,z'\in \C$, and $x,x'\in \C^*$. If $gu_{-i}(z)\alpha_i^\vee(x) \cdot U = gu_{-i}(z')\alpha_i^\vee(x') \cdot U$, then $z=z'$ and $x=x'$.
	\end{lem}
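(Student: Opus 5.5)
Cancel $g$ on the left to reduce to a statement inside $\varphi_i(SL_2)$ and the torus, then read off the parameters from an explicit matrix. From $g u_{-i}(z)\alpha_i^\vee(x)\cdot U = g u_{-i}(z')\alpha_i^\vee(x')\cdot U$ we get
$$\alpha_i^\vee(x)^{-1}u_{-i}(z'-z)\alpha_i^\vee(x') \in U,$$
using $u_{-i}(z)^{-1}u_{-i}(z') = u_{-i}(z'-z)$ (the additive relation in Lemma \ref{lem:frayedbraididentities}\ref{vertices:row4}). So it suffices to show that $\alpha_i^\vee(x)^{-1}u_{-i}(z'-z)\alpha_i^\vee(x')\in U$ forces $z=z'$ and $x=x'$. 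This element is the image under $\varphi_i$ of the $SL_2$ matrix
$$\begin{pmatrix} x^{-1} & 0\\ 0 & x\end{pmatrix}\begin{pmatrix}1&0\\ z'-z & 1\end{pmatrix}\begin{pmatrix} x' & 0\\ 0 & (x')^{-1}\end{pmatrix} = \begin{pmatrix} x^{-1}x' & 0\\ x x'(z'-z) & x(x')^{-1}\end{pmatrix}.$$

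Next I reduce membership in $U$ to a condition in $SL_2$. The element lies in $\varphi_i(SL_2)$, and $\varphi_i(SL_2)\cap U = \varphi_i(SL_2)\cap B\cap U$. Since $\varphi_i^{-1}(B)$ is the upper Borel of $SL_2$ and $\varphi_i^{-1}(U)$ is its unipotent radical (up to the possible kernel $\{\pm I\}$ of $\varphi_i$), the matrix above must be upper triangular unipotent, or $-1$ times one if $\varphi_i$ has kernel.

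To avoid fussing with the kernel, I would argue through $G$'s own decomposition instead. The lower-left entry being nonzero would put the element in $U^-_{\alpha_i}$-nontrivial form: the product would lie in $U_{-\alpha_i}\,T$, and $U_{-\alpha_i}T\cap U=\{e\}$ by uniqueness of the decomposition $U^-\times T\times U\to G$ on the big cell. Writing the element as $u_{-i}(x^{-2}(z'-z))\,\alpha_i^\vee(x^{-1}x')$ via conjugation of $u_{-i}$ by the torus, uniqueness of the big-cell factorisation gives $u_{-i}(x^{-2}(z'-z))=e$ and $\alpha_i^\vee(x^{-1}x')=e$. The first gives $z=z'$.

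The only delicate point is the torus factor: $\alpha_i^\vee(x^{-1}x')=e$ gives $x=x'$ only when $\alpha_i^\vee$ is injective. This holds when $\alpha_i^\vee$ is a cocharacter with trivial kernel, which is the case whenever $\varphi_i$ is injective, e.g. for simply connected $G$. The same caveat is implicit in Lemma \ref{lem:framedsolidparameters}, so I would adopt its conventions (or note that $x$ is determined up to $\ker\alpha_i^\vee\subseteq\{\pm1\}$).
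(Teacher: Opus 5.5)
Your proposal is correct and follows the paper's argument: cancel $g$ and observe that $\alpha_i^\vee(x)^{-1}u_{-i}(z'-z)\alpha_i^\vee(x')$ can lie in $U$ only if it is trivial. The paper simply asserts this, while you justify it via $U^-T\cap U=\{e\}$, and you rightly note that $\alpha_i^\vee$ must be injective, as holds for the simply connected $G$ implicit in the paper's use of generalised minors. One harmless slip: moving the torus factor to the right gives $u_{-i}\bigl(x^{2}(z'-z)\bigr)\alpha_i^\vee(x^{-1}x')$, not $x^{-2}(z'-z)$, as your own matrix shows; this does not affect the conclusion $z=z'$.
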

	\begin{proof}
		The element $\bigl(u_{-i}(z)\alpha_i^\vee(x)\bigr)^{-1}u_{-i}(z')\alpha_i^\vee(x') = \alpha_i^\vee(x^{-1})u_{-i}(z'-z)\alpha_i^\vee(x')$ is in the unipotent group $U$ if and only if $x^{-1}x'=1$ and $z'-z=0$.
	\end{proof}	
	The next statement lists the framed versions of the identities from Lemma \ref{lem:frayedbraididentities}.	
	\begin{lem}\label{lem:framedfrayedbraididentitiesrow1} \leavevmode
		\begin{enumerate}[\normalfont(a)]
			\item\label{frayedverts:row1} When $i$ and $j$ are adjacent, the following hold:
			\begin{enumerate}[\normalfont (i)]
				\item\label{frayedverts:row1:item1} Provided that $x_1x_2=x_2'x_3'$ and $x_2x_3 = x_1'x_2'$, then \begin{multline*}
					B_i(z_1)\alpha_i^\vee(x_1)B_j(z_2)\alpha_j^\vee(x_2)B_i(z_3)\alpha_i^\vee(x_3) = B_j(z_1')\alpha_j^\vee(x_1') B_i(z_2') \alpha_i^\vee(x_2') B_j(z_3')\alpha_j^\vee(x_3'),
				\end{multline*}
				where the $z_i'$ are determined by the $z_i$, $x_i$, and $x_i'$.
				
				\item\label{frayedverts:row1:item2} Provided that $x_1x_3=x_1'x_2'$ and $x_1x_2 = x_2'x_3'$, then
				\begin{multline*}
					B_i(z_1)\alpha_i^\vee(x_1)B_j(z_2)\alpha_j^\vee(x_2)u_{-i}(z_3)\alpha_i^\vee(x_3) =u_{-j}(z_1')\alpha_j^\vee(x_1') B_i(z_2')\alpha_i^\vee(x_2') B_j(z_3')\alpha_j^\vee(x_3'),
				\end{multline*}
				where the $z_i'$ are determined by the $z_i$, $x_i$, and $x_i'$.
				
				\item\label{frayedverts:row1:item3} Provided that $x_2x_3=x_1'x_2'$ and $x_1x_2 = x_1'x_3'$, then
				\begin{multline*}
					u_{-i}(z_1)\alpha_i^\vee(x_1)B_j(z_2)\alpha_j^\vee(x_2)B_i(z_3)\alpha_i^\vee(x_3) = B_j(z_1')\alpha_j^\vee(x_1') B_i(z_2)\alpha_i^\vee(x_2') u_{-j}(z_3')\alpha_j^\vee(x_3').
				\end{multline*}
				where the $z_i'$ are determined by the $z_i$, $x_i$, and $x_i'$.
			\end{enumerate}
			\item\label{frayedverts:row2} If $i$ and $k$ are not adjacent, then we have the equalities
			\begin{align*}
				B_i(z_1)\alpha_i^\vee(x_1)B_k(z_2)\alpha_k^\vee(x_2) &= B_k(z_2)\alpha_k^\vee(x_2)B_i(z_1)\alpha_i^\vee(x_1),\\
				B_i(z_1)\alpha_i^\vee(x_1)u_{-k}(z_2)\alpha_k^\vee(x_2) &= u_{-k}(z_2)\alpha_k^\vee(x_2)B_i(z_1)\alpha_i^\vee(x_1),\\
				u_{-i}(z_1)\alpha_i^\vee(x_1)B_k(z_2)\alpha_k^\vee(x_2) &= B_k(z_2)\alpha_k^\vee(x_2)u_{-i}(z_1)\alpha_i^\vee(x_1), \text{ and }\\
				u_{-i}(z_1)\alpha_i^\vee(x_1)u_{-k}(z_2)\alpha_k^\vee(x_2) &= u_{-k}(z_2)\alpha_k^\vee(x_2)u_{-i}(z_1)\alpha_i^\vee(x_1).
			\end{align*}
			
			\item\label{frayedverts:row3} When $i$ and $j$ are adjacent, the following hold:
				\begin{enumerate}[\normalfont(i)]
					\item\label{frayedverts:row3:item1} Provided that $x_2=x_1'x_3'$ and $x_1x_3=x_1'x_2'$, we have
					\begin{multline*}
						B_i(z_1)\alpha_i^\vee(x_1)u_{-j}(z_2)\alpha_j^\vee(x_2)u_{-i}(z_3)\alpha_i^\vee(x_3) = u_{-j}(z_1')\alpha_i^\vee(x_1') B_i(z_2')\alpha_j^\vee(x_2') u_{-j}(z_3')\alpha_i^\vee(x_3'),
					\end{multline*}
					where the $z_i'$ are determined by the $z_i$,$x_i$, and $x_i'$.
					\item\label{frayedverts:row3:item2} provided that $x_1x_3=x_2'$ and $x_1x_2=x_1'x_3'$, we have
					\begin{multline*}
						u_{-j}(z_1)\alpha_i^\vee(x_1) B_i(z_2)\alpha_j^\vee(x_2) u_{-j}(z_3)\alpha_i^\vee(x_3) = B_i(z_1')\alpha_i^\vee(x_1')u_{-j}(z_2')\alpha_j^\vee(x_2')u_{-i}(z_3')\alpha_i^\vee(x_3'),
					\end{multline*}
					where the $z_i'$ are determined by the $z_i$,$x_i$, and $x_i'$.
					\item\label{frayedverts:row3:item3} Provided that $x_2=x_1'x_3'$, $x_1x_3=x_2'$, and $z_1+z_3x_1^{-2}x_2\neq 0$, we have
					\begin{multline*}
						u_{-i}(z_1)\alpha_i^\vee(x_1)u_{-j}(z_2)\alpha_j^\vee(x_2)u_{-i}(z_3)\alpha_i^\vee(x_3) =u_{-j}(z_1')\alpha_i^\vee(x_1') u_{-i}(z_2')\alpha_j^\vee(x_2) u_{-j}(z_3')\alpha_i^\vee(x_3'),
					\end{multline*}
					where the $z_i'$ are determined by the $z_i$,$x_i$, and $x_i'$.
					\item\label{frayedverts:row3:item4} We have
					$B_i(z)\alpha_i^\vee(x) = u_{-\alpha_i}(z^{-1})\alpha_i^\vee(zx)u_{\alpha_i}(-z^{-1}x^{-2})$.
				\end{enumerate}
			
			\item\label{frayedverts:row4} For any root $i$, we have the equalities
			\begin{align*}
				B_i(z_1) \alpha_i^\vee(x_1) u_{-\alpha_{i}}(z_2)\alpha_i^\vee(x_2) &= B_i(z_1-x_1^{-2}z_2)\alpha_i^\vee(x_1x_2), \text{ and} \\
				u_{-\alpha_i}(z_1)\alpha_i^\vee(x_1) u_{-\alpha_{i}}(z_2) \alpha_i^\vee(x_2) &= u_{-\alpha_i}(z_1+x_1^{-2}z_2)\alpha_i^\vee(x_1x_2).
			\end{align*}
			
			\end{enumerate}
	\end{lem}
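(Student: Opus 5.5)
The plan is to reduce every framed identity to its unframed counterpart in Lemma \ref{lem:frayedbraididentities} by moving all torus factors $\alpha^\vee(x)$ to the right. The basic tools are the conjugation rules
\[
\alpha_k^\vee(x)\,u_{\pm i}(z)=u_{\pm i}\bigl(x^{\pm a_{ik}}z\bigr)\,\alpha_k^\vee(x), \qquad \alpha_k^\vee(x)\,\dot s_i=\dot s_i\,\bigl(s_i\cdot\alpha_k^\vee\bigr)(x),
\]
together with $s_i\cdot\alpha_k^\vee=\alpha_k^\vee\alpha_i^\vee(\,\cdot\,)^{-a_{ik}}$. Consequently each $B_i(z)$ absorbs a torus element as $t\,B_i(z)=B_i(\alpha_i(t)z)\,(s_i t)$, and each $u_{-i}(z)$ does so as $t\,u_{-i}(z)=u_{-i}(\alpha_i(t)^{-1}z)\,t$.

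\textbf{Step 1.} For each side of each identity, commute every torus factor to the far right. The left side becomes (rescaled unframed product)$\cdot t_L$ and the right side becomes (rescaled unframed product)$\cdot t_R$.

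\textbf{Step 2.} Apply the corresponding unframed identity from Lemma \ref{lem:frayedbraididentities}. This produces formulas for the $z_i'$ in terms of the $z_i$, $x_i$ and $x_i'$, which is exactly the phrase ``determined by'' in the statement.

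\textbf{Step 3.} Check that the accumulated tori agree, i.e.\ $t_L=t_R$, using the stated multiplicative constraints.

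In simply-laced rank two, the total torus element is computed as follows. A frayed letter $u_{-i}$ contributes no Weyl group action. A solid letter $B_i$ applies $s_i$ to the tori accumulated from the letters to its left. So, for example, in (a)(i) the left side yields
\[
\bigl(s_is_js_i\,\alpha_i^\vee(x_1)\bigr)\bigl(s_is_j\cdots\bigr)\cdots .
\]
Since $s_is_js_i\alpha_i^\vee=-\alpha_j^\vee$ and similar identities hold, comparing coefficients of $\alpha_i^\vee$ and $\alpha_j^\vee$ gives precisely two equations, and these are the provisos $x_1x_2=x_2'x_3'$ and $x_2x_3=x_1'x_2'$.

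For (b), non-adjacency means all the relevant factors commute, so the identities are immediate. For (c)(iv), the identity follows from the $SL_2$ factorisation
\[
B_i(z)=u_{-i}(z^{-1})\,\alpha_i^\vee(z)\,u_i(-z^{-1}),
\]
after commuting $\alpha_i^\vee(x)$ past $u_i$, which rescales its argument by $x^{-2}$. For (d), the identities follow from (\ref{eqn:frayingvertex})-type computations: conjugating $u_{-i}(z_2)$ through $\alpha_i^\vee(x_1)$ gives $x_1^{-2}z_2$, and then the unframed row (d) identities apply.

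The main obstacle is bookkeeping in row (c). There the maps are only rational, the provisos mix in the other root (for instance $x_2=x_1'x_3'$), and the extra nonvanishing condition $z_1+z_3x_1^{-2}x_2\neq0$ in (c)(iii) has to be recovered as the rescaled version of $z_1+z_3\neq0$. I would treat this step by working directly in $SL_3$ with diagonal matrices, and verify that the degenerate case matches after the rescaling of Step 1. Uniqueness of the $z_i'$ follows from the framed analogue of Lemma \ref{lem:framedsolidparameters} applied letter by letter.
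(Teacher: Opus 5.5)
Your strategy (move every torus factor to the far right, apply the unframed identity of Lemma \ref{lem:frayedbraididentities}, then compare torus parts) is sound. It is essentially the computation behind the paper's proof, which just records the resulting $z_i'$ (citing the earlier literature for (a)(i)) and leaves the rest to explicit $SL_2$/$SL_3$ checks. However, two steps fail as you have written them.

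First, your displayed bookkeeping for (a)(i) contradicts your own (correct) verbal rule. The factor $\alpha_i^\vee(x_1)$ sits to the \emph{right} of $B_i(z_1)$, so it only passes $B_j(z_2)$ and $B_i(z_3)$. It therefore becomes $(s_is_j\alpha_i^\vee)(x_1)=\alpha_j^\vee(x_1)$, not $(s_is_js_i\alpha_i^\vee)(x_1)=\alpha_j^\vee(x_1)^{-1}$. Likewise $\alpha_j^\vee(x_2)$ only sees $s_i$. The correct tori are $t_L=\alpha_i^\vee(x_2x_3)\alpha_j^\vee(x_1x_2)$ and $t_R=\alpha_i^\vee(x_1'x_2')\alpha_j^\vee(x_2'x_3')$. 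Your off-by-one rule happens to be harmless in the all-solid case (a)(i), because there it just inverts the whole torus part on both sides. In the mixed identities it gives wrong relations, since a frayed letter does not invert its own factor. For example, in (a)(ii) it yields $x_2'x_3=x_1x_1'$ instead of $x_1x_3=x_1'x_2'$. So your claim that the comparison ``gives precisely the provisos'' is not supported by the computation you sketched.

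Second, Step 3 cannot succeed for row (c) as printed. On the right of (c)(i), the left of (c)(ii) and the right of (c)(iii), the torus factor attached to a $j$-coloured letter is written $\alpha_i^\vee$, and vice versa. Also, (c)(iii) has $\alpha_j^\vee(x_2)$ where $x_2'$ is meant, and (a)(iii) should read $B_i(z_2')$. Taken literally, your method gives in (c)(i) the conditions $x_2=x_2'$ and $x_1x_3=x_3'/x_1'$, not the stated provisos.

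The provisos correspond to the labelling in which every letter of colour $\pm k$ carries $\alpha_k^\vee$; the paper's own formulas for the $z_i'$ in (c)(i) and (c)(iii) confirm this. With that labelling, (c)(i) gives $t_L=\alpha_i^\vee(x_1x_3)\alpha_j^\vee(x_2)$ and $t_R=\alpha_i^\vee(x_1'x_2')\alpha_j^\vee(x_1'x_3')$, and the nonvanishing condition in (c)(iii) comes out exactly as you predict. Your remark that the provisos ``mix in the other root'' shows you accepted the misprinted labels. Once they are corrected nothing mixes, and row (c) is easier than row (a) because at most one letter on each side acts by a reflection. You should state this correction explicitly rather than defer to an unspecified $SL_3$ check.

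Your ``letter by letter'' uniqueness claim is unnecessary, since the statement only asks that the $z_i'$ be given by formulas, which Step 2 supplies. It also does not follow as stated, because the intermediate framed flags on the two sides differ.

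Incidentally, done carefully your Step 2 gives $z_1'=x_1^{-1}z_2+x_1x_2^{-1}z_1z_3$ in (a)(iii). The formula printed in the paper's proof has these two coefficients interchanged. This illustrates what your systematic reduction buys over a bare list of formulas.
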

	\begin{proof}
		For part \ref{frayedverts:row1}, \ref{frayedverts:row1:item1} is from \citep[Lemma 3.12]{CGGLSS25} where we have
		$$z_1' = z_3\frac{x_1}{x_2}, \quad z_2' = z_1z_3\frac{x_1x_3}{x_2'} - z_2 \frac{x_1'}{x_1}, \quad z_3' = z_1 \frac{x_2'}{x_1'}.$$
		The remaining two parts are analogously computed. For \ref{frayedverts:row1:item2}, we have
		$$z_1' = z_3\frac{x_2}{x_1}, \quad z_2' = z_1\frac{x_1x_3}{x_2'} - z_2z_3\frac{x_2x_3}{x_1x_2'}, \quad z_3' = z_2 \frac{x_2}{x_1'x_3'},$$
		and for \ref{frayedverts:row1:item3}, we have
		$$z_1' = z_2\frac{x_1}{x_2} + z_1 z_3 \frac{1}{x_1}, \quad z_2' = z_3\frac{x_1x_3}{x_2'}, \quad z_3' = z_1 \frac{x_1x_1'}{x_3x_3'}.$$
		
		Simple reflections and one-parameter-subgroups associated to non-adjacent roots commute, so part \ref{frayedverts:row2} follows.
		
		For part \ref{frayedverts:row3}, subpart \ref{frayedverts:row3:item1}, we have
		$$z_1' = z_2z_3\frac{x_3x_3'}{x_2'},\quad z_2'=z_1x_1'-z_3\frac{x_2x_3}{x_1x_2'}, \quad z_3'=z_2\frac{(x_1')^2}{x_3},$$
		for \ref{frayedverts:row3:item2}, we have
		$$z_1' = z_2x_2 + \frac{z_1}{z_3} \frac{x_1^2}{x_3'},\quad z_2'=z_3\frac{x_3'}{x_1^2},\quad z_3' = \frac{z_1}{z_3} \frac{x_1x_1'}{x_3x_3'},$$
		and for \ref{frayedverts:row3:item3}, we have
		$$z_1' = \frac{z_2z_3x_1x_2}{z_1x_1^2+z_3x_2},\quad z_2'=z_1\frac{1}{x_1'}+z_3\frac{x_3'}{x_1^2}, \quad z_3'=\frac{z_1z_2x_1^2(x_1')^2}{z_1x_1^2x_3+z_3x_2x_3}.$$
		
		The remaining \ref{frayedverts:row3}\ref{frayedverts:row3:item4} and \ref{frayedverts:row4} follow from $SL_2$ computations.
	\end{proof}
	
	\begin{eg}
		Continuing with Examples \ref{eg:(1,2,1,-2)} and \ref{eg:mutationsequences(1,2,1,-2)}, we can consider the sequence of rational maps
		$$\cO^{(1,2,1,-2)} \dasharrow \cO^{(-1,2,1,-2)} \dasharrow \cO^{(-1,-2,1,-2)} \dasharrow \cO^{(-1,-2,-1,-2)}$$
		\begin{align*}
			\Phi_{(1,2,1,-2)}(z_1,z_2,z_3,z_4) & \mapsto \Phi_{(-1,2,1,-2)}\left(z_1^{-1}, z_1^{-1}z_2, z_1z_3-z_2, \frac{z_4}{z_1(z_1-z_4)}\right),\\
			& \mapsto \Phi_{(-1,-2,1,-2)}\left(z_1^{-1}, z_1z_2^{-1}, \frac{z_1(z_1z_3-z_2)}{z_2}, \frac{z_4}{z_1(z_2-z_3z_4)}\right),\\
			& \mapsto \Phi_{(-1,-2,-1,-2)}\left(z_1^{-1}, z_1z_2^{-1}, \frac{z_2}{z_1(z_1z_3-z_2)}, \frac{(z_1z_3-z_2)z_4}{z_2(z_2-z_3z_4)}\right).
		\end{align*}
		Note that the presence of the unsupported letter $-2$ in the fourth position means that the indeterminacy locus of these maps is larger than the coordinate function at that position.
		
		When using framed labellings, we instead get the sequence
		\begin{align*}
			\bigl((z_1,1),(z_2,1),(z_3,1),(z_4,1)\bigr) & \mapsto \left((\tfrac{1}{z_1},z_1), (z_2,1), (\tfrac{z_1z_3-z_2}{z_1},1), (\tfrac{z_1z_4}{z_1-z_4},\tfrac{z_1-z_4}{z_1})\right),\\
			& \mapsto \left((\tfrac{1}{z_1},z_1),(\tfrac{1}{z_2},z_2), (\tfrac{z_1z_3-z_2}{z_1},1),(\tfrac{z_2z_4}{z_2-z_3z_4},\tfrac{z_2-z_3z_4}{z_2})\right),\\
			& \mapsto \left((\tfrac{1}{z_1},z_1),(\tfrac{1}{z_2},z_2), (\tfrac{z_1}{z_1z_3-z_2},\tfrac{z_1z_3-z_2}{z_1}),(\tfrac{z_2z_4}{z_2-z_3z_4},\tfrac{z_2-z_3z_4}{z_2})\right).
		\end{align*}
		Note that the pole $z_1-z_4$ from the first map has been cancelled out through the subsequent compositions. Despite it not being a pole of the overall composed map, $A_1= z_1-z_4$ is one of our cluster variables. See Example \ref{eg:mutationsequences(1,2,1,-2)}.
		
	\end{eg}

	\section{Frayed weave equivalences and mutations}\label{sec:frayedweaveequiv}
	We have seen that weaves and frayed weaves correspond to sequences of birational maps between Bott--Samelson charts. In \citep{CGGLSS25} it is shown that two weaves are equivalent if and only if their moduli of flags are the same toric chart in the braid variety. Similarly,  two frayed weaves will be equivalent if the sequence of maps has the same indeterminacy loci. 
	
	\subsection{Braid move equivalences}
	We have a similar class of weave equivalences to those in \citep{CGGLSS25}, but will need more to use on frayed strands.
	We declare that any two weaves between signed expressions using invertible braid moves are equivalent. As in \citep{CGGLSS25}, weaves only using vertices corresponding to local isomorphisms will be equivalent. As in Lemma \ref{lem:leftwardfrayedequivalences}  and Figure \ref{fig:frayedweaveequivlabelled}the two weaves $(i,j,i,-j) \to (i,-i,j,i) \to (i,j,i)$ and $(i,j,i,-j) \to (j,i,j,-j) \to (j,i,j)\to(i,j,i)$ are equivalent for adjacent roots $i$ and $j$. Similarly, for non-adjacent roots $i$ and $k$, the weaves $(\pm i,k,-i) \to (\pm i,- i,k)\to (\pm i,k) \to (k,\pm i)$ and $(\pm i,k,-i) \to (k, \pm i,-i) \to (k,\pm i)$ are equivalent. We can move $k$-coloured strands through $i$-coloured trivalent vertices.

	\subsection{Twining equivalences}
	
	Any two weaves using only (the two types of) trivalent twining vertices are equivalent. See Figure \ref{fig:trivalentequivalent}. The non-equivalence of weave mutation in \citep{CGGLSS25} arises due to the fraying vertices. 
	\begin{figure}[htbp]
		\centering
		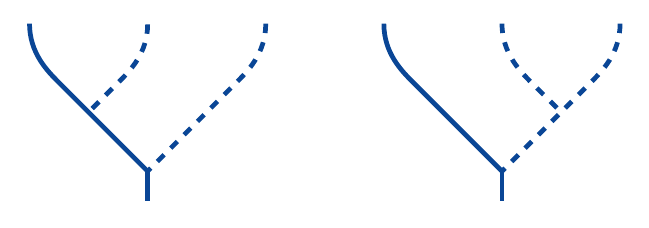
		\caption{Two equivalent frayed weaves from $(1,-1,-1)$ to $(1)$.}
		\label{fig:trivalentequivalent}
	\end{figure}

	\subsection{Isotopy of raking rays}
	In \citep[\S5]{CGGS24} and \citep[\S5.1.5.]{CGGLSS25}, it is checked that the relative heights of trivalent vertices does not affect the labellings on weaves (or the configuration of flags the weave represents). Specifically, the labellings are preserved when hexavalent and trivalent weave vertices are moved by isotopy past a raking ray. The same statement holds for our trivalent, tetravalent and hexavalent vertices involving frayed strands. The proofs for tetravalent and hexavalent vertices are analogous since the corresponding matrix identities come from local isomorphisms.

	\subsection{Isotopy of fraying vertices}
	The equivalences in the previous section do not yet involve the fraying vertices. We now turn our attention to these.
	We now look at isotopies which move the fraying vertex past other vertices on the same strand. As with the trivalent vertices, $(i,k)\to (-i,k) \to (k,-i)$ and $(i,k)\to(k,i)\to(k,-i)$ are equivalent, so we can move $k$-coloured strands through $i$-coloured fraying vertices.
		
	\begin{figure}[htbp]
		\centering
		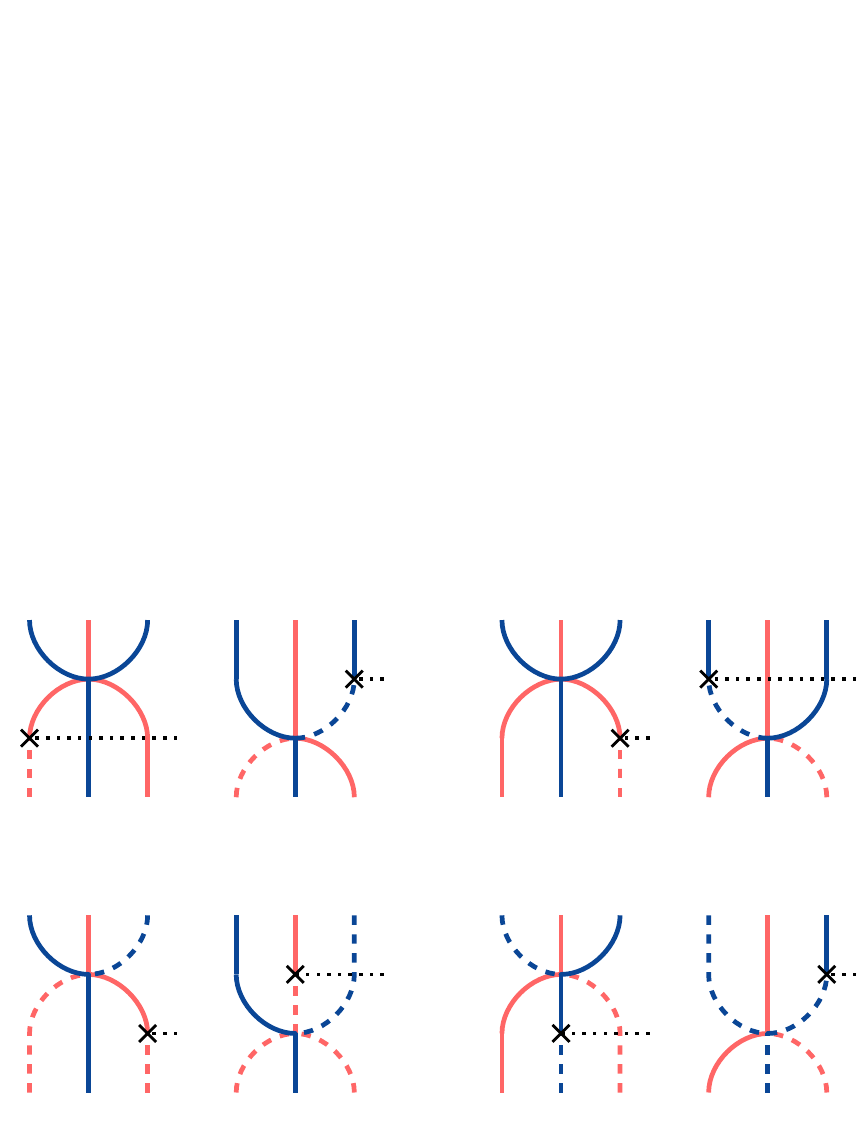
		\caption{Eight equivalences arising from moving a fraying vertex past a tetravalent or hexavalent vertex. For the hexavalent vertices in the bottom row that do not correspond to isomorphisms, note that they shares the same indeterminacy locus as that of the fraying vertex above it.
		}
		\label{fig:frayhexaisotopy}
	\end{figure}

	\begin{prop}
		For adjacent roots $i,j$, the following pairs of compositions give the same Poisson rational map between Bott--Samelson charts. See Figure \ref{fig:frayhexaisotopy}.
		\begin{enumerate}[\normalfont(i)]
			\item $\cO^{(i,j,i)} \to \cO^{(j,i,j)} \dashrightarrow \cO^{(-j,i,j)}$ and $\cO^{(i,j,i)} \dashrightarrow \cO^{(i,j,-i)} \to \cO^{(-j,i,j)}$;
			\item $\cO^{(i,j,-i)} \to \cO^{(-j,i,j)} \dashrightarrow \cO^{(-j,i,-j)}$ and $\cO^{(i,j,-i)} \dashrightarrow \cO^{(i,-j,-i)} \to \cO^{(-j,i,-j)}$;
			\item $\cO^{(i,j,i)} \to \cO^{(j,i,j)} \dashrightarrow \cO^{(j,i,-j)}$ and $\cO^{(i,j,i)} \dashrightarrow \cO^{(-i,j,i)} \to \cO^{(j,i,-j)}$.
			\item $\cO^{(-i,j,i)} \to \cO^{(j,i,-j)} \dashrightarrow \cO^{(j,-i,-j)}$ and $\cO^{(-i,j,i)} \dashrightarrow \cO^{(-i,j,-i)} \to \cO^{(j,-i,-j)}$,
		\end{enumerate}
		Furthermore, the $B$-valued modification matrix labelling the rightmost end of the raking ray is equal under both compositions, meaning they can be interchanged within a larger weave without affecting the corresponding moduli of flags.
	\end{prop}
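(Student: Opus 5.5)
The plan is to reduce everything to explicit matrix identities in $G$ (rank-two, so an $SL_3$ computation suffices). Each of the four pairs must satisfy two things: the two composite rational maps between Bott--Samelson charts agree, and the Borel element left over at the end of the raking ray is the same.

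Both composites in each pair are rational maps into the same chart $\cO^{\widetilde{\gamma}}$. Each is obtained by rewriting the braid matrix $B_\gamma(z_1,z_2,z_3)$ as $B_{\widetilde\gamma}(z_1',z_2',z_3')\,b$ with $b\in B$. The key observation is that such a factorisation is unique. By Lemma \ref{lem:framedsolidparameters} and its frayed analogue (the lemma on $gu_{-i}(z)\alpha_i^\vee(x)U$), the flags $g\cdot B_{\widetilde\gamma}(\underline z')B$ determine the parameters $z'_k$ one letter at a time, and then $b=B_{\widetilde\gamma}(\underline z')^{-1}B_\gamma(\underline z)$ is determined. So it suffices to check, for each pair, that both routes yield a factorisation of the same element $B_\gamma(\underline z)$ of the form $B_{\widetilde\gamma}(\underline z')\,b$. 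Uniqueness then forces the same $\underline z'$ and the same $b$, which settles both the equality of maps and the ``furthermore'' clause at once.

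Concretely, for (i) I would start with $B_i(z_1)B_j(z_2)B_i(z_3)$. On the first route I apply the hexavalent identity to get $B_j(z_3)B_i(z_1z_3-z_2)B_j(z_1)$, then the fraying identity (\ref{eqn:frayingvertex}) on the first factor $B_j(z_3)$. I then push the resulting $\alpha_j^\vee(z_3)u_j(-z_3^{-1})$ rightward through $B_i B_j$ using $B_j(z)=u_j(z)\dot s_j$ and the conjugation rules for $T$ and $U$. On the second route I fray the last factor $B_i(z_3)$, which leaves $b=\alpha_i^\vee(z_3)u_i(-z_3^{-1})$ already at the far right, and then apply the identity $B_i(z_1)B_j(z_2)u_{-i}(z_3^{-1})=u_{-j}(z_3^{-1})B_i(z_1-z_2z_3^{-1})B_j(z_2)$ from Lemma \ref{lem:frayedbraididentities}\ref{vertices:row1}. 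Cases (ii)--(iv) go the same way, using respectively the row \ref{vertices:row3} identity $B_i u_{-j}u_{-i}=u_{-j}B_iu_{-j}$ together with the fray of the middle positive letter, and the third identity of row \ref{vertices:row1} together with the inverse-type vertex. In each case I record the chart coordinates and the trailing Borel element from both routes and compare. By the uniqueness argument above, it is really enough to compare the Borel elements, or equivalently the coordinates.

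Poissonness of the common map then needs no further computation. Each route is a composite of a transition map between adjacent charts of the same Poisson variety $Z_{\underline w}$ with a map from Proposition \ref{prop:shortPoissonmapsbetweencharts}, so each is Poisson, and in particular their common value is.

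I expect the main obstacle to be the bookkeeping when pushing the torus/unipotent element produced by a fraying at a non-final position rightward through two further letters. This happens in the first routes of (i) and (ii), where the fray sits at the left of a three-letter expression with frayed letters to its right. Conjugating $u_j(-z^{-1})$ past $u_{-i}$ or $\dot s_i$ creates commutator terms in $U_{\alpha_i+\alpha_j}$, and getting the right final $b$ requires care. I would handle this by doing the computation directly in $SL_3$ with explicit $3\times 3$ matrices, checking that the product $B_{\widetilde\gamma}(\underline z')^{-1}B_\gamma(\underline z)$ is upper triangular and identical on both sides. The nonadjacency of neither root makes folding unnecessary here.
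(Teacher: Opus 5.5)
Your computational plan (work in $SL_3$, record the chart coordinates and the trailing Borel element along both routes, compare) is exactly the paper's proof, and getting Poissonness from the composites is fine. The gap is the uniqueness claim you call the key observation.

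From $B_{\widetilde\gamma}(\underline z')\,b=B_{\widetilde\gamma}(\underline z'')\,b'$ you only learn that the \emph{final} flags $B_{\widetilde\gamma}(\underline z')B$ and $B_{\widetilde\gamma}(\underline z'')B$ coincide. Lemma \ref{lem:framedsolidparameters} and its frayed analogue compare two letters issuing from the \emph{same} flag $g$. To run them letter by letter you would need the intermediate flags of the two factorisations to agree, and nothing gives you that.

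In fact your reduction is vacuous. Every route automatically yields a factorisation $B_\gamma(\underline z)=B_{\widetilde\gamma}(\underline z')\,b$ of the same element, so the uniqueness claim is carrying the whole proposition. What it really asserts is injectivity of $\underline z'\mapsto B_{\widetilde\gamma}(\underline z')B$ on the target chart. That is false in general: on $\cO^{(j,-i,-j)}$, the target in (iv), one has $B_j(a)u_{-i}(0)u_{-j}(c)=B_j(a-c)$. So $(a,0,c)$ and $(a+t,0,c+t)$ give the same group element, let alone the same flag.

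The same objection applies to your remark that comparing only the Borel elements suffices. Knowing $b=b'$ gives $B_{\widetilde\gamma}(\underline z')=B_{\widetilde\gamma}(\underline z'')$, and you would still need this to force $\underline z'=\underline z''$.

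There are two repairs.

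\emph{Compare coordinates.} This alone suffices, because $b=B_{\widetilde\gamma}(\underline z')^{-1}B_\gamma(\underline z)$ then agrees automatically. The paper's separate check of the raking-ray matrices is only a consistency check.

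\emph{Prove the injectivity you need.} A case analysis of relative positions shows that $m$ is injective on $\cO^{(-j,i,j)}$, $\cO^{(j,i,-j)}$ and $\cO^{(-j,i,-j)}$. On $\cO^{(j,-i,-j)}$ it is injective on $\{z_2'\neq0\}$. That locus contains the image of a generic point of $\cO^{(-i,j,i)}$: such a point has final flag in $Bs_is_js_iB/B$, whereas points with $z_2'=0$ have final flag in $P_j/B$. Since all the vertex maps commute with $m$, this gives a computation-free proof. It matches the paper's remark after the proposition that these vertices are restrictions of one rational map $Z_{(i,j,i)}\dashrightarrow Z_{(j,i,j)}$ over $G/B$. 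But the injectivity has to be supplied; Lemma \ref{lem:framedsolidparameters} does not supply it.

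A minor correction: the routes that push a Borel element through two further letters are the first route of (i) and the second route of (iii). In the first route of (ii) the fray is at the rightmost letter, so nothing is pushed.
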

	\begin{proof}
		Equalities of the maps can verified by computations in $SL_3$ with adjacent roots $i=1$ and $j=2$. It suffices to check that the composed maps are equal, and that the Borel (or unipotent) matrix corresponding to the raking ray exiting the local pictures are equal. We demonstrate the first calculation, the rest are analogous.
		\begin{enumerate}[\normalfont(i)]
			\item For the first path, we use
				\begin{align*}
					B_i(z_1)&B_j(z_2)B_i(z_3) = 
					\begin{pmatrix}
					1 & 0 & 0 \\ 0 & z_3 & -1 \\ 0 & 1 & 0
					\end{pmatrix}
					\begin{pmatrix}
					z_1z_3-z_2 & -1 & 0 \\ 1 & 0 & 0\\ 0 & 0 & 1
					\end{pmatrix} 
					\begin{pmatrix}
					1 & 0 & 0 \\ 0 & z_1 & -1 \\ 0 & 1 & 0
					\end{pmatrix},\\
					&= 
					\begin{pmatrix}
						1 & 0 & 0 \\ 0 & 1 & 0 \\ 0 & z_3^{-1} & 1
					\end{pmatrix}
					\begin{pmatrix}
						1 & 0 & 0 \\ 0 & z_3 & -1 \\ 0 & 0 & z_3^{-1}
					\end{pmatrix}
					\begin{pmatrix}
						z_1z_3-z_2 & -1 & 0 \\ 1 & 0 & 0\\ 0 & 0 & 1
					\end{pmatrix} 
					\begin{pmatrix}
						1 & 0 & 0 \\ 0 & z_1 & -1 \\ 0 & 1 & 0
					\end{pmatrix},\\
					&= 
					\begin{pmatrix}
					1 & 0 & 0 \\ 0 & 1 & 0 \\ 0 & z_3^{-1} & 1
					\end{pmatrix}
					\begin{pmatrix}
					\tfrac{z_1z_3-z_2}{z_3} & -1 & 0 \\ 1 & 0 & 0\\ 0 & 0 & 1
					\end{pmatrix} 
					\begin{pmatrix}
						z_3 & 0 & -1 \\ 0 & 1 & -\tfrac{z_1z_3-z_2}{z_3} \\ 0 & 0 & z_3^{-1}
					\end{pmatrix}
					\begin{pmatrix}
						1 & 0 & 0 \\ 0 & z_1 & -1 \\ 0 & 1 & 0
					\end{pmatrix},\\
					&= 
					\begin{pmatrix}
						1 & 0 & 0 \\ 0 & 1 & 0 \\ 0 & z_3^{-1} & 1
					\end{pmatrix}
					\begin{pmatrix}
						\tfrac{z_1z_3-z_2}{z_3} & -1 & 0 \\ 1 & 0 & 0\\ 0 & 0 & 1
					\end{pmatrix} 
					\begin{pmatrix}
						1 & 0 & 0 \\ 0 & z_2 & -1 \\ 0 & 1 & 0
					\end{pmatrix}
					\begin{pmatrix}
						z_3 & -1 & 0 \\ 0 & z_3^{-1} & 0 \\ 0 & 0 & 1
					\end{pmatrix}.
				\end{align*}
				Computing similarly for the second path, we have
				\begin{align*}
					B_i(z_1)B_j(z_2)B_i(z_3) &= B_i(z_1)B_j(z_2)u_{-i}(z_3^{-1}) \alpha_i^\vee(z_3)u_i(-z_3^{-1}),\\
					&= u_{-j}(z_3^{-1})B_i(z_1 - z_2z_3^{-1})B_j(z_2) \alpha_i^\vee(z_3)u_i(-z_3^{-1}),
				\end{align*}
				and we see that the map between charts and the correction matrix are consistent.
			\item Computations show that both compositions are equal to the rational map
			\begin{equation*}
				\Phi_{(i,j,-i)}(z_1,z_2,z_3) \mapsto \Phi_{(-j,i,j)}\left(z_3, z_1-z_2z_3, z_2^{-1}\right).
			\end{equation*} The correction matrix on the raking rays are both equal to $\alpha_j^\vee(z_2)u_{j}(z_2^{-1})$.
			\item Computations show that both compositions are equal to the rational map
			\begin{equation*}
				\Phi_{(i,j,i)}(z_1,z_2,z_3) \mapsto \Phi_{(j,i,-j)}\left(z_3, z_1z_3-z_2, z_1^{-1}\right).
			\end{equation*} The correction matrix on the raking rays are both equal to $\alpha_j^\vee(z_1)u_{j}(z_1^{-1})$.
			\item Computations show that both compositions are equal to the rational map
			\begin{equation*}
				\Phi_{(-i,j,i)}(z_1,z_2,z_3) \mapsto \Phi_{(j,-i,-j)}\left(z_2+z_1z_3, z_3^{-1}, z_1z_3\right).
			\end{equation*} The correction matrix on the raking rays are both equal to $\alpha_i^\vee(z_3)u_{i}(z_3^{-1})$.\qedhere
		\end{enumerate}
	\end{proof}
	
	\begin{rem}
		The above means that our maps between Bott--Samelson charts from to the vertices in Figure \ref{fig:frayedweavevertices2} arise from the \emph{same} rational maps $Z_{(i,k)}\dashrightarrow Z_{(k,i)}$ and $Z_{(i,j,i)}\dashrightarrow Z_{(j,i,j)}$ restricted to different Bott--Samelson charts.
		The vertices in Figure \ref{fig:frayedweavevertices2} can be thought of as the irreducible ones. Indeed, the restriction of the map to the chart $\cO^{(-i,-j,i)}$ factors via a fraying $\cO^{(-i,-j,i)} \dashrightarrow \cO^{(-i,-j,-i)} \dashrightarrow \cO^{(-j,-i,-j)}$.  That is, the identities which failed to hold in Remark \ref{rem:onlybraididentities} can hold up to right multiplication by a Borel element which arises from a fraying vertex.
	\end{rem}
	
	\subsection{Embeddings of moduli of flags for frayed weaves}\label{subsec:moduliembedding}
	Consider the two weaves $\mathfrak{w},\mathfrak{w}'$ corresponding to the compositions $\cO^{(i,-i)} \to \cO^{(i)} \to \cO{(-i)}$ and $\cO^{(i,-i)} \to \cO^{(-i,-i)} \to \cO^{(-i)}$ respectively.
	A calculation in $SL_2$ shows that in the Bott--Samelson coordinates, the former map is given by $$\Phi_{(i,-i)}(z_1,z_2) \mapsto \Phi_{(i)}(z_1-z_2) \mapsto \Phi_{(-i)}\left(\tfrac{1}{z_1-z_2}\right),$$ while the latter is given by $$\Phi_{(i,-i)}(z_1,z_2) \mapsto \Phi_{(-i,-i)}\left(\tfrac{1}{z_1}, \tfrac{z_2}{z_1(z_1-z_2)}\right) \mapsto \Phi_{(-i)}\left(\tfrac{1}{z_1-z_2}\right).$$
	The indeterminacy locus of the first path is the vanishing of $z_1-z_2$, whereas for the second path it is the vanishing of $z_1(z_1-z_2)$, which has an extra irreducible component. This gives us an inclusion of $M(\mathfrak{w}')\hookrightarrow M(\mathfrak{w})$. 
	The $B$-valued correction matrix which labels the raking ray is the same in both paths, so there will be no other changes in the indeterminacy locus when we apply this isotopy within a longer signed expression $\gamma$.

	Furthermore, we can apply an isotopy to move a fraying vertex past the two remaining types of hexavalent vertices, which results in the fraying vertex moving onto the only remaining solid strand above the hexavalent vertex. These similarly result in embeddings of flag moduli as the indeterminacy locus of the corresponding map gains another irreducible component. These isotopies are shown in Figure \ref{fig:frayisotopies2}.
	
	\begin{prop}
		When $i,j$ are adjacent roots, the following pairs of compositions give the same Poisson rational map between three dimensional Bott--Samelson charts. See Figure \ref{fig:frayisotopies2}.
		When the variable at the top of the first strand $z_1\neq0$ we have the same statement for the pair,
		\begin{enumerate}[\normalfont(i),start=1]
			\item $\cO^{(i,-j,-i)} \to \cO^{(-j,i,-j)} \to \cO^{(-j,-i,-j)}$ and $\cO^{(i,-j,-i)} \to \cO^{(-i,-j,-i)} \to \cO^{(-j,-i,-j)}$;
		\end{enumerate}
		and similarly when $z_2,z_3\neq0$, we have the same for the pair
		\begin{enumerate}[\normalfont(i),start=2]
			\item $\cO^{(-i,j,-i)} \to \cO^{(j,-i,-j)} \to \cO^{(-j,-i,-j)}$ and $\cO^{(-i,j,-i)} \to \cO^{(-i,-j,-i)} \to \cO^{(-j,-i,-j)}$.
		\end{enumerate}
	\end{prop}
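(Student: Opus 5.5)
The plan is to verify both equalities by direct $SL_3$ computation with $i=1$, $j=2$, as in the preceding proposition. Each composition is built from identities of Lemma \ref{lem:frayedbraididentities}, and each is a chart map respecting multiplication. It therefore suffices to show two things: the two composed coordinate changes agree as rational maps, and the Borel correction matrices exiting on the right agree. Poisson-ness then comes for free. Each component is Poisson by Proposition \ref{prop:shortPoissonmapsbetweencharts} and Corollary \ref{cor:Poissonmapsbetweencharts}, or is a transition function between adjacent charts of the same Poisson variety $Z_{\underline{w}}$. A composition of Poisson rational maps is Poisson on its domain of definition.

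For pair (i), start from $B_i(z_1)u_{-j}(z_2)u_{-i}(z_3)$. On the first path, apply the first identity of Lemma \ref{lem:frayedbraididentities}\ref{vertices:row3}:
\[
u_{-j}(z_2z_3)\,B_i(z_1-z_3)\,u_{-j}(z_2).
\]
Then fray the middle strand using $B_i(z)=u_{-i}(z^{-1})\alpha_i^\vee(z)u_i(-z^{-1})$ with $z=z_1-z_3$. The correction $\alpha_i^\vee(z_1-z_3)u_i(-(z_1-z_3)^{-1})$ is pushed rightward past $u_{-j}(z_2)$, rescaling it and producing a Borel factor on the raking ray. On the second path, first fray $B_i(z_1)$ into $u_{-i}(z_1^{-1})\alpha_i^\vee(z_1)u_i(-z_1^{-1})$, which needs $z_1\neq 0$. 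Push the Borel part right through $u_{-j}(z_2)u_{-i}(z_3)$; this changes these to $u_{-j}(z_2')u_{-i}(z_3')$ and leaves a Borel element on the right. Then apply the Lusztig-coordinate identity (the third in \ref{vertices:row3}) to $u_{-i}(z_1^{-1})u_{-j}(z_2')u_{-i}(z_3')$.

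Comparing the two results gives explicit rational functions in $z_1,z_2,z_3$. I expect both to be
\[
\Phi_{(-j,-i,-j)}\!\left(\tfrac{z_2z_3}{\ast},\,\ast,\,\ast\right)
\]
with denominators built from $z_1$ and $z_1-z_3$. The right-hand Borel factors should both reduce to the form $\alpha_i^\vee(\cdot)u_i(\cdot)$ evaluated at a common expression. The hypothesis $z_1\neq0$ is exactly what is needed for the second path to be defined, which matches the embedding of moduli discussed above.

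Pair (ii) is handled the same way, starting from $u_{-i}(z_1)B_j(z_2)u_{-i}(z_3)$. On the first path, use the inverse hexavalent map, namely the second identity in \ref{vertices:row3} with $i$ and $j$ swapped. This needs $z_3\neq0$ and yields $B_j(z_2+z_1/z_3)\,u_{-i}(z_3)\,u_{-j}(z_1/z_3)$. Then fray $B_j$, which needs $z_2+z_1/z_3\neq0$. On the second path, fray $B_j(z_2)$ first, which needs $z_2\neq0$, and then apply the Lusztig identity. The stated conditions $z_2,z_3\neq0$ guarantee both paths are defined on a common dense open set. Agreement there gives equality of rational maps.

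The main obstacle is bookkeeping: tracking how the Borel element from each fraying conjugates past the frayed factors $u_{-\alpha}$ to its right, including the torus rescaling $\alpha_i^\vee(x)u_{-j}(z)\alpha_i^\vee(x)^{-1}=u_{-j}(xz)$. I would do this symbolically in $SL_3$, for instance in SAGE as elsewhere in the paper, and check the matrix identity entrywise. Finally, I would confirm that the rightmost correction matrices coincide, so that the isotopy can be applied inside longer signed expressions.
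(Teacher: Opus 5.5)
Your plan matches the paper's proof: a direct $SL_3$ computation with $i=1$, $j=2$ that checks the two composed chart maps agree and that the Borel factors exiting on the raking ray agree. Your path-by-path decompositions are the right ones, including using the inverse of the $(i,-j,-i)\to(-j,i,-j)$ vertex in (ii).

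Carrying the computation out gives $\Phi_{(-j,-i,-j)}\bigl(z_2z_3,(z_1-z_3)^{-1},(z_1-z_3)z_2\bigr)$ in (i). So, contrary to your guess, no factor $z_1$ appears in any denominator: the pole from fraying the first strand cancels, and $z_1\neq0$ only shrinks the domain. In (ii) you get $\Phi_{(-j,-i,-j)}\bigl(\tfrac{z_3}{z_1+z_2z_3},z_1+z_2z_3,\tfrac{z_1}{z_2(z_1+z_2z_3)}\bigr)$, and both paths additionally need $z_1+z_2z_3\neq0$, not only $z_2,z_3\neq0$.
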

	\begin{proof}
		Equalities of the maps can verified by computations in $SL_3$ with $i=1$ and $j=2$ being the two adjacent roots. It suffices to check that the composed maps are equal, and that the Borel element labelling the raking ray exiting the local pictures are equal.
		\begin{enumerate}[\normalfont(i)]
			\item Computations show that both compositions are equal to the rational map
			\begin{equation*}
				\Phi_{(i,-j,-i)}(z_1,z_2,z_3) \mapsto \Phi_{(-j,-i,-j)}\left(z_2z_3, (z_1-z_3)^{-1}, (z_1-z_3)z_2\right),
			\end{equation*} and both correction matrices on the raking rays equal to $\alpha_i^\vee(z_1-z_3)u_{i}((z_1-z_3)^{-1})$.
			\item Calculations show that the two maps are equal to the rational map
			\begin{equation*}
				\Phi_{(-i,j,-i)}(z_1,z_2,z_3) \mapsto \Phi_{(-j,-i,-j)}\left(\tfrac{z_3}{z_1+z_2z_3}, z_1+z_2z_3, \tfrac{z_1}{z_2(z_1+z_2z_3)}\right).
			\end{equation*}
			In both paths, the correction matrix is given by $U= \alpha_j^\vee(z_2) u_{j}(z_2^{-1})$.\qedhere
		\end{enumerate}
	\end{proof}
	
	\begin{figure}[b]
		\centering
		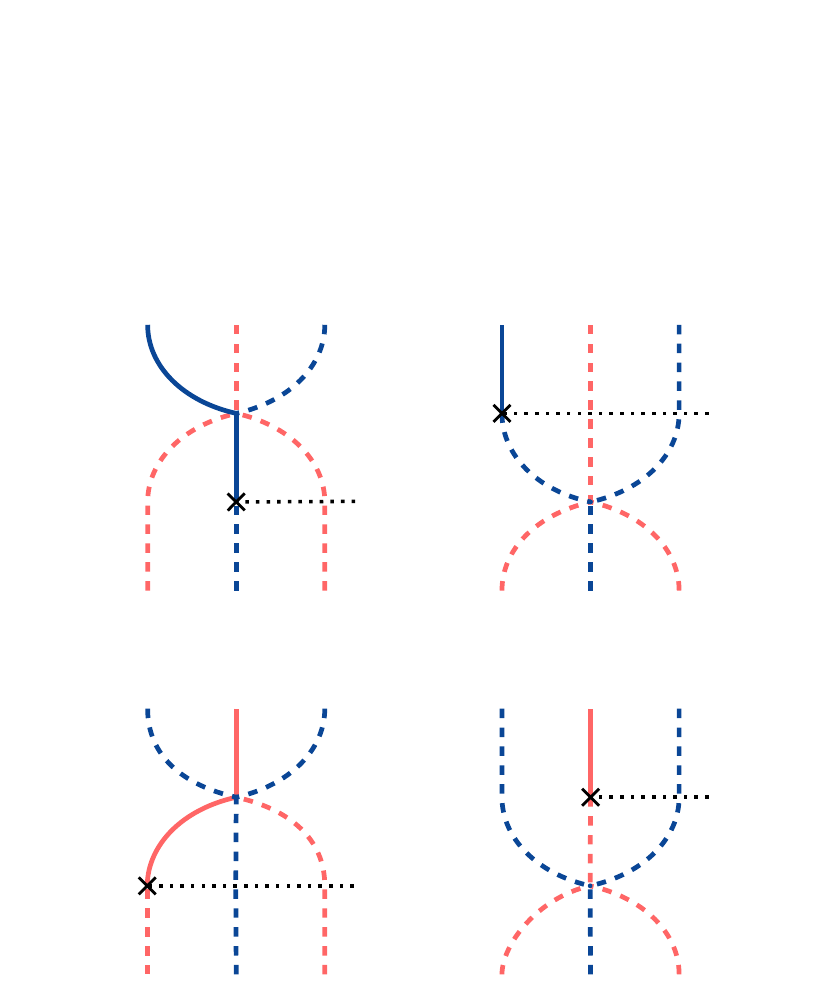
		\caption{Embeddings of weave moduli for other isotopies of fraying vertices.
		}
		\label{fig:frayisotopies2}
	\end{figure}

	\subsection{Mutations of frayed weaves}
	In \citep{CGGLSS25}, weave mutation is a move between the two possible trees from $(i,i,i)$ to $(i)$. From the relation of a weave and its opening sequence constructed in the proof of \citep[Theorem 5.17]{CGGS24}, we can also think of this as changing the order of the opening sequence of a weave.
	\begin{lem}
		Any weave mutation can be realised as swapping the order of two consecutive frayings in its opening sequence.
	\end{lem}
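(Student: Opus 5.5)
The plan is to reduce the statement to a local computation on the subword $(i,i,i)$. Then I would use the factorisation of a classical trivalent vertex as a fraying followed by a twining (Figure \ref{fig:factoringwithfrayings}).

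\emph{Step 1: localise.} By Corollary \ref{cor:Poissonmapsbetweencharts} and Proposition \ref{prop:Poissonmapsbetweencells}, maps on a consecutive substring extend to the whole signed expression. Also, by the isotopy of raking rays, the Borel correction matrices exiting the local picture determine the modification of everything to the right. So it suffices to treat the two trees $(i,i,i)\to(i)$: the left tree pinches positions $1,2$ first, and the right tree pinches positions $2,3$ first. I will replace each pinch by a fraying followed by a twining.

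\emph{Step 2: rewrite each tree as an opening sequence.} The right tree frays position $3$ and twines it into position $2$. That is the map $\cO^{(i,i,i)}\dashrightarrow\cO^{(i,i,-i)}\to\cO^{(i,i)}$. It then frays the new position $2$ and twines into position $1$. Using the twining equivalences (any two weaves built only from twinings are equivalent, Figure \ref{fig:trivalentequivalent}), all twinings can be postponed to the bottom. The tree is then equivalent to the composite of frayings $\cO^{(i,i,i)}\dashrightarrow\cO^{(i,i,-i)}\dashrightarrow\cO^{(i,-i,-i)}$ followed by twinings to $(i)$.

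The left tree pinches positions $1,2$ first. Its fraying is at position $2$, so it first goes to $\cO^{(i,-i,i)}$. After the twining, the fraying at the remaining right strand corresponds to position $3$, giving $\cO^{(i,-i,-i)}$ before the final twinings. Here I must check that moving a twining vertex below a fraying vertex is permitted. This is an isotopy of the raking ray of the fraying past a twining vertex, which holds by the raking-ray isotopy statement for trivalent frayed vertices.

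\emph{Step 3: compare.} After these rewrites, the two weaves differ only in the order of the two frayings, at positions $\{2,3\}$ versus $\{3,2\}$. Both pass through adjacent Bott--Samelson charts from $\cO^{(i,i,i)}$ to $\cO^{(i,-i,-i)}$. The opening sequences therefore differ by a transposition of two consecutive frayings. In the language of \citep[Theorem 5.17]{CGGS24}, this is exactly the relation between a weave and its opening sequence.

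\emph{Main obstacle.} The hardest step is Step 2. I must justify that sliding twinings past frayings (and past each other's raking rays) preserves the represented moduli of flags, rather than producing an embedding as in Subsection \ref{subsec:moduliembedding}. I would check this by a direct $SL_2$ computation of both composites and their exiting correction matrices $\alpha_i^\vee(\cdot)u_i(\cdot)$. If the two indeterminacy loci differed by an extra component, I would instead state the equivalence on the common open torus. That torus is all the lemma needs, since weave mutation is itself only defined at the level of toric charts.
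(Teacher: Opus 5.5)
Your Step 2 contains a false claim, and it is the step you flagged. For the left tree, postponing the twining is harmless. The fraying at position $3$ and the twining of positions $1,2$ lie on disjoint strands, and the raking ray points away from the twining, so the two vertices just exchange heights. That tree is therefore equivalent to fraying $2$ then $3$ followed by twinings; both are defined exactly on $\{z_2(z_2z_3-1)\neq0\}$.

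For the right tree the claim fails. There the second fraying lies on the strand \emph{created} by the first twining. Postponing that twining therefore means sliding the fraying vertex up through the twining vertex onto the solid strand at position $2$. This is precisely the isotopy of Subsection \ref{subsec:moduliembedding} ($\cO^{(i,-i)}\to\cO^{(i)}\to\cO^{(-i)}$ versus $\cO^{(i,-i)}\to\cO^{(-i,-i)}\to\cO^{(-i)}$). That isotopy is an embedding of moduli, not an equivalence. Concretely, the right tree is defined on $\{z_3(z_2z_3-1)\neq0\}$. Fraying position $2$ while $u_{-i}(z_3^{-1})$ still sits to its right additionally forces $z_2\neq0$; the new frayed coordinate is $\frac{1}{z_2(z_2z_3-1)}$. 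So ``fray $3$, fray $2$, then twine'' has as its moduli the \emph{intersection} of the two mutation-adjacent tori. Your Step 3 conclusion, that the two trees differ only in the order of the frayings, is therefore wrong for one of them. The twining equivalences of Figure \ref{fig:trivalentequivalent} concern weaves built only from twinings and say nothing about moving twinings past frayings.

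Your fallback does not rescue this. On the common open torus both trees, and both fraying orders, restrict to the same open set, so the distinction that constitutes the mutation disappears.

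The repair is to drop Step 2 altogether. The paper gives no argument beyond the weave/opening-sequence correspondence in the proof of \citep[Theorem 5.17]{CGGS24}. In that correspondence, each opened crossing is frayed and its frayed strand is immediately twined. With that reading, apply Figure \ref{fig:factoringwithfrayings} to each pinch. This shows directly that the left tree is the frayed weave for the opening order $(2,3)$ and the right tree the one for $(3,2)$, and your Step 1 localisation finishes the proof. If you insist on frayings-first weaves, the correct statement is asymmetric: one side is an equivalence and the other only an embedding. This is what Figure \ref{fig:frayedmutation} and the following remark on $M(\mathfrak{w}')$ record.
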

	
	\begin{rem}
		The converse is not true. See Example \ref{eg:B2example2} where applying a transposition to an opening sequence yields a non-adjacent seed.
	\end{rem}
	\begin{figure}[htbp]
		\centering
\begingroup%
  \makeatletter%
  \providecommand\color[2][]{%
    \errmessage{(Inkscape) Color is used for the text in Inkscape, but the package 'color.sty' is not loaded}%
    \renewcommand\color[2][]{}%
  }%
  \providecommand\transparent[1]{%
    \errmessage{(Inkscape) Transparency is used (non-zero) for the text in Inkscape, but the package 'transparent.sty' is not loaded}%
    \renewcommand\transparent[1]{}%
  }%
  \providecommand\rotatebox[2]{#2}%
  \newcommand*\fsize{\dimexpr\f@size pt\relax}%
  \newcommand*\lineheight[1]{\fontsize{\fsize}{#1\fsize}\selectfont}%
  \ifx\svgwidth\undefined%
    \setlength{\unitlength}{354.33070866bp}%
    \ifx\svgscale\undefined%
      \relax%
    \else%
      \setlength{\unitlength}{\unitlength * \real{\svgscale}}%
    \fi%
  \else%
    \setlength{\unitlength}{\svgwidth}%
  \fi%
  \global\let\svgwidth\undefined%
  \global\let\svgscale\undefined%
  \makeatother%
  \begin{picture}(1,0.72)%
    \lineheight{1}%
    \setlength\tabcolsep{0pt}%
    \put(0,0){\includegraphics[width=\unitlength,page=1]{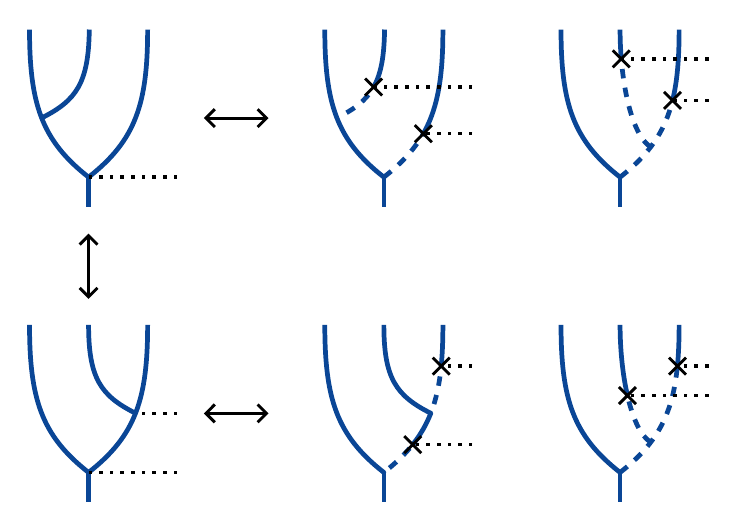}}%
    \put(0.144,0.34842105){\color[rgb]{0,0,0}\makebox(0,0)[t]{\lineheight{1.25}\smash{\begin{tabular}[t]{c}$\mu$\end{tabular}}}}%
    \put(0.68756018,0.55356496){\color[rgb]{0,0,0}\makebox(0,0)[t]{\lineheight{1.25}\smash{\begin{tabular}[t]{c}$\sim$\end{tabular}}}}%
    \put(0.68755893,0.15356522){\color[rgb]{0,0,0}\makebox(0,0)[t]{\lineheight{1.25}\smash{\begin{tabular}[t]{c}$\hookleftarrow$\end{tabular}}}}%
    \put(0,0){\includegraphics[width=\unitlength,page=2]{FrayedMutation.pdf}}%
  \end{picture}%
\endgroup%

		\caption{Mutation of Demazure weaves from \citep{CGGLSS25} compared against the frayed weaves arising from commuting  the order of fraying vertices.}
		\label{fig:frayedmutation}
	\end{figure}
	
	Note however, that our embeddings of weave moduli arising from isotopy of frayed vertices gives us an interpretation of this swapped ordering of fraying sequences. Namely, the frayed weaves $\mathfrak{w}:(i,i) \to (-i,i)\to (-i,-i)$ and $\mathfrak{w}':(i,i)\to (i,-i) \to (-i,-i)$ are closely related to mutation, see Figure \ref{fig:frayedmutation}. Notably, the moduli of flags $M(\mathfrak{w}')$ is the intersection of the cluster torus $M(\mathfrak{w})$, and its mutation adjacent torus.
	
	\begin{conj}
		For any word $\underline{w}$, the frayed weave consisting only of fraying vertices
		\begin{align*}
			\mathfrak{w}_{\mathrm{rev}}:(i_1,i_2,\dots,i_{\ell-1},i_\ell) &\to (i_1,i_2,\dots,i_{\ell-1},-i_\ell) \to (i_1,i_2,\dots,-i_{\ell-1},-i_\ell),\\
			&\to \cdots \to (i_1,-i_2,\dots,-i_{\ell-1},-i_\ell)\to (i_1,-i_2,\dots,-i_{\ell-1},-i_\ell),
		\end{align*}
		corresponding to the reversed opening sequence $(\ell, \ell-1,\dots,2,1)$ has its moduli of flags $M(\mathfrak{w}_{\mathrm{rev}})$ equal to the the intersection of all cluster tori in $\cO^{\underline{w}}$ reached along the maximal green sequence $\overrightarrow{\mu_{\underline{w}}}$ applied to $\seed(\underline{w})$.
	\end{conj}
	
	\bibliographystyle{plainnat}
	\bibliography{citelist}
\end{document}